\newtheorem{theorem}{Theorem}[section]
\newtheorem{lemma}[theorem]{Lemma}
\newtheorem{proposition}[theorem]{Proposition}
\newtheorem{corollary}[theorem]{Corollary}
\newtheorem{notation}[theorem]{Notation}
\newtheorem{conjecture}[theorem]{Conjecture}
\theoremstyle{remark}
\newtheorem{remark}[theorem]{Remark}
\theoremstyle{definition}
\newtheorem{definition}[theorem]{Definition}
\theoremstyle{example}
\newcommand{\Sym}{\mathrm{Sym}}
\newcommand{\Alt}{\mathrm{Alt}}
\newcommand{\F}{\mathbb{F}}
\newcommand{\Z}{\mathbb{Z}}
\newcommand{\Q}{\mathbb{Q}}
\newcommand{\C}{\mathbb{C}}
\newcommand{\GL}{\mathrm{GL}}
\newcommand{\SL}{\mathrm{SL}}
\newcommand{\PGL}{\mathrm{PGL}}
\newcommand{\PSL}{\mathrm{PSL}}
\newcommand{\Di}{\mathrm{D}}
\newcommand{\Mo}{\mathrm{M}}
\newcommand{\Aut}{\mathrm{Aut}}
\newcommand{\thickbar}[1]{\mathbf{\bar{\text{$#1$}}}}
\newcommand{\scst}{\scriptscriptstyle}
\begin{document}

\title[Finite monomial linear groups of prime degree]
{Classifying finite monomial linear groups of
prime degree in characteristic zero}

\begin{abstract}
Let $p$ be a prime and let $\C$ be the complex field. 
We explicitly classify the finite solvable 
irreducible monomial subgroups of $\GL(p,\C)$ up to 
conjugacy. That is, we give a complete and irredundant 
list of $\GL(p,\C)$-conjugacy class representatives 
as generating sets of monomial matrices. 
Copious structural information about 
non-solvable finite 
irreducible monomial subgroups of $\GL(p,\C)$ is
also proved, enabling a classification of all
such groups bar one family. We explain the obstacles 
in that exceptional case. For $p\leq 3$, we classify 
all finite irreducible subgroups of $\GL(p,\C)$. 
Our classifications are available publicly 
in {\sc Magma}. 
\end{abstract}

\author{Z.~B\'{a}cskai}
\address{Defence Science Technology Group, 
 Canberra, Australia}
\email{zoltan.bacskai@defence.gov.au}

\author{D.~L.~Flannery}
\address{School of Mathematics, Statistics 
and Applied Mathematics,
National University of Ireland, Galway, Ireland}
\email{dane.flannery@nuigalway.ie}

\author{E.~A.~O'Brien}
\address{Department of Mathematics,
University of Auckland,
Private Bag 92019,
Auckland,
New Zealand}
\email{e.obrien@auckland.ac.nz}


\dedicatory{In memory of L.~G.~Kov\'{a}cs}

\maketitle

\section{Introduction}\label{Introduction}

Classifying finite subgroups of $\GL(n,\F)$ 
for various fields $\F$ and degrees $n$
is an enduring problem  
in linear group theory.
Early results are due to Jordan, Klein, 
Maschke, and Schur. Subsequently contributions
 were made by Dickson, Blichfeldt, Brauer, and 
Feit, to name just a few. 

Special degrees (`small', prime, product of 
two primes) have received the most attention. 
We focus on prime degree $p$, which eases the 
workload somewhat. For example, 
an irreducible subgroup of $\GL(p,\F)$ can
be imprimitive in only one way (monomial). 
Furthermore, classifications in prime degrees 
may be needed to classify groups of 
composite degree.

The term \emph{classify} has disparate meanings in  
linear group theory. By \emph{classification}, we
mean a list of groups of the declared kind
in $\GL(n,\F)$ that contains every group 
of that kind exactly once up to $\GL(n,\F)$-conjugacy; 
also, each listed group is given explicitly,
as a generating set of matrices. 

D.~A.~Suprunenko classified several kinds of 
linear groups over finite and infinite fields. 
In \cite[Theorem~6, p.~167]{SuprunenkoI}
and \cite[\S\! 22.1]{SuprunenkoI}, 
the maximal irreducible solvable linear groups of prime 
degree over finite fields and algebraically closed fields
are listed up to conjugacy.
This was extended to other classical groups by 
Detinko~\cite{DetinkoClassic,DetinkoThesis}.
Many more classifications of finite linear groups
have been published, some of which are surveyed in 
\cite[\S\! 8]{FeitSurvey}, \cite{TiepZalesskii},
and \cite[\S \S \! 4.5--4.7]{ZalesskiiII}. 

For our classifications, we 
take $\F$ to be the complex field $\C$.
The list of groups in each degree is thus 
infinite. By contrast, the number of conjugacy classes of 
finite primitive subgroups of $\SL(n,\C)$ is 
finite (hence the popularity of these restrictions 
in the literature). 
Each group in our lists has a unique label: an integer 
parameter string that specifies the non-zero entries of 
matrix generators. 

In \cite{Conlon,Conlon2} Conlon classified the
non-abelian finite $p$-subgroups of classical groups 
of degree $p$ over a field of characteristic not $p$.
These papers set a benchmark of thoroughness, demonstrating 
that imprimitive groups in the full general linear 
group could be handled without too much difficulty.

L.~G.~Kov\'{a}cs initiated and 
guided a research program aimed at classifying finite 
linear groups to the standard of \cite{Conlon,Conlon2}. 
Work within this program includes 
that by B\'{a}cskai~\cite{Bacskai}, 
Flannery~\cite{FlanneryMemoir}, 
H\"{o}fling~\cite{Hoefling}, 
Short~\cite{Short}, and Sim~\cite{Sim}.
Our paper is a development of \cite{Bacskai}.
We emphasize again the scale of all these classifications: 
they are complete and irredundant up to conjugacy
in the relevant $\GL(n,\F)$, 
with representatives given by generating sets of matrices.

One motivation for the Kov\'{a}cs program has origins
 in computational group theory. 
A certain maximal subgroups algorithm proposed by 
Kov\'{a}cs, Neub\"{u}ser, and Newman requires
lists of irreducible linear groups 
over finite fields, based on an equivalence 
with primitive permutation groups 
(see \cite[pp.~2--4]{Short}, and 
 \cite{Couttsetal,EickHoefling} for 
later progress in this direction). 
Classifications of finite linear groups over
$\C$ also serve as a resource for 
classifying linear groups over finite 
fields (see the use of \cite{Flannery2} 
in \cite{Flannery4FiniteFields}).

A comparable classification, of finite non-solvable 
irreducible monomial subgroups of $\SL(p,\C)$, was
achieved by Dixon and Zalesskii~\cite{DZII}. 
While our work inevitably has overlaps 
with \cite{DZII}, there are significant differences.
First, a classification up to conjugacy
in $\GL(p,\C)$ is remote from an 
analogous classification in $\SL(p,\C)$; 
the restriction to $\SL(p,\C)$ affords various 
simplifications that are not applicable in the full 
general linear group.  Moreover, we 
have completely classified the solvable finite 
irreducible monomial subgroups of $\GL(p,\C)$ for 
all $p$---a notable accomplishment in its own right. 
The non-solvable case lacks a complete solution for 
arbitrary $p$, as we explain in Section~\ref{Overview}.  

Classifications such as the ones in this paper
are dense with intricacies that may increase
the likelihood of error. To address 
this issue, we have made the classifications 
publicly available as part of the computer 
algebra system {\sc Magma}~\cite{Magma};
they can be incorporated into other systems. 
Output is a list of groups over an algebraic 
number field prescribed by an input bound on group 
order. Measures to verify correctness
are discussed in Section~\ref{Implementation}.

Initially we deal with solvable monomial subgroups 
of $\GL(p,\C)$. 
The treatment is then widened to non-solvable groups, 
culminating in a classification of the finite irreducible 
monomial subgroups of $\GL(p,\C)$ for $p\leq 11$.
Additionally, for $p\leq 3$, we classify 
all finite irreducible subgroups of 
$\GL(p,\C)$. To classify finite non-solvable 
primitive subgroups of $\GL(p,\C)$ for $p>3$, 
one might utilize the description in \cite{DZI} of  
the finite primitive subgroups of 
$\SL(p,\C)$ up to isomorphism. 

As noted above, our development 
of the classification and its exposition 
follow \cite{Bacskai}.   
Recently, the second and third authors
 revisited the topic, with a long-held aim of making 
these results accessible to the wider research 
community. We discovered errors in \cite{Bacskai} 
which impact on the correctness of 
that work. Consequently, we prepared a new
self-contained account that resolves these 
errors, and, for the first time, provides the
classification in a format suitable for 
further computation.

\section{Preliminaries}

Unless stated otherwise, $\F$ denotes 
an arbitrary field.
The group $\Mo(n,\F)$ of all monomial matrices in 
$\GL(n,\F)$ splits over the subgroup $\mathrm{D}(n,\F)$ 
of diagonal matrices:
$\Mo(n,\F) = \Di(n,\F) \rtimes \mathrm{P}(n)$, where 
$\mathrm{P}(n)$ is the group of 
permutation matrices. We identify $\mathrm{P}(n)$ with 
$\mathrm{Sym}(n)$; say, under the isomorphism that maps 
$\alpha \in \mathrm{Sym}(n)$ 
to $[\delta_{i\alpha, j}]_{i,j}\in \mathrm{P}(n)$ 
where $\delta$ is the Kronecker delta.
Each $G\leq \mathrm{M}(n,\F)$ is an 
extension of its \emph{diagonal subgroup} 
$G\cap \allowbreak \mathrm{D}(n,\F)$ by its 
\emph{permutation part}
$\mathrm{D}(n,\F) G \cap \allowbreak
\mathrm{P}(n)$.
\begin{definition}
Let $\phi$ be the natural surjection 
$\mathrm{M}(n,\F)\rightarrow \mathrm{Sym}(n)$ 
defined by $\phi \colon dt \mapsto t$ for 
$d\in \allowbreak \mathrm{D}(n,\F)$ and 
$t\in \mathrm{P}(n)$.
\end{definition} 
Note that $G$ has permutation part $\phi(G)$.
We speak of the diagonal subgroup $G\cap \ker \phi$
as a \emph{$\phi(G)$-module}; 
$x\in \mathrm{M}(n,\F)$ acts by conjugation on 
$\mathrm{D}(n,\F)$ as $\phi(x)$ does, permuting 
diagonal entries as $\phi(x)$ permutes 
$\{ 1, \ldots , n\}$.
\begin{lemma}
If $G\leq \mathrm{M}(n,\F)$ is irreducible then 
$\phi(G)$ is transitive.
\end{lemma}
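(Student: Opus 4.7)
The plan is to prove the contrapositive: assuming $\phi(G)$ is intransitive on $\{1,\ldots,n\}$, I will exhibit a proper nonzero $G$-invariant subspace of the natural module $\F^n$, contradicting the irreducibility of $G$. Let $e_1,\ldots,e_n$ denote the standard basis of $\F^n$.

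First I would pick an orbit $\Omega$ of $\phi(G)$ on $\{1,\ldots,n\}$ with $\emptyset \neq \Omega \subsetneq \{1,\ldots,n\}$; such an $\Omega$ exists precisely because $\phi(G)$ is intransitive. Set $V_\Omega := \langle e_i : i \in \Omega\rangle_\F$. By choice of $\Omega$, the subspace $V_\Omega$ is nonzero and proper in $\F^n$.

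The key step is to check that $V_\Omega$ is stable under $G$. Using the splitting $\Mo(n,\F) = \Di(n,\F) \rtimes \mathrm{P}(n)$, any $g \in G$ factors uniquely as $g = dt$ with $d \in \Di(n,\F)$ and $t \in \mathrm{P}(n)$, where $\phi(g) = t$. Under the identification of $\mathrm{P}(n)$ with $\Sym(n)$ given in the preliminaries, $t$ acts on the standard basis by permuting indices according to $\phi(g)$ (up to the fixed sign convention of the isomorphism, which is immaterial here since a permutation and its inverse have the same orbits). Hence, for $i \in \Omega$, the vector $te_i$ equals $e_{i'}$ for some $i'$ in the $\phi(G)$-orbit of $i$, so $i' \in \Omega$. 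Applying $d$ then scales $e_{i'}$, giving $ge_i \in V_\Omega$. Therefore $V_\Omega$ is $G$-invariant, contradicting irreducibility.

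There is no serious obstacle in this argument; the only bookkeeping point is tracking how $\mathrm{P}(n)$ acts on the standard basis under the chosen identification with $\Sym(n)$, and observing that this action respects $\phi(G)$-orbits. Both points are immediate from the definitions recorded in the excerpt.
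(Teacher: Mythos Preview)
Your proof is correct and is the standard argument for this well-known fact. The paper states this lemma without proof, so there is nothing to compare against; your contrapositive argument via the coordinate subspace supported on an intransitivity block is exactly what one would expect.
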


So we must first solve a classification problem for  
permutation groups: classify the transitive 
groups of prime degree.
\begin{notation}
\emph{$\widetilde{\Mo}(n,\F)$ denotes the
group of all $n\times n$ monomial matrices 
over the roots of unity in $\F$. }
\end{notation}

\begin{lemma}\label{EntriesAreTorsion}
If $G\leq \mathrm{M}(n,\F)$ is finite and 
$\phi(G)$ is transitive then $G$ is 
$\Mo(n,\F)$-conjugate to a subgroup 
of $\widetilde{\Mo}(n,\F)$.
\end{lemma}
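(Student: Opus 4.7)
The plan is to produce a diagonal conjugator $c$ which rescales the standard basis so that after conjugation by $c$ every entry of every element of $G$ is a root of unity. The key observation is that the stabilizer in $G$ of the coordinate line $\F e_1$ already acts on $\F e_1$ by a finite-order character, and once $e_1$ is rescaled correctly the images under coset representatives propagate this property to every coordinate line.

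In detail, first set $H_1 := \{\, g \in G : \phi(g)(1) = 1\,\}$, the stabilizer in $G$ of $\F e_1$. Since $H_1$ is finite and acts on $\F e_1$, the map $\chi \colon H_1 \to \F^\ast$ defined by $he_1 = \chi(h) e_1$ is a character whose image lies in the roots of unity of $\F$. Next, using transitivity of $\phi(G)$ provided by the preceding lemma, pick $x_1, \ldots, x_n \in G$ with $x_1 = 1$ and $\phi(x_i)(1) = i$, and write $x_i e_1 = \mu_i e_i$ with $\mu_i \in \F^\ast$ and $\mu_1 = 1$. Then set $c := \mathrm{diag}(\mu_1, \ldots, \mu_n) \in \Di(n,\F)$.

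The claim to verify is that $c^{-1} G c \leq \widetilde{\Mo}(n,\F)$. Given $g \in G$, for each $j$ there is a unique $k$ with $\phi(g)(j) = k$, and a unique $\lambda \in \F^\ast$ with $ge_j = \lambda e_k$; a direct computation gives $(c^{-1} g c)e_j = \mu_j \lambda \mu_k^{-1} e_k$, so the whole task reduces to showing that the scalar $\mu_j \lambda \mu_k^{-1}$ is a root of unity. Setting $h := x_k^{-1} g x_j$, the equations $\phi(x_j)(1) = j$, $\phi(g)(j) = k$, $\phi(x_k^{-1})(k) = 1$ yield $\phi(h)(1) = 1$, so $h \in H_1$; and tracing the action on $e_1$ gives $h e_1 = x_k^{-1} g (\mu_j e_j) = \mu_j \lambda \, x_k^{-1} e_k = \mu_j \lambda \mu_k^{-1} e_1$, so $\mu_j \lambda \mu_k^{-1} = \chi(h)$ lies in the roots of unity.

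The only delicate point is this last identification $\mu_j \lambda \mu_k^{-1} = \chi(x_k^{-1} g x_j)$; the rest is bookkeeping. Note that the conjugator $c$ is chosen in $\Di(n,\F) \subseteq \Mo(n,\F)$, so the conclusion as stated follows. In particular, because $\chi$ and $D := G \cap \Di(n,\F)$ are both finite and the coset-representative construction covers all of $G = \bigsqcup_i x_i H_1$, no further reduction or separate treatment of the diagonal subgroup is needed.
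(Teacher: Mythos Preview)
Your proof is correct and follows essentially the same approach as the paper's: both conjugate by the diagonal matrix whose columns are the vectors $x_i e_1 = \mu_i e_i$ in the orbit $Ge_1$, and both deduce that the resulting entries are roots of unity because they arise from the action of a finite group on $\F e_1$. The paper's version is simply more compressed, phrasing the argument in terms of the orbit $Ge_1$ directly rather than unpacking the stabilizer character $\chi$ and coset representatives, but the content is the same.
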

\begin{proof}
Let $e_k\in \F^n$ be the vector with $1$ in position 
$k$ and $0$s elsewhere. The orbit $Ge_1$ contains a basis 
$\{b_1,\ldots, b_n\}$ of $\F^n$. For each $g\in G$ and 
$i$, we have $gb_i = \lambda b_j$ for some $j$ and
$\lambda\in \F^\times$. Since $\lambda e_1\in Ge_1$, 
the scalar $\lambda$ has finite order. Thus, if 
$b\in \Mo(n,\F)$ is the matrix with $k$th column $b_k$, 
then $G^b\leq \widetilde{\Mo}(n,\F)$.
\end{proof}

In view of Lemma~\ref{EntriesAreTorsion}, we classify
subgroups of $\widetilde{\mathrm{M}}(p,\C)$. 
That is, listed groups will be given
by generating sets of monomial matrices,  and 
the non-zero entries of each generator are roots of unity.

The key steps in our approach are as follows.
\begin{itemize}
\item[(1)] 
Classify the transitive $T\leq \mathrm{Sym}(p)$ 
up to conjugacy.
\vspace{2pt}
\item[(2)] For each permutation part $T$, list the 
candidate diagonal subgroups $A$, i.e.,
the finite $T$-submodules $A$ of $\Di(p,\F)$.
\vspace{2pt}
\item[(3)] For pairs $(T,A)$
drawn from (1) and (2), solve the extension problem  
in $\mathrm{M}(p,\F)$ up to $\GL(p,\F)$-conjugacy,  
ensuring that each retained extension 
of $A$ by $T$ is irreducible.
\vspace{2pt}
\item[(4)] Eliminate $\GL(p,\F)$-conjugacy among all 
subgroups of $\mathrm{M}(p,\F)$ found in (3).
\end{itemize}

\noindent
These steps are carried out in 
Section~\ref{TransitiveSubgroupsOfSymp};
Section~\ref{SubgroupsOfDpC};
Sections~\ref{IrreducibilityCriteria} and
 \ref{MonomialGroupsWithCp}--\ref{DegreesGreaterThan5};
and Sections~\ref{ConjugacyResults} and
\ref{MonomialGroupsWithCp}--\ref{DegreesGreaterThan5}, 
respectively.

 The final lists are complete (every 
finite irreducible monomial subgroup of $\GL(p,\F)$ 
is represented) and irredundant 
(no $\GL(p,\F)$-conjugacy class is represented 
more than once).  
We justify completeness 
by proving that step~(4)
does not remove any conjugacy class. 
Most of the taxonomic
complication arises in step~(2),
especially when $T$ is solvable.

\subsection{Transitive subgroups of 
 $\mathrm{Sym}(p)$}
\label{TransitiveSubgroupsOfSymp}

Permutation groups of prime degree have been 
studied from the time of Galois.
We recap some of the essential theory.

A transitive group $T\leq \mathrm{Sym}(p)$ has a unique
simple normal transitive subgroup $U$. The quotient 
$N_{\mathrm{Sym}(p)}(U)/U$ is cyclic of order dividing $p-1$.
Thus $T$ is solvable if and only if $U$ is solvable, i.e.,
$|U|=p$.
\begin{notation}\label{stuNotation}
\emph{Throughout, $s=
(1,2,\ldots, p)\in \mathrm{Sym}(p)$. 
Let $t \in \allowbreak \mathrm{Sym}(p)$ be defined by 
$i\mapsto iu \bmod p$ where $u$ is the least 
primitive element modulo $p$ in $\{ 1, \ldots, p-1\}$. 
So $|s|=p$, $|t|=p-1$, and $s^t = s^u$.}
\end{notation}

Since $T$ contains a conjugate of $s$ in $\mathrm{Sym}(p)$, 
we assume that $s\in T$ henceforth.
\begin{lemma}\label{TransitivePrimeDegreePermGroups}
If $T$ is solvable then $T\leq N_{\Sym(p)}(\langle s\rangle)=
\langle s, t \rangle \cong C_p\rtimes C_{p-1}$. 
\end{lemma}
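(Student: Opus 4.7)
The plan is to combine two independent facts: first, identify the unique minimal transitive normal subgroup of $T$ as $\langle s \rangle$, and second, compute the normalizer of $\langle s \rangle$ in $\Sym(p)$ explicitly.

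First I would invoke the structural statement preceding the lemma: $T$ has a unique simple normal transitive subgroup $U$, and because $T$ is solvable, $U$ is solvable, forcing $|U| = p$. Since $p$ is prime and $U$ is transitive of degree $p$, $U$ is generated by a $p$-cycle, hence $U$ is a Sylow $p$-subgroup of $\Sym(p)$. Because $U \trianglelefteq T$ and (trivially) $p^2 \nmid |\Sym(p)|$, $U$ is in fact the unique Sylow $p$-subgroup of $T$. Since $s \in T$ by the convention adopted in Notation~\ref{stuNotation}, we conclude $U = \langle s \rangle$, and therefore $T \leq N_{\Sym(p)}(U) = N_{\Sym(p)}(\langle s \rangle)$.

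Next I would identify $N_{\Sym(p)}(\langle s \rangle)$. The centralizer of the $p$-cycle $s$ in $\Sym(p)$ is $\langle s \rangle$ itself (a standard fact from the cycle structure of centralizers in $\Sym(p)$). The $N/C$ theorem then gives an embedding $N_{\Sym(p)}(\langle s \rangle)/\langle s \rangle \hookrightarrow \Aut(\langle s \rangle) \cong C_{p-1}$. The element $t$ defined in Notation~\ref{stuNotation} satisfies $s^t = s^u$ with $u$ a primitive root modulo $p$, so conjugation by $t$ induces a generator of $\Aut(\langle s \rangle)$. Hence the above embedding is surjective, and $N_{\Sym(p)}(\langle s \rangle) = \langle s, t\rangle$, which has order $p(p-1)$ and structure $C_p \rtimes C_{p-1}$ as claimed.

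There is no real obstacle: the argument is a direct application of the preceding paragraph together with standard facts about centralizers of $p$-cycles and automorphisms of $C_p$. The only point demanding a moment of care is verifying that $\langle s \rangle$ (rather than some conjugate) is the unique minimal normal transitive subgroup of $T$, which follows because $\langle s \rangle \leq T$ is forced to coincide with the unique Sylow $p$-subgroup $U$ of $T$.
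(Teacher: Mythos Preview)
Your proof is correct and follows exactly the line the paper intends: the paper states this lemma without proof, relying on the preceding paragraph (the unique simple normal transitive subgroup $U$ has order $p$ when $T$ is solvable) together with the convention $s\in T$, and your argument simply fills in those details via the Sylow/$N\!/\!C$ reasoning.
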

A solvable irreducible monomial subgroup of 
$\mathrm{GL}(p,\F)$ is therefore conjugate to a 
subgroup of 
$\mathrm{M}(p,\F)$ with permutation part 
$\langle s, t^a\rangle$ where $a\, | \, (p-1)$.
On the other hand, $\langle s\rangle$ is 
not normal in non-solvable $T$.
\begin{proposition}\label{FitTransitiveDegreep}
A non-trivial normal subgroup of $T$ is transitive.
Hence the Fitting subgroup $\mathrm{Fit}(T)$ is non-trivial 
if and only if $T$ is solvable, in which case 
$|\mathrm{Fit}(T)|=p$. 
\end{proposition}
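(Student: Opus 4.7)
The plan is to begin with the first assertion, since both remaining claims reduce to it. Let $N$ be a non-trivial normal subgroup of a transitive $T \leq \mathrm{Sym}(p)$. The orbits of $N$ on $\{1,\ldots,p\}$ are permuted by $T$, and since $T$ is transitive, all $N$-orbits have the same size $d$, which must divide $p$. Thus $d=1$ or $d=p$; but $d=1$ forces $N=1$, so $d=p$ and $N$ is transitive. This is the workhorse that I will use repeatedly.

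For the second claim, one direction is immediate: if $T$ is solvable, then by Lemma~\ref{TransitivePrimeDegreePermGroups}, $\langle s\rangle$ is a non-trivial nilpotent normal subgroup of $T$, so $\mathrm{Fit}(T)\supseteq \langle s\rangle \neq 1$. The converse is where I must do a little work. Assume $\mathrm{Fit}(T)\neq 1$. By the first part, $\mathrm{Fit}(T)$ is transitive, so $p \mid |\mathrm{Fit}(T)|$. Since $\mathrm{Fit}(T)$ is nilpotent, each of its Sylow subgroups is characteristic in $\mathrm{Fit}(T)$, hence normal in $T$. Applying the first part again to a Sylow $q$-subgroup $Q$ of $\mathrm{Fit}(T)$ for a prime $q\neq p$: if $Q\neq 1$ then $Q$ is transitive of degree $p$, forcing $p \mid |Q|$, which is absurd. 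Hence $\mathrm{Fit}(T)$ is a non-trivial $p$-subgroup of $\mathrm{Sym}(p)$, and since the Sylow $p$-subgroups of $\mathrm{Sym}(p)$ have order $p$, we conclude $|\mathrm{Fit}(T)|=p$.

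Finally, $\mathrm{Fit}(T)$ is then a normal subgroup of $T$ of order $p$ containing (after conjugation, or because $s\in T$ already lies in this unique Sylow) $\langle s\rangle$, so $\mathrm{Fit}(T)=\langle s\rangle$. Therefore $T \leq N_{\mathrm{Sym}(p)}(\langle s\rangle) = \langle s,t\rangle$, which is solvable; this yields the remaining implication and simultaneously confirms $|\mathrm{Fit}(T)|=p$. The only step with any subtlety is ruling out Sylow subgroups of $\mathrm{Fit}(T)$ at primes other than $p$, and the trick is precisely that nilpotence forces these to be characteristic in $\mathrm{Fit}(T)$ (hence normal in $T$), so the transitivity observation from the first paragraph applies.
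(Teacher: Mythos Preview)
Your proof is correct. The paper itself states this proposition without proof, treating it as well known, so there is no argument to compare against; your approach via the standard block-system observation (orbits of a normal subgroup form blocks, hence have common size dividing $p$) together with the fact that a nilpotent group has characteristic Sylow subgroups is exactly the natural route.

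One small stylistic remark: in your final paragraph you could streamline the identification $\mathrm{Fit}(T)=\langle s\rangle$ by noting directly that once $\mathrm{Fit}(T)$ is a normal subgroup of order $p$ it is the unique Sylow $p$-subgroup of $T$, and $\langle s\rangle$ (being a subgroup of order $p$ in $T$) must coincide with it. The parenthetical ``after conjugation'' is unnecessary and slightly muddies an otherwise clean argument.
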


Table \ref{table-nmr-reps} 
(extracted from \cite[Table~1]{Neumann};
see also \cite[\S \! 1.1]{DZII})
displays facts about all non-solvable $U$.
\begin{table}[h]
\begin{center}
\begin{tabular}{|c|c|c|c|}\hline
 $U$ & $N_{\mathrm{Sym}(p)}(U)$ & \mbox{Degree} & 
\mbox{\# rep.s} \\
\hline\hline
$\mathrm{Alt}(p)$ &  $\mathrm{Sym}(p)$ & $p\geq 7$ & 1 \\
\hline
$\SL(d,q)$ &  
$\Sigma \mathrm{L}(d,q)$ 
& $p=\frac{q^d-1}{q-1}$ & 
$\begin{array}{c}
1 \, (d=2)\\  [-.5mm]
2 \, (d \geq 3) 
\end{array}$ \\
\hline
$M_p$ &  $M_p$ & $p=11$, $23$ & $1$ \\ \hline
$\PSL(2,11)$ &  $\PSL(2,11)$ & $p=11$ & $2$\\ \hline
\end{tabular}
 \end{center}
\medskip
\caption{Non-abelian simple transitive 
permutation groups of degree $p$}
\label{table-nmr-reps}
\end{table}

\noindent 
The fourth column states the number of 
inequivalent faithful representations of $U$ in
$\mathrm{Sym}(p)$; $M_{11}$ and $M_{23}$ are 
Mathieu groups; $d$ is prime and 
$\mathrm{gcd}(d,q-1)=1$, so $\SL(d,q)\cong \PSL(d,q)$. 
The normalizer $\Sigma \mathrm{L}(d,q)$ is 
$\SL(d,q)\rtimes \mathrm{Aut}(\F_q)$ where $\F_q$ 
denotes the field of size $q$.
Of course, $\mathrm{Alt}(5)$ appears as $U$ twice.

Observe that $\mathrm{Sym}(p)$ always has 
transitive subgroups of the following kinds:
the solvable ones, $\mathrm{Alt}(p)$, 
and $\mathrm{Sym}(p)$; these we 
call \emph{compulsory}.
All but three non-compulsory transitive 
permutation groups of degree $p$ belong to 
the `projective' family (i.e., with $U$ as in row~2 of 
Table~\ref{table-nmr-reps}). 
Bateman and Stemmler~\cite[Theorem~4]{BS} show that for 
large $n$ there are at most $50 \sqrt{n} /(\log n)^{2}$ 
primes of this form not exceeding $n$. So
there are infinitely many `non-projective' primes.

The next fact does not seem to be widely known. 
\begin{theorem}
\label{IsoQDegreep}
Transitive subgroups of $\mathrm{Sym}(p)$ 
are conjugate if and 
only if they are isomorphic.
\end{theorem}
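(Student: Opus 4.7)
One direction is immediate, so I focus on the forward direction: showing isomorphic transitive subgroups $T_1, T_2 \leq \Sym(p)$ are $\Sym(p)$-conjugate. My approach is first to extract a canonical small normal subgroup $U_i \trianglelefteq T_i$, and then to prove (a) that every copy of the isomorphism type of $U_i$ in $\Sym(p)$ is $\Sym(p)$-conjugate to any other, and (b) that once $U_1 = U_2 = U$, the overgroups of $U$ in $N_{\Sym(p)}(U)$ are determined up to equality by their order.

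For (b), the remarks opening Subsection~\ref{TransitiveSubgroupsOfSymp} tell us that $N := N_{\Sym(p)}(U)$ has cyclic quotient $N/U$, so the overgroups of $U$ in $N$ form a chain ordered by index. For the reduction to this setting, let $U_i$ be the simple normal transitive subgroup of $T_i$: $\langle s_i\rangle \cong C_p$ when $T_i$ is solvable (Proposition~\ref{FitTransitiveDegreep}), and one of the simple groups in Table~\ref{table-nmr-reps} otherwise. Since every non-trivial normal subgroup of $T_i$ is transitive, $U_i$ is the unique minimal normal subgroup of $T_i$ and hence characteristic; any isomorphism $T_1 \cong T_2$ therefore restricts to $U_1 \cong U_2$. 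Granting (a), I conjugate $T_1$ by a $\sigma$ with $\sigma U_1 \sigma^{-1} = U_2 =: U$, placing both $T_i$ inside $N$. Then (b) together with the equality $|T_1| = |T_2|$ forced by $T_1 \cong T_2$ gives $T_1 = T_2$.

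Fact (a) is therefore the crux. For $U \cong C_p$ it reduces to Sylow's theorem, and for $U = \Alt(p)$ to the uniqueness of the index-$2$ subgroup of $\Sym(p)$. For the remaining non-abelian simple $U$ from Table~\ref{table-nmr-reps}, the $\Sym(p)$-conjugacy classes of embeddings of $U$ biject naturally with the $\Aut(U)$-orbits on the $U$-conjugacy classes of index-$p$ subgroups of $U$ (i.e., with point stabilizers in faithful transitive degree-$p$ actions, up to $\Aut(U)$). When the fourth column of the table records a single representation, (a) is automatic, and I expect the main obstacle to concentrate in the two rows where the count is two. For $\SL(d,q)$ with $d \geq 3$, the two $U$-classes are the stabilizers of points and of hyperplanes in $\F_q^d$, swapped by the graph automorphism $g \mapsto (g^{-1})^{\mathrm{T}} \in \Aut(U)$; for $\PSL(2,11)$ acting on $11$ points, the two classes of $\Alt(5)$-subgroups are fused by the outer automorphism residing in $\PGL(2,11) = \Aut(\PSL(2,11))$. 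Both identifications are standard consequences of the subgroup theory of the simple groups in question, so while they require some care they should not derail the argument.
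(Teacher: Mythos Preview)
Your proposal is correct and follows essentially the same route as the paper: reduce to the unique simple normal transitive subgroup $U$, show all copies of $U$ in $\Sym(p)$ are conjugate, then use cyclicity of $N_{\Sym(p)}(U)/U$ to finish. The one noteworthy difference is in the $\PSL(2,11)$ case: you assert that the outer automorphism in $\PGL(2,11)$ fuses the two $\Alt(5)$-classes as a ``standard consequence'', whereas the paper gives a short self-contained argument---since $\theta(U)$ is self-normalizing in $\Sym(11)$, the stabilizer in $\Aut(U)$ of any equivalence class $[\theta]$ is exactly $\mathrm{Inn}(U)$, so the $\Aut(U)$-orbit has size $|\mathrm{Out}(U)|=2$ and must exhaust both representations. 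This avoids invoking the explicit subgroup structure of $\PSL(2,11)$ and is worth internalizing.
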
 
\begin{proof}
If $U$ is $C_p$, $\mathrm{Alt}(p)$, $\SL(2,q)$, or
$M_{p}$, then there is only one faithful 
permutation representation of $U$ of degree $p$ 
up to equivalence, and hence a single conjugacy 
class of groups in $\mathrm{Sym}(p)$
isomorphic to $U$.

Suppose that $U\cong \PSL(2,11)$ and $\theta$ is 
a transitive embedding of $U$ in 
$\mathrm{Sym}(p)$ inequivalent to $\mathrm{id}_U$. 
If $\alpha \in \mathrm{Aut}(U)$ and 
$\theta\alpha(u)^a = \theta(u)$
for some $a\in \mathrm{Sym}(p)$ and all 
$u\in U$, then $a\in \theta(U)$ because
$\theta(U)$ is self-normalizing. Since 
$\mathrm{Out}(U)\neq 1$, there will be an $\alpha$ 
such that $\mathrm{id}_U$ and $\theta\alpha$ are 
equivalent; whence $U$ is conjugate to $\theta(U)$.

If $U\cong \SL(d,q)$ for $d\geq 3$ then  
the graph (inverse transpose) 
automorphism of $U$ swaps the two inequivalent 
transitive representations of $U$ in 
$\mathrm{Sym}(p)$~\cite[p.~523]{Neumann}.

Now let $S$ and $\hat{S}$ be isomorphic transitive
subgroups of $\mathrm{Sym}(p)$, with simple normal 
transitive subgroups $U$, $\hat{U}$ respectively.
By the above, $U^w = \hat{U}$ for some 
$w\in \mathrm{Sym}(p)$. Thus $S^w$ normalizes $\hat{U}$.
Since $N_{\mathrm{Sym}(p)}(\hat{U})/\hat{U}$ 
is cyclic, $S^w = \hat{S}$.
\end{proof}

\subsection{Conjugacy}
\label{ConjugacyResults}

Already we witness a division of the classification 
into mutually disjoint families: 
groups with permutation parts that are not 
conjugate in $\mathrm{Sym}(p)$ are not 
$\mathrm{M}(p,\F)$-conjugate. 
Such groups cannot even be isomorphic. 
\begin{theorem}
\label{GLConjugateImpliesSamePP}
Suppose that $G, H\leq \Mo(p,\F)$ are isomorphic, 
with $\phi(G)$ and $\phi(H)$ transitive. Then $\phi(G)$ 
and $\phi(H)$ are $\mathrm{Sym}(p)$-conjugate.
If, furthermore, $G$ is non-solvable, then 
$\Di(p,\F)\cap G$ maps onto $\Di(p,\F)\cap H$ under 
any isomorphism $G\rightarrow H$.
\end{theorem}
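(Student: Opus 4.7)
The plan is to show that the Fitting subgroup $\mathrm{Fit}(G)$ is a characteristic subgroup close to the diagonal subgroup $D_G := G\cap \Di(p,\F)$, and thereby recover $\phi(G)$ as an (almost) canonical quotient of $G$. Since $D_G$ is abelian and normal, $D_G\leq \mathrm{Fit}(G)$; moreover $\mathrm{Fit}(G)/D_G$ is a nilpotent normal subgroup of $G/D_G=\phi(G)$, so it lies in $\mathrm{Fit}(\phi(G))$. By Proposition~\ref{FitTransitiveDegreep}, this latter subgroup has order $p$ when $\phi(G)$ is solvable and is trivial otherwise. Hence $\mathrm{Fit}(G)$ is either $D_G$ or a degree-$p$ extension of $D_G$, and only $D_G$ occurs when $\phi(G)$ is non-solvable.

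For the non-solvable case, I would note that isomorphism preserves solvability (and since $D_H$ is abelian, $H$ is solvable iff $\phi(H)$ is), so $\phi(H)$ is also non-solvable. Any isomorphism $\psi\colon G\to H$ sends $\mathrm{Fit}(G)$ to $\mathrm{Fit}(H)$, and because both coincide with the respective diagonal subgroups, this gives $\psi(D_G)=D_H$, settling the second assertion. Passing to the induced isomorphism on quotients yields $\phi(G)\cong \phi(H)$.

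The remaining task, namely the first assertion in the solvable case, I would handle with a short case analysis. By Lemma~\ref{TransitivePrimeDegreePermGroups}, $\phi(G)\cong C_p\rtimes C_{m_G}$ and $\phi(H)\cong C_p\rtimes C_{m_H}$ for some $m_G,m_H\mid p-1$. The quotient $G/\mathrm{Fit}(G)$ has order $pm_G$ when $\mathrm{Fit}(G)=D_G$, and order $m_G$ when instead $\mathrm{Fit}(G)=\phi^{-1}(\mathrm{Fit}(\phi(G)))\cap G$; similarly for $H$. Since $\mathrm{Fit}$ is characteristic, $|G/\mathrm{Fit}(G)|=|H/\mathrm{Fit}(H)|$, and the crude bound $m_G,m_H\leq p-1<p$ rules out the mixed possibility. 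Thus $m_G=m_H$, whence $\phi(G)\cong \phi(H)$.

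In every case $\phi(G)\cong \phi(H)$, so Theorem~\ref{IsoQDegreep} delivers the required $\Sym(p)$-conjugacy. The main obstacle is the solvable case, where $D_G$ need not be characteristic in $G$; however, the inequality $m<p$ keeps the two possibilities for $|G/\mathrm{Fit}(G)|$ in disjoint ranges, so the required comparison reduces to inspecting orders.
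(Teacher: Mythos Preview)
Your proof is correct and follows essentially the same route as the paper, which merely indicates that the result follows ``with a little effort'' from Lemma~\ref{TransitivePrimeDegreePermGroups}, Proposition~\ref{FitTransitiveDegreep}, and Theorem~\ref{IsoQDegreep}. Your Fitting-subgroup argument is precisely the intended effort: Proposition~\ref{FitTransitiveDegreep} controls $\mathrm{Fit}(G)/D_G$, Lemma~\ref{TransitivePrimeDegreePermGroups} supplies the structure $C_p\rtimes C_m$ in the solvable case, and Theorem~\ref{IsoQDegreep} converts the isomorphism $\phi(G)\cong\phi(H)$ into $\Sym(p)$-conjugacy.
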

\begin{proof}
The theorem follows (with a little effort)
from Lemma~\ref{TransitivePrimeDegreePermGroups}, 
Proposition~\ref{FitTransitiveDegreep}, and
Theorem~\ref{IsoQDegreep}.
\end{proof}

Theorem~\ref{GLConjugateImpliesSamePP} is vital
in our solution of the conjugacy problem: groups with 
different permutation parts are not conjugate, and 
conjugacy that does not respect diagonal subgroups 
can only occur between (irreducible) solvable groups.
\begin{theorem}
\label{Protoconjugacy}
Let $G$ be an irreducible subgroup of 
$\widetilde{\mathrm{M}}(p,\F)$ with 
non-scalar diagonal subgroup $A$. Suppose that 
$G^w \leq \widetilde{\mathrm{M}}(p,\F)$ 
and $A^w \leq \mathrm{D}(p,\F)$ for 
some $w\in \GL(p,\F)$. 
Then $w$ is monomial; furthermore, 
$w\in \widetilde{\Mo}(p,\F)$ up to scalars if
$\F$ is algebraically closed.
\end{theorem}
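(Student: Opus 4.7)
My plan is to apply Clifford theory to pin the $A$-isotypic decomposition of $\F^p$ to the coordinate lines, then read off the monomial structure of $w$ from the fact that $w$ must map $A^w$-isotypic components onto $A$-isotypic components. The second assertion will then follow from a short calculation leveraging the transitivity of $\phi(G)$.

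Decompose $\F^p = \bigoplus_{i=1}^p \F e_i$ with $A$ acting on $\F e_i$ through a character $\chi_i\colon A \to \F^\times$. Since $A = G\cap \Di(p,\F) \trianglelefteq G$ and $G$ is irreducible on $\F^p$, Clifford's theorem says that $G$ permutes the $A$-isotypic components of $\F^p$ transitively. These components therefore share a common dimension $k$ with $km = p$, where $m$ is their number. Primality forces $k \in \{1,p\}$, and $k = p$ would collapse all $\chi_i$ to a single character and force $A$ to be scalar. Hence $k = 1$, the $\chi_i$ are pairwise distinct, and the $A$-isotypic components are exactly the coordinate lines $\F e_i$. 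The same Clifford analysis applied to the pair $(G^w, A^w)$---which is still irreducible in $\GL(p,\F)$ with a non-scalar normal diagonal subgroup---shows that the $A^w$-isotypic components of $\F^p$ are again the coordinate lines. The identity $a(wv) = w(a^w v)$ for $a \in A$ and $v \in \F^p$ now shows that $v \mapsto wv$ carries $A^w$-isotypic lines to $A$-isotypic lines; since both sets equal $\{\F e_1,\ldots,\F e_p\}$, $w$ permutes the coordinate lines, i.e., $w$ is monomial.

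For the second assertion, assume $\F$ is algebraically closed and write $w = d\sigma$ with $d = \mathrm{diag}(\mu_1,\ldots,\mu_p) \in \Di(p,\F)$ and $\sigma$ a permutation matrix. For any $g = d'\tau \in G$, the expansion $g^w = \sigma^{-1}\bigl(d^{-1}d'\cdot \tau d\tau^{-1}\bigr)\tau\sigma$ shows that the non-zero entries of $g^w$ have the form $\mu_a^{-1}\mu_b \cdot \zeta$, where $\zeta$ is an entry of $d'$ (hence a root of unity) and the pair $(a,b)$ is determined by $\sigma$ and $\tau$. The hypothesis $g^w \in \widetilde{\Mo}(p,\F)$ therefore forces each such ratio $\mu_a/\mu_b$ to be a root of unity. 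Letting $g$ vary over $G$, the permutation parts $\tau$ range over the transitive group $\phi(G)$, so any two indices $i, j$ are connected by a chain of such relations; hence $\mu_i/\mu_j$ is a root of unity for all $i,j$. Setting $\lambda = \mu_1$ yields $\lambda^{-1}w \in \widetilde{\Mo}(p,\F)$, as desired.

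The crux of the argument is the Clifford step, where non-scalarness of $A$ combined with primality of $p$ forces the $p$ characters $\chi_i$ to be distinct. Once that is secured, the monomiality of $w$ is immediate, and the scalar cleanup in the algebraically closed case reduces to the transitivity of $\phi(G)$.
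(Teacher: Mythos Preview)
Your proof is correct. The first half---using Clifford's theorem to identify the $A$-isotypic components with the coordinate lines and then observing that $w$ must permute them---is essentially the paper's argument, although the paper applies Clifford only on the $A$ side (once $A^w$ is diagonal, each $\F e_i$ is trivially $A^w$-invariant, so $w(\F e_i)$ is an $A$-invariant line and hence some $\F e_j$).

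Your treatment of the second assertion, however, is genuinely different from the paper's. The paper reduces to diagonal $w$, observes that $b\mapsto [b,g]$ is an endomorphism of $\Di(p,\F)$, notes that $[w,g]\in\widetilde{\Mo}(p,\F)$ has finite order, and concludes $[w^n,g]=1$ for suitable $n$ and all $g$; Schur's Lemma then forces $w^n$ to be scalar, and one extracts an $n$th root to finish. Your direct entry-by-entry computation of $g^w$ combined with transitivity of $\phi(G)$ bypasses both Schur's Lemma and the root extraction. In fact, although you state the hypothesis, your argument never uses that $\F$ is algebraically closed: you produce an explicit scalar $\lambda=\mu_1\in\F^\times$ with $\lambda^{-1}w\in\widetilde{\Mo}(p,\F)$. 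So your route is slightly more elementary and yields a marginally stronger conclusion than the paper's proof.
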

\begin{proof}
By Clifford's Theorem, the $A$-submodules of $\F^n$ 
are exactly the $1$-dimensional subspaces $\F e_i$.
Then $we_i\in \F e_{j}$
for some $j$, because $A^w e_i\subseteq \F e_i$. 
Hence $w$ is monomial. We may
assume that $w$ is diagonal.  
Fix $g\in G$; the map  defined by $b\mapsto [b,g]$ 
is an endomorphism of $\Di(p,\F)$. 
Since $[w,g]\in  \widetilde{\Mo}(p,\F)$,
we have $[w^n,g] = [w,g]^n=1$ for some $n$ 
and all $g$. By Schur's Lemma, $w^n$ is scalar. 
Taking $n$th roots, we see that
a scalar multiple of $w$ has finite order.
\end{proof}

In other words, $\GL(p,\C)$-conjugacy 
that respects (non-scalar) diagonal subgroups is 
effected by a monomial matrix.
So our priority is to sort out the conjugacy 
classes of $\widetilde{\mathrm{M}}(p,\C)$. 
Moreover, Theorems~\ref{GLConjugateImpliesSamePP} 
and \ref{Protoconjugacy} are frequently used to 
show that $\GL(p,\C)$-conjugacy
among non-solvable groups is the same as
$\widetilde{\mathrm{M}}(p,\C)$-conjugacy.
\begin{remark}
 The \emph{isomorphism question} 
for a set $\mathcal S$ of subgroups of  
$\GL(n,\F)$ asks: if $G$, $H\in \mathcal S$ 
are (abstractly) isomorphic, are $G$ and $H$
linearly isomorphic ($\GL(n,\F)$-conjugate)? 
The answer is ``yes'' for
non-abelian finite $p$-subgroups  
of $\GL(p,\C)$ by \cite[Proposition~4.2]{Conlon},
but ``no'' more generally for 
finite irreducible subgroups of $\Mo(p,\C)$.
Cf.~Theorem~\ref{IsoQDegreep}. 
Corollary~\ref{SingleCClNonsolvable} 
gives another answer to the isomorphism 
 question. 
\end{remark}

\subsection{Irreducibility}
\label{IrreducibilityCriteria}

We prove the next theorem 
using Ito's result that the irreducible 
ordinary character degrees of a finite 
group divide the 
index of each abelian normal subgroup. 
\begin{theorem}
\label{NonScalarIffIrreducible}
Let $G$ be a finite subgroup of 
$\mathrm{M}(p,\C)$
such that $\phi(G)$ is transitive.
If $\Di(p, \C)\cap G$ 
is non-scalar, then $G$ is irreducible. 
Conversely, if $G$ is solvable irreducible, 
then $\Di(p,\C)\cap G$ is non-scalar. 
\end{theorem}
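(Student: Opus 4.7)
The proof splits into two directions that will be handled separately.

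For the forward direction (non-scalar $A := \Di(p,\C) \cap G$ implies $G$ irreducible), the plan is to exploit that $A$ is a finite abelian group of diagonal matrices. Decompose $\C^p = \bigoplus_\chi H_\chi$ into $A$-isotypic components indexed by characters $\chi$ of $A$; each $H_\chi$ is spanned by a subset of the standard basis $\{e_1, \ldots, e_p\}$, namely the $e_i$ on which $A$ acts by $\chi$. Since $G$ normalizes $A$, it permutes the $H_\chi$, and since $\phi(G)$ is transitive on $\{1,\ldots,p\}$, $G$ acts transitively on the nonzero isotypic components. Their common size must divide $p$; primality forces either a single component (which would make $A$ scalar, contrary to hypothesis) or $p$ components, each one-dimensional. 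Thus $H_\chi = \C e_i$ for some $i$. Any $G$-submodule is $A$-stable and hence a direct sum of $H_\chi$'s, i.e., of coordinate lines $\C e_i$; transitivity of $\phi(G)$ then forces the corresponding index set to be empty or all of $\{1,\ldots,p\}$, giving irreducibility.

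For the converse (solvable irreducible implies $A$ non-scalar), I would argue by contradiction using Ito's theorem. Assume $A$ is scalar, so $A \leq Z(G)$. By Proposition~\ref{FitTransitiveDegreep}, the Fitting subgroup of the solvable transitive group $\phi(G)$ has order $p$, generated by a $p$-cycle (which we may take to be $s$). Let $M = \phi^{-1}(\langle s\rangle) \cap G$, so that $M/A$ is cyclic of order $p$. Because $A$ lies in the center of $G$, the quotient $M/Z(M)$ is cyclic, hence $M$ is abelian; moreover $M$ is normal in $G$ since $\langle s \rangle \triangleleft \phi(G)$. Ito then forces the irreducible character degree $p$ to divide $[G:M] = [\phi(G) : \langle s\rangle]$, which divides $p-1$ by Lemma~\ref{TransitivePrimeDegreePermGroups}. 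This contradiction rules out $A$ being scalar.

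The main obstacle is really in the converse: one has to pick the right abelian normal subgroup to feed into Ito. The natural candidate $A$ itself is useless when $A$ is scalar, so the trick is to enlarge $A$ by pulling back the Fitting subgroup of $\phi(G)$ and then use centrality of $A$ to upgrade the resulting cyclic extension to an abelian one. The forward direction is conceptually straightforward once one notices that the isotypic decomposition of $A$ on $\C^p$ is compatible with the transitive action of $\phi(G)$ and that primality of $p$ leaves no intermediate possibilities.
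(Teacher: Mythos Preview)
Your proof is correct and follows essentially the same approach as the paper: the paper's proof is a one-line pointer to Ito's theorem, and you flesh this out exactly as intended, using Ito on the abelian normal subgroup $M=\phi^{-1}(\langle s\rangle)\cap G$ for the converse and a standard Clifford-type isotypic decomposition for the forward direction. The only thing the paper leaves implicit that you make explicit is the enlargement of $A$ to $M$ when $A$ is scalar, which is precisely the right move.
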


\begin{corollary}
If $G$ is a finite irreducible subgroup 
 of $\Mo(p,\C)$ then 
 $\Di(p,\C)\cap G$ is a maximal abelian 
normal subgroup of $G$. 
\end{corollary}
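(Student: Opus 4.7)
The plan is to show that any abelian normal subgroup $M$ of $G$ containing $A := \Di(p,\C)\cap G$ must equal $A$. I would split into the solvable and non-solvable cases, which are controlled by different earlier results in the excerpt.

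In the solvable case, the starting point is Theorem~\ref{NonScalarIffIrreducible}, which ensures that $A$ is non-scalar. Define the characters $\chi_i \colon A \to \C^\times$ by $\Diag(a_1,\ldots,a_p) \mapsto a_i$. The transitive group $\phi(G) \leq \Sym(p)$ permutes these characters via $\pi \cdot \chi_i = \chi_{\pi(i)}$, and hence preserves the equivalence relation $i \sim j \iff \chi_i = \chi_j$. Because $p$ is prime, any $\phi(G)$-invariant partition of $\{1,\ldots,p\}$ consists either of all singletons or of a single class; the latter would force $A$ to be scalar, which is now ruled out. So the $\chi_i$ are all distinct. For any $g \in M$, the hypothesis $A \leq M$ and abelianness of $M$ yield $gag^{-1} = a$ for all $a \in A$; writing $g = d\,\phi(g)$ with $d$ diagonal, this collapses to $\phi(g)$ fixing every $\chi_i$, whence $\phi(g) = 1$ and $g \in A$.

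For the non-solvable case, I would instead apply Proposition~\ref{FitTransitiveDegreep}: since $\phi(G)$ is non-solvable transitive of prime degree, $\mathrm{Fit}(\phi(G)) = 1$. Because $A$ is abelian normal in $G$, $A \leq \mathrm{Fit}(G)$, while $\mathrm{Fit}(G)/A$ is a nilpotent normal subgroup of $G/A \cong \phi(G)$ and so lies in $\mathrm{Fit}(\phi(G)) = 1$. Therefore $\mathrm{Fit}(G) = A$, and every abelian normal subgroup of $G$ is contained in $\mathrm{Fit}(G) = A$.

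The main obstacle is the solvable case. There $\mathrm{Fit}(\phi(G)) = \langle s \rangle$ is nontrivial, so the Fitting-quotient trick does not immediately confine $M$ inside $A$. One genuinely has to invoke irreducibility (via Theorem~\ref{NonScalarIffIrreducible}) to exclude the scalar alternative for $A$, and then use the prime-degree combinatorics (a transitive action on $p$ points admits no nontrivial block system) to force the characters $\chi_i$ to be pairwise distinct, from which the centralizer calculation finishes the argument.
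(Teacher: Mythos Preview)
Your proof is correct. Both cases go through as written: in the solvable case, non-scalarity of $A$ (via Theorem~\ref{NonScalarIffIrreducible}) forces the characters $\chi_i$ to be pairwise distinct, and any element of $M$ centralizing $A$ must then be diagonal; in the non-solvable case, $\mathrm{Fit}(\phi(G))=1$ forces $\mathrm{Fit}(G)=A$.

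The paper states this corollary without proof, immediately after Theorem~\ref{NonScalarIffIrreducible}, whose proof invokes Ito's theorem. The intended argument is almost certainly the following uniform one, with no case split. Suppose $M\supsetneq A$ is abelian normal in $G$. Then $\phi(M)\cong M/A$ is a nontrivial normal subgroup of the transitive group $\phi(G)$, hence transitive by Proposition~\ref{FitTransitiveDegreep}, so $p$ divides $|\phi(M)|$. Since $p^2\nmid |\mathrm{Sym}(p)|$, the index $|G:M|=|\phi(G):\phi(M)|$ is coprime to $p$; but Ito's theorem gives $p\mid |G:M|$, a contradiction.

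Your route is more hands-on: the solvable branch avoids Ito entirely and in fact proves the stronger statement $C_G(A)=A$ whenever $A$ is non-scalar (normality of $M$ is never used there), while the non-solvable branch identifies $A$ with $\mathrm{Fit}(G)$, which is also stronger than mere maximality. The Ito argument buys uniformity and brevity; yours extracts a bit more structural information in each case.
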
 

\begin{remark}
Let $G = A\rtimes T$ for scalar 
$A\leq \mathrm{D}(n,\F)$ and $T\leq \mathrm{Sym}(n)$. 
Then $G$ fixes the all $1$s vector, so is reducible.
\end{remark}

\begin{remark}
The general converse of the first claim 
in Theorem~\ref{NonScalarIffIrreducible}
is false: $\mathrm{M}(5,\C)$ has irreducible 
subgroups isomorphic to $\mathrm{Alt}(5)$.
\end{remark}

\section{Diagonal subgroups}
\label{SubgroupsOfDpC}

Let $T\leq \mathrm{Sym}(p)$.
A finite $T$-submodule of $\mathrm{D}(p,\C)$ is the 
direct product of its Sylow $p$-subgroup and its 
Hall $p'$-subgroup and each is a $T$-module.
The submodule listing problem 
bifurcates accordingly.

\subsection{$\langle s\rangle$-modules}

\subsubsection{The modules of $p$-power order}
\label{pPowerSubmodules}

Our paradigm for listing the $\langle s\rangle$-submodules 
of $\Di(p,\C)$ of $p$-power order is \cite[\S \! 1]{Conlon}.

Note that $\Di(p,\C)$ is a central product 
$XZ$ amalgamating the scalar subgroup of order $p$, 
where $Z$ is the group of all scalars and 
$X = \SL(p,\C)\cap \Di(p,\C)$.
We define endomorphisms $\gamma$ and $\chi$ of 
$\mathrm{D}(p,\C)$
by
\[
\gamma(d) = d^{1-s}, \qquad 
 \chi(d) = d^{1+s+\cdots+s^{p-1}}. 
\]
Then $Z=\chi(Z)=\chi(\mathrm{D}(p,\C))=
\ker \, \gamma$ and 
$X = \gamma (X) = \gamma(\mathrm{D}(p,\C))
=\ker \, \chi$.  
For $\mathrm{i} = \allowbreak \sqrt{-1}$, let
\[
b_m = \mathrm{diag}(e^{2\pi{\rm i}/m}, e^{-2\pi{\rm i}/m},1,
 \ldots , 1), 
 \qquad
z_m = \mathrm{diag}(e^{2\pi{\rm i}/m}, \ldots , 
e^{2\pi{\rm i}/m}).
\]
\begin{lemma}\label{ZModules}
$Z$ has a unique $\langle s \rangle$-submodule 
of each order $m$, namely 
$Z_m=\langle z_m\rangle$. 
\end{lemma}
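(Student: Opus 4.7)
The plan is to reduce the statement to the classical fact that the multiplicative group of a field has at most one subgroup of each finite order.

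First I would observe that the $\langle s\rangle$-action on $Z$ is trivial. Indeed, $Z$ consists of scalar matrices, which commute with the permutation matrix $s$; so conjugation by $s$ fixes every element of $Z$. Consequently, the $\langle s\rangle$-submodules of $Z$ are exactly the subgroups of $Z$, and the lemma amounts to the assertion that $Z$ has a unique subgroup of each finite order $m$.

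Next I would identify $Z$ with $\C^\times$ via the isomorphism sending a scalar matrix to its unique nonzero entry. Under this identification, any finite subgroup $H \le Z$ of order $m$ consists of elements $\lambda \in \C^\times$ satisfying $\lambda^m = 1$. The polynomial $X^m - 1$ has at most $m$ roots in the field $\C$, and it has exactly $m$ roots, namely the $m$-th roots of unity $\mu_m = \{e^{2\pi \mathrm{i} k/m} : 0 \le k < m\}$. Hence $H = \mu_m$, and $\mu_m$ corresponds under the identification to $\langle z_m\rangle = Z_m$.

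The only subtlety worth flagging is the trivial action remark in the first step, which is what allows the module-theoretic statement to collapse to a purely group-theoretic one; after that, the argument is immediate and no step is really an obstacle.
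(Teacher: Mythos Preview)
Your argument is correct. The paper states this lemma without proof, evidently regarding it as immediate; your reduction to the uniqueness of finite subgroups of $\C^\times$ via the triviality of the $\langle s\rangle$-action on scalars is exactly the intended reasoning.
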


\begin{notation}
\emph{Let $D$ be the torsion subgroup of $\Di(p,\C)$.
If $\pi$ is a set of primes, then
 $A_{\pi}$ denotes the Hall $\pi$-subgroup of
$A\leq D$. So we write $A_{\{q\}}$ for the 
Sylow $q$-subgroup of $A$ if $\pi$ has a single 
prime $q$.  
The complement of $A_{\pi}$ in $A$ is denoted 
$A_{\pi'}$.}
\end{notation}

\begin{remark}
The scalar subgroup $Z_p= X\cap Z$ of order $p$
lies in every non-identity 
$\langle s\rangle$-submodule of $D_{\{p\}}$. 
\end{remark}

Next we determine the finite $\langle s\rangle$-submodules 
of $X_{\{ p\}}$.
\begin{definition}
For a positive integer $j$, let $n$, $m$ be the 
non-negative integers such that $m<p-1$ and $j = n(p-1)-m$. 
Define 
\[
X_{p^j} = X \cap (\ker \, \gamma^j), \qquad 
x_{p^j} = \gamma^m(b_{p^n}).
\]
\end{definition}
\begin{remark}
$z_{p^{j+1}}^p = z_{p^j}$ and 
$x_{p^{j+p-1}}^p = x_{p^j}$.
\end{remark}

\begin{notation}
{\em If $A$ is an abelian $q$-group, then 
$\Omega_k A := \{ a\in A \; |\; \allowbreak
 a^{q^k} = 1\}$, i.e., the largest subgroup 
of exponent at most $q^k$.}
\end{notation}

\begin{lemma}
\label{XModules}
$X_{\{p\}}$ is a uniserial 
$\langle s\rangle$-module; 
it has a unique submodule 
at every order $p^j$, namely $X_{p^j}$,
generated as an 
$\langle s\rangle$-module by $x_{p^j}$.
\end{lemma}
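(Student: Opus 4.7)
The approach is to recognize $X_{\{p\}}$ as a module over the discrete valuation ring $R := \Z_p[\zeta_p]$, with uniformizer $\pi := 1 - \zeta_p$. Because $\chi$ vanishes on $X_{\{p\}}$ (a direct consequence of $X \subseteq \ker \chi$), the $\Z[\langle s\rangle]$-action factors through $\Z[\langle s\rangle]/(\chi) \cong \Z[\zeta_p]$ with $s \mapsto \zeta_p$, and $p$-primariness extends this uniquely to $R$. Under this identification $\gamma = 1-s$ becomes multiplication by $\pi$, so the $\langle s\rangle$-submodules of $X_{\{p\}}$ are precisely its $R$-submodules.

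The first step is to identify $X_{\{p\}}$ as an $R$-module with the Pr\"ufer-type module $\Q_p(\zeta_p)/R$. I would verify (i) $X_{\{p\}}$ is divisible: it is $p$-divisible (being the $p$-primary torsion of the divisible group $X$), and $\pi$-divisibility follows since $p$ is a unit times $\pi^{p-1}$; and (ii) the $\pi$-socle is one-dimensional over the residue field $R/\pi R\cong \F_p$, namely $\ker\gamma\cap X_{\{p\}} = Z\cap X_{\{p\}} = Z_p$. The structure theorem for divisible torsion modules over a DVR then pins $X_{\{p\}}$ down as a single copy of $\Q_p(\zeta_p)/R$, whose submodules form a chain $\ker\pi^j$ of orders $p^j$. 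Since $\gamma$ acts as $\pi$, this chain is precisely $\{X_{p^j}\}_{j\geq 0}$, giving uniseriality and the unique submodule of each order $p^j$.

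It remains to verify that $x_{p^j}$ generates $X_{p^j}$; since $X_{p^j}$ is cyclic over $R$, this is equivalent to $x_{p^j}\in X_{p^j}\setminus X_{p^{j-1}}$. A preliminary ingredient is the identity $\gamma^{p-1} = p\epsilon$ on $X_{\{p\}}$ for some unit $\epsilon\in R$: from the expansion
\[
\chi = \sum_{k=0}^{p-1}(-1)^k\binom{p}{k+1}(1-s)^k\quad\text{in }\Z[\langle s\rangle],
\]
together with $\chi=0$ on $X_{\{p\}}$, one can solve for $\gamma^{p-1}$ as $p$ times an element whose constant term is $\pm 1$ (hence a unit, as $\gamma$ is topologically nilpotent on this $p$-primary module). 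Membership $x_{p^j}\in X_{p^j}$ then follows, because $j+m = n(p-1)$ gives $\gamma^j(x_{p^j}) = \gamma^{n(p-1)}(b_{p^n}) = \epsilon^n p^n b_{p^n} = 1$, using that $b_{p^n}$ has order $p^n$.

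For $x_{p^j}\notin X_{p^{j-1}}$ I would induct on $n$, with base case $n=1$: the computation $\gamma^{p-2}(b_p) = (1-s)^{p-1}b_p$ in additive coordinates has $j$th entry $(-1)^{j-1}\binom{p-1}{j-1}\equiv 1\pmod p$ (by the standard congruence $\binom{p-1}{k}\equiv(-1)^k\pmod p$), so $\gamma^{p-2}(b_p)$ is the nontrivial scalar $\zeta_p I\in Z_p$. The inductive step propagates nonvanishing: from $b_{p^{n-1}} = b_{p^n}^{\,p}$ combined with $\gamma^{p-1} = p\epsilon$, one obtains $\gamma^{(n-1)(p-1)-1}(b_{p^{n-1}}) = \epsilon\cdot\gamma^{n(p-1)-1}(b_{p^n})$, so nonvanishing at stage $n-1$ forces nonvanishing at stage $n$. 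The main obstacle is this explicit base-case computation; once it is in hand, the rest follows almost formally from DVR structure theory applied to the $R$-module $X_{\{p\}}$.
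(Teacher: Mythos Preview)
Your approach is correct and genuinely different from the paper's. The paper argues elementarily: it shows $|X_{p^j}|=p^j$ by induction (the map $\gamma\colon X_{p^{j+1}}\to X_{p^j}$ is onto with kernel $X_p$), deduces uniseriality by observing that every non-identity $\langle s\rangle$-submodule contains $X_p$ and then recursing via $X_{\{p\}}/X_p\cong X_{\{p\}}$, and obtains the generators from the fact that $b_{p^n}$ generates $\Omega_n X_{\{p\}}=X_{p^{n(p-1)}}$, whence $\gamma^m(b_{p^n})$ generates its image $X_{p^{n(p-1)-m}}$. Your route through the DVR $R=\Z_p[\zeta_p]$ is more structural: identifying $X_{\{p\}}$ with the Pr\"ufer module $\Q_p(\zeta_p)/R$ explains at once why the submodule lattice is a single chain and why $\gamma^{p-1}$ acts as $p$ times a unit. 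The paper's argument is shorter and self-contained; yours buys a conceptual picture that generalizes and clarifies later computations (indeed the paper's treatment of the $q\ne p$ case in Section~3.1.2 is carried out in exactly this $q$-adic spirit).

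One notational slip to repair: in your base case you write ``$\gamma^{p-2}(b_p)=(1-s)^{p-1}b_p$'', but $\gamma^{p-2}$ applied to $b_p$ is $(1-s)^{p-2}$ acting on the vector $(1,-1,0,\dots,0)$. What makes your entry computation come out right is that this vector equals $(1-s)e_1$, so $\gamma^{p-2}(b_p)$ corresponds to $(1-s)^{p-1}e_1$, whose $j$th coordinate is indeed $(-1)^{j-1}\binom{p-1}{j-1}\equiv 1\pmod p$. Replace ``$b_p$'' by ``$e_1$'' (or the diagonal matrix $a_p=\mathrm{diag}(e^{2\pi\mathrm i/p},1,\dots,1)$) on the right-hand side and the base case is clean. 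Also, in the inductive step the unit should be $\epsilon^{-1}$ rather than $\epsilon$, though of course this does not affect nonvanishing.
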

\begin{proof}
Suppose that $|X_{p^j}| = p^j$ for 
$j\geq 1$. The $\langle s\rangle$-epimorphism 
$\gamma \colon X_{p^{j+1}}\rightarrow X_{p^j}$ has kernel 
$X_p$. Thus $|X_{p^{j+1}}| = p |X_{p^j}|
= p^{j+1}$, proving that $|X_{p^j}| = p^j$  
for all $j$ by induction.

Let $B\leq X_{\{p\}}$ be a non-identity 
$\langle s\rangle$-submodule.
So $B/X_p$ is an $\langle s\rangle$-submodule of 
$X_{\{p\}}/X_p \cong X_{\{p\}}$. If $B\neq X_p$ 
then we replace $B$ by $B/X_p$ in $X_{\{p\}}/X_p$ 
and repeat. The recursion eventually
terminates, at which point $B= X_{p^j}$ for some $j$. 

The $\langle s\rangle$-module generated by $b_{p^n}$ is
$\Omega_n X_{\{p\}} = X_{p^{n(p-1)}}$.
Hence $\gamma^m (b_{p^n})$ generates 
$\gamma^m(X_{p^{n(p-1)}})=X_{p^{n(p-1)-m}}$.
\end{proof}

We obtain all finite $\langle s \rangle$-submodules 
of the Sylow $p$-subgroup 
\[
D_{\{p\}}/Z_p = X_{\{p\}}/Z_p\times Z_{\{ p\}}/Z_p
\] 
of $\Di(p,\C)/Z_p$ from Lemmas~\ref{ZModules} and 
\ref{XModules} 
and the following well-known theorem
(for a proof, see \cite[1.6.1, p.~35]{Schmidt}).
\begin{theorem}[Goursat--Remak]
\label{GoursatRemak}
Let $U$ and $V$ be $R$-modules for
an associative unital ring $R$. 
If $\theta$ is an $R$-isomorphism of a section
$U_1/U_2$ of $U$ onto a section $V_1/V_2$ of $V$, then
\[
W_\theta = \{ uv \; |\;  u\in U_1, \, v\in V_1, \, 
\theta(uU_2) = vV_2\}
\]
is an $R$-submodule of $U\times V$ such that 
\[
U_2 = W_\theta\cap U, \quad V_2= W_\theta\cap V, 
\quad U_1 = U\cap W_\theta V,\quad
V_1 = UW_\theta \cap V ,
\] 
and $W_\theta/U_2V_2 \cong U_1/U_2\cong V_1/V_2$.

Conversely, let $W$ be an $R$-submodule of $U\times V$. Put
\[
U_2 = W\cap U, \quad V_2= W\cap V, \quad U_1 = U\cap WV,\quad
V_1 = UW\cap V,
\] 
and define $\alpha\colon U_1/U_2\rightarrow V_1/V_2$
by $\alpha(uU_2) = vV_2$ where $v$ is any element
of $uW\cap V$. Then $\alpha$ is an $R$-isomorphism 
such that $W=W_\alpha$.
\end{theorem}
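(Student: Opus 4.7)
The plan is to verify the two directions of the asserted correspondence separately. Throughout, I rely on the fact that every element of $U\times V$ decomposes uniquely as $uv$ with $u\in U$ and $v\in V$, and that $U$ and $V$ commute inside $U\times V$, so that $R$-module operations interact with the decomposition in the obvious way.

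For the forward direction I would check in order: (i) $W_\theta$ is an $R$-submodule; (ii) the four intersection/product identities hold; (iii) the stated isomorphism exists. Claim (i) is immediate from the fact that $\theta$ is an $R$-homomorphism, since if $uv, u'v'\in W_\theta$ then $\theta((uu')U_2)=\theta(uU_2)\theta(u'U_2)=(vv')V_2$, and similarly for the scalar action. For (ii) I would chase elements through the unique decomposition; for instance, an element of $W_\theta\cap U$ must have trivial $V$-component, which forces $\theta(uU_2)=V_2$ and hence $u\in U_2$, while the reverse inclusion comes from taking $v=1$ in the definition of $W_\theta$ for any $u\in U_2$. The identities involving $V_2$, $U_1$, and $V_1$ are analogous, with $U_1=U\cap W_\theta V$ proved by observing that $u=u_1 v_1 v$ with $u_1 v_1\in W_\theta$ and $v\in V$ forces $u=u_1$ by uniqueness. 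For (iii), the projection $W_\theta\to U_1/U_2$ sending $uv\mapsto uU_2$ is a surjective $R$-homomorphism; its kernel is exactly $U_2 V_2$, using well-definedness of $\theta$ to deduce that $u\in U_2$ implies $v\in V_2$.

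For the reverse direction, given $W$, the crux is that $\alpha$ is well-defined on cosets: if $v,v'$ both lie in $uW\cap V$ then $v^{-1}v'\in W\cap V=V_2$, so $vV_2=v'V_2$; and if $u\in U_2\subseteq W$ then $1\in uW\cap V$ forces $\alpha(uU_2)=V_2$, so $\alpha$ descends to $U_1/U_2$. Non-emptiness of $uW\cap V$ for $u\in U_1=U\cap WV$ is exactly the condition $u\in U_1$, so the domain is correct. That $\alpha$ is an $R$-homomorphism then follows because $W$ is closed under products and the $R$-action; injectivity reverses the vanishing argument, and surjectivity is immediate from the definition of $V_1$. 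The final assertion $W=W_\alpha$ holds by inspection: one inclusion is forced by the defining relation of $\alpha$, while for the other, each $w\in W$ written in its unique form $uv$ produces a pair $(u,v)$ which realizes the defining relation of $W_\alpha$.

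The only genuinely delicate point is well-definedness of $\alpha$ on cosets and the identification of the two sides as inverse constructions; everything else is bookkeeping using the direct product decomposition. Since the result is standard, no significant obstacle is anticipated, and the proof sketched above is essentially forced by the structure of the statement.
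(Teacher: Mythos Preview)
The paper does not prove this theorem; it simply cites Schmidt, \emph{Subgroup lattices of groups}, 1.6.1, p.~35, and moves on. Your direct element-chasing verification is exactly the standard textbook argument one would find in such a reference, so there is nothing substantive to compare.

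One caution: your ``by inspection'' for $W=W_\alpha$ is too quick, and with the definition of $\alpha$ exactly as printed it actually fails. Take $U=V=\Z$ (additively) and $W$ the diagonal $\{(n,n)\}$. Then $U_2=V_2=0$, $U_1=V_1=\Z$, and $(u+W)\cap V=\{-u\}$, so $\alpha(u)=-u$ and $W_\alpha$ is the anti-diagonal, not $W$. The intended definition is surely that $\alpha(uU_2)=vV_2$ when $uv\in W$ (equivalently $v\in u^{-1}W\cap V$); with that correction your argument goes through verbatim. This is a transcription slip in the statement rather than a flaw in your method, but it is worth flagging since you asserted the identity without checking.
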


That is, apart from `Cartesian' submodules 
$X_{p^j} Z_{p^k}$,
the $\langle s \rangle$-submodules of $D_{\{ p\}}$
are in one-to-one correspondence with the 
$\langle s\rangle$-isomorphisms 
between non-identity sections of $X_{\{p\}}/Z_p$ 
and $Z_{\{p\}}/Z_p$.
\begin{definition}
For $k\geq 0$, let $Y_{0,k,0} = Z_{p^k}$. 
For $j,k\geq 1$ and $l\geq 0$, let
\[
Y_{j,k,l} = \langle x_{p^{j+1}}z_{p^{k+1}}^l, 
X_{p^j},Z_{p^k}\rangle .
\]
Define $\mathcal{Y}$ to be the set of all $Y_{j,k,l}$
where $0\leq l\leq p-1$ and 
either $j = l = 0$ and  $k\geq 0$, or
$j$, $k\geq 1$.
\end{definition}
\begin{theorem}[cf.{~\cite[1.8]{Conlon}}]
\label{AllpsModules}
$\mathcal Y$ is the set of all finite 
$\langle s \rangle$-submodules 
of $D_{\{p\}}$. 
\end{theorem}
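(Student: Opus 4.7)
The plan is to derive the theorem from the Goursat--Remak correspondence (Theorem~\ref{GoursatRemak}) applied modulo $Z_p$, combined with the uniserial descriptions of $X_{\{p\}}$ and $Z_{\{p\}}$ from Lemmas~\ref{ZModules} and~\ref{XModules}. That each member of $\mathcal Y$ is a finite $\langle s\rangle$-submodule is immediate from those lemmas, so the substantive task is to show every non-trivial finite $\langle s\rangle$-submodule $W$ of $D_{\{p\}}$ lies in $\mathcal Y$; the trivial submodule is $Y_{0,0,0}$. By the preceding remark, $Z_p \leq W$, so one may pass to $W/Z_p \leq D_{\{p\}}/Z_p$. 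Since $X_{\{p\}} \cap Z_{\{p\}} = Z_p$ and $X_{\{p\}} Z_{\{p\}} = D_{\{p\}}$, the quotient decomposes as an internal $\langle s\rangle$-direct sum $\bar X \oplus \bar Z$ with $\bar X = X_{\{p\}}/Z_p$ and $\bar Z = Z_{\{p\}}/Z_p$, to which Theorem~\ref{GoursatRemak} applies.

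The Goursat--Remak quintuples $(U_2, U_1, V_2, V_1, \theta)$ must now be determined. By the uniserial lemmas, the submodule lattices of $\bar X$ and $\bar Z$ are chains, with submodules $X_{p^m}/Z_p$ and $Z_{p^n}/Z_p$. Because $\langle s \rangle$ acts trivially on $\bar Z$, any admissible $\theta$ forces a trivial action on $U_1/U_2 = X_{p^a}/X_{p^b}$; using $\gamma(X_{p^a}) = X_{p^{a-1}}$, this holds iff $a \in \{b, b+1\}$. Hence either $\theta$ is trivial (so $U_1 = U_2$ and $V_1 = V_2$), or both sections are cyclic of order $p$, namely $X_{p^{j+1}}/X_{p^j}$ and $Z_{p^{k+1}}/Z_{p^k}$ for some $j, k \geq 1$, with $\theta$ one of the $p - 1$ isomorphisms determined by $\theta(x_{p^{j+1}} X_{p^j}) = z_{p^{k+1}}^l Z_{p^k}$ for $l \in \{1, \ldots, p-1\}$.

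Lifting back to $D_{\{p\}}$ recovers $\mathcal Y$ exactly: the trivial $\theta$ with $U_2 = \{1\}$ gives the pure $Z$-submodules $Z_{p^k} = Y_{0,k,0}$; the trivial $\theta$ with $U_2 \neq \{1\}$ gives $X_{p^{j+1}} Z_{p^k} = Y_{j,k,0}$ for $j, k \geq 1$ (absorbing the pure $X$-submodule $X_{p^{j+1}} = Y_{j,1,0}$ since $Z_p \leq X_{p^{j+1}}$); and the non-trivial $\theta$ yields the diagonal submodule $Y_{j,k,l} = \langle x_{p^{j+1}} z_{p^{k+1}}^l, X_{p^j}, Z_{p^k}\rangle$. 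That this list is irredundant is a direct consequence of the bijectivity clause in Theorem~\ref{GoursatRemak}.

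I anticipate the main obstacle to be the careful matching of the intrinsic Goursat--Remak data with the theorem's $(j, k, l)$: confirming that the pure $X$-submodules $X_{p^m}$ really are identical with $Y_{m-1, 1, 0}$ (so they need no separate case), that $Y_{0,k,0}$ and $Y_{j,k,0}$ for $j \geq 1$ never clash, and that the concrete coset representative $x_{p^{j+1}} z_{p^{k+1}}^l$ genuinely realises the Goursat--Remak ``diagonal'' lift for each $l$. Once this bookkeeping is in place, the theorem follows cleanly from the reduction above.
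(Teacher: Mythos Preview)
Your proposal is correct and follows essentially the same route as the paper: the paper's proof is a one-line invocation of Theorem~\ref{GoursatRemak} together with Lemmas~\ref{ZModules} and~\ref{XModules}, and you have simply unpacked that invocation in detail---passing to $D_{\{p\}}/Z_p = \bar X \times \bar Z$, using uniseriality to enumerate the Goursat--Remak data, and observing that the trivial $\langle s\rangle$-action on $\bar Z$ forces the admissible sections $U_1/U_2$ to have order $1$ or $p$. The bookkeeping you flag (that $X_{p^m} = Y_{m-1,1,0}$, and that the generator $x_{p^{j+1}} z_{p^{k+1}}^l$ realises the diagonal lift) is routine and goes through as you indicate.
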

\begin{proof}
As forecast, this is an application of
Theorem~\ref{GoursatRemak}, relying on 
Lemmas~\ref{ZModules} and \ref{XModules}.
\end{proof}
\begin{remark} \
\label{YGeneratorsAndOrder}
\begin{itemize}
\item[(i)]
If $(j,k,l)\neq (0,0,0)$ then $Y_{j,k,l}$ 
is generated as an $\langle s \rangle$-module by 
$x_{p^{j+1}}z_{p^{k+1}}^l$ and $z_{p^k}$. 
\item[(ii)] $|Y_{j,k,l}|=p^{j+k}$.
\item[(iii)] 
Each element of $\mathcal Y$ is labeled by a unique triple:  
$Y_{j,k,l}=Y_{a,b,c}\Rightarrow (j,k,l) = (a,b,c)$.
\end{itemize}
\end{remark}

\subsubsection{The modules of order coprime to $p$}
\label{ModulesOfpCoprimeOrder}

Let $q$ be a prime, $q\neq p$. Clearly
\[
D_{\{q\}} = X_{\{q\}}\times Z_{\{q\}}.
\]
As we will see, $X_{\{q\}}$ is a direct 
product of uniserial $\langle s\rangle$-submodules 
with no isomorphism between non-identity sections 
of the factors, and $\langle s\rangle$ acts 
non-trivially on every non-identity section of 
$X_{\{q\}}$. Hence, by the Goursat--Remak
Theorem,  all $\langle s\rangle$-submodules of 
$\Di(p,\C)$ of $q$-power order are Cartesian.

We gave a `closed' (submodule or 
subgroup) generating set of each finite 
$\langle s\rangle$-submodule of $D_{\{ p\}}$. It is 
infeasible to do the same for submodules of 
$D_{\{ q\}}$. A new feature is calculation with 
polynomials over the $q$-adic integers $\Z_q$ 
(the endomorphism ring of the quasicyclic $q$-group 
$C_{q^\infty}$, which acts on 
$D_{\{ q \}}\cong (C_{q^\infty})^p$ by extension 
of the $\Z$-action).
The complexity of these calculations 
varies with $q$ and $p$. We undertake
these calculations without imposing the `height' 
restriction of \cite[p.~366]{DZII}.
\begin{notation}
\emph{Denote reduction modulo $q$
by overlining. So $\overline{\Z}=\overline{\Z_q}=\F_q$, 
and a polynomial $f\in \Z_q[{\rm x}]$ or  $\Z[{\rm x}]$
maps to $\overline{\! f}\in \F_q[{\rm x}]$.} 
\end{notation}

We need the following version of Hensel's Lemma.
Its proof contains an algorithm that we use
to construct generators 
for submodules of $X_{\{q\}}$ 
(cf.~\cite[Lemma~12.8, p.~40]{FeitRepTh}).
\begin{lemma}\label{Hensellike}
Suppose that $f\in \Z_q[{\rm x}]$ is monic and 
$\, \overline{\! f}\in \F_q[{\rm x}]$ factorizes 
into the product of coprime monic polynomials 
$g_0$, $h_0\in \F_q[{\rm x}]$. 
Then there exist monic $g$, 
$h\in \Z_q[{\rm x}]$ such that
$\, \overline{\! g} = g_0$,
$\, \overline{\! h} = h_0$, and
$f= gh$.
\end{lemma}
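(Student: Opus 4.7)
The plan is to construct $g$ and $h$ as $q$-adic limits of monic sequences $(g_n),(h_n)$ in $\Z_q[{\rm x}]$ satisfying $\overline{g_n}=g_0$, $\overline{h_n}=h_0$, $\deg g_n=\deg g_0$, $\deg h_n=\deg h_0$, and
\[
f\equiv g_n h_n \pmod{q^n},
\]
with $g_{n+1}\equiv g_n$ and $h_{n+1}\equiv h_n \pmod{q^n}$. Since $\Z_q$ is complete in the $q$-adic topology and the degrees stay bounded, coefficient-wise limits will then yield monic $g,h\in\Z_q[{\rm x}]$ with $\overline{g}=g_0$, $\overline{h}=h_0$, and $f=gh$.

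For the base case take any monic lifts $g_1,h_1\in\Z_q[{\rm x}]$ of $g_0,h_0$. For the inductive step, I write $f-g_n h_n=q^n r_n$ with $r_n\in\Z_q[{\rm x}]$ of degree strictly less than $\deg f=\deg g_0+\deg h_0$ (the degree bound follows because $f$ and $g_n h_n$ are monic of equal degree). I then seek $u,v\in\Z_q[{\rm x}]$ with $\deg u<\deg g_0$ and $\deg v<\deg h_0$ such that $g_{n+1}:=g_n+q^n u$ and $h_{n+1}:=h_n+q^n v$ satisfy $g_{n+1}h_{n+1}\equiv f \pmod{q^{n+1}}$; after expanding, this reduces modulo $q$ to solving
\[
g_0\overline{v}+h_0\overline{u}=\overline{r_n}
\]
in $\F_q[{\rm x}]$.

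Coprimality of $g_0,h_0$ produces $a_0,b_0\in\F_q[{\rm x}]$ with $g_0 a_0 + h_0 b_0 = 1$. Multiplying by $\overline{r_n}$, and then using polynomial division to replace $b_0 \overline{r_n}$ by its remainder modulo $g_0$ (absorbing the quotient into the partner term), produces $\overline{u}$ with $\deg\overline{u}<\deg g_0$ together with $\overline{v}$ satisfying $g_0\overline{v}=\overline{r_n}-h_0\overline{u}$; comparing degrees in this identity, using $\deg\overline{r_n}<\deg g_0+\deg h_0$, forces $\deg\overline{v}<\deg h_0$. Lifting $\overline{u},\overline{v}$ to $u,v\in\Z_q[{\rm x}]$ of the same degree and defining $g_{n+1},h_{n+1}$ as above closes the induction: monicity and degree of $g_{n+1},h_{n+1}$ are preserved because the added terms $q^n u$, $q^n v$ have degree strictly below those of $g_n$, $h_n$ respectively.

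The main obstacle I anticipate is the degree bookkeeping --- in particular, verifying that the solution $\overline{v}$ automatically respects $\deg\overline{v}<\deg h_0$, so that successive lifts remain monic of the prescribed degree. Beyond that, the argument is a routine Cauchy-sequence completion in the complete local ring $\Z_q[{\rm x}]$.
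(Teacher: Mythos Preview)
Your proposal is correct and follows essentially the same Hensel-lifting approach as the paper: build successive approximations $g_n,h_n$ with $f\equiv g_nh_n\pmod{q^n}$, use the B\'ezout relation $a_0g_0+b_0h_0=1$ together with polynomial division to find correction terms of controlled degree, and pass to the $q$-adic limit. The only cosmetic difference is that the paper carries out the induction with integer (``flat'') polynomial lifts in $\Z[{\rm x}]$ rather than working directly in $\Z_q[{\rm x}]$, because the resulting algorithm is reused later to compute explicit approximations $f_{r,n}$.
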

\begin{proof} 
We call an integer polynomial {\em flat} if its 
coefficients lie in $\{0,1,\dots, \allowbreak q-1\}$. 
Each polynomial in $\Z_q[\rm x]$ is congruent 
modulo $q$ to a unique flat polynomial.

We have $a_0g_0+b_0 h_0 =1$ for some 
$a_0$, $b_0\in \F_q[\rm x]$.
Let $a$, $b$, $g_1$, $h_1\in \Z[{\rm x}]$ 
be the flat preimages of $a_0$, $b_0$, $g_0$,
$h_0$, respectively, modulo $q$.
Assume inductively that $n\geq 1$ and
$\overline{g_n} = g_0$, 
$\overline{h_n} = h_0$ for some $g_n$, 
$h_n\in \Z[{\rm x}]$ 
such that $f = g_nh_n + q^nc_n$
where $c_n \in \Z_q[{\rm x}]$. Then
$\mathrm{deg} \,c_n < \mathrm{deg} \, f$. Also, 
$ag_n + bh_n \equiv 1 \bmod q$.

By division in $\F_q[\rm x]$ and lifting, we 
get unique flat $v_n$, $w_n$ such that 
$bc_n \equiv w_ng_n + v_n \bmod q$ and 
$\mathrm{deg} \, v_n < \mathrm{deg} \, g_n$.
Let $u_n = ac_n + w_nh_n$, and let $u_n'$ be 
the unique flat polynomial congruent to 
$u_n$ modulo $q$. Then
\[
v_nh_n + u_ng_n = bc_n h_n + ac_n g_n \equiv c_n 
\bmod q,
\]
so $u_n'g_n\equiv c_n- v_nh_n \bmod q$. Since 
$\mathrm{deg} \, c_n <\mathrm{deg} \, f$
and $\mathrm{deg} \, v_n < \mathrm{deg} \, g_n$,
it follows that 
$\mathrm{deg} \, u_n' < \mathrm{deg} \, h_n$.

Define $g_{n+1} = g_n + q^n v_n$ and 
$h_{n+1} = h_n + q^n u_n'$. These polynomials
are monic, and
\begin{align*}
g_{n+1} h_{n+1} & \equiv g_n h_{n}
+q^n(u_{n}' g_{n} + v_nh_n )\\
& \equiv g_n h_{n}
+q^n c_{n} \\
& \equiv f \bmod q^{n+1}.
\end{align*}
Hence, by induction, for each positive 
integer $n$ there exist $g_n$, $h_n\in \Z[\rm x]$
such that $g_{n+1}\equiv g_n$,
$h_{n+1}\equiv h_n$, and $f\equiv g_n h_n \bmod q^n$. 
So the polynomial sequences $\{ g_n\}_{n\geq 1}$, 
$\{ h_n\}_{n\geq 1}$ 
converge in the $q$-adic sense to monic $g$, 
$h\in \Z_q[{\rm x}]$ such that $f=gh$.
\end{proof}

\begin{corollary}\label{IrredBelowIffIrredAbove}
Suppose that $f\in \Z_q[{\rm x}]$ is monic and 
$\overline{f}$ has no repeated roots (in 
any extension of $\F_q$). 
Then $f$ is $\Z_q$-irreducible if and only if 
$\, \overline{f}$ is $\F_q$-irreducible.
\end{corollary}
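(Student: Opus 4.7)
The plan is to prove the two implications separately, treating the forward direction as essentially formal and reducing the reverse direction to a direct application of Lemma~\ref{Hensellike}.

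First I would handle the easy direction: if $\, \overline{\! f}$ is $\F_q$-irreducible, then $f$ is $\Z_q$-irreducible. Suppose $f = gh$ in $\Z_q[{\rm x}]$ with $g$ and $h$ monic. Reducing modulo $q$ gives $\, \overline{\! f} = \, \overline{\! g}\, \overline{\! h}$ in $\F_q[{\rm x}]$. Since $g$ and $h$ are monic, $\deg \, \overline{\! g} = \deg g$ and $\deg \, \overline{\! h} = \deg h$, so irreducibility of $\, \overline{\! f}$ forces one of these degrees to be $0$, and the corresponding factor in $\Z_q[{\rm x}]$ is $1$. Since any non-trivial factorization of the monic $f$ can be normalized so both factors are monic (by absorbing unit constants), this establishes irreducibility of $f$.

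For the reverse direction I would argue by contrapositive: assuming $\, \overline{\! f}$ is reducible, I aim to produce a non-trivial factorization of $f$. Write $\, \overline{\! f} = g_0 h_0$ with monic $g_0, h_0 \in \F_q[{\rm x}]$ of positive degree. The hypothesis that $\, \overline{\! f}$ has no repeated roots implies that $g_0$ and $h_0$ share no common root in any extension of $\F_q$, hence $\gcd(g_0, h_0) = 1$ in $\F_q[{\rm x}]$. This is precisely the coprimality hypothesis of Lemma~\ref{Hensellike}, which then supplies monic $g, h \in \Z_q[{\rm x}]$ with $\, \overline{\! g} = g_0$, $\, \overline{\! h} = h_0$, and $f = gh$. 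Since $\deg g = \deg g_0 > 0$ and similarly for $h$, this is a non-trivial factorization, so $f$ is reducible.

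There is no real obstacle here: all the substantive work is packed into the Hensel-style lifting already proved in Lemma~\ref{Hensellike}. The only point that requires attention is the role of the no-repeated-roots hypothesis, which enters solely to guarantee coprimality of the mod-$q$ factors and thereby justify the lift; without it, the naive factorization $\, \overline{\! f} = g_0 h_0$ could fail to lift, and indeed the corollary itself can fail (for instance, an Eisenstein polynomial at $q$ is $\Z_q$-irreducible while its reduction ${\rm x}^n$ is reducible but has a repeated root).
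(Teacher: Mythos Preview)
Your proof is correct. The contrapositive argument using Lemma~\ref{Hensellike} for the implication ``$f$ irreducible $\Rightarrow$ $\overline{f}$ irreducible'' matches the paper's exactly, including the observation that the no-repeated-roots hypothesis is precisely what guarantees coprimality of the mod-$q$ factors.

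Where you differ is in the other direction. You treat ``$\overline{f}$ irreducible $\Rightarrow$ $f$ irreducible'' as the trivial half, arguing directly that reduction preserves degrees of monic factors. The paper instead factors $f$ into monic irreducibles $f_i$ over $\Z_q$, applies the Hensel direction to each $f_i$ to conclude that every $\overline{f_i}$ is $\F_q$-irreducible, and then reads off that $\overline{f}$ irreducible forces a single factor. Your route is more elementary and self-contained (it does not need factorization in $\Z_q[{\rm x}]$ or a second appeal to the first half); the paper's route is a bit more structural, showing that the lift $f \mapsto \overline{f}$ carries irreducible factorizations to irreducible factorizations. Either is fine here.
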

\begin{proof}
Since $\overline{f}$ has no repeated roots, it is 
coprime to its formal derivative. 
Thus $\overline{f}$ can only properly
factorize into a product of coprime polynomials, 
which contradicts
Lemma~\ref{Hensellike} if $f$ is irreducible.

For the converse, let $f_i\in \Z_q[{\rm x}]$ be the 
irreducible monic factors of $f$ in $\Z_q[{\rm x}]$.
Each $\overline{f_i}$ is monic and has no repeated 
roots. The previous paragraph implies that the 
$\overline{f_i}$ are $\F_q$-irreducible. 
\end{proof}

\begin{notation}\label{dAndvDefinitions}
\emph{$v := \frac{p-1}{d}$ where $d$ is the 
multiplicative order of $q$ modulo $p$.}
\end{notation}
\begin{definition}\label{Definefandgr}
From now on,
$f = f({\rm x}):=1+{\rm x} + \cdots + {\rm x}^{p-1}\in
\Z_q[{\rm x}]$. Let $g_1$ be an irreducible monic 
factor of $\, \overline{\! f}$.
For $1\leq r \leq v-1$, define
$g_{r+1} ({\rm x}) = 
\mathrm{gcd}(g_r({\rm x}^u), \overline{\! f}({\rm x}))$
where $u$ is the least primitive element modulo $p$ 
(see Notation~\ref{stuNotation}).
\end{definition}
\begin{remark}
We can impose a total ordering on $\F_q[{\rm x}]$
to ensure that the $g_r$ (and thus submodule generators 
in $X_{\{q\}}$) are canonically defined. 
\end{remark}

The polynomials $f$ and $\overline{f}$ do not have
repeated roots. Indeed, if $\xi$ is a root of 
$g_1$, then $\xi$ is a root of $\overline{f}$, 
hence a primitive $p$th root of unity, and 
the roots of $\overline{f}$ are the $\xi^i$.
\begin{proposition}\
\begin{itemize}
\item[{\rm (i)}] 
Each $g_r$ is $\F_q$-irreducible, and 
$\, \overline{\! f} = g_1\cdots g_v$.
\item[{\rm (ii)}] There are monic irreducible factors 
$f_1,\ldots , f_v\in \Z_q[{\rm x}]$ of $f$ such 
that $\overline{\! f_r}=g_r$ and $f = f_1\cdots f_v$. 
\end{itemize}
\end{proposition}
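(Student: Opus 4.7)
My plan is to work with the roots of $\, \overline{\! f}$ in an algebraic closure of $\F_q$ and translate the recursive definition of $g_{r+1}$ into a statement about cosets in $(\Z/p\Z)^\times$.

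First, I note that $\, \overline{\! f}$ has distinct roots (as the excerpt observes, its roots are the $p-1$ primitive $p$th roots of unity, and $p \ne q$), so its irreducible factorization over $\F_q$ corresponds to the orbits of the Frobenius $\xi \mapsto \xi^q$ on these roots. Each orbit has size equal to the multiplicative order $d$ of $q$ modulo $p$, giving exactly $v = (p-1)/d$ irreducible factors, each of degree $d$. Fixing a primitive $p$th root of unity $\xi$ that is a root of $g_1$ and identifying $\xi^i \leftrightarrow i$, the irreducible factors of $\, \overline{\! f}$ correspond to the $v$ cosets of $\langle q\rangle$ in $(\Z/p\Z)^\times$, and $g_1$ corresponds to $\langle q\rangle$ itself.

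Next, I analyze one step of the recursion. If $g$ is an irreducible factor of $\, \overline{\! f}$ corresponding to a coset $C$, then the $p$th roots of unity among the roots of $g({\rm x}^u)$ are exactly those $\xi^i$ with $ui \in C$, i.e.\ $i \in u^{-1}C$. Hence $\gcd(g({\rm x}^u),\, \overline{\! f})$ is the irreducible factor corresponding to $u^{-1}C$. By induction, $g_{r+1}$ corresponds to $u^{-r}\langle q\rangle$. Since $u$ is a primitive root modulo $p$, its image generates the cyclic quotient $(\Z/p\Z)^\times/\langle q\rangle$ of order $v$, so the cosets $u^{-r}\langle q\rangle$ for $r = 0,1,\ldots,v-1$ are distinct and exhaust all cosets. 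This yields (i): each $g_r$ is $\F_q$-irreducible of degree $d$, and $g_1 \cdots g_v = \, \overline{\! f}$.

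For (ii), I lift the factorization to $\Z_q$ by repeated application of Lemma~\ref{Hensellike}. Since $g_1$ and $g_2 \cdots g_v$ are coprime in $\F_q[{\rm x}]$, that lemma produces monic $f_1, h_1 \in \Z_q[{\rm x}]$ with $f = f_1 h_1$, $\, \overline{\! f_1} = g_1$, and $\overline{h_1} = g_2 \cdots g_v$. Repeating with $h_1$ in place of $f$ extracts $f_2$ with $\overline{\! f_2} = g_2$, and so on; after $v-1$ steps we obtain $f = f_1 \cdots f_v$ with $\overline{\! f_r} = g_r$. Each $f_r$ is monic and $\overline{\! f_r}$ is $\F_q$-irreducible (hence separable), so Corollary~\ref{IrredBelowIffIrredAbove} gives that each $f_r$ is $\Z_q$-irreducible.

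The only step requiring care is the coset bookkeeping in the first two paragraphs; the Hensel lifting in the third paragraph is a routine iteration, and irreducibility of the lifts is immediate from the corollary already established. I do not anticipate any serious obstacle.
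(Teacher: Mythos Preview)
Your proof is correct and follows essentially the same approach as the paper. For part~(i) the paper phrases the argument in terms of explicit Frobenius orbits $\Xi_r=\{\xi^{u^{1-r}q^i}\}$ rather than cosets of $\langle q\rangle$ in $(\Z/p\Z)^\times$, but this is only a notational difference; the inductive step ($\xi^j$ is a root of $g_{r+1}$ iff $\xi^{ju}$ is a root of $g_r$) is identical. For part~(ii) the paper dispatches the lifting in one line by invoking the UFD property together with Corollary~\ref{IrredBelowIffIrredAbove}, whereas you iterate Lemma~\ref{Hensellike} explicitly; your version is slightly more constructive and in fact aligns with how the paper later defines the approximations $f_{r,n}$.
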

\begin{proof}
Denote the Frobenius automorphism of $\F_q(\xi)/\F_q$
by $\beta$. 
Let $\Xi_r=\{ \xi^{u^{1-r}q^i} \mid 
\allowbreak  1 \leq i \leq d\}$,
the orbit of $\xi^{u^{1-r}}$ under $\langle \beta\rangle$.
Further, let $\Lambda_r$ be the set of roots of $g_r$. 
We assert that $\Lambda_r = \allowbreak
\Xi_r$. Since $\langle \beta\rangle$ 
acts transitively on $\Xi_r$, this will prove that $g_r$ is 
irreducible; since the set of roots of $\overline{f}$ is 
partitioned by the $\Xi_r$, this will also prove that 
$\, \overline{\! f} = g_1 \cdots g_v$.

Certainly $\Lambda_1 = \Xi_1$, because $g_1$ is 
irreducible and has $\xi$ as a root. Assume inductively 
that $\Lambda_r = \Xi_r$ for some $r\geq 1$. Then 
$\xi^j\in \Lambda_{r+1}$ if and only if 
$\xi^{ju}\in \Lambda_r$. By the 
inductive hypothesis, this happens 
if and only if $j \equiv u^{1-(r+1)}q^i \bmod p$ 
for some $i$. Hence $\Lambda_{r+1} = 
\Xi_{r+1}$, completing the proof of (i) by induction.
As the polynomial rings are UFDs,  part~(ii) then 
follows from Corollary~\ref{IrredBelowIffIrredAbove}.
\end{proof}

Thus, we factorize $\, \overline{\! f}$ over $\F_q$, 
then lift to the irreducible factors $f_r\in \Z_q[{\rm x}]$ 
of $f$ by the algorithm in the proof of 
Lemma~\ref{Hensellike}. Although this factorization 
depends strongly on the value of $q$, we omit
$q$ in some of the attendant polynomial notation
to reduce clutter.

\begin{definition}\
\begin{itemize}
\item[(i)]
Let $X_{\{ q\}}^{(r)}$ be the set of  
elements of $X_{\{q\}}$ 
annihilated by $f_r(s)$.
\item[(ii)] 
$X_{q,n}^{(r)}:= \Omega_n X_{\{ q\}}^{(r)}$.
\item[(iii)] 
$f_{r'} := \prod_{j\neq r}f_j$.
\end{itemize}
\end{definition}
\begin{remark}
$X_{\{ q\}}^{(r)}$ is a 
$\Z_q\langle s\rangle$-submodule.
\end{remark}

\begin{proposition}\mbox{}
\label{410Bacskai}
\begin{itemize}
\item[{\rm (i)}] $X_{\{ q\}} = X_{\{ q\}}^{(1)}\times \cdots 
\times X_{\{ q\}}^{(v)}$.

\vspace{1pt}

\item[{\rm (ii)}] $X_{\{ q\}}^{(r)} = X_{\{ q\}}^{f_{r'}(s)}$
for $1\leq r \leq v$. 
\end{itemize} 
\end{proposition}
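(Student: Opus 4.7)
The plan is to exhibit $X_{\{q\}}$ as a module over $R:=\Z_q[{\rm x}]/(f({\rm x}))$ and then apply the Chinese Remainder Theorem to this ring. Since $X_{\{q\}}\leq X=\ker\chi$ and $\chi$ acts as the polynomial $f(s)=1+s+\cdots+s^{p-1}$, every element of $X_{\{q\}}$ is killed by $f(s)$. Moreover, $X_{\{q\}}$ is a $q$-primary abelian torsion group, so the exponentiation action of $\Z$ extends continuously to $\Z_q$. These together give the $R$-module structure.

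The heart of the argument is a coprimality statement: $f_r$ and $f_{r'}=\prod_{j\neq r}f_j$ generate the unit ideal of $\Z_q[{\rm x}]$. I would argue this by reduction modulo $q$. Because the $g_r$ are pairwise distinct monic irreducibles in $\F_q[{\rm x}]$, we have $(\overline{f_r},\overline{f_{r'}})=(g_r,g_{r'})=\F_q[{\rm x}]$. Thus the finitely generated $\Z_q$-module $\Z_q[{\rm x}]/(f_r,f_{r'})$ vanishes modulo $q$, so vanishes by Nakayama. Hence there exist $a,b\in \Z_q[{\rm x}]$ with $af_r+bf_{r'}=1$.

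Part (ii) is then essentially immediate from this Bezout identity. For the inclusion $X_{\{q\}}^{f_{r'}(s)}\subseteq X_{\{q\}}^{(r)}$, any $y=x^{f_{r'}(s)}$ satisfies $y^{f_r(s)}=x^{f(s)}=1$. For the reverse inclusion, if $y\in X_{\{q\}}^{(r)}$ then
$$y=y^{a(s)f_r(s)}\cdot y^{b(s)f_{r'}(s)}=\bigl(y^{b(s)}\bigr)^{f_{r'}(s)},$$
so $y\in X_{\{q\}}^{f_{r'}(s)}$.

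For part (i), I would invoke CRT: the pairwise coprimality established above yields $R\cong \prod_r \Z_q[{\rm x}]/(f_r)$, hence orthogonal idempotents $e_1,\ldots,e_v\in R$ with $\sum_r e_r=1$ and $e_r\equiv \delta_{rs}\pmod{f_s}$. For any $x\in X_{\{q\}}$, the decomposition $x=\prod_r x^{e_r(s)}$ writes $x$ as a product of elements of the $X_{\{q\}}^{(r)}$, since $e_rf_r\equiv 0\pmod f$. Directness follows by applying $e_s(s)$ to a relation $\prod_r y_r=1$ with $y_r\in X_{\{q\}}^{(r)}$: for $r\neq s$, $e_s\equiv 0\pmod{f_r}$ means $e_s(s)$ acts on $y_r$ as a combination of $f_r(s)$ and $f(s)$ and so kills it, while $e_s\equiv 1\pmod{f_s}$ fixes $y_s$. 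The only genuine difficulty is the coprimality step, since $\Z_q[{\rm x}]$ is not a PID and coprimality of the irreducibles $f_r$ is not automatically a Bezout statement; Nakayama (or, equivalently, the observation that the resultant of $f_r$ and $f_{r'}$ reduces to a nonzero element of $\F_q$ and is therefore a $q$-adic unit) handles this cleanly, after which everything reduces to standard CRT bookkeeping.
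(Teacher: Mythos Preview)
Your proof is correct and takes a genuinely different route from the paper's. The paper never establishes a Bezout identity over $\Z_q[{\rm x}]$; instead it passes to the fraction field, writes $\sum_r f_{r'}h_r=1$ with $h_r\in\Q_q[{\rm x}]$, clears denominators to obtain $\sum_r f_{r'}c_r=q^k$ with $c_r\in\Z_q[{\rm x}]$, and then absorbs the stray $q^k$ by invoking the $q$-divisibility of $X_{\{q\}}$ (that is, $X_{\{q\}}^{q^k}=X_{\{q\}}$). Directness is checked separately on the socle $\Omega_1 X_{\{q\}}$ using only the $\F_q$-Bezout relation; part~(ii) is then recovered from the chain of inclusions $X_{\{q\}}^{f_{r'}(s)c_r(s)}\subseteq X_{\{q\}}^{f_{r'}(s)}\subseteq X_{\{q\}}^{(r)}$ once (i) forces the outer terms to coincide. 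Your Nakayama step upgrades the $\F_q$-coprimality to an honest identity $af_r+bf_{r'}=1$ in $\Z_q[{\rm x}]$, after which the CRT idempotents do all the work and divisibility of $X_{\{q\}}$ is never needed. The paper's argument is slightly more bare-handed---nothing beyond polynomial arithmetic over a field and one structural fact about $X_{\{q\}}$---whereas yours is more conceptual and, in particular, makes part~(ii) a one-line consequence rather than something extracted after the fact.
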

\begin{proof}
There exist $h_1, \ldots , h_v\in \Q_q[ {\rm x }]$ such that 
$\sum_{r=1}^vf_{r'}h_r=1$.  
We can choose $k\geq 0$ such that 
$c_r:=q^k h_r\in \Z_q[{\rm x}]$ for all $r$.
Since $X_{\{ q\}}^{q^k} = X_{\{q\}}$, 
\[
X_{\{q\}} = 
{\textstyle \prod}_{r=1}^v(X_{\{q\}})^{f_{r'}(s)c_r(s)}
 =
{\textstyle \prod}_{r=1}^v(X_{\{q\}}^{(r)})^{c_r(s)}
\subseteq 
{\textstyle \prod}_{r=1}^v X_{\{q\}}^{(r)} .
\]
Thus $X_{\{q\}}=\prod_{r=1}^vX_{\{q\}}^{(r)}$.

If $X_{q,1}^{(r)} \cap \Omega_1 
\big(\prod_{j\neq r} X_{\{q\}}^{(j)}\big) \neq 1$, then
there is a non-identity $b\in \allowbreak
\Omega_1  X_{\{q\}}$ such that $b=\allowbreak
b^{f_r(s)} =b^{f_{r'}(s)} =1$. 
But $f_r w + f_{r'}z \equiv 1 \bmod q$ for some
$w$, $z\in \Z_q[{\rm x}]$. Hence
$b= \allowbreak b^{f_r w(s)}b^{f_{r'} z(s)}=1$.
\end{proof}

We embark on the task of determining the finite 
$\langle s\rangle$-submodules of each $X_{\{ q\}}^{(r)}$.  
\begin{lemma}\label{411Bacskai}
$X_{q,1}^{(r)}$ is irreducible as an 
$\langle s\rangle$-module. 
\end{lemma}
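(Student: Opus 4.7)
The plan is to identify $X_{q,1}^{(r)}$ with a one-dimensional vector space over the field $K := \F_q[{\rm x}]/(g_r({\rm x}))$, which has order $q^d$. First I would note that $X_{q,1}^{(r)}$ has exponent $q$ and is annihilated by $f_r(s)$, hence by $\, \overline{\! f_r}(s) = g_r(s)$. Its $\F_q\langle s\rangle$-module structure therefore factors through $K$, and since $K$ is a field, every $\F_q\langle s\rangle$-submodule of $X_{q,1}^{(r)}$ is automatically a $K$-subspace. Consequently, $X_{q,1}^{(r)}$ is irreducible over $\langle s\rangle$ if and only if $\dim_K X_{q,1}^{(r)} = 1$, i.e., $|X_{q,1}^{(r)}| = q^d$.

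To pin down this cardinality, I would identify $D_{q,1}$ with the regular representation $\F_q[{\rm x}]/({\rm x}^p - 1)$ of $\langle s\rangle$, under which $\chi$ restricts to multiplication by $\, \overline{\! f}({\rm x})$. Then $X_{q,1} = D_{q,1} \cap \ker \chi$ is the annihilator of $\, \overline{\! f}({\rm x})$ in this ring. Since ${\rm x}^p - 1 = ({\rm x} - 1)\, \overline{\! f}({\rm x})$ with the two factors coprime in $\F_q[{\rm x}]$, the Chinese Remainder Theorem gives $X_{q,1} \cong \F_q[{\rm x}]/(\, \overline{\! f}({\rm x}))$ as $\F_q\langle s\rangle$-modules. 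Applying CRT a second time to the coprime factorization $\, \overline{\! f} = g_1 \cdots g_v$ yields
\[
X_{q,1} \cong \bigoplus_{r=1}^{v} \F_q[{\rm x}]/(g_r({\rm x})).
\]
The summand annihilated by $g_r(s)$ is exactly $X_{q,1}^{(r)}$ (matching the decomposition of Proposition~\ref{410Bacskai}(i)), and this summand is isomorphic to $K$, so $|X_{q,1}^{(r)}| = q^d$ as required.

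The step demanding most care is matching the paper's definition of $X_{q,1}^{(r)}$ with the CRT summand; this is immediate once $X_{q,1}$ is identified with $\F_q[{\rm x}]/(\, \overline{\! f}({\rm x}))$, but one must keep straight the passage from the multiplicative action of $s$ on diagonal matrices to the polynomial ring action. An alternative that sidesteps the explicit realization as a regular representation is to invoke Proposition~\ref{410Bacskai}(i) directly and count dimensions: each $X_{q,1}^{(r)}$ is a $K$-vector space, so its $\F_q$-dimension is a multiple of $d$; the dimensions sum to $p - 1 = vd$; and showing each summand is nonzero---for instance, by exhibiting an $s$-eigenvector over an algebraic closure of $\F_q$ corresponding to a root of $g_r$---forces each to have $K$-dimension exactly~$1$.
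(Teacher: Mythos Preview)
Your argument is correct. The primary route you take---identifying $\Omega_1 D_{\{q\}}$ with $\F_q[{\rm x}]/({\rm x}^p-1)$ and then applying the Chinese Remainder Theorem twice to realize $X_{q,1}^{(r)}$ as the field $K=\F_q[{\rm x}]/(g_r)$---is sound and yields irreducibility directly, since a one-dimensional $K$-space has no proper nonzero $K$-subspaces and every $\F_q\langle s\rangle$-submodule is a $K$-subspace.

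The paper argues differently. It observes that the minimal polynomial of the action of $s$ on $X_{q,1}^{(r)}$ divides $g_r$, hence equals $g_r$ by irreducibility; Cayley--Hamilton then gives $\dim_{\F_q} X_{q,1}^{(r)}\geq d$. Summing over $r$ and using $\dim_{\F_q}\Omega_1 X_{\{q\}}=p-1=vd$ forces equality, so the characteristic polynomial of $s$ on $X_{q,1}^{(r)}$ is the irreducible $g_r$, whence the module is irreducible. Your alternative at the end is essentially this argument recast in terms of $K$-dimension rather than Cayley--Hamilton. The CRT approach has the advantage of giving an explicit model of the module (useful if one later wants coordinates), while the paper's dimension count is shorter and avoids setting up the ring isomorphism; both hinge on the same arithmetic fact $p-1=vd$.
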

\begin{proof}
Let $\mathrm{d}_r$ be the dimension of the subspace $X_{q,1}^{(r)}$ 
of the $(p-1)$-dimensional $\F_q$-space 
$\Omega_1X_{\{q\}}$. 
The conjugation action of 
$\langle s\rangle$ induces a linear transformation 
$s_r$ on $X_{q,1}^{(r)}$. 
Its minimal polynomial is $g_r$, so $d\leq \mathrm{d}_r$
by the Cayley-Hamilton theorem. This shows that
$\mathrm{d}_r =d$, because $\sum_r \mathrm{d}_r = dv$. 
Therefore $s_r$ has characteristic polynomial $g_r$.
Since $g_r$ is irreducible, $X_{q,1}^{(r)}$ is an
 irreducible $\langle s \rangle$-module.
\end{proof}

\begin{definition}
Let $f_{r,n}({\rm x})\in \Z[{\rm x}]$ be the $n$th 
approximation of $f_r$ found by the algorithm 
in the proof of Lemma~\ref{Hensellike} 
($f_{r,n+1}\equiv f_r \bmod q^n$).
Define
\[
f_{r',n}= 
{\textstyle \prod}_{j\neq r} f_{j,n}\in \Z[{\rm x}], 
\qquad \quad
x_{q,n}^{(r)} =b_{q^n}^{f_{r'}(s)}.
\]
Since $(x_{q,n}^{(r)})^{q^n} = 1$, we obtain
the useful working formula
$x_{q,n}^{(r)}= b_{q^n}^{f_{r',n}(s)}$. 
\end{definition}

\begin{lemma}
$X_{q,n}^{(r)}\cong (C_{q^n})^d$ is generated by 
$x_{q,n}^{(r)}$ as an $\langle s\rangle$-module. 
\end{lemma}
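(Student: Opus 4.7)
The plan is to establish the two claims separately: the abelian group structure $X_{q,n}^{(r)}\cong (C_{q^n})^d$, and the fact that $X_{q,n}^{(r)}$ is generated by $x_{q,n}^{(r)}$ as an $\langle s\rangle$-module.

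For the structure, I would observe that $X_{\{q\}}$, being the Sylow $q$-subgroup of the torsion of the $(p-1)$-dimensional complex torus $X$, is divisible and isomorphic to $(C_{q^\infty})^{p-1}$. By Proposition~\ref{410Bacskai}(ii), $X_{\{q\}}^{(r)}$ is the image of $X_{\{q\}}$ under the endomorphism $f_{r'}(s)$, and homomorphic images of divisible groups are divisible. Hence $X_{\{q\}}^{(r)}\cong (C_{q^\infty})^{n_r}$ for some $n_r\geq 0$. Its socle $\Omega_1 X_{\{q\}}^{(r)} = X_{q,1}^{(r)}$ has $\F_q$-dimension $d$ by (the proof of) Lemma~\ref{411Bacskai}, which forces $n_r = d$. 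Consequently $X_{q,n}^{(r)} = \Omega_n X_{\{q\}}^{(r)} \cong (C_{q^n})^d$.

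For generation I would argue by induction on $n$. A key recursion is $(x_{q,n}^{(r)})^q = x_{q,n-1}^{(r)}$, immediate from $b_{q^n}^q = b_{q^{n-1}}$. Let $M_n$ denote the $\langle s\rangle$-submodule generated by $x_{q,n}^{(r)}$ in $X_{q,n}^{(r)}$. Divisibility of $X_{\{q\}}^{(r)}$ makes the $q^{n-1}$-power map $X_{q,n}^{(r)} \to X_{q,1}^{(r)}$ surjective with kernel $X_{q,n-1}^{(r)}$, and the quotient is irreducible as $\langle s\rangle$-module by Lemma~\ref{411Bacskai}. By the inductive hypothesis, $M_n \supseteq M_{n-1} = X_{q,n-1}^{(r)}$, so $M_n/X_{q,n-1}^{(r)}$ is either trivial or all of $X_{q,n}^{(r)}/X_{q,n-1}^{(r)}$. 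Ruling out the former requires $x_{q,n}^{(r)}$ to have order exactly $q^n$, equivalently $(x_{q,n}^{(r)})^{q^{n-1}} = x_{q,1}^{(r)} \neq 1$.

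The principal obstacle is therefore the base case $n=1$, which also supplies the non-triviality needed at every inductive step. I would handle it inside $\Omega_1 X_{\{q\}}$, which in additive $\F_q$-coordinates is the augmentation kernel of $\F_q^p$ under the cyclic-shift action of $\langle s\rangle$, and which by Proposition~\ref{410Bacskai}(i) decomposes as $\bigoplus_j X_{q,1}^{(j)}$. Writing $b_q$ as $(1,-1,0,\ldots,0)$, its $\langle s\rangle$-orbit consists of the vectors $e_i - e_{i+1}$, which span the augmentation kernel; thus $b_q$ is a cyclic $\langle s\rangle$-generator of $\Omega_1 X_{\{q\}}$, so its component in each summand $X_{q,1}^{(j)}$, and in particular in $X_{q,1}^{(r)}$, is nonzero. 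Because $\gcd(f_{r'},f_r) = 1$ in $\F_q[{\rm x}]$, the operator $f_{r'}(s)$ acts invertibly on the $\F_q[{\rm x}]/(f_r)$-module $X_{q,1}^{(r)}$; hence $x_{q,1}^{(r)} = b_q^{f_{r'}(s)}$ is nonzero, and by the irreducibility from Lemma~\ref{411Bacskai} it generates $X_{q,1}^{(r)}$. This settles the base case and completes the induction.
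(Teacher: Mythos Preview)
Your proof is correct. The divisibility argument for the isomorphism type $X_{q,n}^{(r)}\cong (C_{q^n})^d$ is precisely what the paper's terse back-reference to Lemma~\ref{411Bacskai} is meant to convey. For generation, however, the paper proceeds directly and uniformly in $n$: since $b_{q^n}$ generates $\Omega_n X_{\{q\}}$ as an $\langle s\rangle$-module, its image $x_{q,n}^{(r)}=b_{q^n}^{f_{r'}(s)}$ generates $(\Omega_n X_{\{q\}})^{f_{r'}(s)}$, and this is identified with $\Omega_n\big(X_{\{q\}}^{f_{r'}(s)}\big)=X_{q,n}^{(r)}$ via Proposition~\ref{410Bacskai}(ii). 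Your base case $n=1$ is essentially this same projection argument, but you then climb the tower by induction through the composition series rather than running the projection once at level $n$. The paper's route is slicker but leaves the commutation $(\Omega_n X_{\{q\}})^{f_{r'}(s)}=\Omega_n X_{\{q\}}^{(r)}$ to the reader; your inductive route is more explicit, isolates the crucial nonvanishing $x_{q,1}^{(r)}\neq 1$, and effectively previews the uniseriality established immediately afterward in Proposition~\ref{StructureofXqr}. One small notational slip: over $\F_q$ you should write $\gcd(g_{r'},g_r)=1$ and view $X_{q,1}^{(r)}$ as an $\F_q[\mathrm{x}]/(g_r)$-module, not $f_r$.
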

\begin{proof}
We saw in the proof of Lemma~\ref{411Bacskai} that 
$X_{q,n}^{(r)}\cong (C_{q^n})^d$. 
Also, $x_{q,n}^{(r)}$ generates
 $(\Omega_n X_{\{q\}})^{f_{r'}(s)} =
\Omega_n \big(X_{\{q\}}^{f_{r'}(s)}\big)
= X_{q,n}^{(r)}$ as an $\langle s \rangle$-module.
\end{proof}

\begin{proposition}
\label{StructureofXqr}
The $\langle s\rangle$-module $X_{\{ q\}}^{(r)}$ is 
uniserial: its only finite $\langle s\rangle$-submodules 
are the $X_{q,n}^{(r)}$, $n\geq 0$.
\end{proposition}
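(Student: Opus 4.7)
The plan is to prove the proposition by induction on the $q$-exponent $e$ of a finite $\langle s\rangle$-submodule $N\leq X_{\{q\}}^{(r)}$; that is, $q^e$ is the least power of $q$ annihilating $N$, so $N\leq \Omega_e X_{\{q\}}^{(r)}= X_{q,e}^{(r)}$. I will show that this containment is an equality. The base case $e=0$ is immediate (both sides are trivial), and once the inductive equality is established, uniseriality of $X_{\{q\}}^{(r)}$ follows because any submodule is the union of its finite submodules, and hence either equals some $X_{q,n}^{(r)}$ or exhausts $X_{\{q\}}^{(r)}$.

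The engine of the inductive step is the $\langle s\rangle$-endomorphism $\pi\colon X_{\{q\}}^{(r)}\to X_{\{q\}}^{(r)}$ given by $\pi(x)=x^q$. Its kernel is $X_{q,1}^{(r)}$, so restricting to $N$ yields a short exact sequence $1\to N\cap X_{q,1}^{(r)}\to N\to N^q\to 1$ of finite $\langle s\rangle$-modules. Since $N^q$ has exponent at most $q^{e-1}$, the inductive hypothesis forces $N^q = X_{q,k}^{(r)}$ for some $k\leq e-1$. Choosing $x\in N$ of order exactly $q^e$, the element $x^q\in N^q$ has order $q^{e-1}$, pinning down $k=e-1$ and therefore $|N^q|=q^{(e-1)d}$.

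Simultaneously, $x^{q^{e-1}}$ is a non-identity element of order $q$ lying in $N\cap X_{q,1}^{(r)}$, so this intersection is a nonzero $\langle s\rangle$-submodule of $X_{q,1}^{(r)}$. By Lemma~\ref{411Bacskai}, $X_{q,1}^{(r)}$ is $\langle s\rangle$-irreducible, forcing $N\cap X_{q,1}^{(r)} = X_{q,1}^{(r)}$, of order $q^d$. The short exact sequence now gives $|N|=q^d\cdot q^{(e-1)d}=q^{ed}=|X_{q,e}^{(r)}|$, and combined with $N\leq X_{q,e}^{(r)}$ this forces $N=X_{q,e}^{(r)}$, closing the induction.

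The entire argument pivots on Lemma~\ref{411Bacskai} (irreducibility of $X_{q,1}^{(r)}$); everything else is order bookkeeping around the $q$-th power map, so I anticipate no substantive obstacle. The only delicate points are verifying that $\pi$ is an $\langle s\rangle$-endomorphism (immediate, since $s$ acts on $\Di(p,\C)$ by conjugation and diagonal matrices commute) and checking that the argument degenerates correctly at $e=1$, where $N^q=X_{q,0}^{(r)}=1$ and the calculation reduces to $N=X_{q,1}^{(r)}$ directly by irreducibility.
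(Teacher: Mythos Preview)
Your proof is correct and follows essentially the same approach as the paper, which simply points to the proof of Lemma~\ref{XModules} with Lemma~\ref{411Bacskai} substituted for the irreducibility of the bottom layer. Your induction on exponent via the $q$th-power map is just the explicit form of the recursion in that earlier proof: quotienting by $X_{q,1}^{(r)}$ and identifying $X_{\{q\}}^{(r)}/X_{q,1}^{(r)}\cong X_{\{q\}}^{(r)}$ is exactly $\pi$, so your $N^q$ is the image of $N/X_{q,1}^{(r)}$ under that isomorphism.
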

\begin{proof}
Cf.~the proof of Lemma~\ref{XModules}; here we use
Lemma~\ref{411Bacskai}.
\end{proof}

\begin{definition}
Let $\mathcal{W}_q$ be the set of all subgroups
$X^{(1)}_{q,n_1} X^{(2)}_{q,n_2} \cdots 
X^{(v)}_{q,n_v}Z_{q^c}$ 
of $D_{\{q\}}$ as $n_1, \ldots , n_v, c$ range 
over the non-negative integers.
\end{definition}

\begin{theorem}
\label{qsSubmodules}
$\mathcal{W}_q$ is the set of all finite 
$\langle s\rangle$-submodules of $D_{\{q\}}$. 
\end{theorem}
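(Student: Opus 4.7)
The plan is to deploy the Goursat--Remak Theorem \ref{GoursatRemak} on the full direct-product decomposition of $D_{\{q\}}$, and to show that no non-trivial ``twisted'' gluings between distinct direct factors can occur; this forces every finite $\langle s\rangle$-submodule to be Cartesian and hence an element of $\mathcal{W}_q$. First I would assemble
\[
D_{\{q\}} = X_{\{q\}}^{(1)}\times\cdots\times X_{\{q\}}^{(v)}\times Z_{\{q\}}
\]
as an $\langle s\rangle$-module from Proposition \ref{410Bacskai}(i) together with $D_{\{q\}}=X_{\{q\}}\times Z_{\{q\}}$. Each $X_{\{q\}}^{(r)}$ is uniserial with finite $\langle s\rangle$-submodules $\{X_{q,n}^{(r)}\}_{n\geq 0}$ by Proposition \ref{StructureofXqr}, and $Z_{\{q\}}\cong C_{q^\infty}$ has submodules $\{Z_{q^c}\}_{c\geq 0}$ by Lemma \ref{ZModules}, because $\langle s\rangle$ acts trivially on scalars. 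Thus every $W\in\mathcal{W}_q$ is automatically a finite $\langle s\rangle$-submodule of $D_{\{q\}}$.

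For the reverse inclusion, given a finite $\langle s\rangle$-submodule $M\leq D_{\{q\}}$, I would split $D_{\{q\}}$ as $U\times V$ with $U$ one factor and $V$ the product of the remaining factors, and invoke Theorem \ref{GoursatRemak}: $M$ is parameterized by an $\langle s\rangle$-isomorphism $\alpha\colon U_1/U_2\to V_1/V_2$ between finite sections. Iterating this splitting inductively across the $v+1$ factors, it suffices to show that $\alpha$ must operate between zero sections at every stage---for then $M=(M\cap U)(M\cap V)$ recursively, yielding the Cartesian form $M=X_{q,n_1}^{(1)}\cdots X_{q,n_v}^{(v)}Z_{q^c}$ with parameters uniquely determined by the intersections $M\cap X_{\{q\}}^{(r)}$ and $M\cap Z_{\{q\}}$.

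The hard part is the underlying Hom-orthogonality claim: no non-identity finite $\langle s\rangle$-section of $X_{\{q\}}^{(r)}$ is $\langle s\rangle$-isomorphic to a non-identity finite section of $X_{\{q\}}^{(r')}$ for $r\neq r'$, nor of $Z_{\{q\}}$. I would argue through irreducible composition factors. Using the uniserial structure (Proposition \ref{StructureofXqr}) together with the $\langle s\rangle$-isomorphisms $X_{q,n+1}^{(r)}/X_{q,n}^{(r)}\cong X_{q,1}^{(r)}$ induced by raising to the $q^n$-th power, every composition factor of a non-identity section of $X_{\{q\}}^{(r)}$ is isomorphic to the single simple constituent $X_{q,1}^{(r)}$; analogously every composition factor of a non-identity section of $Z_{\{q\}}$ is isomorphic to $Z_q$. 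By Lemma \ref{411Bacskai}, $s$ acts on $X_{q,1}^{(r)}$ irreducibly with minimal polynomial $g_r$, whereas it acts trivially on $Z_q$ (minimal polynomial ${\rm x}-1$). The polynomials $g_1,\ldots,g_v,{\rm x}-1$ are pairwise coprime irreducibles of $\F_q[{\rm x}]$---the $g_r$ are distinct factors of $\overline{\! f}$, and $g_r(1)\neq 0$ since $\overline{\! f}(1)=p\neq 0$ in $\F_q$---so the simple $\langle s\rangle$-modules $X_{q,1}^{(1)},\ldots,X_{q,1}^{(v)},Z_q$ are pairwise non-isomorphic. A non-trivial Goursat--Remak $\alpha$ would therefore equate composition factors belonging to different direct summands, which is impossible. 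Hence every gluing is trivial, and $M\in\mathcal{W}_q$.
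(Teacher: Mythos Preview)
Your proof is correct and follows essentially the same route as the paper: apply Goursat--Remak to the direct decomposition $D_{\{q\}}=X_{\{q\}}^{(1)}\times\cdots\times X_{\{q\}}^{(v)}\times Z_{\{q\}}$ from Proposition~\ref{410Bacskai} and Proposition~\ref{StructureofXqr}, and show that all gluings are trivial. The only minor difference is in justifying the non-isomorphism of non-identity sections across distinct factors: the paper argues directly that $f_i(s)$ annihilates every section of $X_{\{q\}}^{(i)}$ but no non-identity section of $X_{\{q\}}^{(j)}$ for $j\neq i$, whereas you pass to composition factors and compare the $\F_q$-minimal polynomials $g_1,\ldots,g_v,{\rm x}-1$; both arguments are equivalent and your version supplies a little more detail, including the case of $Z_{\{q\}}$ which the paper leaves implicit.
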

\begin{proof}
This is another application of 
Theorem~\ref{GoursatRemak}, relying now on 
Propositions~\ref{410Bacskai} and \ref{StructureofXqr}.
In particular, if $i\neq j$ then a non-identity 
section of $X_{\{q\}}^{(i)}$
is not $\langle s\rangle$-isomorphic to a non-identity 
section of $X_{\{q\}}^{(j)}$ (since $f_i(s)$ does 
not annihilate both sections). 
\end{proof}
\begin{remark}\label{CalWInjective}
Each element of $\mathcal{W}_q$ is labeled by a 
unique $(v+1)$-tuple $n_1, \ldots, n_v, c$.
\end{remark}

\subsection{All finite submodules of $\Di(p,\C)$}

Theorems~\ref{AllpsModules} and \ref{qsSubmodules} 
give the following.
\begin{theorem}
\label{AllModulesEverywhere}
The set $\mathcal A$ of direct products
$Y \times \Pi_q W_q$, where $Y\in \mathcal Y$ and
$W_q\in \mathcal{W}_q$ for finitely many primes $q\neq p$,
is the set of all finite $\langle s\rangle$-submodules 
of $\mathrm{D}(p,\C)$.
\end{theorem}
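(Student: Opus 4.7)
The plan is to decompose any finite $\langle s\rangle$-submodule $A$ of $\Di(p,\C)$ into its primary components and identify each component via the theorems already established.

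First I would observe that since $\Di(p,\C)$ is abelian, $A$ is the internal direct product of its Sylow subgroups $A_{\{q\}}$ over the primes $q$ dividing $|A|$; finiteness of $A$ forces only finitely many of these factors to be non-trivial. Each $A_{\{q\}}$ is a characteristic subgroup of $A$, hence is itself an $\langle s\rangle$-submodule of $D_{\{q\}}$ (using that conjugation by $s$ preserves element orders). Separating the prime $p$ from the others, this gives $A = A_{\{p\}} \times A_{p'}$ with $A_{p'} = \prod_{q\neq p} A_{\{q\}}$.

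Next I would apply the classification results already proved: Theorem~\ref{AllpsModules} yields $A_{\{p\}}\in \mathcal Y$, and for each prime $q\neq p$ dividing $|A|$, Theorem~\ref{qsSubmodules} yields $A_{\{q\}}\in \mathcal W_q$ (with $A_{\{q\}} = 1 \in \mathcal W_q$ corresponding to all $n_i$ and $c$ equal to $0$ otherwise). This shows $A$ belongs to $\mathcal A$. For the converse direction, I would note that any $Y\times \prod_q W_q$ with $Y\in \mathcal Y$ and $W_q\in \mathcal W_q$ is manifestly finite and is an $\langle s\rangle$-submodule, since a direct product of $\langle s\rangle$-submodules inside $\Di(p,\C)$ is again an $\langle s\rangle$-submodule.

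The one subtlety worth flagging, rather than a true obstacle, is to explain why no Goursat--Remak ``diagonal'' submodule between different primary components appears in the list: an $\langle s\rangle$-isomorphism between non-identity sections of $A_{\{q\}}$ and $A_{\{q'\}}$ for distinct primes $q,q'$ is impossible because the orders of elements in the two sections have coprime $q$-parts. Thus Theorem~\ref{GoursatRemak} only produces Cartesian products across the different primary components, confirming that $\mathcal A$ is exactly the set described.
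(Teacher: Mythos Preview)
Your proof is correct and matches the paper's approach: the paper states the primary decomposition of a finite $T$-submodule into its Sylow $p$-subgroup and Hall $p'$-subgroup at the opening of Section~\ref{SubgroupsOfDpC}, and then records Theorem~\ref{AllModulesEverywhere} as an immediate consequence of Theorems~\ref{AllpsModules} and \ref{qsSubmodules}, exactly as you argue. Your final paragraph on Goursat--Remak is superfluous, since the Sylow decomposition of a finite abelian group already forces every submodule to be Cartesian across distinct primes; but it does no harm.
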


Theorem~\ref{AllModulesEverywhere} accounts for all the 
modules needed.
That is, the $T$-modules for 
$\langle s\rangle \leq T\leq \mathrm{Sym}(p)$ 
are listed by refinement of $\mathcal A$. 

As per Remarks~\ref{YGeneratorsAndOrder}~(iii) and 
\ref{CalWInjective}, we designate
each finite $\langle s \rangle$-submodule of $\Di(p,\C)$
by a unique integer parameter string. 

With the implementation in mind, we outline how to 
list all $\langle s\rangle$-submodules $M$ 
of a given order $o>1$.
Let $o=p^ab$ where $a\geq 0$ and $b$ is a positive 
integer not divisible by $p$. The possible Sylow 
$p$-subgroups of $M$ are the $Y_{j,k,l}$ where $j+k=a$, 
$0\leq l\leq p-1$, and either $j=l=0$ or $j,k\geq 1$. 
Let $q^e>1$ be the largest power of the prime $q$ in 
the prime factorization of $b$. Then 
$M_{\{p\}'}\cap X_{\{ q\}}$ is some
$W_q\in \mathcal{W}_q$; the choices for $W_q$
correspond to the strings $n_1,\ldots , n_v,c$
of non-negative integers such that
$e = d(n_1 + \cdots + n_v)+c$. We do this 
for each prime $q$ dividing $b$, and
get all $\langle s \rangle$-submodules as
 direct products of these parts. The module 
generating sets are sufficient to assemble group 
generating sets of the $T$-extensions in $\Mo(p,\C)$.

\subsection{Modules for every solvable permutation part}

Let $a\geq 1$ be a proper divisor of $p-1$. 
The next two lemmas enable us to refine the list 
$\mathcal A$ of Theorem~\ref{AllModulesEverywhere}
to a list of finite $\langle s, t^a\rangle$-modules,
and are also used in solving the conjugacy 
problem.
 
Note that each finite 
$\langle s\rangle$-submodule of $X_{\{p\}}$ is a 
$\langle t\rangle$-module, by Lemma~\ref{XModules}.
\begin{lemma}
\label{tConjugacyonspModules}
$Y_{j,k,l}^t = Y_{j,k,l'}$ where $l'$ is the image of 
$l$ under $t^j\in \mathrm{Sym}(p)$, i.e., 
$l'\equiv lu^j \bmod p$.
\end{lemma}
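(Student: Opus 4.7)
The plan is to track how conjugation by $t$ transforms the three pieces of the defining generating set of $Y_{j,k,l}$: the subgroups $X_{p^j}$ and $Z_{p^k}$, and the element $x_{p^{j+1}} z_{p^{k+1}}^l$. The subgroup $X_{p^j}$ is $\langle t\rangle$-invariant because $t$ normalises $\langle s\rangle$ (Notation~\ref{stuNotation}) and $X_{p^j}$ is the unique $\langle s\rangle$-submodule of $X_{\{p\}}$ of its order (Lemma~\ref{XModules}); the subgroup $Z_{p^k}$ and its element $z_{p^{k+1}}^l$ are scalar matrices, so are centralised by $\GL(p,\F)$. Therefore
\[
Y_{j,k,l}^{\,t} = \langle\, x_{p^{j+1}}^{\,t}\cdot z_{p^{k+1}}^l,\; X_{p^j},\; Z_{p^k}\,\rangle,
\]
and the problem reduces to computing the image of $x_{p^{j+1}}^{\,t}$ in $X_{p^{j+1}}/X_{p^j}$.

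The quotient $X_{p^{j+1}}/X_{p^j}$ is one-dimensional over $\F_p$, and $s$ acts trivially on it because $\gamma$ sends $X_{p^{j+1}}$ into $X_{p^j}$. Since $t$ normalises $\langle s\rangle$, the $\langle s,t\rangle$-action on this quotient factors through a character $\mu_j \colon \langle \bar t\rangle \to \F_p^{\times}$. To compute $\mu_j$ I use the relation $\gamma\tau = \tau\gamma\psi_u$, where $\psi_u = 1+s+\cdots+s^{u-1}$ and $\tau,\gamma$ denote the conjugation actions; this relation is an immediate consequence of $s^t = s^u$. Because $\psi_u$ reduces to multiplication by $u$ on any quotient on which $s$ acts trivially, applying both sides of this identity to a representative of $X_{p^{j+1}}\setminus X_{p^j}$ and reducing modulo $X_{p^{j-1}}$ gives the recurrence $\mu_{j-1} = u\cdot \mu_j$. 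The base case $\mu_0 = 1$ holds because $X_p = Z_p$ is central, so $\mu_j = u^{-j}$ for all $j\geq 0$.

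To conclude, I work in the elementary abelian quotient
\[
X_{p^{j+1}}Z_{p^{k+1}}/X_{p^j}Z_{p^k} \;\cong\; \F_p\oplus\F_p,
\]
whose summands are generated by the images $\bar x,\bar z$ of $x_{p^{j+1}}$ and $z_{p^{k+1}}$ (these images are independent because $X_{p^{j+1}}\cap Z_{p^{k+1}} = Z_p \subseteq X_{p^j}\cap Z_{p^k}$). The image of $Y_{j,k,l}/X_{p^j}Z_{p^k}$ is the cyclic subgroup $\langle \bar x + l\bar z\rangle$. Applying $t$ carries this to $\langle u^{-j}\bar x + l\bar z\rangle$, and multiplying the generator through by the unit $u^j \in \F_p^\times$ rewrites it as $\bar x + lu^j\bar z$, the generator of $Y_{j,k,l'}/X_{p^j}Z_{p^k}$ with $l' \equiv lu^j \bmod p$. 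Since $X_{p^j}Z_{p^k}$ is common to both sides, this gives $Y_{j,k,l}^{\,t} = Y_{j,k,l'}$, as required.

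The main obstacle is the character computation $\mu_j = u^{-j}$. The recurrence itself is clean, but setting it up demands consistency between the conjugation convention and the right/left action of the group ring, and one must pin the base case correctly; a direct calculation of $t\, b_p\, t^{-1}$ and its reduction modulo $X_{p^{p-2}}$ provides a concrete sanity check.
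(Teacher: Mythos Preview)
Your proof is correct, and it reaches the same key fact as the paper---that $t$ acts on the one-dimensional quotient $X_{p^{j+1}}/X_{p^j}$ by the scalar $u^{-j}$---but by a genuinely different route. The paper introduces the auxiliary diagonal element $a_{p^n}$ (with a single nontrivial entry $e^{2\pi\mathrm{i}/p^n}$), writes $x_{p^{j+1}} = \gamma^m(a_{p^n})$, and then computes $x_{p^{j+1}}^t$ directly from the explicit formula $a_{p^n}^t = a_{p^n}^{s^{u-1}}$ together with a binomial expansion of $(1-s^u)^m$ in $\Z\langle s\rangle$; this yields $x_{p^{j+1}}^t \equiv x_{p^{j+1}}^{u^m} \bmod X_{p^j}$ with $m\equiv -j \bmod (p-1)$. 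You instead extract the character $\mu_j$ abstractly and derive the recurrence $\mu_{j-1} = u\mu_j$ from the operator identity $\gamma\tau = \tau\gamma\psi_u$, anchoring it at $\mu_0 = 1$ since $X_p = Z_p$ is central. Your argument is cleaner and avoids any explicit matrix bookkeeping or case split on $m$; the paper's computation, on the other hand, gives a concrete handle on $x_{p^{j+1}}^t$ itself (not just its image modulo $X_{p^j}$), which is occasionally convenient elsewhere. Either way, once $\mu_j = u^{-j}$ is in hand, the passage to $Y_{j,k,l}^t = Y_{j,k,l'}$ via the two-dimensional quotient $X_{p^{j+1}}Z_{p^{k+1}}/X_{p^j}Z_{p^k}$ is the same in both proofs.
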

\begin{proof}
Let $m$ be the residue of $-j$ modulo $p-1$, and let
$n= (j+m)/(p-1)$. If $a_{p^k}$ denotes the diagonal matrix 
with $e^{2\pi \mathrm{i}/p^k}$ in position $(1,1)$ and $1$s
elsewhere on the main diagonal, then
$x_{p^{j+1}} = \gamma^m(a_{p^n})$ for $m\neq 0$ and
 $x_{p^{j+1}} = \gamma^{p-1}(a_{p^{n+1}})$ for $m= 0$.

Suppose that $m>\allowbreak 0$; the  
proof for $m=\allowbreak 0$ is similar. 
It may be checked that 
 $x_{p^{j+1}}^t = a_{p^n}^{t(1-s^u)^m}$  
and $a_{p^n}^t = a_{p^n}^{s^{u-1}}$.
Since $\langle s\rangle$ acts trivially 
on $X_{p^{j+1}}/X_{p^j}$,
\[
x_{p^{j+1}}^t \equiv a_{p^n}^{(1-s^u)^m} 
 \bmod X_{p^j}.
\]
Binomial expansion in $\Z\langle s\rangle$ 
gives 
\[
(1-s^u)^m = (1-s)^m u^m + 
\textrm{(terms divisible by }(1-s)^{m+1}).
\]
Thus $x_{p^{j+1}}^t \equiv  
 x_{p^{j+1}}^{u^m} \bmod X_{p^j}$.
 
Let $y=x_{p^{j+1}}z_{p^{k+1}}^l$. 
Since $Y_{j,k,0}$ is a $\langle t\rangle$-module, 
we assume that $l>0$. By the above,
$y^t \in \allowbreak 
 x_{p^{j+1}}^{u^m} z_{p^{k+1}}^l X_{p^j}$; so
$(y^t)^{u^j} \in 
 x_{p^{j+1}}z_{p^{k+1}}^{lu^j}X_{p^j}$.
Hence $Y_{j,k,l'}\subseteq Y_{j,k,l}^t$. 
As these modules have the same order, 
 they are equal.
\end{proof}

\begin{lemma}
\label{tConjugacyonsqModules} 
$(X^{(1)}_{q,n_1} X^{(2)}_{q,n_2} \cdots X^{(v)}_{q,n_v})^t =
 X^{(1)}_{q,n_v} X^{(2)}_{q,n_1} \cdots X^{(v)}_{q,n_{v-1}}$.
\end{lemma}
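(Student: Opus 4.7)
The plan is to reduce to the single-factor identity $(X^{(r)}_{q,n})^t = X^{(r+1)}_{q,n}$ for each $r \in \{1,\ldots,v\}$ and each $n \geq 0$, with superscripts read modulo $v$. Once this is in hand, Proposition~\ref{410Bacskai}(i) and the fact that $t$-conjugation is a homomorphism give
\[
\Bigl(\prod_{r=1}^{v} X^{(r)}_{q,n_r}\Bigr)^t
= \prod_{r=1}^{v} X^{(r+1)}_{q,n_r}
= \prod_{r'=1}^{v} X^{(r')}_{q,n_{r'-1}},
\]
where indices are taken mod $v$ so that $n_0 := n_v$; this is precisely the right-hand side of the lemma.

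The motor is the relation $s^t = s^u$. If $b \in X^{(r)}_{\{q\}}$, then $b^{f_r(s)} = 1$ transforms under $t$-conjugation into $(b^t)^{f_r(s^u)} = (b^{f_r(s)})^t = 1$, so $(X^{(r)}_{\{q\}})^t$ lies in $K := \ker(f_r(s^u))$ on $X_{\{q\}}$. I would compute $K$ one primary component at a time via Proposition~\ref{410Bacskai}(i). Writing $R_{r'} := \Z_q[{\rm x}]/(f_{r'})$ (the ring of integers in the unramified degree-$d$ extension of $\Q_q$) and $\alpha_{r'} := {\rm x} \bmod f_{r'}$, the module $X^{(r')}_{\{q\}} \cong (C_{q^\infty})^d$ is a faithful $R_{r'}$-module on which multiplication by $x \in R_{r'}$ is zero when $x=0$ and injective when $x$ is a unit. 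Now $\overline{\alpha_{r'}^{\, u}} \in \Xi_{r'}^u = \Xi_{r'-1}$, the identity $\Xi_{r'}^u = \Xi_{r'-1}$ being exactly what the recursion $g_{r'} = \mathrm{gcd}(g_{r'-1}({\rm x}^u), \overline{f})$ encodes. Hence $g_r(\overline{\alpha_{r'}^{\, u}}) = 0$ iff $r' = r+1$: in that single case $\alpha_{r+1}^u$ is a root of $f_r$ and so $f_r(\alpha_{r+1}^u) = 0$, whereas for every other $r'$ the element $f_r(\alpha_{r'}^u)$ is a unit of $R_{r'}$. Therefore $K = X^{(r+1)}_{\{q\}}$.

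This yields $(X^{(r)}_{\{q\}})^t \subseteq X^{(r+1)}_{\{q\}}$, and the reverse inclusion follows by running the same argument with $t^{-1}$ in place of $t$ (using $s^{t^{-1}} = s^{u^{-1}}$). Passing to the level-$n$ version is immediate because $\Omega_n X_{\{q\}}$ is characteristic in $X_{\{q\}}$, so conjugation commutes with $\Omega_n$ and $(X^{(r)}_{q,n})^t = \Omega_n X^{(r+1)}_{\{q\}} = X^{(r+1)}_{q,n}$. I expect the main obstacle to be the kernel identification, which couples the local algebra of the $R_{r'}$ (where one must verify that $f_r(\alpha_{r'}^u)$ takes only the two extreme values $0$ or unit, never an intermediate non-unit nonzero value) to the Frobenius orbit combinatorics $\Xi_{r'}^u = \Xi_{r'-1}$ built into the recursive definition of the $g_r$.
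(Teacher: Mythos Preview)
Your argument is correct and rests on the same key fact as the paper's proof---the orbit identity $\Xi_{r'}^u = \Xi_{r'-1}$ encoded by the recursion $g_{r+1}({\rm x}) = \gcd(g_r({\rm x}^u),\overline{f})$, together with $s^t = s^u$. The execution differs. The paper first observes that conjugation by $t$ must permute the indecomposable direct factors $X_{\{q\}}^{(r)}$ of $X_{\{q\}}$, then reduces to level~$1$ (where the $X_{q,1}^{(r)}$ are pairwise non-isomorphic $\langle s\rangle$-modules); there a single divisibility $g_{r+1}\mid g_r({\rm x}^u)$ gives one inclusion, and irreducibility (Lemma~\ref{411Bacskai}) upgrades it to equality. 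You instead compute the full kernel of $f_r(s^u)$ on $X_{\{q\}}$ via the $\Z_q$-algebra structure of each $R_{r'}$, which requires the extra (but easy) verification that $f_r(\alpha_{r'}^u)$ is either $0$ or a unit in $R_{r'}$ and never an intermediate non-unit; this is slightly heavier machinery but sidesteps any appeal to indecomposability or irreducibility. Both routes are short and legitimate; the paper's is a touch more economical since working at level~$1$ lets one stay over $\F_q$ throughout.
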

\begin{proof}
The only indecomposable direct factors of $X_{\{q\}}$ are 
the $X_{\{q\}}^{(r)}$. Thus $(X_{\{q\}}^{(r)})^t = 
X_{\{q\}}^{(j)}$ for some $j$. The $X_{q,1}^{(1)}, 
\ldots , X_{q,1}^{(v)}$ 
are pairwise non-isomorphic $\langle s\rangle$-modules, so it 
is enough to prove that $(X_{q,1}^{(r)})^t \cong X_{q,1}^{(r+1)}$ 
(reading superscripts modulo $v$).

By Definition~\ref{Definefandgr},
$g_{r+1}({\rm x})$ divides $g_r({\rm x}^u)$, whence 
$g_r(s)^t \in  g_{r+1}(s)\Z_q\langle s\rangle$. 
Each element of $X_{q,1}^{(r+1)}$
is annihilated by $g_{r+1}(s)$, and therefore by 
$g_r(s)^t$. Thus $(X_{q,1}^{(r+1)})^{t^{-1}} 
\subseteq X_{q,1}^{(r)}$; then
$(X_{q,1}^{(r)})^{t} = X_{q,1}^{(r+1)}$ by 
Lemma~\ref{411Bacskai}.
\end{proof}

\section{Monomial groups with cyclic permutation part}
\label{MonomialGroupsWithCp}

This section presents our first solutions of the extension 
and conjugacy problems. The resulting classification 
subsumes that in \cite[\S\S\! 2--3]{Conlon}.
\begin{definition}
Let $\mathcal L$ be the set of all groups
$\langle sz_{p^{k+1}}^i, A\rangle$
where $0\leq i\leq p-1$ and $A\in \mathcal A$ as
in Theorem~\ref{AllModulesEverywhere} with 
$A\cap Z_{\{ p\}} = Z_{p^k}$.
\end{definition}

\begin{proposition}
\label{InitialPare}
A finite subgroup of $\widetilde{\Mo}(p,\C)$ with 
permutation part $\langle s\rangle$ is 
$D$-conjugate to a group in $\mathcal L$.
\end{proposition}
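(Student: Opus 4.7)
My plan is as follows.

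First, set $A = G\cap \Di(p,\C)$. This is a finite $\langle s \rangle$-submodule, so by Theorem~\ref{AllModulesEverywhere} we have $A\in \mathcal{A}$, and by Lemma~\ref{ZModules} there is a unique $k\geq 0$ with $A\cap Z_{\{p\}}=Z_{p^k}$. Since $\phi(G)=\langle s\rangle$, pick $g\in G$ with $\phi(g)=s$, and write $g=ds$ with $d\in D$ (possible because $G\leq \widetilde{\Mo}(p,\C)$). Then $G=\langle g,A\rangle$, so it suffices to massage $g$ by $D$-conjugation and multiplication by elements of $A$ into the required form $z_{p^{k+1}}^i s$.

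Second, I bring $d$ into $Z\cap D$ by $D$-conjugation. A direct computation gives
\[
(ds)^{d_0} \;=\; d\cdot (d_0^{-1}\,\sigma_s(d_0))\cdot s,
\]
where $\sigma_s$ is conjugation by $s$ on $\Di(p,\C)$. The map $\rho\colon d_0\mapsto d_0^{-1}\sigma_s(d_0)$ on $D$ has kernel $Z\cap D$ and image contained in $X\cap D$; exactly as for $\gamma$, one checks (using $D=(X\cap D)\cdot (Z\cap D)$, a central product with amalgamated $Z_p$) that $\rho(D)=X\cap D$. Hence we can choose $d_0\in D$ so that, after replacing $G$ by $G^{d_0}$, the new generator has the form $g=zs$ with $z\in Z\cap D$ a torsion scalar.

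Third, I absorb the $p'$-part of $z$ into $A$ and normalize the $p$-part. Write $z=z_p z_{p'}$ with $z_p\in Z_{\{p\}}$, $z_{p'}\in Z_{\{p\}'}$. Since $z$ is scalar, $g^p = z^p\in A$, so $z_{p'}^p\in A\cap Z_{\{p\}'}$ and $z_p^p\in Z_{p^k}$. On $Z_{\{p\}'}$ the $p$th power map is bijective, hence $z_{p'}\in A\cap Z_{\{p\}'}\subseteq A$, so we may replace $g$ by $g\cdot z_{p'}^{-1}=z_p s$ without changing $G$. Next, $z_p^p\in Z_{p^k}$ forces $|z_p|\mid p^{k+1}$, so $z_p=z_{p^{k+1}}^{\,j}$ for some $0\le j<p^{k+1}$. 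Writing $j=j_1 p + i$ with $0\le i\le p-1$ gives $z_p=z_{p^{k+1}}^{\,i}\cdot z_{p^k}^{\,j_1}$, and $z_{p^k}^{\,j_1}\in Z_{p^k}\subseteq A$, so we may further multiply by its inverse to obtain $g=z_{p^{k+1}}^{\,i}s$.

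Thus $G^{d_0}=\langle z_{p^{k+1}}^{\,i}s,\,A\rangle\in\mathcal{L}$, as required. The only nontrivial point in the plan is the surjectivity $\rho(D)=X\cap D$, which is really the same statement as $\gamma(D)=X\cap D$ and follows from the central product decomposition of $\Di(p,\C)$; everything else is bookkeeping with the Sylow decomposition $D=D_{\{p\}}\times D_{\{p\}'}$ and the fact that $A\cap Z_{\{p\}}=Z_{p^k}$.
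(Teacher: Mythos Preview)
Your proof is correct and follows essentially the same route as the paper's: conjugate by a (torsion) diagonal element to kill the $X$-component of the coefficient of $s$ (the paper cites $\gamma(X)=X$ for this, you use the equivalent map $\rho$), then absorb the $p'$-part of the remaining scalar into $A$ via $g^p\in A$, and finally reduce the $p$-part modulo $Z_{p^k}$. Your write-up is in fact slightly more careful than the paper's, which takes the conjugating element $y\in X$ without explicitly noting that it can be chosen torsion (needed for genuine $D$-conjugacy); your observation that $\rho(D)=X\cap D$ handles this.
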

\begin{proof}
Let $G$ be a subgroup of $\widetilde{\mathrm M}(p,\C)$
with $\phi(G)=\langle s\rangle$, so $sxz\in G$ for some
torsion elements $x\in X$ and $z\in Z$. 
Since $\gamma(X) = X$, there exists $y\in X$ such that 
$G^y = \langle sz,D\cap G\rangle$.
Then $z^p=(sz)^p \in (G\cap Z_{\{p\}})(G\cap  Z_{\{p\}'})$
implies that we may multiply $z$ by scalars from $D\cap G$ 
to get $z\in Z_{\{p\}}$.
\end{proof}

By Theorem~\ref{NonScalarIffIrreducible},  
the irreducible groups in $\mathcal L$ are 
precisely the non-abelian ones: those with 
non-scalar diagonal subgroup.

Since each group in $\mathcal L$ is normalized 
by $\langle s\rangle$, and 
$N_{\mathrm{Sym}(p)}(\langle s\rangle) = 
\langle s, t\rangle$,
Theorems~\ref{GLConjugateImpliesSamePP} and 
 \ref{Protoconjugacy} 
guarantee that the irreducible groups with a 
 unique abelian normal subgroup of index $p$ 
 are $\GL(p,\C)$-conjugate if and only if 
they are $D\langle t\rangle$-conjugate. 
We decide conjugacy of this type using 
 the next two lemmas.
\begin{lemma}
\label{Littlet}
Let $G \in \mathcal L$, with $G\cap Z_{\{p\}} =
Z_{p^k}$. Then $G$ is $\langle t\rangle$-conjugate 
to some $H\in \mathcal L$ such that 
$s\in \allowbreak H$ or 
$s z_{p^{k+1}} \in H$.
\end{lemma}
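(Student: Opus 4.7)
The plan is to pick a power $t^j$ of $t$ so that conjugation by it shifts the exponent of $z_{p^{k+1}}$ in the generator to $0$ or $1$. The key observation is that $t$ centralizes the scalar $z_{p^{k+1}}$ but sends $s$ to $s^{u^j}$, where $u$ is a primitive root modulo $p$. If $i = 0$, we simply take $H = G$, so henceforth assume $i \in \{1,\ldots,p-1\}$. Since $u$ is a primitive root modulo $p$, there is a unique $j \in \{0,\ldots,p-2\}$ with $u^j \equiv i \pmod p$, and I claim $H := G^{t^j}$ is the desired group.

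To check that $sz_{p^{k+1}} \in H$: conjugating the generator $sz_{p^{k+1}}^i$ by $t^j$ gives $s^{u^j}z_{p^{k+1}}^i$, because the scalar $z_{p^{k+1}}$ commutes with $t$. Letting $m \in \{1,\ldots,p-1\}$ be the inverse of $u^j$ modulo $p$, the $m$th power is $(s^{u^j}z_{p^{k+1}}^i)^m = sz_{p^{k+1}}^{im}$. Since $z_{p^{k+1}}^p = z_{p^k} \in Z_{p^k} \subseteq A^{t^j}$, we can reduce the exponent $im$ modulo $p$ by multiplying by a suitable element of $A^{t^j}$. The result is an element of $H$ of the form $sz_{p^{k+1}}^{i'}$ with $i' \equiv iu^{-j} \equiv 1 \pmod p$, as required.

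It remains to confirm that $H \in \mathcal L$. The diagonal subgroup of $H$ is $A^{t^j}$, and writing $A = Y \times \prod_q W_q$ as in Theorem~\ref{AllModulesEverywhere}, Lemmas~\ref{tConjugacyonspModules} and~\ref{tConjugacyonsqModules} immediately yield $Y^{t^j} \in \mathcal Y$ and $W_q^{t^j} \in \mathcal W_q$, so $A^{t^j} \in \mathcal A$. Moreover $A^{t^j} \cap Z_{\{p\}} = Z_{p^k}^{t^j} = Z_{p^k}$ because $t$ centralizes scalars. Combining this with the generator analysis above shows $H = \langle sz_{p^{k+1}}, A^{t^j}\rangle \in \mathcal L$. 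No serious obstacle arises: the reduction is essentially forced once one couples the transitive $t$-action on $\langle s\rangle\setminus\{1\}$ with its triviality on scalars; the only slight care needed is the exponent bookkeeping when passing from $z_{p^{k+1}}^{im}$ to the representative $z_{p^{k+1}}^{im \bmod p}$ modulo $Z_{p^k}$.
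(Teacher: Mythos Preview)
Your proof is correct and follows essentially the same approach as the paper's: both pick the power of $t$ that undoes the exponent $i$ via the primitive root $u$, then use that $t$ centralizes scalars while acting as $s\mapsto s^{u}$ to reduce the generator to $sz_{p^{k+1}}$ modulo $Z_{p^k}$. The paper compresses this into a single congruence, whereas you spell out the exponent bookkeeping and the verification that $A^{t^j}\in\mathcal A$; the underlying computation and choice of conjugating element are identical.
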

\begin{proof}
If $i\neq 0$ and $i^{-1}\equiv u^e \bmod p$, for $u$ as
in Notation~\ref{stuNotation}, 
then $(sz_{p^{k+1}})^{t^e} \equiv (sz_{p^{k+1}}^i)^{u^e} 
\bmod Z_{p^k}$.
\end{proof}
\begin{lemma}
\label{ElGt0}
Distinct $G$, $H\in \mathcal L$ are $D$-conjugate if and only 
if $G\cap D = \allowbreak H\cap D$ and 
$G\cap D_{\{p\}} \in \{  1,  Y_{j,k,l} \; |\;  l\neq 0\}$.
\end{lemma}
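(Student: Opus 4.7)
The plan is to analyze $D$-conjugation of groups in $\mathcal{L}$ via an explicit conjugation formula, then reduce the question to one about the $p$-part of the diagonal subgroup.

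First, I would observe that since $D$ is abelian, conjugation by $d \in D$ preserves the diagonal subgroup: $(G \cap D)^d = G \cap D$. Writing $G = \langle sz_{p^{k+1}}^i, A\rangle$ and $H = \langle sz_{p^{k'+1}}^{i'}, A'\rangle$, so $G \cap D = A$ and $H \cap D = A'$, this yields the necessary condition $A = A'$ (and in particular the parameters $k$ and $k'$ agree). Next, using the centrality of $z_{p^{k+1}}^i$ and the identity $d^{-1}sd = s \cdot \gamma(d)$ (which follows directly from $\gamma(d) = d^{1-s}$ and $\phi(s) = s$), I compute
\[
(sz_{p^{k+1}}^i)^d = s\, \gamma(d)\, z_{p^{k+1}}^i.
\]
Since both $G^d$ and $H$ have diagonal part $A$, the equality $G^d = H$ reduces to $s\gamma(d)z_{p^{k+1}}^i \in sz_{p^{k+1}}^{i'} A$, i.e.\ $\gamma(d) \in z_{p^{k+1}}^{i'-i} A$.

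Next I would use that $\gamma$ restricted to the torsion subgroup $D$ surjects onto $X \cap D$ (check prime-by-prime: for $q \neq p$, $\ker \gamma \cap X_{\{q\}} \subseteq Z_p \cap X_{\{q\}} = 1$, and for $q = p$ the text gives $\gamma(X_{p^j}) = X_{p^{j-1}}$). Therefore a conjugating $d$ exists iff $z_{p^{k+1}}^{i'-i} \in XA$. For distinct $G, H$ we have $m := i' - i \not\equiv 0 \bmod p$; combined with $z_{p^{k+1}}^p = z_{p^k} \in A$, the condition collapses to $z_{p^{k+1}} \in XA$. Taking $p$-parts yields $(XA)_{\{p\}} = X_{\{p\}} Y$ where $Y := G \cap D_{\{p\}}$ (an element of $\mathcal{Y}$), so the problem reduces to deciding when $z_{p^{k+1}} \in X_{\{p\}} Y$.

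Finally I would run through the possible $Y \in \mathcal{Y}$: (a) if $Y = 1$ (forcing $k = 0$), then $z_{p^{k+1}} = z_p \in Z_p \subseteq X_{\{p\}}$, so the condition holds; (b) if $Y = Y_{j,k,l}$ with $l \neq 0$, then the defining generator $x_{p^{j+1}}z_{p^{k+1}}^l \in Y$ gives $z_{p^{k+1}}^l \in X_{\{p\}} Y$, and since $\gcd(l,p) = 1$ together with $z_{p^k} \in Y$ this upgrades to $z_{p^{k+1}} \in X_{\{p\}} Y$; (c) the remaining ``Cartesian'' cases $Y_{j,k,0} = X_{p^{j+1}} Z_{p^k}$ ($j,k \geq 1$) and $Y_{0,k,0} = Z_{p^k}$ ($k \geq 1$) both satisfy $X_{\{p\}} Y = X_{\{p\}} Z_{p^k}$, so using the internal direct product decomposition $D_{\{p\}}/Z_p = X_{\{p\}}/Z_p \times Z_{\{p\}}/Z_p$ (highlighted in the proof of Theorem~\ref{AllpsModules}), the scalar elements of $X_{\{p\}} Z_{p^k}$ lie in $Z_{p^k}$, whereas $z_{p^{k+1}}$ has order $p^{k+1}$ and so is not in $Z_{p^k}$. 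Thus $z_{p^{k+1}} \in X_{\{p\}} Y$ holds precisely when $Y \in \{1\} \cup \{Y_{j,k,l} : l \neq 0\}$, matching the statement.

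The main obstacle I anticipate is the case analysis of step~(c): one must recognize the identification $Y_{j,k,0} = X_{p^{j+1}}Z_{p^k}$ (using that $x_{p^{j+1}}$ together with $X_{p^j}$ generates $X_{p^{j+1}}$) so that both Cartesian cases collapse to the same obstruction, and then invoke the direct product structure of $D_{\{p\}}/Z_p$ to separate the scalar and $X$-components cleanly.
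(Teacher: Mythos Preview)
Your proof is correct and follows essentially the same route as the paper: both reduce $D$-conjugacy to the condition $\gamma(d)\in z_{p^{k+1}}^{i'-i}A$ (the paper phrases this via the commutator $[s,d]$, which equals $\gamma(d)$). Where you package everything into the single criterion $z_{p^{k+1}}\in X_{\{p\}}Y$ and then run a uniform case check on $Y$, the paper treats the three cases with separate devices: for $Y_{j,k,0}\neq 1$ it shows $[s,d]\in (G\cap D)Z\cap X\leq G$, forcing $G^d=G$; for $Y=1$ and for $l\neq 0$ it exhibits explicit conjugating elements ($x_{p^2}$ and any $x$ with $\gamma(x)=x_{p^{j+1}}$, respectively). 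Your formulation is a clean repackaging of the same content, and your identification $Y_{j,k,0}=X_{p^{j+1}}Z_{p^k}$ is exactly what underlies the paper's inclusion $(G\cap D)Z\cap X\leq G$.

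One minor point: your parenthetical justification that $\gamma$ surjects from torsion $D$ onto torsion $X$ only states \emph{injectivity} on $X_{\{q\}}$ for $q\neq p$. Surjectivity there follows by restricting $\gamma$ to the finite layers $\Omega_n X_{\{q\}}$ (injective endomorphism of a finite group), but you should say so explicitly.
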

\begin{proof}
Suppose that $G^d=H$ for some $d\in D$. Thus 
$G\cap H = D\cap H$. If
$G\cap D_{\{p\}}= Y_{j,k,0}\neq 1$,
then $[s,d]\in \allowbreak 
(G\cap D)Z \cap X \leq G\cap X\leq H$.
As a consequence, $H= G$.

If $G\cap D_{\{p\}} =1$ then all 
$\langle sz_p^i, G\cap D \rangle$ in $\mathcal L$ 
are $\langle x_{p^2}\rangle$-conjugate.

Lastly, suppose that $G\cap D_{\{p\}} = Y_{j,k,l}$
where $1\leq l \leq p-1$.
Now $\langle s, \allowbreak
 G\cap D_{\{ p\}}\rangle^x =
\langle sz_{p^{k+1}}^{-l}, 
G\cap D_{\{ p\}}\rangle$ for $x\in D$ such that 
$\gamma (x) = \allowbreak x_{p^{j+1}}$. 
Hence all $\langle sz_{p^{k+1}}^i, G\cap D  \rangle
\in \mathcal L$ are $\langle x \rangle$-conjugate.
\end{proof}

If $G, H\in \mathcal L$  
are conjugate by a non-monomial matrix, then
each has more than one abelian normal
subgroup of index $p$. 
Such a group $G$ has a scalar subgroup of index 
 $p^2$, so is nilpotent of class $2$. 
\begin{lemma}
\label{Vanderlike}\
\begin{itemize} 
\item[{\rm (i)}] 
The groups in $\mathcal L$ that are nilpotent of class
$2$ are the  
$\langle sz_{p^{k+1}}^i, Y_{1,k,l}, 
\allowbreak Z_m\rangle$ where $k \geq 1$, $l \geq 0$,
and $\mathrm{gcd}(p,m)=1$.
\item[{\rm (ii)}] Let $\mathrm{gcd}(p,m)=1$.
Up to $\GL(p,\C)$-conjugacy, there are exactly 
two groups of order $p^{k+2}m$ in $\mathcal L$ that 
are nilpotent of class $2$, namely 
$\langle s, Y_{1,k,0},Z_m\rangle$ and 
$\langle sz_{p^{k+1}}, Y_{1,k,0},Z_m\rangle$.
\end{itemize}
\end{lemma}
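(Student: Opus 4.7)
The plan for part (i): Write $G=\langle sz_{p^{k+1}}^i,A\rangle$ with $A=G\cap D$. Since $A$ is abelian and $sz_{p^{k+1}}^i$ differs from $s$ by a central scalar, $[G,G]=\gamma(A)$; hence $G$ is non-abelian of class~$2$ iff $1\neq\gamma(A)\leq Z\cap X=Z_p$. Decompose $A=Y\times\prod_q W_q$ via Theorem~\ref{AllModulesEverywhere}. For $q\neq p$, $\gamma(W_q)$ has $q$-power order while $Z_p$ has $p$-power order, so $\gamma(W_q)=1$, forcing $W_q=Z_{q^c}$. For the $p$-part, surjectivity of $\gamma\colon X_{p^{j+1}}\to X_{p^j}$ makes $\gamma(x_{p^{j+1}})$ an $\langle s\rangle$-module generator of $X_{p^j}$, so $\gamma(Y_{j,k,l})=X_{p^j}$; this lies in $Z_p$ only when $j\leq 1$. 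The case $j=0$ makes $A$ scalar (and $G$ abelian), so $j=1$, hence $k\geq 1$.

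For part (ii), I will first reduce to three $D\langle t\rangle$-orbit representatives. Set $G_{i,l}:=\langle sz_{p^{k+1}}^i,Y_{1,k,l},Z_m\rangle$, each of order $p^{k+2}m$. By Lemma~\ref{tConjugacyonspModules}, $Y_{1,k,l}^t=Y_{1,k,lu}$, and $(sz_{p^{k+1}}^i)^t=s^u z_{p^{k+1}}^i$ together with $Y_{1,k,lu}$ generates the same group as $sz_{p^{k+1}}^{iu^{-1}}$ together with $Y_{1,k,lu}$ (indices mod $p$), so $G_{i,l}^{t^e}=G_{iu^{-e},lu^e}$. Combining with Lemma~\ref{ElGt0}---which merges all $G_{i,l}$ of a fixed $l\neq 0$ under $D$-conjugacy---the $D\langle t\rangle$-orbits are $\{G_{0,0}\}$, $\{G_{i,0}:i\neq 0\}$, and $\{G_{i,l}:l\neq 0\}$, represented by $G_{0,0}$, $G_{1,0}$, $G_{0,1}$.

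Collapsing to two classes proceeds in two steps. First, $G_{0,0}\not\cong G_{1,0}$ as abstract groups: for odd $p$, every $g\in G_{0,0}$ has the form $s^a x_{p^2}^b z$ with $z$ central, and the class-$2$ identity $(xy)^p=x^p y^p [y,x]^{\binom{p}{2}}$ together with $\binom{p}{2}\equiv 0\bmod p$ and $x_{p^2}^p=s^p=1$ yields $g^p=z^p$, so the $p$-exponent of $G_{0,0}$ is at most $p^k$, whereas $sz_{p^{k+1}}\in G_{1,0}$ has order $p^{k+1}$ (for $p=2$ the Sylow $2$-subgroups are of $D_4$- and $Q_8$-type, separated by involution counts). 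Second, for odd $p$ the map $\phi\colon G_{0,1}\to G_{1,0}$ defined on generators by $s\mapsto x_{p^2}^{-1}$, $x_{p^2}z_{p^{k+1}}\mapsto sz_{p^{k+1}}$, and $\mathrm{id}$ on $Z_{p^k}Z_m$, is an abstract isomorphism---the commutator $[x_{p^2}^{-1},sz_{p^{k+1}}]=\gamma(x_{p^2})$ matches $[s,x_{p^2}z_{p^{k+1}}]=\gamma(x_{p^2})$, and all defining relations are preserved. Both natural representations are faithful irreducible of degree $p$ with identical central characters on $Z_{p^k}Z_m$; since a class-$2$ group $H$ with $|H/Z(H)|=p^2$ admits (for each central character nontrivial on $[H,H]$) a unique irreducible degree-$p$ representation up to equivalence, the natural representations are equivalent under $\phi$, producing the $\GL(p,\C)$-conjugation between $G_{0,1}$ and $G_{1,0}$ (for $p=2$, analogously $G_{0,1}$ is conjugate to $G_{0,0}$).

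The hardest step is this last $\GL(p,\C)$-conjugacy: the diagonal subgroups of $G_{0,1}$ and $G_{1,0}$ genuinely differ, so Theorem~\ref{Protoconjugacy} does not reduce the problem to monomial conjugation, and an explicit conjugator would essentially be a discrete Fourier transform normalising the extraspecial factor. The character-theoretic shortcut elegantly sidesteps that construction.
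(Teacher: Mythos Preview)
Your argument for (i) is correct and more explicit than the paper's, which simply cites Remark~\ref{YGeneratorsAndOrder}(ii) together with the observation preceding the lemma that a class-$2$ group in $\mathcal L$ has scalar subgroup of index $p^2$; your computation $[G,G]=\gamma(A)$ makes the mechanism transparent.

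For (ii) your route genuinely differs from the paper's. The paper exhibits the Vandermonde matrix $e$ with $s^e=x_{p^2}$ and $x_{p^2}^e=s^{-1}$, then checks directly that $\langle sz_{p^{k+1}},Y_{1,k,0}\rangle^e=\langle s,Y_{1,k,1}\rangle$; this conjugator is reused later (Theorem~\ref{teConjugacy}). You instead build an abstract isomorphism $G_{0,1}\cong G_{1,0}$ swapping the roles of $s$ and $x_{p^2}$, and then invoke the Stone--von~Neumann--type fact that a class-$2$ group with $|H/Z(H)|=p^2$ has a unique degree-$p$ irreducible with prescribed faithful central character. Both are valid for odd $p$; yours avoids matrix computation but does not furnish the explicit conjugator needed downstream. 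Your exponent invariant separating $G_{0,0}$ from $G_{1,0}$ is equivalent to the paper's observation that $\langle s,Y_{1,k,0}\rangle$ contains an elementary abelian subgroup of order $p^3$ while $\langle sz_{p^{k+1}},Y_{1,k,0}\rangle$ does not.

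One caveat: your parenthetical ``for $p=2$, analogously $G_{0,1}$ is conjugate to $G_{0,0}$'' is only correct for $k=1$. For $p=2$ and $k\geq 2$ the Sylow $2$-subgroup of $G_{0,0}$ has all three maximal abelian subgroups of type $C_2\times C_{2^k}$, whereas $G_{0,1}$ and $G_{1,0}$ each have two cyclic maximal abelian subgroups of order $2^{k+1}$ (indeed both Sylow $2$-subgroups are the modular group $M_{2^{k+2}}$), so $G_{0,1}$ merges with $G_{1,0}$, not $G_{0,0}$. The paper sidesteps this by explicitly restricting its proof of (ii) to $p\geq 3$.
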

\begin{proof}
By Remark~\ref{YGeneratorsAndOrder}~(ii), if 
$G\in \mathcal L$ is nilpotent of class $2$ then
  $G\cap D_{\{p\}}=Y_{1,k,l}$ for some $l$ and
$k\geq 1$.

We prove (ii) for $p\geq 3$. By (the proof of) 
Lemma~\ref{Littlet},  
$H= \langle sz_{p^{k+1}}, \allowbreak Y_{1,k,0} \rangle 
   = \langle sz_{p^{k+1}}, \allowbreak x_{p^2} \rangle$
is conjugate to each group
$\langle sz_{p^{k+1}}^i, Y_{1,k,0}\rangle$ for
 $1<i\leq p- 1$.
Let $e\in \GL(p,\C)$ be the Vandermonde matrix 
with entry $\epsilon^{rc}$ in row $r$, column $c$, 
where $|\epsilon|=p$
(cf.~\cite[4.1]{Conlon}). Then $s^e = x_{p^2}$ and 
$x_{p^2}^e = s^{-1}$; so  $H^e = \langle s, Y_{1,k,1}\rangle$. 
Lemmas~\ref{tConjugacyonspModules} and \ref{ElGt0} 
show that $H$ and the 
$\langle sz_{p^{k+1}}^i, Y_{1,k,l}\rangle$ for 
$i\geq 1$ or $l\geq 1$ are all conjugate to each other. 
However, $H$ is not conjugate
to $\langle s , Y_{1,k,0}\rangle$: this group 
has an elementary abelian subgroup of order 
$p^3$, while $H$ does not.
\end{proof}

We now define sublists $\mathcal{L}_1$, $\mathcal{L}_2$, 
 $\mathcal{L}_3$, $\mathcal{L}_4$ 
of groups $\langle sz_{p^{k+1}}^i, A\rangle
\in \mathcal L$, on the way to eliminating redundancy in 
$\mathcal L$.
\begin{definition}
Groups in $\mathcal L_1$ have 
$A = \langle Y_{1,k,0}, Z_m\rangle$
for $m$ coprime to $p$.
Groups in 
$\mathcal{L}_2\cup \mathcal{L}_3\cup \mathcal{L}_4$ have
 $A=\langle Y_{j,k,l}, \textstyle{\Pi}_q W_q \rangle$ 
for finitely many primes $q\neq p$ where 
$W_{q} \in \mathcal{W}_{q}$. 
Conditions that govern membership of such
$\langle sz_{p^{k+1}}^i, A\rangle$ in
an $\mathcal L_i$ are as follows.
\begin{itemize}
\item[] \hspace{-10pt} 
 $\mathcal{L}_1$:  
 $i\in \{0, 1\}$ and $k\geq 1$.
\item[] \hspace{-10pt}
 $\mathcal{L}_2$: $i=1$; $l=0$; either $j=k=0$ or 
 $k\geq 1$; and either $j\geq 2$ 
or some $W_q$ is non-scalar.
\item[] \hspace{-10pt}
 $\mathcal{L}_3$: $i= l=0$; $k\geq 1$; and either $j\geq 2$ 
or some $W_q$ is non-scalar.
\item[] \hspace{-10pt}
 $\mathcal{L}_4$: $i=0$; $1\leq l\leq p-1$; $j, k\geq 1$;
and either $j\geq 2$ 
or some $W_q$ is non-scalar. 
\end{itemize}

\noindent Let $\mathcal{L}_0= 
\mathcal{L}_1 \cup \mathcal{L}_2 \cup\mathcal{L}_3 
\cup\mathcal{L}_4$.
\end{definition}

Each group in $\mathcal{L}_0$ 
has non-scalar diagonal subgroup, 
 hence is irreducible. 
\begin{theorem}
\label{CorrectPList}\
\begin{itemize}
\item[{\rm (i)}]
Up to $\GL(p,\C)$-conjugacy,
$\mathcal{L}_0$ is a complete list of the finite irreducible
subgroups of $\Mo(p,\C)$ with permutation part $C_p$. 
That is, an irreducible group in $\mathcal L$ is conjugate to 
at least one group in  $\mathcal{L}_0$. 
\item[{\rm (ii)}]  
Distinct $G, H \in \mathcal{L}_0$ are 
conjugate only if they are both in the same sublist 
$\mathcal{L}_2$, $\mathcal{L}_3$, or $\mathcal{L}_4$. 
If $G, H\in \mathcal{L}_2$ are conjugate, then 
$G\cap D_{\{p\}} = H\cap D_{\{p\}} = 1$ and 
$G\cap D$ is $\langle t\rangle$-conjugate
to $H\cap D$. 
If $G, H\in \mathcal{L}_i$ for $i=3$ or $4$ are 
conjugate, 
then $G$ is $\langle t \rangle$-conjugate to $H$.
\end{itemize}
\end{theorem}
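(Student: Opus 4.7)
The plan is to combine the reductive lemmas above with the structural results of Sections~2--3. For completeness (i), I begin with an arbitrary irreducible $G\in\mathcal L$. By Theorem~\ref{NonScalarIffIrreducible} the diagonal subgroup $A=G\cap D$ is non-scalar, and Theorem~\ref{AllModulesEverywhere} writes $A=\langle Y_{j,k,l},\prod_q W_q\rangle$. I dichotomize on whether $G$ is nilpotent of class~$2$: if yes, Lemma~\ref{Vanderlike}(ii) supplies a conjugate in $\mathcal L_1$ directly; if no, then by Lemma~\ref{Vanderlike}(i) either $j\geq 2$ or some $W_q$ is non-scalar, putting $G$ into the ambit of $\mathcal L_2\cup\mathcal L_3\cup\mathcal L_4$.

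In the non-class-$2$ case I normalize the generator $sz_{p^{k+1}}^i$ by branching on $(k,l)$. When $G\cap D_{\{p\}}=1$ (so $j=k=l=0$), Lemma~\ref{ElGt0} affords a $D$-conjugation sending $i$ to~$1$, placing $G$ in $\mathcal L_2$. When $l\neq 0$ (which forces $j,k\geq 1$ by the definition of $Y_{j,k,l}$), Lemma~\ref{ElGt0} again affords a $D$-conjugation sending $i$ to~$0$, placing $G$ in $\mathcal L_4$. When $k\geq 1$ and $l=0$, Lemma~\ref{Littlet} gives a $\langle t\rangle$-conjugate with $i\in\{0,1\}$, placing $G$ in $\mathcal L_3$ or $\mathcal L_2$.

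For (ii), the class~$2$ dichotomy immediately separates $\mathcal L_1$ from $\mathcal L_2\cup\mathcal L_3\cup\mathcal L_4$: a direct commutator calculation verifies that the defining constraint ``$j\geq 2$ or some $W_q$ non-scalar'' produces commutators outside the center. For groups in $\mathcal L_2\cup\mathcal L_3\cup\mathcal L_4$, failure of class~$2$ makes the diagonal subgroup the unique maximal abelian normal subgroup of index~$p$, so Theorems~\ref{GLConjugateImpliesSamePP} and~\ref{Protoconjugacy}, together with Lemma~\ref{TransitivePrimeDegreePermGroups}, force any $\GL(p,\C)$-conjugation to lie in $D\langle t\rangle$. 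Applying Lemmas~\ref{ElGt0}, \ref{Littlet}, and~\ref{tConjugacyonspModules}, I track that $D$-conjugation preserves $G\cap D$ and alters $i$ only when $G\cap D_{\{p\}}$ is trivial or when $l\neq 0$, while $\langle t\rangle$-conjugation sends $i\mapsto iu^{-e}$ modulo $Z_{p^k}$ and $l\mapsto lu^j$. The invariants $i=0$ vs $i=1$, and $l=0$ vs $l\neq 0$, are thus preserved, separating $\mathcal L_2$, $\mathcal L_3$, and $\mathcal L_4$; the intra-sublist statements then follow from the same three lemmas (in $\mathcal L_3$ and $\mathcal L_4$ the non-triviality of $G\cap D_{\{p\}}$ combined with $l=0$ or the fixed value of $i$ blocks non-trivial $D$-conjugations, leaving only $\langle t\rangle$-conjugacy; in $\mathcal L_2$ the condition $G\cap D_{\{p\}}=1$ frees $D$-conjugation to act on $G\cap D$, after which $\langle t\rangle$ finishes the job). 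The main obstacle is the bookkeeping required to verify that each claimed invariant really survives every allowed conjugation, especially when $\langle t\rangle$ simultaneously twists $l$ and $i$.
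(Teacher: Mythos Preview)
Your approach is essentially the paper's: dichotomize on nilpotency class~$2$ to isolate $\mathcal{L}_1$, then reduce to $D\langle t\rangle$-conjugacy via uniqueness of the diagonal subgroup, and finally track the parameters $(i,l)$ through Lemmas~\ref{Littlet}, \ref{ElGt0}, and~\ref{tConjugacyonspModules}. Two points deserve attention.

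First, a minor difference in method: to separate $\mathcal{L}_2$ from $\mathcal{L}_3\cup\mathcal{L}_4$ when $k\geq 1$, the paper invokes an \emph{abstract} invariant rather than parameter-tracking. Namely, groups in $\mathcal{L}_3\cup\mathcal{L}_4$ split over their diagonal subgroup (since $s\in G$), whereas $G=\langle sz_{p^{k+1}},Y_{j,k,0},M\rangle\in\mathcal{L}_2$ with $k\geq 1$ does not: any $d\in G\cap D$ with $|sz_{p^{k+1}}d|=p$ would force $z_{p^k}=z_{p^{k+1}}^p\in\chi(Y_{j,k,0})=Z_{p^{k-1}}$, a contradiction. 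Your parameter-tracking also works here, but note that ``$i=0$ vs $i=1$'' is not literally the invariant: $\langle t\rangle$-conjugation sends $i=1$ to $u^{-e}\neq 0$, so the preserved dichotomy is ``$i=0$ vs $i\neq 0$''. You then need Lemma~\ref{ElGt0} to say that when $l=0$ and $G\cap D_{\{p\}}\neq 1$, no $D$-conjugation can alter $i$, so $i\neq 0$ cannot reach $i=0$.

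Second, there is a gap in your intra-$\mathcal{L}_2$ argument. You write ``in $\mathcal{L}_2$ the condition $G\cap D_{\{p\}}=1$ frees $D$-conjugation\ldots'', but $\mathcal{L}_2$ also contains groups with $k\geq 1$, hence $G\cap D_{\{p\}}\neq 1$. The theorem asserts that \emph{distinct} conjugate groups in $\mathcal{L}_2$ must have $G\cap D_{\{p\}}=1$; this requires proof. Your own framework supplies it: if $G\cap D_{\{p\}}=Y_{j,k,0}\neq 1$ and $G^{t^e d}=H\in\mathcal{L}_2$, then $G^{t^e}=\langle sz_{p^{k+1}}^{u^{-e}},A^{t^e}\rangle$ still has $l=0$ and nontrivial $p$-part, so by Lemma~\ref{ElGt0} the $D$-conjugation must be trivial on $\mathcal{L}$, giving $G^{t^e}=H$; since $H$ has $i=1$ and the labeling of $\mathcal{L}$ by $(i,A)$ is injective, $u^{-e}\equiv 1\bmod p$, whence $t^e=1$ and $G=H$. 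You should make this step explicit.
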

\begin{proof}
By Lemma~\ref{Vanderlike}, every group in $\mathcal{L}_1$ 
is nilpotent of class $2$; a nilpotent group of
class $2$ in $\mathcal L$ is conjugate to a 
single group in $\mathcal{L}_1$;  and no group in 
$\mathcal{L}_1$ can be conjugate to a group in 
$\mathcal{L}_0\setminus \mathcal{L}_1$.

Assume now that $G\in \mathcal{L}$ is irreducible 
and not nilpotent of class $2$.
Then $G\cap D$ is the unique abelian normal subgroup of 
$G$ with index $p$, so $G$ is $\GL(p,\C)$-conjugate 
to $H\in \mathcal L$ only if $G$ is 
$D\langle t\rangle$-conjugate to $H$. 
A laborious check against the definition of the 
$\mathcal L_i$ and 
Lemmas~\ref{Littlet}--\ref{Vanderlike}
confirms that $G$ is conjugate to a group in 
$\mathcal{L}_2\cup \mathcal{L}_3\cup \mathcal{L}_4$. 

It remains to prove (ii) for
$G, H\in \mathcal{L}_2\cup \mathcal{L}_3\cup \mathcal{L}_4$.
Suppose that 
$G= \allowbreak \langle sz_{p^{k+1}}, Y_{j,k,0}, 
\allowbreak M\rangle \in \mathcal{L}_2$ and 
$H\in \mathcal{L}_3\cup\mathcal{L}_4$ 
are conjugate. 
Then $|G\cap D_{\{p\}}|=  |H\cap D_{\{p\}}|$ 
implies that $k\geq 1$. Since $H$ splits over its diagonal
subgroup, $G$ does too. But this is false:
there is no $d\in D\cap G$ such that 
$|sz_{p^{k+1}}d|=p$ for $k\geq 1$ (if there
were such a $d$, then $z_{p^{k}}= \allowbreak z_{p^{k+1}}^p$ 
would be in $\chi(Y_{j,k,0})=Z_{p^{k-1}}$).
 
A group $G\in \mathcal{L}_2$ has 
$p-1$ different $\langle t\rangle$-conjugates 
of the form $\langle sz_{p^{k+1}}^i, A\rangle$, 
one for each $i\in \{ 1, \ldots , p-1\}$.
By Lemmas~\ref{tConjugacyonspModules} and \ref{ElGt0}, 
if $G\cap D_{\{p\}} \neq 1$ then
the only one of these that is $D$-conjugate  
to a group in $\mathcal{L}_2$ is $G$ itself.
 
Since $\langle t\rangle$-conjugacy leaves  
$\mathcal L_3$ and $\mathcal L_4$ setwise
invariant, no group in $\mathcal L_3$ is 
$D\langle t\rangle$-conjugate 
to a group in $\mathcal L_4$ by Lemma~\ref{ElGt0}. 
If $G,H \in \mathcal{L}_3 \cup \mathcal{L}_4$ are 
$D$-conjugate then $G=H$; 
hence $G$ and $H$ can be conjugate only if they
are $\langle t\rangle$-conjugate.
\end{proof}
\begin{corollary}
\label{ConjugacyDeterminedByDiagonals}\
\begin{itemize}
\item[{\rm (i)}] Groups in $\mathcal{L}_2$ are 
conjugate if and only if their diagonal subgroups 
have $p'$-order and are $\langle t\rangle$-conjugate.
\item[{\rm (ii)}]
Groups in either $\mathcal{L}_3$ or $\mathcal{L}_4$ 
are conjugate if and only if their diagonal subgroups 
are $\langle t\rangle$-conjugate.
\end{itemize}
\end{corollary}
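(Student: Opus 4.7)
The plan is to derive both parts directly from Theorem \ref{CorrectPList}(ii), together with Lemmas \ref{tConjugacyonspModules}, \ref{Littlet}, and \ref{ElGt0}. The forward implications are essentially free; the converses require explicitly exhibiting a monomial conjugating element.

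\textbf{Forward implications.} If $G, H \in \mathcal{L}_2$ are $\GL(p,\C)$-conjugate, Theorem \ref{CorrectPList}(ii) gives $G \cap D_{\{p\}} = H \cap D_{\{p\}} = 1$, so the diagonal subgroups have $p'$-order, and $G \cap D$ is $\langle t\rangle$-conjugate to $H \cap D$; this is (i) forward. The forward direction of (ii) is even more direct: whole-group $\langle t\rangle$-conjugacy restricts to $\langle t\rangle$-conjugacy of the diagonal subgroups.

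\textbf{Backward direction of (i).} Suppose $G, H \in \mathcal{L}_2$ have $p'$-order diagonal subgroups with $(G\cap D)^{t^e} = H\cap D$. The $p'$-order hypothesis together with the defining conditions of $\mathcal{L}_2$ ($l=0$ and either $j=k=0$ or $k \geq 1$) forces $j = k = 0$, so $G = \langle sz_p, G\cap D\rangle$ and $H = \langle sz_p, H\cap D\rangle$. Centrality of $z_p$ yields $(sz_p)^{t^e} = s^{u^e}z_p$; choosing $v$ with $vu^e \equiv 1 \pmod{p}$ and raising to the $v$th power gives $sz_p^v$, so $G^{t^e} = \langle sz_p^v, H\cap D\rangle$. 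This group lies in $\mathcal{L}$, has $p'$-order diagonal $H\cap D$, and therefore satisfies $G^{t^e}\cap D_{\{p\}} = 1$. Lemma \ref{ElGt0} now identifies $G^{t^e}$ as $D$-conjugate to $\langle sz_p, H\cap D\rangle = H$. The composite conjugation lies in $D\langle t\rangle \leq \Mo(p,\C)$, so the two groups are indeed $\GL(p,\C)$-conjugate.

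\textbf{Backward direction of (ii).} Let $G, H \in \mathcal{L}_i$ for $i = 3$ or $4$; the condition $i=0$ on the first generator means $G = \langle s, G\cap D\rangle$ and $H = \langle s, H\cap D\rangle$. If $(G\cap D)^{t^e} = H\cap D$, then $G^{t^e} = \langle s^{u^e}, H\cap D\rangle = \langle s, H\cap D\rangle = H$, since $\langle s^{u^e}\rangle = \langle s\rangle$. No diagonal adjustment is needed.

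The main bookkeeping lies in (i): verifying that the rewritten group $\langle sz_p^v, H\cap D\rangle$ actually lies in $\mathcal{L}$ (so that Lemma \ref{ElGt0} applies) and that the $\mathcal{L}_2$ constraints genuinely force the $p$-part of the diagonal to be trivial. Both are routine once unpacked, so I do not expect a serious obstacle.
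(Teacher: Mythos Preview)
Your proof is correct and follows the route the paper intends: the forward implications are read off Theorem~\ref{CorrectPList}(ii), while the backward directions come from the observation that groups in $\mathcal{L}_3\cup\mathcal{L}_4$ split as $\langle s\rangle\ltimes(G\cap D)$ (so $\langle t\rangle$-conjugacy of diagonals lifts immediately), and for $\mathcal{L}_2$ with trivial $p$-part one uses Lemma~\ref{ElGt0} to absorb the residual scalar $z_p^v$ after $\langle t\rangle$-conjugation. The paper states the corollary without proof precisely because these steps are routine consequences of the preceding results, and your write-up supplies them accurately.
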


\begin{theorem}\label{p2Done}
If $p=2$ then $\mathcal{L}_0$ is a 
(complete and irredundant) 
classification of the finite irreducible subgroups 
of $\Mo(2,\C)$.
\end{theorem}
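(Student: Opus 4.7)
The plan is to obtain the theorem as the direct specialization of Theorem~\ref{CorrectPList} to $p=2$, using that $\Sym(2) = \langle s\rangle$, so $\langle s\rangle$ is the only transitive subgroup of $\Sym(2)$ and hence the only permutation part available to a finite irreducible subgroup of $\Mo(2,\C)$.

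For completeness, let $G \leq \Mo(2,\C)$ be finite irreducible. Since irreducibility forces $\phi(G)$ to be transitive, we have $\phi(G) = \langle s\rangle$. Lemma~\ref{EntriesAreTorsion} conjugates $G$ into $\widetilde{\Mo}(2,\C)$; Proposition~\ref{InitialPare} then $D$-conjugates it into $\mathcal L$; and Theorem~\ref{CorrectPList}(i) supplies a final $\GL(2,\C)$-conjugate in $\mathcal L_0$.

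For irredundancy, the key observation is that $|t| = p - 1 = 1$, so $\langle t\rangle$ is trivial and every $\langle t\rangle$-conjugacy condition degenerates to equality. By Theorem~\ref{CorrectPList}(ii), distinct $G, H \in \mathcal L_0$ that are $\GL(2,\C)$-conjugate must both lie in one of $\mathcal L_2$, $\mathcal L_3$, $\mathcal L_4$. In $\mathcal L_3$ and $\mathcal L_4$ the necessary condition becomes $G = H$, a contradiction. The only case needing a separate word is $G, H \in \mathcal L_2$: Theorem~\ref{CorrectPList}(ii) forces $G \cap D_{\{2\}} = H \cap D_{\{2\}} = 1$, so $j = k = l = 0$ for both, and $G \cap D = H \cap D$; but then each of $G$, $H$ equals $\langle sz_2, G \cap D\rangle$, again giving $G = H$. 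There is no genuine obstacle; the theorem amounts to the general-$p$ classification collapsing in the degenerate case $p - 1 = 1$, where $\langle s\rangle$ exhausts the transitive permutation parts and $\langle t\rangle$-conjugacy is vacuous.
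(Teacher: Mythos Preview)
Your proof is correct and follows essentially the same approach as the paper's: both invoke Theorem~\ref{CorrectPList} and the observation that $t=1$ when $p=2$, so that $\langle t\rangle$-conjugacy degenerates to equality. The paper compresses the irredundancy argument into a single sentence via Corollary~\ref{ConjugacyDeterminedByDiagonals}, whereas you unpack the $\mathcal{L}_2$, $\mathcal{L}_3$, $\mathcal{L}_4$ cases directly from Theorem~\ref{CorrectPList}(ii); but the substance is the same.
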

\begin{proof}
Here $t=1$. Since $\mathcal L_1$ is 
irredundant and there is a single group in 
$\mathcal{L}_0\setminus \mathcal{L}_1$ 
with given diagonal subgroup, the result 
follows from 
Theorem~\ref{CorrectPList} and 
Corollary~\ref{ConjugacyDeterminedByDiagonals}.
\end{proof}

If $G\in \mathcal{L}_2\cup \mathcal{L}_3$ 
then we need only worry about $\langle t\rangle$-orbits 
in the $\mathcal{W}_q$. However, if $G\in \mathcal{L}_4$ 
then $\langle t\rangle$-conjugacy might change
$G\cap D_{\{p\}}$.

We encode the action of $\langle t\rangle$ on 
the set of finite $\langle s\rangle$-submodules
of $D_{\{ p\}'}$ as an action by $\langle t \rangle$ 
on a set of arrays ${\sf N}$.
Each such array has $p-1$ columns and 
finitely many non-zero rows. 
Let $q\neq p$, and suppose that the Sylow
$q$-subgroup of $A\in \mathcal A$ is 
$X^{(1)}_{q,n_1} X^{(2)}_{q,n_2} \cdots \allowbreak 
X^{(v)}_{q,n_v}Z_{q^c}$, where as usual $(p-1)/v$ 
 is the multiplicative order of $q$ modulo $p$.
Row $q$ of the array ${\sf N}_A$ has $n_r$ in column $r$ 
for $r\leq v$, and  $n_{r-v}$ in  column $r$ for 
$r> v$. 
By Lemma~\ref{tConjugacyonsqModules}, 
${\sf N}_A^t$ is the array obtained from 
${\sf N}_A$ by shifting columns 
of ${\sf N}_A$ one place rightward, modulo $p-1$.

A lexicographic ordering is defined on these arrays.
Specifically, ${\sf N}\leq {\sf N}'$
 if and only if the first entry in the first 
row of ${\sf N}$ where  ${\sf N}$ and ${\sf N}'$ 
differ is at most the matching entry 
in ${\sf N}'$.
We select a minimal element of each 
$\langle t\rangle$-orbit of ${\sf N}_A$. 
Although cumbersome, 
this formulation of $\langle t \rangle$-conjugacy 
in $\cup_q\mathcal{W}_q$ is easily automated. 

\begin{definition}\label{StarLDefinitions}
Let $\mathcal{L}^*:= \mathcal{L}_1\cup \mathcal{L}_2^*
 \cup \mathcal{L}_3^* \cup \mathcal{L}_4^*$, where
\begin{itemize}
\item[(i)] 
$\mathcal L_2^*$ consists of those 
$G \in \mathcal{L}_2$ 
such that either 
$G\cap D_{\{p\}}\neq 1$, or ${\sf N}_{G\cap D}$ is 
$\langle t\rangle$-minimal (i.e., minimal
in its $\langle t\rangle$-orbit); 
\item[(ii)]
$\mathcal L_3^*$ consists of those $G\in \mathcal L_3$ 
such that ${\sf N}_{G\cap D}$ is 
$\langle t\rangle$-minimal;
\item[(iii)]
 $\mathcal L_4^*$ consists of
those $G\in \mathcal L_4$ such that
\begin{itemize}
\item
$G\cap D_{\{p\}} = Y_{j,k,l}$ for
$l\in \{u,u^2,\ldots , \allowbreak u^{j'}\}$ modulo $p$
where $j'=\mathrm{gcd}(j,p-1)$ and
$u$ is the least primitive integer 
modulo $p$,
\item 
${\sf N}_{G\cap D}$ is  
$\langle t^{(p-1)/j'}\rangle$-minimal.
\end{itemize}
\end{itemize}
\end{definition}

Theorem~\ref{CorrectPList}, 
Corollary~\ref{ConjugacyDeterminedByDiagonals}, 
and the foregoing provide our first major classification 
of irreducible monomial groups.
\begin{theorem}\label{sGroupsFinal}
Up to $\GL(p,\C)$-conjugacy, 
$\mathcal{L}^*$ is a complete and
irredundant list of the finite irreducible subgroups of 
$\mathrm{M}(p,\C)$ with permutation part $C_p$.
\end{theorem}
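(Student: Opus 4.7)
The plan is to deduce completeness and irredundancy of $\mathcal L^*$ from Theorem \ref{CorrectPList}, Corollary \ref{ConjugacyDeterminedByDiagonals}, and the $\langle t\rangle$-action formulas in Lemmas \ref{tConjugacyonspModules} and \ref{tConjugacyonsqModules}, treating the defining conditions on $\mathcal L^*$ as a canonicalization procedure applied inside each sublist $\mathcal L_i$.

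For completeness, I would begin with Proposition \ref{InitialPare} and Theorem \ref{CorrectPList}(i), which together say every finite irreducible subgroup of $\Mo(p,\C)$ with permutation part $C_p$ is $\GL(p,\C)$-conjugate to some $G\in \mathcal L_0$. It then suffices to show each $G\in \mathcal L_i$ is $D\langle t\rangle$-conjugate to a member of $\mathcal L_i^*$. The case $i=1$ is immediate since $\mathcal L_1^*=\mathcal L_1$. For $i=2$: if $G\cap D_{\{p\}}\neq 1$ then $G\in \mathcal L_2^*$ by definition; otherwise Corollary \ref{ConjugacyDeterminedByDiagonals}(i) and Lemma \ref{tConjugacyonsqModules} let me $\langle t\rangle$-translate $G$ to make ${\sf N}_{G\cap D}$ minimal. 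The case $i=3$ is analogous via Corollary \ref{ConjugacyDeterminedByDiagonals}(ii). For $i=4$ I first apply Lemma \ref{tConjugacyonspModules}, which gives $Y_{j,k,l}^t=Y_{j,k,l'}$ with $l'\equiv lu^j\bmod p$, to $\langle t\rangle$-translate $G$ so that its $p$-part label $l$ lies in the transversal $\{u,u^2,\ldots,u^{j'}\}$ for the $\langle u^j\rangle$-action on $\F_p^\times$, where $j'=\gcd(j,p-1)$; the stabilizer of this choice in $\langle t\rangle$ is exactly $\langle t^{(p-1)/j'}\rangle$, and I use its action on the $p'$-part (again via Lemma \ref{tConjugacyonsqModules}) to select the ${\sf N}$-minimal representative.

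For irredundancy, Theorem \ref{CorrectPList}(ii) and Lemma \ref{Vanderlike}(ii) imply that two distinct conjugate members of $\mathcal L^*$ must lie in a common $\mathcal L_i^*$ with $i\in\{2,3,4\}$, and Corollary \ref{ConjugacyDeterminedByDiagonals} reduces their conjugacy to $\langle t\rangle$-conjugacy of the diagonal subgroups. In $\mathcal L_2^*$ the corollary forces conjugate groups to have $G\cap D_{\{p\}}=1$, so any $G\in \mathcal L_2^*$ with $G\cap D_{\{p\}}\neq 1$ is automatically canonical; the remaining groups in $\mathcal L_2^*$ have ${\sf N}$-minimal diagonals, and two such being $\langle t\rangle$-conjugate forces their arrays (hence their diagonals, hence the groups themselves) to agree. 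The case $\mathcal L_3^*$ is identical. For $\mathcal L_4^*$ the transversal fixes the $p$-part to one chosen representative per $\langle t\rangle$-orbit, and the ${\sf N}$-minimality under the stabilizer $\langle t^{(p-1)/j'}\rangle$ then pins down the $p'$-part uniquely.

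I anticipate the main obstacle to lie in the $\mathcal L_4^*$ case: one must verify that $\{u,u^2,\ldots,u^{j'}\}$ really is a complete and non-redundant transversal for the orbits of $\langle u^j\rangle$ on $\F_p^\times$, and that the setwise stabilizer of each such $Y_{j,k,l}$ in $\langle t\rangle$ is exactly $\langle t^{(p-1)/j'}\rangle$; getting either wrong would either leave duplicates between orbits or fail to cover some orbit, so the combined $p$-part and $p'$-part canonicalization must interlock cleanly.
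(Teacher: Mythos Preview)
Your proposal is correct and follows essentially the same route as the paper's own proof: reduce via Theorem~\ref{CorrectPList} and Corollary~\ref{ConjugacyDeterminedByDiagonals} to $\langle t\rangle$-conjugacy of diagonal subgroups, then handle each $\mathcal L_i^*$ by the canonicalization built into Definition~\ref{StarLDefinitions}. The technical point you flag for $\mathcal L_4^*$---that $\{u,u^2,\ldots,u^{j'}\}$ is a transversal for the $\langle u^j\rangle$-orbits on $\F_p^\times$ with stabilizer $\langle t^{(p-1)/j'}\rangle$---is exactly what the paper isolates as the ``uniqueness statement'' in its last two paragraphs, and your interlocking of the $p$-part and $p'$-part canonicalizations matches theirs.
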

\begin{proof}
We prove that $\mathcal{L}^* \setminus \mathcal{L}_1$ 
is irredundant and complete.

By Definition~\ref{StarLDefinitions} and 
Corollary~\ref{ConjugacyDeterminedByDiagonals}, 
each group in $\mathcal L_3$ is conjugate to one 
in $\mathcal{L}_3^*$.
Suppose that $G, H\in \mathcal{L}_3^*$ 
are conjugate. 
Minimality and Corollary~\ref{ConjugacyDeterminedByDiagonals} 
force $G\cap D =\allowbreak  H\cap D$. 
But the groups in $\mathcal{L}_3$ are distinguished 
by their diagonal subgroups, so $G = H$.
The reasoning for $\mathcal{L}_2^*$ is similar.

Now let $G=\langle s, G\cap D\rangle \in \mathcal{L}_4$,
 with $G\cap D_{\{p\}} = Y_{j,k,l}$. 
By Lemma~\ref{tConjugacyonspModules},
there is a unique non-negative integer $a$ such that 
$a<(p-1)/j'$ and $H\cap D_{\{ p\}} = Y_{j,k,\ell}$
 where $H=G^{t^a}$ and 
$\ell \in \{ u, u^2, \ldots , u^{j'}\}$ modulo $p$.
Conjugation of $H$ by some $t^b$ 
preserves this value of $\ell$ (i.e., does not change
$\langle s, H\cap D_{\{p\}}\rangle$) if and 
only if $b$ is divisible by $(p-1)/j'$.
Completeness of $\mathcal L^*$ is proved.

If $G, H\in \mathcal{L}_4^*$ are conjugate then 
$G\cap D_{\{p\}}$ and $H\cap D_{\{p\}}$
are $\langle t\rangle$-conjugate. 
By the uniqueness statement above,
$G\cap D$ and $H\cap D$ can only be
$\langle t^{(p-1)/j'}\rangle$-conjugate; so they 
are the same. Hence $G=H$.
\end{proof}

\section{The remaining solvable monomial groups}

In this section we classify the finite irreducible 
solvable subgroups 
of $\Mo(p,\C)$ with non-cyclic permutation part. 
To that end, $p$ is assumed odd (by 
Theorem~\ref{p2Done}).

\begin{definition}\label{NoncyclicSolvableTransitive}
Let $T = \langle s , t^a\rangle$ 
where $a\geq 1$ is a proper divisor of $p-1$, and 
let $\hat{a}=(p-1)/a$.
\end{definition}
Up to conjugacy, the groups $T\cong C_p\rtimes C_{\hat{a}}$ 
in Definition~\ref{NoncyclicSolvableTransitive} 
are the non-cyclic solvable transitive subgroups 
of $\mathrm{Sym}(p)$. 

Recall the discussion before
Definition~\ref{StarLDefinitions} of 
$\langle t\rangle$-conjugacy in $\cup_q \mathcal W_q$.
\begin{lemma}\label{taSubmodules}
The subset $\mathcal{A}^{[a]}$ of $\mathcal{A}$ 
(see Theorem{\em ~\ref{AllModulesEverywhere}}) consisting of 
all $A$ such that ${\sf N}_A^{t^a} = \allowbreak {\sf N}_A$
and either $l=0$ or $aj\equiv 0 \bmod p-1$, where 
$A\cap D_{\{p\}} = Y_{j,k,l}$,
is the set of all finite $T$-submodules of $D$. 
\end{lemma}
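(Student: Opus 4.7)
The plan is to reduce the lemma to a characterization of $\langle t^{a}\rangle$-invariance for elements of $\mathcal A$, and then check this invariance prime by prime using the two conjugacy lemmas already proved.

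First I would observe that any finite $T$-submodule of $D$ is, in particular, a finite $\langle s\rangle$-submodule of $D$, hence lies in $\mathcal A$ by Theorem~\ref{AllModulesEverywhere}. Conversely, since $T=\langle s,t^{a}\rangle$, an element of $\mathcal A$ is a $T$-module if and only if it is $\langle t^{a}\rangle$-invariant. So the real task is: characterize those $A\in \mathcal A$ with $A^{t^{a}}=A$. Since conjugation by $t^{a}$ permutes the diagonal entries, it preserves the Sylow decomposition $A=A_{\{p\}}\times \prod_{q\neq p}A_{\{q\}}$, and each factor is characteristic in $A$. Thus $A$ is $\langle t^{a}\rangle$-invariant if and only if each Sylow factor is.

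For the Sylow $p$-part, write $A_{\{p\}}=Y_{j,k,l}\in \mathcal Y$. Iterating Lemma~\ref{tConjugacyonspModules}, one gets $Y_{j,k,l}^{t^{a}}=Y_{j,k,l^{\ast}}$ where $l^{\ast}\equiv l\,u^{aj}\bmod p$. This equals $Y_{j,k,l}$ precisely when $l=0$, or $u^{aj}\equiv 1\bmod p$; as $u$ is a primitive root modulo $p$, the latter amounts to $aj\equiv 0\bmod (p-1)$. This gives exactly the second condition in the definition of $\mathcal A^{[a]}$.

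For each prime $q\neq p$ dividing $|A|$, write $A_{\{q\}}=X^{(1)}_{q,n_{1}}\cdots X^{(v)}_{q,n_{v}}Z_{q^{c}}$. The scalar part $Z_{q^{c}}$ is centralized by every monomial matrix and so is automatically $\langle t^{a}\rangle$-invariant. For the $X$-part, Lemma~\ref{tConjugacyonsqModules} shows that conjugation by $t$ cyclically permutes the exponents $(n_{1},\ldots ,n_{v})$, which is precisely the rightward column shift defining the $t$-action on the $q$-th row of ${\sf N}_{A}$ (indices $v+1,\ldots ,p-1$ just repeat indices $1,\ldots ,p-1-v$ to fill out $p-1$ columns, so the same shift description holds modulo $p-1$). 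Iterating, conjugation by $t^{a}$ acts as an $a$-fold shift, and $\langle t^{a}\rangle$-invariance of $A_{\{q\}}$ is equivalent to fixing the $q$-th row of ${\sf N}_{A}$ under this shift. Taking this over all primes $q\neq p$ simultaneously yields the first condition ${\sf N}_{A}^{t^{a}}={\sf N}_{A}$.

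Combining the two conditions characterizes $\langle t^{a}\rangle$-invariance inside $\mathcal A$, which by the opening reduction is exactly membership in the set of finite $T$-submodules of $D$. I do not anticipate a major obstacle here; the only point requiring mild care is confirming that the $v$-periodic column convention in the definition of ${\sf N}_{A}$ correctly translates Lemma~\ref{tConjugacyonsqModules} into the single uniform statement ``shift by one column modulo $p-1$'', so that iteration by $a$ is unambiguous.
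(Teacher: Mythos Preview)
Your proposal is correct and follows essentially the same route as the paper, which merely says ``We refine $\mathcal{A}$ using Lemma~\ref{tConjugacyonspModules}''; you have simply unpacked this one-line reference into the natural prime-by-prime verification via Lemmas~\ref{tConjugacyonspModules} and~\ref{tConjugacyonsqModules}. Your caveat about the $v$-periodic column convention is prudent but not a genuine obstacle, since the paper's discussion preceding Definition~\ref{StarLDefinitions} already encodes Lemma~\ref{tConjugacyonsqModules} as the column shift on ${\sf N}_A$.
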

\begin{proof}
We refine $\mathcal{A}$ using 
Lemma~\ref{tConjugacyonspModules}.
\end{proof}

If $A\in \mathcal{A}^{[a]}$ and 
$X^{(1)}_{q,n_1} \cdots X^{(v)}_{q,n_v}$ is the
Sylow $q$-subgroup of $A\cap X$, then 
${\sf N}_A^{t^a} = {\sf N}_A$ is equivalent to 
$n_r = n_{r+a}$ for all $r$ and $q$
(see Lemma~\ref{tConjugacyonsqModules}). These
conditions are again straightforward to implement, 
building on our implementation of $\mathcal A$.

\begin{definition}\label{calMaDefinition}
Let $\mathcal{M}^{[a]}$ be the set of all
$\langle s, t^a z_{m\hat{a}}^{c}, A\rangle$ where 
$A\in \mathcal{A}^{[a]}$,
$A\cap Z_{\{p\}'}=\allowbreak Z_m$, 
and $0\leq c < \hat{a}$.
\end{definition}

\begin{theorem}\label{sSplitsOff}
A finite subgroup of $DT$ with permutation
part $T$ is $D$-conjugate to a group in 
$\mathcal{M}^{[a]}$.
\end{theorem}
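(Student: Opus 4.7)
The plan is to $D$-conjugate $G$ into the desired shape in three stages. Throughout, the diagonal subgroup $A := G \cap D$ is automatically a $T$-submodule of $D$ (since $\phi(G) = T$), so Lemma~\ref{taSubmodules} yields $A \in \mathcal{A}^{[a]}$. In the first stage, the subgroup $H := G \cap D\langle s\rangle$ has permutation part $\langle s\rangle$, so Proposition~\ref{InitialPare} supplies $y \in D$ with $H^y = \langle sz_{p^{k+1}}^i, A\rangle$ for some $i$, where $A \cap Z_{\{p\}} = Z_{p^k}$. After replacing $G$ by $G^y$, we have $sz_{p^{k+1}}^i \in G$.

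Second, I would force $s$ itself into $G$ using $t^a$. Pick $t^a e \in G$; the element $(sz_{p^{k+1}}^i)^{t^a e} \cdot (sz_{p^{k+1}}^i)^{-u^a}$ has trivial permutation part, so lies in $G \cap D = A$. Direct calculation (using $s^{t^a} = s^{u^a}$, centrality of scalars, and that $1 - s^{-u^a}$ sends $D$ into $X$) identifies it as $z_{p^{k+1}}^{i(1-u^a)} \cdot x$ with $x \in X$. Projecting to $D/X \cong Z/Z_p$ and inspecting the image of $A$: if the $Y_{j,k,l}$-summand of $A$ has $l = 0$, then the image of $A$ in the $p$-component is $Z_{p^k}/Z_p$, forcing $p \mid i(u^a - 1)$; since $u^a - 1$ is a unit modulo $p$ (because $u$ has multiplicative order $p-1$ and $a < p-1$ is a proper divisor), $p \mid i$, hence $z_{p^{k+1}}^i \in A$ and $s \in G$. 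If $l \neq 0$, the image of $A$ in $D/X$ contains $z_{p^{k+1}}^i Z_p$ for every $i$, so some $a \in A$ satisfies $a z_{p^{k+1}}^{-i} \in X$; choosing $y' \in D$ with $\gamma(y') = a z_{p^{k+1}}^{-i}$, a further conjugation by $y'$ sends $sz_{p^{k+1}}^i$ to $sa$, and so $s \in G^{y'}$.

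Third, normalize the $t^a$-coset. With $s \in G$, pick $t^a e \in G$ and set $N := 1 + t^a + \cdots + t^{(\hat{a}-1)a}$. Since $(t^a e)^{\hat{a}} = N(e) \in A$ and $t^a$ fixes $Z$ pointwise, the $Z$-component $e_Z$ of $e$ satisfies $e_Z^{\hat{a}} \in A \cap Z = Z_m$; hence $e_Z \in Z_{m\hat{a}}$, and modifying $e$ by a power of $z_m \in A$ reduces $e_Z$ to $z_{m\hat{a}}^c$ with $0 \leq c < \hat{a}$. Finally, conjugate by $w \in D$ with $\gamma(w) \in A$ (which preserves $s \in G^w$): since $(t^a e)^w = t^a \cdot w^{1-t^{-a}} \cdot e$, I must choose $w$ so that $w^{1 - t^{-a}} \cdot e_X \in A$. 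The existence of such a $w$---a cocycle-coboundary argument for $\langle t^a\rangle$ acting on the $\langle s\rangle$-module $X$, with the norm condition $N(e_X) \in A \cap X$ as the precise vanishing obstruction---is the main obstacle. I would verify it componentwise using the uniserial $\langle s\rangle$-structure of $X_{\{p\}}$ (Lemma~\ref{XModules}) and of each $X_{\{q\}}^{(r)}$ (Proposition~\ref{StructureofXqr}), together with the $\langle t\rangle$-action recorded in Lemmas~\ref{tConjugacyonspModules}--\ref{tConjugacyonsqModules}.
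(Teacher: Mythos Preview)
Your first two stages are sound and give a valid alternative to the paper's opening. The gap is Stage~3, which you yourself flag as ``the main obstacle'' and leave as a sketch. Several points there are loose: the equation $A\cap Z=Z_m$ is wrong (it is $A\cap Z_{\{p\}'}=Z_m$, while $A\cap Z=Z_{p^k m}$); the $D_{\{p\}}$-component of $e$ is never dealt with; and the crucial claim---that some $w$ exists with simultaneously $\gamma(w)\in A$ (to keep $s\in G^w$) and $w^{1-t^{-a}}e_X\in A$---is asserted but not established. A generic cocycle--coboundary appeal does not produce such a $w$; an explicit construction is needed, and checking that both constraints can be met at once is precisely the content of the theorem.

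The paper takes a different route that avoids this impasse. Rather than first pinning down $s$ and then wrestling with the $t^a$-generator, it uses the Frattini argument to choose $h\in N_G(P)$ with $\phi(h)=\bar t:=t^a$, where $P$ is a Sylow $p$-subgroup of $F=\phi^{-1}(\langle s\rangle)\cap G$. After adjusting so that $h^{\hat a}\in A_{\{p\}'}$, one sees that $h^{\hat a}$ centralizes $P$ and is therefore scalar; writing $h=\bar t\,xy$ with $y$ scalar of $p'$-order and $x\in X$, this forces $x^\psi=1$ for $\psi=1+\bar t+\cdots+\bar t^{\,\hat a-1}$. The explicit group-ring element $\mu=\bar t+2\bar t^{\,2}+\cdots+(\hat a-1)\bar t^{\,\hat a-1}$, which satisfies $\mu(1-\bar t)=\psi-\hat a$, then furnishes the conjugator $x^{-a\mu}$: a direct computation shows this sends the $\bar t$-generator to $\bar t\,y$ (modulo $A$, using $x^p\in A$) and the $s$-generator into $sA$ simultaneously (via a further identity $\mu(s-1)=(s-1)\nu$). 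The Frattini choice of $h$, giving $x^\psi=1$, together with the explicit $\mu$, is exactly the ingredient your Stage~3 lacks.
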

\begin{proof}
Denote $t^a$ by $\thickbar{t}$.
Let $G$ be a finite subgroup of 
$DT$ such that $\phi(G) = T$.
Put $A=G\cap D$ and 
$F=\phi^{-1}(\langle s\rangle)\cap G$.
By Proposition~\ref{InitialPare},
we assume that $F\in \mathcal L$.

The Frattini argument shows that if $P$ 
is a Sylow $p$-subgroup of $F$, then there 
is $h\in N_G(P)$ such that 
$\phi(h)=\thickbar{t}$. We may replace 
$h$ by an appropriate power of $h$ to arrange 
that $h^{\hat{a}}\in \allowbreak A_{\{p\}'}$.
Choose $g\in P$ with $\phi(g)=s$. Then 
$[h^{\hat{a}},g ]\in  P\cap A_{\{p\}'}=1$. 
Hence $h^{\hat{a}}$ is scalar (it is
 centralized by $s$). 
 
So $h^{\hat{a}}= \lambda^{\hat{a}} 1_p$ 
for $\lambda\in \C^\times$ of $p'$-order 
such that $\mathrm{det}(h)= \lambda^p$.
Define $y = (-1)^a\lambda 1_p$ and  
$x=\thickbar{t}^{\, -1}hy^{-1}$. 
Then $y^{\hat{a}} = h^{\hat{a}}$,
$x\in X$ (as $\mathrm{det}(x)=1$), 
and $h^{\hat{a}}=x^\psi y^{\hat{a}}$ 
where $\psi$ is the element
$1+\thickbar{t}+\thickbar{t}^{2} + 
\cdots \allowbreak + 
\thickbar{t}^{\, \hat{a}-1}$ 
of the integral group ring 
$\Z\langle s,t\rangle$. 
Thus $x^\psi=1$.
 
Now $g=sz$ for $z\in Z_{\{p\}}$ such that 
$z^p\in A$.
Since $g^h \in F$ and $s^t=s^u$, 
we have $d:= \allowbreak x^{1-s^{\thickbar{t}}} 
 \in \allowbreak z^{u^a-1}A$ (remember $s^t=s^u$). 
Raising $d$ to the power $1+s^{\thickbar{t}} + 
\cdots \allowbreak +s^{\thickbar{t}(\tilde{u}-1)}$,
where $\tilde{u}\equiv u^{-a} \bmod p$,
reveals that $x^{1-s} \in z^{1-\tilde{u}}A$.
Thus $\gamma^{p-1}(x) \in A$.
The $\langle s\rangle$-module generated by $x^p$ 
is the same as the one generated by $\gamma^{p-1}(x)$, 
so $x^p\in A$.

Fortunately, $x$ can be conjugated away. 
Let $\mu =\thickbar{t}+2\thickbar{t}^2+\cdots + 
(\hat{a}-1)\thickbar{t}^{\, \hat{a}-1}$. 
Then $\mu(1-\thickbar{t})=\psi-\hat{a}$, and we
further calculate that $G$ is conjugate by 
$x^{-a\mu}$ to
\[
\langle szx^{a\mu(s-1)}\! , \hspace{.1pt} 
\thickbar{t}y, \hspace{.1pt} \allowbreak A\rangle.
\]
(Note: the inclusion 
 $(\hspace{1pt} \thickbar{t}y)^{\hat{a}}\in A \cap Z_{\{p\}'}$
implies the possibilities for the generator 
with permutation part $\thickbar{t}$   
in Definition~\ref{calMaDefinition}.)
Starting from the identity
\[
t^b(s-1) = (s-1)(1+s+ \cdots 
+ s^{u^{-b}-1})t^b
\]
in $\Z\langle s, t \rangle$,
we can prove the existence of
 $\nu\in \Z\langle s, t \rangle$
with coefficient sum 
\[
c \equiv \tilde{u}+2\tilde{u}^{2}+\cdots + 
(\hat{a}-1) \tilde{u}^{(\hat{a}-1)}\bmod p
\]
such that $\mu(s-1)=(s-1)\nu$. Together with 
 $x^{s-1} \in z^{\tilde{u}-1}A$, this yields
 $z^{(1-\tilde{u})a c}x^{a\mu(s-1)}\in A$.
Also,
\begin{align*}
(\tilde{u}-1)c 
& \equiv -\tilde{u}-\tilde{u}^2 - \cdots - 
\tilde{u}^{\hat{a}-1} 
+ (\hat{a}-1)\tilde{u}^{\hat{a}}\\
& \equiv 1+(\hat{a} -1)\tilde{u}^{\hat{a}}\\
& \equiv \hat{a} \bmod p.
\end{align*}
Thus $(\tilde{u}-1)ac \equiv -1 \bmod p$.
It follows that 
$zx^{a\mu(s-1)}\in A$, as required.
\end{proof}
\begin{remark}
 The relative simplicity of the family $\mathcal{M}^{[a]}$ 
places restrictions on degree $p$ representations of 
$\mathrm{Aut}(G)$ for 
$G\in \mathcal L$ (cf.~\cite[\S \! 6]{Conlon}).
\end{remark}

We move on to the conjugacy problem.
\begin{lemma}\label{MonomialConjugacyta}
Each group in $\mathcal{M}^{[a]}$ is normalized by 
$\langle s\rangle$, and stays in $\mathcal{M}^{[a]}$ 
under $\langle t\rangle$-conjugation. 
If $G^d \in \mathcal{M}^{[a]}$ 
for $d\in D$, 
then $G^d=G$. 
\end{lemma}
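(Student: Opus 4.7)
The plan is to treat the three claims in turn. The first is immediate: $s$ lies in $G$ by construction, so $\langle s\rangle\leq G$ and hence $\langle s\rangle$ normalizes $G$.

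For the $\langle t\rangle$-stability assertion, I would conjugate each generator of $G=\langle s,\,\thickbar{t}z_{m\hat{a}}^{c},\,A\rangle$ by $t$. We have $s^{t}=s^{u}$, still a generator of $\langle s\rangle$; and since $t$ centralizes $\thickbar{t}=t^{a}$ and fixes scalars, $(\thickbar{t}z_{m\hat{a}}^{c})^{t}=\thickbar{t}z_{m\hat{a}}^{c}$. What remains is to verify $A^{t}\in\mathcal{A}^{[a]}$, which is precisely what Lemmas~\ref{tConjugacyonspModules} and~\ref{tConjugacyonsqModules} are designed for: $t$ cyclically shifts the columns of ${\sf N}_{A}$ and sends $Y_{j,k,l}$ to $Y_{j,k,lu^{j}\bmod p}$. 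Both operations respect the defining conditions of $\mathcal{A}^{[a]}$ (namely $t^{a}$-invariance of the ${\sf N}$-array and ``$l=0$ or $aj\equiv 0\bmod p-1$''), since $j$ is unchanged and $l=0$ implies $lu^{j}=0$; scalars are $t$-fixed, so the invariant $m$ also stays the same. Thus $G^{t}\in\mathcal{M}^{[a]}$.

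The substantive claim is the third. Suppose $d\in D$ and $G^{d}\in\mathcal{M}^{[a]}$. Because $D$ is abelian, $A^{d}=A$, so $G^{d}\cap D=A$ and $G^{d}=\langle s,\,\thickbar{t}z_{m\hat{a}}^{c'},\,A\rangle$ for some $c'\in\{0,\ldots,\hat{a}-1\}$. Matching cosets of $A$ with permutation parts $s$ and $\thickbar{t}$ in $G$ and $G^{d}$, using $s^{d}=s\cdot d^{1-s}$ and $(\thickbar{t}z_{m\hat{a}}^{c})^{d}=\thickbar{t}z_{m\hat{a}}^{c}\cdot d^{1-\thickbar{t}}$, gives $d^{1-s}\in A$ and
\[
z_{m\hat{a}}^{c-c'}\cdot d^{1-\thickbar{t}}\in A.
\]
My plan is to extract $c=c'$ from the second relation by a determinant argument: since $d^{\thickbar{t}}$ is a conjugate of $d$, we have $\det(d^{1-\thickbar{t}})=1$, and hence $\det(z_{m\hat{a}}^{c-c'})=z_{m\hat{a}}^{p(c-c')}$ lies in $\det(A)\leq\C^{\times}$. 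A short inventory of $\det$ on the generators of $A$ (scalars $z_{m},z_{p^{k}}$, non-scalar factors in $A\cap X$, and the mixed generator $x_{p^{j+1}}z_{p^{k+1}}^{l}$ when $l\neq 0$) shows that the $p'$-part of $\det(A)$ is exactly $\mu_{m}$. Since $\gcd(p,m\hat{a})=1$, the element $z_{m\hat{a}}^{p(c-c')}$ has $p'$-order $m\hat{a}/\gcd(c-c',m\hat{a})$, so it belongs to $\mu_{m}$ if and only if $\hat{a}\mid c-c'$. In the range $0\leq c,c'<\hat{a}$ this forces $c=c'$, and consequently $G^{d}=G$.

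The main obstacle is identifying the right invariant for this last step. The obvious first attempt---arguing $d^{s-1}\in A\Rightarrow d\in AZ$ so that $d^{\thickbar{t}-1}$ then sits in $A$ automatically---breaks down because $H^{1}(\langle s\rangle,A_{\{p\}})$ can be non-trivial, producing $s$-fixed classes in $D/A$ that are not scalar modulo $A$. Projecting through $\det$ collapses that troublesome $p$-part of $D/A$ and turns the question into a transparent divisibility condition modulo $\hat{a}$.
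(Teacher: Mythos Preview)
Your proof is correct. The paper states this lemma without proof, so there is no argument to compare against; your determinant trick for the third claim is a clean and efficient way to isolate the invariant $c$, and your checks for the first two claims are straightforward and accurate.
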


\begin{theorem}\label{teConjugacy}
If $G$, $H\in \mathcal{M}^{[a]}$ are irreducible 
and $\GL(p,\C)$-conjugate, then 
either they are $\langle t\rangle$-conjugate, or they are
$\langle e\rangle$-conjugate, where $e$ is 
 a Vandermonde matrix.
The latter can occur only when each of $G$ and $H$ has 
more than one abelian normal subgroup with quotient $T$.
\end{theorem}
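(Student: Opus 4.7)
The plan is to track how the conjugating element $w\in \GL(p,\C)$ with $G^w=H$ acts on the canonical maximal abelian normal subgroups $G\cap D$ and $H\cap D$, which are maximal abelian normal by the corollary to Theorem~\ref{NonScalarIffIrreducible}. The key dichotomy is whether $(G\cap D)^w = H\cap D$ or not.

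\textbf{Case 1: $(G\cap D)^w = H\cap D$.} Here Theorem~\ref{Protoconjugacy} applies: since $\C$ is algebraically closed, $w\in \widetilde{\mathrm{M}}(p,\C)$ up to a scalar. Then $\phi(w)$ conjugates $\phi(G)=T$ onto $\phi(H)=T$. Because $\mathrm{Fit}(T)=\langle s\rangle$ (Proposition~\ref{FitTransitiveDegreep}) and $N_{\Sym(p)}(\langle s\rangle)=\langle s,t\rangle$ (Lemma~\ref{TransitivePrimeDegreePermGroups}), I have $\phi(w)=s^it^b$ for some $i,b$. Factor $w=d\cdot s^it^b$ with $d\in D$, and since $s\in G$, rearrange to the form $w=d't^b$ modulo $G$-conjugation. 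Now Lemma~\ref{MonomialConjugacyta} gives $G^{t^b}\in \mathcal{M}^{[a]}$, and the remaining $D$-conjugation both preserves $\mathcal{M}^{[a]}$ and fixes the group; hence $H=G^{t^b}$, putting $G$ and $H$ in the same $\langle t\rangle$-orbit.

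\textbf{Case 2: $(G\cap D)^w\neq H\cap D$.} Then $(G\cap D)^w$ is a second maximal abelian normal subgroup of $H$, of the same order as $H\cap D$, and symmetrically $G$ has a second maximal abelian normal subgroup other than $G\cap D$. Each has index $|T|$ with quotient (isomorphic to) $T$, giving the multiple-abelian-normals condition in the statement. The plan is to use the Vandermonde identities from Lemma~\ref{Vanderlike}, $s^e=x_{p^2}$ and $x_{p^2}^e=s^{-1}$, to realise the alternative maximal abelian normal subgroup of $G$ as the diagonal subgroup of $G^e$. This reduces Case~2 to Case~1 applied to the pair $(G^e,H)$, after which the residual $\langle t\rangle$-conjugation must be shown to be unnecessary under the additional structural constraints forced by the existence of two max abelian normals.

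\textbf{Main obstacle.} The delicate part is the structural analysis within Case~2: one must identify precisely which $G\in \mathcal{M}^{[a]}$ carry more than one maximal abelian normal subgroup of quotient $T$, then verify that the Vandermonde swap $s\leftrightarrow x_{p^2}$ exchanges the two candidates consistently. Using Lemma~\ref{taSubmodules} to decompose $G\cap D$ and tracking where $x_{p^2}\in Y_{j,k,l}$ sits, I expect the conclusion to be that the alternative max abelian normal must contain $x_{p^2}$ centrally, so that $\langle x_{p^2}\rangle$ can replace $\langle s\rangle$ in the generating set of $G$; the swap by $e$ then literally interchanges the two roles. Verifying that this swap lands in $\mathcal{M}^{[a]}$ (so that Lemma~\ref{MonomialConjugacyta} can be reapplied) and that no separate $\langle t\rangle$-factor is needed is the step where most of the calculation will reside.
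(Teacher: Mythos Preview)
Your Case~1 is correct and matches the paper's argument.

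Case~2 has a genuine gap. Your plan is to conjugate $G$ by $e$ so that the ``other'' abelian normal subgroup becomes the diagonal one, and then invoke Case~1 for the pair $(G^e,H)$. But there is no reason that $G$ has only \emph{two} abelian normal subgroups with quotient $T$: once you know $G\cap D=\langle x_{p^2}\rangle\times(G\cap Z)$, the quotient $E/Z(E)$ with $E=\langle s,x_{p^2}\rangle$ is elementary abelian of rank~$2$, and the candidate abelian normals correspond to the $1$-dimensional $\langle t^a\rangle$-invariant subspaces of $E/Z(E)$, of which there may be several. So $e^{-1}w$ need not carry $G^e\cap D$ to $H\cap D$, and your reduction does not close. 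Even granting the reduction, you would obtain $H=G^{et^b}$, and you have no mechanism to strip off the $t^b$; you flag this yourself as the ``main obstacle'' but do not resolve it.

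The paper's route is different and supplies exactly the missing control. From Case~2 one deduces $G\cap D_{\{p\}}=Y_{1,k,0}$, so $E=\langle s,x_{p^2}\rangle$ is extraspecial of order $p^3$ and is the Sylow $p$-subgroup of $\mathrm{Fit}(G)=\mathrm{Fit}(H)$; hence $w\in N:=N_{\GL(p,\C)}(E)$. By Bolt--Room--Wall, $N/EZ\cong\SL(2,p)$, and one computes that under the natural surjection $\sigma$ the images of $t$ and $e$ generate a monomial subgroup of $\SL(2,p)$ (with respect to the basis $\{sZ(E),x_{p^2}Z(E)\}$), with the key identity $t^e=t^{-1}$. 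Since $\sigma(w)$ normalises $\sigma(G)=\sigma(\langle t^a\rangle)$, one finds $\sigma(w)\in\sigma(\langle t,e\rangle)$ (or, in the degenerate case where $\sigma(\langle t^a\rangle)$ is scalar, that the extra generator $d=x_{p^3}$ already normalises $G$). Finally, the structural constraints of Case~2 force $G^t=G$, so modulo elements normalising $G$ the conjugator reduces to a power of $e$. The two facts you are missing---$t^e=t^{-1}$ and $G^t=G$ in this situation---are precisely what collapse $\langle t,e\rangle$-conjugacy to $\langle e\rangle$-conjugacy.
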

\begin{proof}
Suppose that $G^w = H$ for some $w\in \GL(p,\C)$. 
If $(G\cap D)^w = H\cap D$ then 
$w\in D\langle s, t \rangle$ up to scalars.
Thus $G$ and $H$ are $\langle t\rangle$-conjugate
by Lemma~\ref{MonomialConjugacyta}.

Suppose now that $(G\cap D)^w \neq H\cap D$.
By the definition of $\mathcal Y$ and 
Lemma~\ref{taSubmodules},
$G\cap D = \langle x, G\cap Z \rangle$ where $x= x_{p^2}$; 
i.e., $G\cap D_{\{p\}} = 
H\cap D_{\{p\}} = Y_{1,k,0}$ for some $k$. 
The Fitting subgroup $\langle s,G\cap D\rangle$ of $G$ 
(and of $H$) has unique Sylow $p$-subgroup 
$E=\langle s,x\rangle$, an extraspecial group 
of order $p^3$ and exponent $p$. Therefore 
$w\in N:=N_{\GL(p,\C)}(E)$.

It is proved in \cite[\S \! 3]{BoltRoomWall} that 
$N_{\PGL(p,\C)}(EZ/Z)/(EZ/Z) \cong \SL(2,p)$.
As the largest subgroup
of exponent $p$, $E$ is characteristic in $EZ$.
Hence $N/EZ \cong \SL(2,p)$.

For the Vandermonde matrix $e$ defined in the proof 
of Lemma~\ref{Vanderlike},
 $s^e = x$ and $x^e = s^{-1}$. Also $t^e = t^{-1}$.
(We know from the proof of 
Lemma~\ref{tConjugacyonspModules} that $x^t = x^{\tilde{u}}$ 
where $\tilde{u}\equiv u^{-1} \bmod p$. 
Thus $t^et$ centralizes $E$. So the scalar
$t^et$ is $1$, because $t^{-1}$ and $t^e$  
have trace $1$.)
Let $\sigma$ be the natural surjection of 
$N$ onto $\SL(2,p)$.  
If $d= x_{p^3}$ then $\sigma(d)$ has order $p$, so 
generates a Sylow $p$-subgroup of $\SL(2,p)$.
Its normalizer $\sigma(\langle d,t\rangle)$ is 
maximal in $\SL(2,p)$.
Consequently $\SL(2,p)=\sigma(\langle d,t,e\rangle)$.

Observe that $\sigma(w)$ normalizes 
$\langle \sigma(t^a) \rangle=\sigma(G)=\sigma(H)$.
Representing $\sigma(d)$, 
$\sigma(t)$, and $\sigma(e)$ in $\SL(2,p)$ 
according to conjugation action on the basis
$\{sZ(E), \allowbreak xZ(E)\}$ of $\mathbb{F}_p^2$, 
we find that $\sigma(\langle t,e\rangle)$ is 
monomial. 
If the diagonal subgroup $\sigma(\langle t^a \rangle)$ 
is non-scalar then it has normalizer 
$\sigma(\langle t,e\rangle)$ in $\SL(2,p)$;
otherwise $G^d = G$.
 Since $G^t = G$,
this completes the proof.
\end{proof}

\begin{definition}
Let $\mathcal{M}_1^{[a]}$,
 $\mathcal{M}_2^{[a]}$, and
 $\mathcal{M}_3^{[a]}$ respectively denote the sublists 
of $\mathcal{M}^{[a]}$ 
consisting of $G= \langle s, t^a z_{m\hat{a}}^{c}, 
Y_{j,k,l}, G\cap D_{\{p\}'} \rangle$ that satisfy 
(1), (2), (3) below.
\begin{enumerate}
\item $l=0$, $0\leq c \leq \hat{a}/2$, $j=1$, 
and $\mathsf{N}_{G\cap D}=0$.
\item[] \vspace{-15pt}
\item 
$l=0$, either $j\geq 2$ or $\mathsf{N}_{G\cap D}\neq 0$, and 
$\mathsf{N}_{G\cap D}$ is $\langle t\rangle$-minimal.
\item[] \vspace{-15pt}
\item  $j\geq 2$ is divisible by $\hat{a}$, $l\in
\{u, u^2, \ldots , u^{j'}\}$ modulo $p$ where 
$j'=\mathrm{gcd}(j,p-1)$, and $\mathsf{N}_{G\cap D}$ is 
$\langle t^{(p-1)/j'}\rangle$-minimal. 
\end{enumerate}
Let $\mathcal{M}^*$ be the union of all 
$\mathcal{M}^{[a]*} := \mathcal{M}_1^{[a]}\cup \mathcal{M}_2^{[a]}
\cup \mathcal{M}_3^{[a]}$ as $a$ ranges over the proper divisors 
of $p-1$. 
\end{definition}
 
The solvable groups are now classified. 
\begin{theorem}\label{SolvableFullMonp}
For $\mathcal{L}^*$ as in 
Definition{\em ~\ref{StarLDefinitions}}, 
$\mathcal{L}^*\cup \mathcal{M}^*$ 
is a complete and irredundant list of the 
solvable finite irreducible monomial subgroups 
of $\GL(p,\C)$.
\end{theorem}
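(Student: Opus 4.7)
The plan is to reduce to a classification per transitive solvable permutation part $T \leq \Sym(p)$, show completeness within each such $T$ using Theorem~\ref{sSplitsOff}, and obtain irredundancy using Theorem~\ref{teConjugacy} together with the ordering conventions built into $\mathcal{L}^*$ and $\mathcal{M}^*$. Disjointness across different values of $a$ is automatic: by Lemma~\ref{TransitivePrimeDegreePermGroups}, the possible permutation parts for a solvable finite irreducible monomial subgroup of $\GL(p,\C)$ are exactly $\langle s \rangle$ and the groups $\langle s, t^a \rangle$ for proper divisors $a$ of $p-1$, and by Theorem~\ref{GLConjugateImpliesSamePP} these are pairwise non-conjugate in $\Sym(p)$, so the corresponding families cannot merge under $\GL(p,\C)$-conjugacy. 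The cyclic case $T = \langle s \rangle$ is dispatched by Theorem~\ref{sGroupsFinal}, which gives $\mathcal{L}^*$ as a complete irredundant list.

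Fix a proper divisor $a$ of $p-1$ and let $T = \langle s, t^a \rangle$. For completeness, I start with an arbitrary finite irreducible $G \leq \Mo(p,\C)$ with $\phi(G) = T$. Theorem~\ref{sSplitsOff} places $G$ (up to $D$-conjugacy) in $\mathcal{M}^{[a]}$, and Lemma~\ref{taSubmodules} constrains $G\cap D \in \mathcal{A}^{[a]}$. I then use $\langle t \rangle$-conjugacy to normalize the diagonal part: for $l=0$, Lemma~\ref{MonomialConjugacyta} shows that $\langle t\rangle$-conjugation permutes groups in $\mathcal M^{[a]}$, and choosing $\mathsf{N}_{G\cap D}$ to be $\langle t\rangle$-minimal produces a representative in $\mathcal{M}_2^{[a]}$ (or, in the Vandermonde-sensitive case $j = 1$ and $\mathsf{N}_{G\cap D} = 0$, in $\mathcal{M}_1^{[a]}$ after further reduction of $c$). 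For $l \neq 0$, Lemma~\ref{taSubmodules} forces $\hat{a} \mid j$ (since $aj \equiv 0 \bmod p - 1$); combining Lemma~\ref{tConjugacyonspModules} with $\langle t^{(p-1)/j'}\rangle$-minimality of $\mathsf{N}_{G\cap D}$ places $G$ in $\mathcal{M}_3^{[a]}$, where the restriction $l \in \{u, u^2, \dots, u^{j'}\}$ absorbs the residual $\langle t \rangle$-freedom on the $p$-part.

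For irredundancy within a single $\mathcal{M}^{[a]*}$, I apply Theorem~\ref{teConjugacy}: if $G, H \in \mathcal{M}^{[a]*}$ are $\GL(p,\C)$-conjugate, then either they are $\langle t\rangle$-conjugate, or each has multiple abelian normal subgroups with quotient $T$ and they are conjugate by a Vandermonde matrix $e$. In the former situation, the $\langle t\rangle$-minimality of $\mathsf{N}_{G\cap D}$ (with the appropriate stabilizer $\langle t^{(p-1)/j'}\rangle$ in the $\mathcal{M}_3^{[a]}$ case) forces $G\cap D = H\cap D$; the uniqueness of the triple labeling $Y_{j,k,l}$ noted in Remark~\ref{YGeneratorsAndOrder}(iii), plus the fact that $t^a z_{m\hat{a}}^c$ determines $c$ uniquely modulo $\hat{a}$ once the diagonal subgroup is fixed, then yields $G = H$. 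The genuinely delicate case is the Vandermonde collapse, which I expect to be the main obstacle.

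This case is detected by $G \cap D_{\{p\}} = Y_{1,k,0}$ with trivial $p'$-tail on $X$, so that the Fitting subgroup of $G$ contains the extraspecial $E = \langle s, x_{p^2}\rangle$ of order $p^3$ and exponent $p$. Using the computation in Theorem~\ref{teConjugacy} that $s^e = x$, $x^e = s^{-1}$, and $t^e = t^{-1}$, conjugation by $e$ sends $\langle s, t^a z_{m\hat{a}}^c, A \rangle$ to a group with the same diagonal subgroup but with $t^a z_{m\hat{a}}^c$ replaced (up to its inverse, and hence up to relabeling of the cyclic generator) by $t^{-a} z_{m\hat{a}}^c$. Thus the only extra $\GL(p,\C)$-identification among the $\mathcal{M}^{[a]}$-representatives in this family pairs $c$ with $-c \bmod \hat{a}$, and this is precisely what the restriction $0 \leq c \leq \hat{a}/2$ in the definition of $\mathcal{M}_1^{[a]}$ resolves. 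Combining, $\mathcal{L}^* \cup \mathcal{M}^*$ is a complete and irredundant list.
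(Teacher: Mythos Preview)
Your proof is correct and follows essentially the same route as the paper: reduce to a fixed permutation part via Theorem~\ref{GLConjugateImpliesSamePP}, invoke Theorem~\ref{sGroupsFinal} for the cyclic case, and for non-cyclic $T=\langle s,t^a\rangle$ use Theorem~\ref{sSplitsOff} for completeness and Theorem~\ref{teConjugacy} to split irredundancy into the $\langle t\rangle$-conjugacy case (handled by the minimality conventions) and the Vandermonde case (handled by the restriction $0\le c\le \hat a/2$ in $\mathcal{M}_1^{[a]}$). Your computation that $e$ sends $c$ to $\hat a - c$ matches the paper's $G^e=\langle s,t^az_{m\hat a}^{\hat a-c},A\rangle$; the only point you leave slightly implicit is why the Vandermonde case is confined to $j=1$ and $\mathsf N_{G\cap D}=0$, but this is exactly the characterization of when $G$ fails to have a unique abelian normal subgroup with quotient $T$, which you do state.
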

\begin{proof}
All groups in $\mathcal{M}^*$ are irreducible 
(Theorem~\ref{NonScalarIffIrreducible}). 
By Theorems~\ref{GLConjugateImpliesSamePP}, 
\ref{sGroupsFinal}, and \ref{sSplitsOff}, 
we must show that each element of $\mathcal{M}^{[a]}$ 
(for fixed $a$) is conjugate to one and only one
 element of $\mathcal{M}^{[a]*}$.

Let $G=\langle s, t^az_{m\hat{a}}^c, A\rangle\in 
 \mathcal{M}^{[a]}$ where $A\cap D_{\{p\}} = Y_{j,k,l}$.
Heeding Lemma~\ref{taSubmodules}, we first suppose that $l = 0$. 
By Lemma~\ref{tConjugacyonspModules} and 
Theorem~\ref{teConjugacy},  
if $G$ has a unique abelian normal 
subgroup with quotient $T$, then the only
group in $\mathcal{M}^{[a]*}$ that is 
conjugate to $G$ lies in 
$\mathcal{M}^{[a]}_2$.  
Otherwise, $j=1$ and $\mathsf{N}_{G\cap D} = 0$. 
We have $G^e = \langle s, t^a z_{m\hat{a}}^{\hat{a}-c}, A\rangle$.
Thus the $\langle e\rangle$-orbit of $G$ 
contains just one group $H$ 
with $c\leq \hat{a}/2$. Since $H\in\mathcal{M}^{[a]}_1$, the 
only element of $\mathcal{M}^{[a]*}$ conjugate to $G$ is
$H$.

Suppose next that $l \geq 1$, so 
$j$ is positive and $j \equiv 0 \bmod \hat{a}$.
Only $\langle t\rangle$-conjugacy matters here, and 
$G$ cannot be conjugate to a group 
in $\mathcal{M}^{[a]}_1\cup \mathcal{M}^{[a]}_2$.
The rest of the proof echoes the last two paragraphs
in the proof of Theorem~\ref{sGroupsFinal}.
\end{proof}

\section{Non-solvable monomial groups}

Our objective in this section is to prove
general-purpose results 
for (\mbox{finite}) subgroups of $\Mo(p,\C)$
with non-solvable transitive permutation part $T$.
In later sections, we treat
$T=\Sym(p)$, $T=\Alt(p)$, and special cases of $T$
required to facilitate the classifications  
for $p\leq 11$.
\begin{notation}\label{InForce}
\emph{Let $p\geq 5$, let $q$ be a prime,
let $T$ be a non-solvable 
subgroup of $\mathrm{Sym}(p)$ containing 
$s$, and let $\pi$ 
 be the set of primes 
other than $p$ that divide~$|T|$.} 
\end{notation}
Note that all primes in $\pi$ are less than  $p$.

A finite $T$-submodule of $D_{\{p\}}$ is sandwiched 
between $\Omega_nX_{\{p\}}$ and 
$Z_{\{p\}} \cdot \Omega_nD_{\{p\}}$ for
some $n \geq 0$. 
\begin{lemma}\label{SpModulesp}
If $A=Y_{j,k,l}\in \mathcal Y$ then the following 
are equivalent.
\begin{itemize}
\item[{\rm (i)}] $A$ is a $T$-module. 
\item[{\rm (ii)}] $A$ is a $\mathrm{Sym}(p)$-module.
\item[{\rm (iii)}] 
Either $j\equiv 0 \bmod (p-1)$, or both
$l=0$ and $j\equiv -1 \bmod (p-1)$. 
\end{itemize}
\end{lemma}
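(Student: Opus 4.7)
The implication (ii) $\Rightarrow$ (i) is immediate from $T \leq \Sym(p)$. My plan is to establish (ii) $\Leftrightarrow$ (iii) by direct computation, then derive (i) $\Rightarrow$ (ii) using the 2-transitivity of non-solvable $T$.

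For (ii) $\Leftrightarrow$ (iii): if $l \neq 0$, then Lemma~\ref{tConjugacyonspModules} gives $Y_{j,k,l}^t = Y_{j,k,lu^j}$, so $\Sym(p)$-invariance forces $u^j \equiv 1 \bmod p$, hence $(p-1) \mid j$. If $l = 0$, then $Y_{j,k,0} = X_{p^{j+1}} \cdot Z_{p^k}$, and $\Sym(p)$-invariance reduces to $X_{p^{j+1}}$ being $\Sym(p)$-invariant (since $Z_{p^k}$ is scalar). By the uniserial structure of $X_{\{p\}}$ as an $\langle s\rangle$-module (Lemma~\ref{XModules}), $X_{p^{j+1}}$ is determined by its order; comparing with the $\Sym(p)$-submodules strictly between $\Omega_n X_{\{p\}}$ and $\Omega_{n+1} X_{\{p\}}$, which correspond (via the quotient $\Omega_{n+1} X_{\{p\}}/\Omega_n X_{\{p\}} \cong \Omega_1 X_{\{p\}}$) to the three $\Sym(p)$-submodules $0$, $Z_p$, $\Omega_1 X_{\{p\}}$, we conclude that $X_{p^{j+1}}$ is $\Sym(p)$-invariant exactly when $j+1 \equiv 0$ or $1 \bmod (p-1)$.

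For the converse in the case $j \equiv 0 \bmod (p-1)$: setting $n = j/(p-1)$, the preceding analysis shows both $X_{p^{j+1}}$ and $Z_{p^{k+1}}$ are $\Sym(p)$-invariant, and $\Sym(p)$ acts trivially on each of $X_{p^{j+1}}/\Omega_n X_{\{p\}}$ (forced, since its single $\Sym(p)$-composition factor inside $\Omega_1 X_{\{p\}}$ of dimension $1$ must be $Z_p$) and $Z_{p^{k+1}}/Z_{p^k}$ (scalar). Hence every cyclic subgroup of order $p$ in $X_{p^{j+1}} Z_{p^{k+1}}/(\Omega_n X_{\{p\}} Z_{p^k}) \cong C_p \times C_p$ is $\Sym(p)$-invariant, including the line generated by $x_{p^{j+1}} z_{p^{k+1}}^l$ modulo $\Omega_n X_{\{p\}} Z_{p^k}$. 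The case $j \equiv -1 \bmod (p-1)$ with $l = 0$ is immediate, since then $Y_{j,k,0} = \Omega_n X_{\{p\}} \cdot Z_{p^k}$ for $n = (j+1)/(p-1)$.

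For (i) $\Rightarrow$ (ii): By Burnside's classical theorem (implicit in Proposition~\ref{FitTransitiveDegreep} and the discussion of Table~\ref{table-nmr-reps}), a non-solvable transitive subgroup of $\Sym(p)$ is 2-transitive. Hence the $\F_p T$-submodule lattice of the natural permutation module $\Omega_1 D_{\{p\}} \cong \F_p^p$ coincides with that of $\F_p\Sym(p)$, namely $\{0,\, Z_p,\, \Omega_1 X_{\{p\}},\, \F_p^p\}$. Extending up the torsion filtration by induction on $n$, using the short exact sequences $0 \to \Omega_1 D_{\{p\}} \to \Omega_n D_{\{p\}} \to \Omega_{n-1} D_{\{p\}} \to 0$ and the fact that the composition factors (trivial or the standard $(p-2)$-dimensional simple module) restrict from $\F_p\Sym(p)$ to $\F_p T$ without splitting, we conclude that every $T$-submodule of $D_{\{p\}}$ is $\Sym(p)$-invariant.

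The main obstacle is (i) $\Rightarrow$ (ii). Lemma~\ref{tConjugacyonspModules} alone yields only the weaker constraint $(p-1) \mid aj$ from conjugation by $t^a$ (where $\langle s, t^a\rangle \leq T$), and for $a > 1$ --- for example $a = 2$ when $T = \Alt(p)$ --- this falls short of $(p-1) \mid j$. The 2-transitivity argument is what bridges the gap, but justifying it at higher torsion levels requires the inductive lattice comparison above rather than a direct conjugation calculation.
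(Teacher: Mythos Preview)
Your treatment of (ii) $\Leftrightarrow$ (iii) is sound, and using Lemma~\ref{tConjugacyonspModules} for the $l\neq 0$ case of (ii) $\Rightarrow$ (iii) is a tidy shortcut the paper does not take; the paper instead constructs the modules in (iii) explicitly as $\Sym(p)$-modules and then appeals to Goursat--Remak.

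Your argument for (i) $\Rightarrow$ (ii), however, has a genuine gap. You assert that $2$-transitivity of $T$ forces the $\F_pT$-submodule lattice of $\Omega_1 D_{\{p\}}\cong\F_p^p$ to coincide with that of $\F_p\Sym(p)$. Two-transitivity yields only $\dim_{\F_p}\End_{\F_pT}(\F_p^p)=2$, hence indecomposability of the permutation module; it does not by itself guarantee that the $(p-2)$-dimensional heart remains irreducible on restriction to $T$ (for $2$-transitive groups in general the heart can split in the natural characteristic). This is exactly why the paper invokes \cite[Satz~5.1]{Neumann2} for the statement that the only proper nontrivial $T$-submodules of $\Omega_1 D_{\{p\}}$ are $Z_p$ and $\Omega_1X_{\{p\}}$. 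Your subsequent ``inductive lattice comparison'' is also not carried out: matching composition factors on each layer $\Omega_n/\Omega_{n-1}$ is not the same as matching submodule lattices globally, and you give no mechanism for ruling out extra $T$-submodules.

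The paper's route is more economical. It looks only at $A\cap X$, raises it to a suitable $p$-power to land nontrivially inside $\Omega_1X_{\{p\}}$, and applies Neumann's result there; uniseriality of $X_{\{p\}}$ over $\langle s\rangle$ (Lemma~\ref{XModules}) then pins down the index modulo $p-1$ in one step, with no induction. The same device, applied also to $AZ\cap X$, rules out the residual case $l\neq 0$, $j\equiv 1\bmod(p-1)$. Once Neumann's input is granted, your approach can be completed along these lines, but as written the key reduction is missing.
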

\begin{proof} 
Let $j = n(p-1)$. Then $X_{p^j} = \Omega_n X_{\{p\}}$ 
and thus $Y_{j-1,k,0}= X_{p^j}Z_{p^k}$ are 
$\mathrm{Sym}(p)$-modules. Since $M:=\Omega_n D_{\{p\}}$ 
is a $\mathrm{Sym}(p)$-module of order $p^{np}$, 
and $Y_{j-1,n,0}$ has index $p$ in $M$, it follows that 
$M= Y_{j,n,l}$ for some $l\neq 0$. 
Hence $X\cap Z M = X_{p^{j+1}}$
is a $\mathrm{Sym}(p)$-module. 
Taking $p^n$-powers, we deduce that  
$X_{p^{j+1}}/X_{p^{j}}\cong Z_p$ is trivial 
as a $\mathrm{Sym}(p)$-module, so 
 (iii) $\Rightarrow$ (ii) by 
Theorem~\ref{GoursatRemak}.

Suppose that $A$ is a (non-identity) 
$T$-module. If $l\neq 0$  (resp., $l =0$)
then $A\cap X = X_{p^j}$ 
 (resp., $A\cap \allowbreak X = X_{p^{j+1}}$). 
By \cite[Satz~5.1]{Neumann2}, 
the only non-identity proper $T$-submodules of 
$\Omega_1 D_{\{p\}}$ are $X_{p}=Z_p$ and  
$\Omega_1 X_{\{p\}}= X_{p^{p-1}}$.
Since $(A\cap X)^r \in \Omega_1 X_{\{p\}}$ for
some $p$-power $r$, 
necessarily $A\cap X$ is $X_{p^{n(p-1)}}$ or 
$X_{p^{n(p-1)+1}}$ for some $n\geq 1$. 
The permitted values of $j$ and $l$ in (iii) are 
now evident.
\end{proof}

We derive a weaker 
statement for submodules of $p'$-order.
\begin{proposition}\label{XqIndecomposable}
If $q\neq p$ then
$X_{\{q\}}$ is an indecomposable $T$-module. 
\end{proposition}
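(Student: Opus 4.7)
The plan is to reduce indecomposability of $X_{\{q\}}$ as a $T$-module to that of its bottom layer $U_1 := \Omega_1 X_{\{q\}}$, and then to pin down $U_1$ by an endomorphism-ring calculation that exploits the $2$-transitivity of $T$.

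First I would invoke Burnside's classical theorem that a transitive permutation group of prime degree is either solvable or $2$-transitive; since $T$ is non-solvable, it is $2$-transitive (this is also visible directly from Table~\ref{table-nmr-reps}). Passing to additive notation in each diagonal coordinate identifies $U_1$ with the augmentation submodule $\{(a_1,\ldots,a_p) \in \F_q^p : a_1 + \cdots + a_p = 0\}$ of the $\F_q T$-permutation module $P := \F_q^p$. Because $q \neq p$, the constant line $\F_q \cdot \mathbf{1}$ is a $T$-invariant complement of $U_1$ in $P$, giving $P = \F_q \cdot \mathbf{1} \oplus U_1$.

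The heart of the argument is to show $\mathrm{End}_{\F_q T}(U_1) = \F_q$. By Mackey's formula, valid in any characteristic, $\dim_{\F_q} \mathrm{End}_{\F_q T}(P)$ equals the number of $T$-orbits on $\{1,\ldots,p\}^2$, which is $2$ by $2$-transitivity. Transitivity of $T$ together with $q \neq p$ forces $U_1^T = \F_q \cdot \mathbf{1} \cap U_1 = 0$, and the self-duality of the permutation module then yields $\mathrm{Hom}_{\F_q T}(U_1, \F_q) = 0$. Hence the Hom-blocks mixing the two summands of $P$ vanish, so $\mathrm{End}_{\F_q T}(P) = \F_q \oplus \mathrm{End}_{\F_q T}(U_1)$, and the dimension count forces $\mathrm{End}_{\F_q T}(U_1) = \F_q$. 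In particular $U_1$ is a brick, hence indecomposable as an $\F_q T$-module.

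Finally, if $X_{\{q\}} = M_1 \oplus M_2$ were a proper $T$-decomposition, each $M_i$ would be a non-trivial abelian $q$-group, so $\Omega_1 M_i \neq 1$; then $U_1 = \Omega_1 M_1 \oplus \Omega_1 M_2$ would split $U_1$ non-trivially, contradicting the previous step. Hence $X_{\{q\}}$ is $T$-indecomposable. The main obstacle is the endomorphism-ring computation, but its inputs (the orbit count, the vanishing of trivial sub/quotient in $U_1$, and self-duality of $P$) all follow cleanly from $2$-transitivity and $q \neq p$; the lift from $U_1$ back to $X_{\{q\}}$ is routine.
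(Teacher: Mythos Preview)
Your proof is correct and follows essentially the same route as the paper's: both reduce to the bottom layer $\Omega_1 X_{\{q\}}$, invoke Burnside to get $2$-transitivity of $T$, and then compute that the endomorphism ring of the $\F_qT$-permutation module $\Omega_1 D_{\{q\}}\cong \F_q^{\,p}$ is $2$-dimensional. The paper does this last step by counting $T$-orbits on elementary matrices in $\mathrm{Mat}(p,\F_q)$, whereas you phrase it via Mackey's formula and a Hom-block decomposition; your treatment is more explicit about why the cross-terms $\mathrm{Hom}(\F_q,U_1)$ and $\mathrm{Hom}(U_1,\F_q)$ vanish and about lifting indecomposability from $U_1$ back to $X_{\{q\}}$, but the substance is the same.
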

\begin{proof}
Let $W = \Omega_1D_{\{q\}}$. We prove that the $\F_q$-space 
$\mathrm{End}_T W$ 
has dimension $2$. 
Since $W= Z_q\times \Omega_1 X_{\{q\}}$, 
this will imply that $\Omega_1 X_{\{q\}}$ 
and thus $X_{\{q\}}$ are indecomposable $T$-modules.

The permutation matrix group $T$ embeds in $\GL(p,q)$ under 
entrywise reduction modulo $q$, and 
$T$ thereby acts on $\mathrm{Mat}(p,\F_q)$ by conjugation,
with fixed-point space $\mathrm{End}_T W$.
By a result of Burnside~\cite[Theorem~3]{Neumann},
$T$ is $2$-transitive. 
Hence the elementary matrices in $\mathrm{Mat}(p,\F_q)$ 
are permuted in two orbits by $T$:
\[
\{ [\delta_{i,k}\delta_{l,j}]_{i,j} \mid \allowbreak
k\neq l, \, 1\leq k, l \leq p \} 
\qquad \mbox{and} \qquad
\{ [\delta_{i,k}\delta_{k,j}]_{i,j} \mid 
1\leq k \leq p\}.
\]
Summing the elements in each orbit gives 
 a basis of $\mathrm{End}_T W$.
\end{proof}

\begin{corollary}\label{qNotInpiModules}
For $q$ coprime to $|T|$, the $\Omega_n X_{\{ q\}}$ 
are all the finite $T$-submodules
of $X_{\{ q\}}$.
\end{corollary}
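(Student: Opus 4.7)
The plan is to promote Proposition~\ref{XqIndecomposable} to an irreducibility statement at level one, and then propagate it up the $q$-power filtration by induction. Since $q$ is coprime to $|T|$, Maschke's theorem makes $\F_q[T]$ semisimple. The proof of Proposition~\ref{XqIndecomposable}, via the two-dimensional endomorphism algebra of $W=\Omega_1 D_{\{q\}}$, actually shows that the finite $\F_q[T]$-module $\Omega_1 X_{\{q\}}$ is indecomposable; combined with semisimplicity, this forces $\Omega_1 X_{\{q\}}$ to be \emph{irreducible} as a $T$-module.

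Next I would classify the finite $T$-submodules $A\leq X_{\{q\}}$ by induction on $|A|$. The base case $A=1=\Omega_0 X_{\{q\}}$ is immediate. For $|A|>1$, the characteristic subgroup $A^q$ of $q$th powers is again a $T$-submodule, and $|A^q|<|A|$ because the nontrivial abelian $q$-group $A$ has nontrivial socle. By the inductive hypothesis, $A^q=\Omega_m X_{\{q\}}$ for some $m\geq 0$. Hence $A$ has exponent dividing $q^{m+1}$, which yields
\[
\Omega_m X_{\{q\}} \;=\; A^q \;\leq\; A \;\leq\; \Omega_{m+1} X_{\{q\}}.
\]

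Finally, the quotient $A/\Omega_m X_{\{q\}}$ sits inside $\Omega_{m+1} X_{\{q\}}/\Omega_m X_{\{q\}}$. The $q^m$-power map is a surjective $T$-homomorphism $\Omega_{m+1} X_{\{q\}}\to \Omega_1 X_{\{q\}}$ with kernel $\Omega_m X_{\{q\}}$ (surjectivity uses $q$-divisibility of $X_{\{q\}}$), so $\Omega_{m+1} X_{\{q\}}/\Omega_m X_{\{q\}}\cong \Omega_1 X_{\{q\}}$ as $T$-modules. Irreducibility (step one) then forces $A/\Omega_m X_{\{q\}}$ to be either trivial or everything; i.e.\ $A=\Omega_m X_{\{q\}}$ or $A=\Omega_{m+1} X_{\{q\}}$, completing the induction.

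The only delicate point is the first step: one has to read the proof of Proposition~\ref{XqIndecomposable} closely enough to extract indecomposability of the specific finite module $\Omega_1 X_{\{q\}}$, rather than merely of $X_{\{q\}}$ itself, so that Maschke may legitimately be invoked on a finitely generated $\F_q[T]$-module. Everything thereafter is a routine uniserial induction in the spirit of Lemma~\ref{XModules}.
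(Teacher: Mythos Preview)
Your proof is correct and follows essentially the same approach as the paper: Maschke's theorem plus the indecomposability of $\Omega_1 X_{\{q\}}$ (extracted from the proof of Proposition~\ref{XqIndecomposable}) gives irreducibility at level one, and then a uniserial induction in the style of Lemma~\ref{XModules} finishes. The only cosmetic difference is that you induct via $A\mapsto A^q$ (descending from the top), whereas the paper's reference to Lemma~\ref{XModules} suggests the dual recursion via $A\mapsto A/\Omega_1 X_{\{q\}}$ (peeling off the socle); both are equivalent routes to uniseriality.
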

\begin{proof}
(Cf.~the proof of Lemma~\ref{XModules}.)
By Maschke's Theorem, 
$\Omega_1 X_{\{ q\}}$ is a 
completely reducible $\F_q T$-module.
Hence $\Omega_1 X_{\{ q\}}$ is irreducible by
Proposition~\ref{XqIndecomposable}, so is in every
non-identity $T$-submodule of $X_{\{ q\}}$. 
\end{proof}

We show that $p$ and primes not 
dividing $|T|$ can be set aside from
submodule orders that appear in our solution of the 
extension problem for $T$.
\begin{proposition}\label{FirstNonSolvableReduction}
Let $G$ be a finite subgroup of 
$DT$ such that $\phi(G)=T$. 
Then there exists $H\leq TD_\pi$ such that $s\in H$
and $G$ is $D$-conjugate to  $H.(G\cap D_{\pi'})$.
\end{proposition}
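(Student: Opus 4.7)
The plan is to produce $H$ by two successive $D$-conjugations of $G$.

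\emph{Stage 1.} I would first arrange that the permutation matrix $s$ itself lies in $G$. Let $g_s=e\cdot s\in\phi^{-1}(s)\cap G$; since $p$ is odd, $\det s=1$ and so $\det g_s=\det e$. By Table~\ref{table-nmr-reps}, the unique simple normal transitive subgroup $U\le T$ is non-abelian simple and hence perfect, and $s\in U\le[T,T]$. Lifting a commutator-product expression of $s$ from $T$ to $G$ shows $g_s\in[G,G]\cdot A$, so $\det g_s\in\det A$. Choose $a\in A$ with $\det(g_s a)=1$; then $g_s a=d\cdot s$ with $d\in X$. Now $wsw^{-1}=d_w\cdot s$ with $d_w(i)=w_i/w_{s(i)}$, and as $w$ varies in $D$ the element $d_w$ realizes every element of $X$ (the system $w_{s(i)}=w_i/d(i)$ closes up along the $p$-cycle exactly because $\det d=1$). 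Picking $w_1\in D$ with $d_{w_1}=d$ yields $w_1sw_1^{-1}=g_s a\in G$, so $s\in G^{w_1}$. Replacing $G$ by $G^{w_1}$, we may assume $s\in G$.

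\emph{Stage 2.} For each $\tau\in T$ choose a lift $g_\tau=d_\tau\cdot\tau\in G$ and set
\[
\beta\colon T\to D_{\pi'}/A_{\pi'},\qquad\beta(\tau)=(d_\tau)_{\pi'}\bmod A_{\pi'}.
\]
A direct check shows $\beta$ is a well-defined $1$-cocycle, and $\beta(s)=0$ since $s\in G$. I claim $[\beta]=0$ in $H^1(T,D_{\pi'}/A_{\pi'})$. Decompose $D_{\pi'}=D_{\{p\}}\oplus D_\sigma$ where $\sigma$ is the set of primes coprime to $p|T|$; the $\sigma$-summand of $H^1$ vanishes because $|T|$ is invertible on the $\sigma$-torsion quotient $D_\sigma/A_\sigma$. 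For the $p$-summand, $D_{\{p\}}$ is the permutation $T$-module $\mathrm{Ind}_{T_0}^T\mu_{p^\infty}$ for a point stabilizer $T_0\le T$; since $\Sym(p)$ has Sylow $p$-order $p$, $|T_0|=|T|/p$ is coprime to $p$, and Shapiro's lemma gives $H^i(T,D_{\{p\}})=H^i(T_0,\mu_{p^\infty})=0$ for $i\ge 1$. The long exact sequence of $0\to A_{\{p\}}\to D_{\{p\}}\to D_{\{p\}}/A_{\{p\}}\to 0$ therefore yields an isomorphism $\delta\colon H^1(T,D_{\{p\}}/A_{\{p\}})\xrightarrow{\sim}H^2(T,A_{\{p\}})$. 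A cocycle calculation identifies $\delta[\beta]$ with the $A_{\{p\}}$-component of the extension class of $1\to A\to G\to T\to 1$. Sylow restriction to $\langle s\rangle$ is injective on $p$-primary $H^2$ (as $[T:\langle s\rangle]=|T|/p$ is coprime to $p$), and that restriction is the class of $1\to A_{\{p\}}\to P\to\langle s\rangle\to 1$ for $P$ a Sylow $p$-subgroup of $G$; this splits via $\langle s\rangle$ because $s\in P\setminus A_{\{p\}}$ and $[P:A_{\{p\}}]=p$.

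Pick $w_2\in D_{\pi'}$ with coboundary $\delta w_2\equiv-\beta$ in $D_{\pi'}/A_{\pi'}$. Since $\beta(s)=0$, the component at $s$ of this coboundary lies in $A_{\pi'}\subseteq A$, which guarantees that $s\in G^{w_2^{-1}}$ survives the further conjugation. Set $H:=G^{w_2^{-1}}\cap TD_\pi$. By construction, each $\tau\in T$ has a lift in $G^{w_2^{-1}}$ whose $\pi'$-diagonal is in $A_{\pi'}$, and multiplication by a suitable element of $A_{\pi'}$ then yields a lift in $TD_\pi$, so $\phi(H)=T$. Also $s\in H$, $H\cap A_{\pi'}\le D_\pi\cap D_{\pi'}=1$, and $H\cdot A_{\pi'}=G^{w_2^{-1}}$. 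Hence $G$ is $D$-conjugate (by $w_1w_2^{-1}\in D$) to $H\cdot(G\cap D_{\pi'})$. The main obstacle will be the bookkeeping of Stage 2: the cocycle identification $\delta[\beta]=[c]_{\{p\}}$ with the $A_{\{p\}}$-component of the extension class, together with the verification that $s\in G$ is automatically preserved by the second conjugation.
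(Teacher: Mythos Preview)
Your argument is correct, but it follows a genuinely different route from the paper's.  For Stage~1 the paper does not argue directly; it observes that $N_T(\langle s\rangle)\neq\langle s\rangle$ (so $G$ has a subgroup with solvable non-cyclic permutation part $\langle s,t^a\rangle$) and then invokes the already-proved solvable classification Theorem~\ref{sSplitsOff} to conjugate so that $s\in G$.  Your determinant trick via $s\in U=[U,U]$ is more self-contained: it avoids appealing to the intricate solvable analysis, at the cost of using non-solvability of $T$ explicitly.  For Stage~2 the contrast is sharper.  The paper passes to the quotient $TD/(A_{\pi'}D_\pi)$, where $\overline{G}$ and $\overline{T}$ are both complements of the $\pi'$-group $\overline{D}$, and invokes Dixon's complement-conjugacy lemma~\cite{DixonInfiniteComp} (using that $|\overline{T}:\overline{T}\cap\overline{G}|$ is a $\pi$-number) to obtain the conjugating element in one stroke.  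Your cohomological computation---Shapiro for $H^i(T,D_{\{p\}})$, the connecting isomorphism $\delta$, and the identification of $\delta[\beta]$ with the $p$-part of the extension class killed by Sylow restriction to $\langle s\rangle$---unpacks exactly why such a conjugation exists, and in particular makes transparent why $s$ survives (because $\beta(s)=0$ forces the coboundary at $s$ into $A_{\pi'}$).  The paper's proof is shorter and leans on a black box; yours is longer but exposes the mechanism and needs no external citation beyond Shapiro's lemma.  One cosmetic point: check your conjugation direction---with the paper's convention $G^w=w^{-1}Gw$ and coboundary $\tau\mapsto w\cdot({}^\tau w)^{-1}$ matching your cocycle, the correct conjugate is $G^{w_2}$ rather than $G^{w_2^{-1}}$, though this does not affect the substance.
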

\begin{proof}
By \cite[Satz~V.21.1~c)]{Huppert}, 
$N_{T}(\langle s\rangle)\neq \langle s\rangle$.
Thus $G$ has a subgroup with 
solvable non-cyclic permutation part, 
and we may suppose that $s\in G$ 
by Theorem~\ref{sSplitsOff}. 

Denote the natural surjection of $TD=GD$ onto 
 $TD/(G\cap D_{\pi'})D_\pi$ by an
overline; then
$\overline{GD}=\overline{TD} = 
\overline{G}\ltimes \overline{D} = 
\overline{T}\ltimes \overline{D}$.
Also, $|\overline{T}:\overline{T}\cap \overline{G}|$ 
is a $\pi$-number,
while $\overline{D}$ is a $\pi'$-group. 
Therefore, by 
\cite[Lemma~1, corrected]{DixonInfiniteComp},
 $\overline{T}$ and $\overline{G}$ are 
$\overline{D}$-conjugate. 
This implies that, for some $d\in D_{\pi'}$, 
\[
G^dD_{\pi} = (GD_\pi)^d = 
TD_{\pi}(G\cap D_{\pi'})=
TD_{\pi}(G^d\cap D_{\pi'}) .
\]  
Then $G^d =  H.( G\cap D_{\pi'})$ where 
$H= G^d\cap TD_{\pi}$.
Since $s[s,d]=s^d \in TD_{\pi}(G^d\cap D_{\pi'})$
and thus $[s,d]\in D_{\pi}(G^d\cap D_{\pi'})$,
 we have $[s,d]\in G^d\cap D_{\pi'}$.
Hence $s= s^d [s,d]^{-1} \in G^d\cap T \leq H$.
\end{proof}
 
So each group with non-solvable 
permutation part $T$ in our final list is 
the semidirect product of a $T$-submodule 
of $D_{\pi'}$ by a `hub group'  
in $TD_\pi$ containing $s$.

The next result is a companion piece to 
Proposition~\ref{FirstNonSolvableReduction},
dealing with a ubiquitous kind of
 hub group. If the hypotheses
are fulfilled, then we can discard even more of a
 diagonal subgroup when solving the 
extension problem. (Recall that for a  
group $K$ and $K$-module $U$, there is a
one-to-one correspondence between the first
cohomology group $H^1(K,U)$ and  
the set of conjugacy classes of complements of 
$U$ in $U\rtimes K$.)
\begin{proposition}\label{1stCoh}
Let $G$ be a finite subgroup of $TD_\pi$ such 
that $\phi(G)=T$ and $s\in G$. Suppose that 
$D_{\pi} = B\times C$ for $T$-modules
$B$ and $C$ where $H^1(T,B/G\cap B) = 0$. 
Then $G$ is $B$-conjugate to $H.(G\cap B)$ for 
some $H\leq TC$ such that $\phi(H)=\allowbreak T$
and $s\in H$.
\end{proposition}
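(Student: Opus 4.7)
The plan is to exhibit the conjugating element $\beta\in B$ as the cancellation of an explicit $1$-cocycle, using the direct decomposition $D_\pi = B\times C$ and inflation-restriction. Write each $g\in TD_\pi$ uniquely as $g = \phi(g)\,b(g)\,c(g)$ with $b(g)\in B$ and $c(g)\in C$, and define $f\colon G\to B$ by $f(g) = b(g)$. A direct check, using that $B$ and $C$ are $T$-modules and $D_\pi$ is abelian, yields the $1$-cocycle identity $f(gh) = f(g)^{\phi(h)} f(h)$. Write $B_0 := G\cap B$ and let $\bar f\colon G\to B/B_0$ be the reduction. Conjugation by $\beta\in B$ sends $f(g)$ to $f(g)\cdot \beta^{1-\phi(g)}$, so the class $[\bar f]\in H^1(G, B/B_0)$ measures the obstruction to achieving, after some $B$-conjugation, $b(g)\in B_0$ for every $g\in G$. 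Once that property is secured, setting $H := G\cap TC$ works: each $g\in G$ factors as $\phi(g)c(g)\cdot b(g)\in H\cdot B_0$, so $G = H\cdot B_0 = H\cdot (G\cap B)$; the generator $s\in H$ because $b(s) = 1\in B_0$; and $\phi(H) = T$ because $\phi(G) = T$.

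It remains to show $[\bar f] = 0$. Since $D_\pi$ is abelian, $G\cap D_\pi$ acts trivially on $B/B_0$, and hence the $G$-action on $B/B_0$ factors through $T = G/(G\cap D_\pi)$. The inflation-restriction sequence for $1\to G\cap D_\pi\to G\to T\to 1$ gives
\[
0 \longrightarrow H^1(T, B/B_0) \longrightarrow H^1(G, B/B_0) \longrightarrow \mathrm{Hom}_T(G\cap D_\pi,\, B/B_0).
\]
The hypothesis annihilates the left term, so it suffices to show that the image of $[\bar f]$ on the right is the zero homomorphism. That image sends $d\in G\cap D_\pi$ to $b(d) + B_0$, so its vanishing amounts to the statement that $G\cap D_\pi$ lies inside $B_0\times C$, equivalently, that $G\cap D_\pi = (G\cap B)\times (G\cap C)$ inside $D_\pi = B\times C$. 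Given this splitting, the restriction is zero, hence $[\bar f] = 0$; pick $\beta\in B$ with $\bar f(g) = \beta^{\phi(g)-1}\bmod B_0$ for all $g\in G$, and observe that $G^\beta$ then has every $B$-component in $B_0$, so $H := G^\beta\cap TC$ is as required.

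The main obstacle is the compatibility of $G\cap D_\pi$ with the decomposition $D_\pi = B\times C$. In the intended applications $B$ and $C$ will be chosen as Sylow/Hall pieces of coprime order, so the abelian $T$-module $G\cap D_\pi$ splits along them automatically; in the general statement as given one likely performs a preliminary $B$-conjugation (or invokes coprime-order/Maschke-type splitting for the $T$-module $G\cap D_\pi$) to arrange $G\cap D_\pi = B_0\times (G\cap C)$ before running the cohomological argument. Everything else is essentially bookkeeping: once $[\bar f] = 0$, lifting a cochain $\beta$ realising $\bar f$ as a coboundary and conjugating $G$ by $\beta$ produces the desired $H\leq TC$ containing $s$ with $\phi(H) = T$ and $G^\beta = H\cdot(G\cap B)$.
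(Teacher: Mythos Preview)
Your cocycle/inflation--restriction argument is sound and is really the same mechanism as the paper's proof, which quotients $TD_\pi$ by $(G\cap B)C$, recognises the images of $G$ and $T$ as two complements of $B/(G\cap B)$, and uses $H^1(T,B/(G\cap B))=0$ to conjugate them. Both approaches hinge on exactly the point you flag: one needs $G\cap D_\pi=(G\cap B)(G\cap C)$, equivalently that every $d\in G\cap D_\pi$ has its $B$-component already in $G$.

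Where you go wrong is in the proposed justifications for this splitting. In the paper's applications $B$ and $C$ are \emph{not} of coprime order (for $T=\mathrm{Sym}(p)$ one takes $B=X_{\{2\}}\prod_{q\in\pi\setminus\{2\}}D_{\{q\}}$ and $C=Z_{\{2\}}$, both with $2$-torsion), so a Hall argument does not apply. And a ``preliminary $B$-conjugation'' is vacuous: $D_\pi$ is abelian, so conjugation by any $\beta\in B\subseteq D_\pi$ fixes $G\cap D_\pi$ elementwise. The actual reason, which the paper cites as Theorem~\ref{qsSubmodules}, uses the hypothesis $s\in G$ in an essential way. Since $s\in G$, the subgroup $G\cap D_\pi$ is a finite $\langle s\rangle$-submodule of $D_\pi$; by the structure theory of Section~\ref{SubgroupsOfDpC} every such submodule is Cartesian over the indecomposable uniserial $\langle s\rangle$-factors $X_{\{q\}}^{(r)}$ and $Z_{\{q\}}$ (no two of which share an $\langle s\rangle$-isomorphic nontrivial section). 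The $T$-modules $B$ and $C$ are in particular $\langle s\rangle$-modules with $B\times C=D_\pi$, and uniseriality forces each indecomposable factor to lie wholly in $B$ or wholly in $C$. Hence the Cartesian form of $G\cap D_\pi$ immediately yields $G\cap D_\pi=(G\cap B)(G\cap C)$. With that in hand, your argument (or the paper's complement version) goes through unchanged.
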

\begin{proof}
By Theorem~\ref{qsSubmodules}, 
$G\cap D_\pi = (G\cap B)(G\cap C)$.
We mimic the proof of 
Proposition~\ref{FirstNonSolvableReduction}, with 
$D_\pi$, $B$, $C$ here in place of $D$, $D_{\pi'}$, 
$D_\pi$ there, respectively;  $\overline{G}$ and 
$\overline{T}$ are $B$-conjugate because 
$H^1(T, D_\pi/(G\cap B)C ) 
\cong H^1(T,B/G\cap B)  = \allowbreak 0$.
\end{proof}

Occasionally the $T$-module structure of $B/(B\cap G)$ 
is independent of $G$. 
\begin{lemma}\label{ZetaLem}
Let $\zeta$ be a subset of $\pi$ consisting of
$q$ such that $\Omega_1 X_{\{q\}}$ is an irreducible 
$T$-module, and let 
$B= \prod_{q\in \zeta} X_{\{q\}}\cdot  
\prod_{q\in \eta} Z_{\{q\}}$
where $\eta$ is any subset of $\pi$.
Then $B/A\cong B$ as $T$-modules for every 
finite $T$-submodule $A$ of $B$. 
\end{lemma}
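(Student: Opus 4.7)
The plan is to reduce to each Sylow $q$-component of $B$ separately, then exploit the divisibility of $X_{\{q\}}$ and $Z_{\{q\}}$. Write $B_q$ for the Sylow $q$-subgroup of $B$ (which equals $X_{\{q\}}$, $Z_{\{q\}}$, or $X_{\{q\}}\times Z_{\{q\}}$ according to whether $q$ lies only in $\zeta$, only in $\eta$, or in $\zeta\cap \eta$; the last is a direct product because $X_{\{q\}}\cap Z_{\{q\}}=1$ for $q\neq p$). Coprimality of orders gives $B=\prod_q B_q$ and $A=\prod_q A_q$ with $A_q := A\cap B_q$, hence $B/A\cong \prod_q B_q/A_q$ as $T$-modules. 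It thus suffices to prove $B_q/A_q\cong B_q$ for every $q\in \zeta\cup \eta$.

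Next I would classify the finite $T$-submodules of each $B_q$. Mimicking the proof of Lemma~\ref{XModules}, if $q\in \zeta$ and $M$ is a non-identity finite $T$-submodule of $X_{\{q\}}$, then $M\cap \Omega_1 X_{\{q\}}$ is a non-trivial $T$-submodule of the irreducible $\Omega_1 X_{\{q\}}$ and so equals $\Omega_1 X_{\{q\}}$; an induction using the $T$-isomorphism $X_{\{q\}}/\Omega_1 X_{\{q\}}\cong X_{\{q\}}$ given by the $q$th-power map then yields $M=\Omega_n X_{\{q\}}$ for some $n$. Finite submodules of the quasicyclic group $Z_{\{q\}}$ are obviously the $Z_{q^m}$. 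For $q\in \zeta\cap \eta$, I would invoke Theorem~\ref{GoursatRemak} in $X_{\{q\}}\times Z_{\{q\}}$: since $T$ acts trivially on $Z_{\{q\}}$ but non-trivially on every non-identity section of $X_{\{q\}}$ (each such section contains a copy of the irreducible $\Omega_1 X_{\{q\}}$, whose $\F_q$-dimension is $p-1\geq 4$ so the action cannot be trivial), no Goursat-isomorphism between non-trivial sections is possible, forcing $A_q=\Omega_{n_q} X_{\{q\}}\times Z_{q^{m_q}}$.

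Finally, divisibility of $X_{\{q\}}$ makes the $q^n$th-power map a surjective $T$-endomorphism with kernel $\Omega_n X_{\{q\}}$, yielding a $T$-isomorphism $X_{\{q\}}/\Omega_n X_{\{q\}}\cong X_{\{q\}}$; likewise $Z_{\{q\}}/Z_{q^m}\cong Z_{\{q\}}$. Assembling these isomorphisms across all $q$ delivers $B/A\cong B$.

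The step requiring the most care is the Goursat argument when $q\in \zeta\cap \eta$: one must certify that no stray $T$-isomorphism links a non-trivial section of $X_{\{q\}}$ to a non-trivial section of $Z_{\{q\}}$. The irreducibility hypothesis on $\Omega_1 X_{\{q\}}$ is precisely what prevents this, since a trivial $T$-action would contradict $\dim_{\F_q}\Omega_1 X_{\{q\}}\geq 4$.
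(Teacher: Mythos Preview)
Your proof is correct and follows essentially the same route as the paper's: reduce to a single prime $q$, identify the finite $T$-submodules of $X_{\{q\}}$ as the $\Omega_n X_{\{q\}}$ via irreducibility of the socle, and use divisibility to obtain $X_{\{q\}}/\Omega_n X_{\{q\}}\cong X_{\{q\}}$. Your Goursat argument for $q\in\zeta\cap\eta$ is sound but unnecessary here, since Theorem~\ref{qsSubmodules} already shows that every finite $\langle s\rangle$-submodule (hence every finite $T$-submodule) of $D_{\{q\}}$ is Cartesian; this is why the paper can pass directly to the case $B=X_{\{q\}}$.
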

\begin{proof}
It suffices to assume that $B = X_{\{q\}}$ 
for $q\in \zeta$.
Since $\Omega_1 B$ is an irreducible $T$-module, $A$ 
must be some $\Omega_n B$. 
Certainly $B/\Omega_n B\cong B$.
\end{proof}

We record basic results for calculating 
first cohomology.
These use the following definition.
If $R\leq T$ and $M$ is a right $R$-module,
then $M_R^T$ denotes the $T$-module 
co-induced from  $M$. 
That is, $M_R^T$ has 
element set $\mathrm{Hom}_R(\Z T,M)$, and becomes
a $T$-module by setting $\rho y(x) = \rho(yx)$ for 
$\rho \in \mathrm{Hom}_R(\Z T,M)$, $y\in T$,
and $x\in \Z T$.
\begin{lemma}[Eckmann--Shapiro~{\cite[p.~561]{Rotman}}]
\label{Shapiro} 
$H^n(R,M) \cong H^n(T,M_R^T)$ for all $n\geq 0$.
\end{lemma}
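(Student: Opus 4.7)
The plan is to establish Lemma~\ref{Shapiro} using the fact that coinduction is right adjoint to restriction, so it carries injective resolutions of $R$-modules to injective resolutions of $T$-modules. First I would verify the adjunction: for any $T$-module $N$ and $R$-module $M$, there is a natural isomorphism
$$\mathrm{Hom}_T(N, M_R^T) \cong \mathrm{Hom}_R(N\!\downarrow_R, M),$$
given by $\varphi \mapsto (n \mapsto \varphi(n)(1))$, with inverse sending $\psi \colon N \to M$ to the map $n\mapsto (x\mapsto \psi(nx))$. Setting $N=\Z$ with trivial $T$-action gives a natural isomorphism of functors
$$\mathrm{Hom}_T(\Z, (-)_R^T) \;\cong\; \mathrm{Hom}_R(\Z, -)$$
from $R$-modules to abelian groups, whose right derived functors are $H^n(T, (-)_R^T)$ and $H^n(R, -)$ respectively.

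Next I would establish two homological properties of the coinduction functor $(-)_R^T = \mathrm{Hom}_R(\Z T, -)$. The first is exactness: since $T$ is a disjoint union of cosets of $R$, the group ring $\Z T$ is free, and in particular projective, as a right $R$-module, so $\mathrm{Hom}_R(\Z T, -)$ is exact. The second is that $(-)_R^T$ preserves injectives; this follows formally from the adjunction above together with the exactness of the restriction functor $(-)\!\downarrow_R$, via the standard argument that a right adjoint to an exact functor preserves injectives.

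With these ingredients in place, the conclusion is routine. Pick an injective resolution $0 \to M \to I^0 \to I^1 \to \cdots$ in the category of $R$-modules. By exactness and injectivity-preservation of coinduction, $0 \to M_R^T \to (I^0)_R^T \to (I^1)_R^T \to \cdots$ is an injective resolution of $M_R^T$ in the category of $T$-modules. Applying $\mathrm{Hom}_T(\Z, -)$, invoking the adjunction term-by-term, and taking cohomology yields
$$H^n(T, M_R^T) = H^n\bigl(\mathrm{Hom}_T(\Z, (I^\bullet)_R^T)\bigr) \cong H^n\bigl(\mathrm{Hom}_R(\Z, I^\bullet)\bigr) = H^n(R, M)$$
for every $n\geq 0$, and naturality of the adjunction shows the isomorphism is canonical.

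The main obstacle is purely bookkeeping, namely the careful verification that coinduction preserves injectivity; this is where the adjoint functor formalism is indispensable, since it would be awkward to construct extensions of $T$-module homomorphisms into $I_R^T$ by hand. Once this point is accepted, the remainder of the proof is forced by the universal property of derived functors and requires no further computation.
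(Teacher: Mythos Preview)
Your proof is correct and follows the standard adjoint-functor route to Eckmann--Shapiro: verify the adjunction between restriction and coinduction, deduce that coinduction is exact and preserves injectives, then push an injective resolution through. The paper, however, does not supply its own proof at all---it merely states the lemma with a citation to Rotman---so there is nothing to compare against beyond noting that your argument is the usual one found in standard references.
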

\begin{lemma}\label{Spminus1}
Let $R = T\cap \mathrm{S}_{p-1}$, where 
$\mathrm{S}_{p-1}\cong \mathrm{Sym}(p-1)$ 
is the group of permutation matrices in 
$\GL(p,\C)$ whose elements have $1$ in 
position $(p,p)$. Then 
$D_{\{q\}}\cong (Z_{\{ q\}})_R^T$ as $T$-modules.  
\end{lemma}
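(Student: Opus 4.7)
The plan is to realize $D_{\{q\}}$ as the permutation $T$-module with coefficients in $C_{q^\infty}$, and then invoke the standard identification of a transitive permutation module with the coinduced module from a point stabilizer.

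First I would decompose $D_{\{q\}} = M_1 \oplus \cdots \oplus M_p$, where $M_i \cong C_{q^\infty}$ is the subgroup of diagonal matrices whose only possibly non-trivial entry lies in position $i$. Since $T$ consists of permutation matrices, conjugation by $T$ permutes the summands $M_i$ according to its action on $\{1,\ldots,p\}$. Next I would observe that $R = T \cap \mathrm{S}_{p-1}$ is exactly the stabilizer in $T$ of the point $p$, so $R$ preserves $M_p$ and acts trivially on it (it fixes the $p$th diagonal entry). Consequently $M_p \cong Z_{\{q\}}$ as $R$-modules, since both are copies of $C_{q^\infty}$ carrying the trivial $R$-action.

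Fix an $R$-isomorphism $\iota \colon Z_{\{q\}} \to M_p$ and pick coset representatives $t_1,\ldots,t_p$ with $T = \bigsqcup_i t_i R$ and with $t_i$ sending position $p$ to position $i$ (possible since $T$ is transitive of degree $p$, so $|T:R|=p$). Then I would define
\[
\Phi \colon (Z_{\{q\}})_R^T \longrightarrow D_{\{q\}}, \qquad \rho \longmapsto \prod_{i=1}^p \iota(\rho(t_i))^{t_i}.
\]
Since $\Z T$ is free of rank $p$ as a right $R$-module on the $t_i$, any $\rho \in \mathrm{Hom}_R(\Z T, Z_{\{q\}})$ is determined by $(\rho(t_1),\ldots,\rho(t_p)) \in Z_{\{q\}}^p$, and every such tuple arises, so $\Phi$ is a group isomorphism. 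For $T$-equivariance, write $yt_i = t_{\sigma(i)} r_i$ with $\sigma \in \mathrm{Sym}(p)$ and $r_i \in R$; then $(\rho y)(t_i) = \rho(yt_i) = \rho(t_{\sigma(i)}) \cdot r_i = \rho(t_{\sigma(i)})$ because $R$ acts trivially on $Z_{\{q\}}$. This matches the $y$-action on $D_{\{q\}}$, which sends $M_i$ to $M_{\sigma(i)}$.

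The only mild obstacle is aligning the conventions: $T$ permuting coordinates of $D_{\{q\}}$, $T$ acting on the cosets of $R$, and the right $T$-action on $\mathrm{Hom}_R(\Z T,-)$ given by $(\rho y)(x) = \rho(yx)$. Once the $t_i$ are chosen compatibly with the coordinate permutation, the verification is routine; the content of the lemma is ultimately that the transitive permutation module $D_{\{q\}}$ is (co)induced from the trivial module of the point stabilizer, with coinduction and induction agreeing because $|T:R|=p$ is finite.
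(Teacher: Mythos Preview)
Your argument is correct: you identify $D_{\{q\}}$ as the permutation $T$-module on the coordinates with values in $C_{q^\infty}$, recognize $R$ as the point stabilizer of $p$ acting trivially on $M_p\cong Z_{\{q\}}$, and then write down the standard explicit isomorphism between a transitive permutation module and the module coinduced from the stabilizer. The convention-alignment you flag at the end is indeed routine once the coset representatives $t_i$ are chosen to send position $p$ to position $i$.

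The paper takes a different route. Rather than constructing $\Phi$ by hand, it first observes abstractly that both sides are isomorphic as groups (each $\rho$ being determined by its values on a transversal), and then defines the $R$-homomorphism $\theta\colon D_{\{q\}}\to Z_{\{q\}}$, $\mathrm{diag}(a_1,\ldots,a_p)\mapsto \mathrm{diag}(a_p,\ldots,a_p)$. Frobenius reciprocity (cited from Huppert--Blackburn) then produces a $T$-homomorphism $\theta'\colon D_{\{q\}}\to (Z_{\{q\}})_R^T$ with $\ker\theta'\subseteq\ker\theta$. Since $T$ is transitive, $\ker\theta$ contains no non-identity $T$-submodule, so $\theta'$ is injective and hence an isomorphism. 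Your explicit construction is more elementary and self-contained, avoiding the external citation, at the cost of the bookkeeping with coset representatives and the equivariance check. The paper's approach trades that bookkeeping for a one-line appeal to the universal property of coinduction plus the clean observation that transitivity forces $\ker\theta'=1$; in effect the two arguments are dual, yours building the map from the coinduced side and the paper's from the permutation-module side.
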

\begin{proof}
Since each $\rho \in (Z_{\{ q\}})_R^T$ is 
determined by its values on a transversal $U$ for 
the $p$ cosets of $R$ in $T$, as a group $(Z_{\{ q\}})_R^T$
is isomorphic to the group of all set maps $U\rightarrow
Z_{\{ q\}}$, which in turn is isomorphic to $D_{\{ q\}}$.

Let $\theta$ be the $R$-homomorphism from
$D_{\{ q\}}$ into $Z_{\{ q\}}$ defined by 
$\mathrm{diag}(a_1, \ldots , a_p) \mapsto \allowbreak
\mathrm{diag}(a_p,\ldots , a_p)$. 
By \cite[Theorem~4.9, p.~55]{HuppertBlackburnII},
there is a $T$-homomorphism $\theta'\colon D_{\{ q\}}\rightarrow
 (Z_{\{ q\}})_R^T$ with kernel in $\ker \theta$.
But $\ker \theta$ contains no non-identity $T$-modules.
Thus  $\theta'$ is an isomorphism, as desired.
\end{proof}
\begin{remark}
If $K$ is perfect and $M$ is a trivial
$K$-module, then $H^1(K,M) =0$. 
\end{remark}

\section{Permutation part $\mathrm{Sym}(p)$}
\label{PermPartSymp}

We maintain Notation~\ref{InForce}, writing 
$\mathrm{S}_n$ for $\mathrm{Sym}(n)$. 
\begin{definition}\label{MathcalAsDefn}
Let $\mathcal{A}^{[S]}$ be the set of 
$A\in \mathcal A$ such that ${\sf N}_A^t = {\sf N}_A$, 
and if $A\cap D_{\{p\}} = Y_{j,k,l}$, then either 
$j\equiv 0 \bmod (p-1)$, or both $l=0$ and 
$j\equiv -1 \bmod (p-1)$. 
\end{definition}

\begin{lemma}\label{AllSpModules}
$\mathcal{A}^{[S]}$ is the set of all 
finite $\mathrm{S}_p$-submodules of 
$\mathrm{D}(p,\C)$.
\end{lemma}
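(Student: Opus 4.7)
The plan is to verify $\mathrm{S}_p$-invariance prime by prime, handling the Sylow $p$-subgroup separately from the $p'$-part and using Lemma~\ref{SpModulesp} for the former.

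A finite $\mathrm{S}_p$-submodule of $\Di(p,\C)$ is in particular a finite $\langle s\rangle$-submodule, so by Theorem~\ref{AllModulesEverywhere} it lies in $\mathcal{A}$. Conversely, each $A\in\mathcal{A}$ decomposes as $A\cap D_{\{p\}}=Y_{j,k,l}$ together with its $p'$-Hall subgroup $\prod_{q\neq p} W_q$ with $W_q\in\mathcal{W}_q$, and $A$ is $\mathrm{S}_p$-invariant if and only if each Sylow factor is. For the $p$-part, Lemma~\ref{SpModulesp} asserts that $Y_{j,k,l}$ is an $\mathrm{S}_p$-module if and only if condition~(iii) of that lemma holds, which is exactly the $p$-part clause in Definition~\ref{MathcalAsDefn}.

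For a prime $q\neq p$, write $W_q=X_{q,n_1}^{(1)}\cdots X_{q,n_v}^{(v)}Z_{q^c}$ using Theorem~\ref{qsSubmodules}. Since $t\in\mathrm{S}_p$, any $\mathrm{S}_p$-invariant $W_q$ is $\langle t\rangle$-invariant, and Lemma~\ref{tConjugacyonsqModules} shows that the latter is equivalent to $n_1=n_2=\cdots=n_v$, i.e., to ${\sf N}_A^t={\sf N}_A$ on row $q$. When this equality holds with common value $n$, Proposition~\ref{410Bacskai} together with the distributivity of $\Omega_n$ over direct products identifies the first factor with $\Omega_nX_{\{q\}}$, giving $W_q=\Omega_nX_{\{q\}}\cdot Z_{q^c}$. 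This is a canonically defined subgroup of $\Di(p,\C)$, namely the $q^n$-torsion in $X$ together with the scalars of order dividing $q^c$. As $\mathrm{S}_p$ acts by conjugation via permutation matrices, which preserve both $X$ (since conjugation by permutation matrices does not alter determinants) and $Z$ (pointwise), any such intrinsically defined subgroup is automatically $\mathrm{S}_p$-invariant.

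Assembling the two analyses yields the claimed equivalence: $A$ is an $\mathrm{S}_p$-submodule of $\Di(p,\C)$ if and only if the $p$-part satisfies~(iii) of Lemma~\ref{SpModulesp} and ${\sf N}_A^t={\sf N}_A$, if and only if $A\in\mathcal{A}^{[S]}$. I do not anticipate a serious obstacle; the one point worth articulating carefully is that $\langle t\rangle$-invariance of the $p'$-part already forces each $W_q$ into the intrinsic form $\Omega_nX_{\{q\}}\cdot Z_{q^c}$, so that full $\mathrm{S}_p$-invariance of the $p'$-part comes for free from the apparently weaker condition ${\sf N}_A^t={\sf N}_A$.
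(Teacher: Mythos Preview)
Your proof is correct and follows essentially the same route as the paper, which simply cites Lemmas~\ref{tConjugacyonsqModules} and~\ref{SpModulesp}. You have usefully spelled out the one step the paper leaves implicit: that $\langle t\rangle$-invariance of the $p'$-part forces each $W_q$ to be $\Omega_nX_{\{q\}}\cdot Z_{q^c}$, which is then automatically $\mathrm{S}_p$-invariant.
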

\begin{proof}
Follows from Lemmas~\ref{tConjugacyonsqModules} 
and \ref{SpModulesp}.
\end{proof}
In particular, for $q\neq p$, the $\Omega_nX_{\{q\}}$ 
are all the finite $\mathrm{S}_p$-submodules of
$X_{\{q\}}$.

\begin{lemma}\label{H1XandD}
$H^1(\mathrm{S}_p, X_{\{2\}})=0$ and 
$H^1(\mathrm{S}_p, D_{\{q\}})=0$ if $q$ is odd.
\end{lemma}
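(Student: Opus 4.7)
My plan is to reduce both computations to Hom groups via the Eckmann--Shapiro isomorphism, using the identification of $D_{\{q\}}$ as a co-induced module from Lemma~\ref{Spminus1}, and then exploit a direct-product splitting of $D_{\{2\}}$ to peel off the scalar contribution.

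First I would set $R = \mathrm{S}_{p-1}$ (the point-stabilizer inside $\mathrm{S}_p$) and combine Lemmas~\ref{Spminus1} and \ref{Shapiro} to obtain
\[
H^1(\mathrm{S}_p, D_{\{q\}}) \cong H^1(R, Z_{\{q\}})
\]
for every prime $q \neq p$. Because $Z_{\{q\}}$ consists of scalar matrices it is a trivial $R$-module, so the right side reduces to $\mathrm{Hom}(R^{\mathrm{ab}}, Z_{\{q\}})$. Since $p \geq 5$, we have $R^{\mathrm{ab}} \cong C_2$, while $Z_{\{q\}}$ is the Prüfer $q$-group $C_{q^\infty}$. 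For odd $q$ there are no nontrivial homomorphisms $C_2 \to C_{q^\infty}$, which immediately yields $H^1(\mathrm{S}_p, D_{\{q\}}) = 0$.

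For $q = 2$ the same formula gives $H^1(\mathrm{S}_p, D_{\{2\}}) \cong C_2$, and I would split this contribution off to reach the conclusion about $X_{\{2\}}$. The key structural observation is that $D_{\{2\}} = X_{\{2\}} \times Z_{\{2\}}$ as $\mathrm{S}_p$-modules: both factors are $\mathrm{S}_p$-invariant, and $X_{\{2\}} \cap Z_{\{2\}}$ is the $2$-part of $X \cap Z = Z_p$, which is trivial because $p$ is odd (equivalently, $-I$ has determinant $-1$, so $-I \notin X$). This decomposition yields
\[
H^1(\mathrm{S}_p, D_{\{2\}}) \cong H^1(\mathrm{S}_p, X_{\{2\}}) \oplus H^1(\mathrm{S}_p, Z_{\{2\}}),
\]
and since $H^1(\mathrm{S}_p, Z_{\{2\}}) = \mathrm{Hom}(\mathrm{S}_p^{\mathrm{ab}}, Z_{\{2\}}) = \mathrm{Hom}(C_2, C_{2^\infty}) \cong C_2$ already accounts for all of $H^1(\mathrm{S}_p, D_{\{2\}})$, the summand $H^1(\mathrm{S}_p, X_{\{2\}})$ must be trivial.

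The whole argument is essentially two size comparisons once Eckmann--Shapiro is invoked, so I do not expect a genuine obstacle. The only point requiring care is confirming that $Z_{\{2\}}$ is an $\mathrm{S}_p$-module direct summand of $D_{\{2\}}$ rather than just a submodule, and that is precisely where the parity hypothesis on $p$ is used.
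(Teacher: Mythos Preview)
Your proof is correct and follows essentially the same route as the paper: apply Eckmann--Shapiro via Lemmas~\ref{Shapiro} and \ref{Spminus1} to reduce $H^1(\mathrm{S}_p, D_{\{q\}})$ to $\mathrm{Hom}(\mathrm{S}_{p-1}^{\mathrm{ab}}, Z_{\{q\}})$, then use the $\mathrm{S}_p$-module splitting $D_{\{2\}} = X_{\{2\}}\times Z_{\{2\}}$ to isolate the $X_{\{2\}}$ summand. Your added remarks on why the splitting is direct (trivial $2$-part of $X\cap Z$ for odd $p$) and on the standing hypothesis $p\geq 5$ (so that $\mathrm{S}_{p-1}^{\mathrm{ab}}\cong C_2$) make explicit what the paper leaves tacit, but the argument is the same.
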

\begin{proof}
By Lemmas~\ref{Shapiro} and \ref{Spminus1},
$H^1(\mathrm{S}_p, D_{\{q\}}) \cong 
 H^1(\mathrm{S}_{p-1}, Z_{\{q\}})$. 
Now $H^1(\mathrm{S}_n, Z_{\{q\}}) = 
\mathrm{Hom}(\mathrm{S}_n/\mathrm{S}_n', Z_{\{q\}})$.
Thus $H^1(\mathrm{S}_p, D_{\{q\}})=0$ if $q$ is odd,
whereas $H^1(\mathrm{S}_p, D_{\{2\}})\cong  
C_2$. Since $D_{\{2\}}= X_{\{2\}}\times Z_{\{2\}}$ 
implies that $H^1(\mathrm{S}_p,D_{\{2\}})= 
H^1(\mathrm{S}_p,X_{\{2\}})\oplus 
 H^1(\mathrm{S}_p,Z_{\{2\}})$, the rest of 
the lemma is clear.
\end{proof}

\begin{definition}
Let $r= (1,2)\in \mathrm{S}_p$; so
$\mathrm{S}_p = \langle s, r\rangle$. 
Define $\mathcal R$ to be the set of groups 
$\langle s, r, A\rangle$ 
and $\langle s, rz_{2^{n+1}}, A\rangle$
where $A\in \mathcal{A}^{[S]}$ and
$|A\cap Z_{\{2\}}|=2^n$, for all $n\geq 0$. 
\end{definition}

\begin{proposition}
If $G$ is a finite subgroup of $\Mo(p,\C)$ 
 with permutation part $\mathrm{S}_p$, 
 then $G$ is $D$-conjugate to a group in 
 $\mathcal R$.
\end{proposition}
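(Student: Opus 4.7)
The plan is to peel off the diagonal subgroup of $G$ in stages, using the two previously established reduction tools, and then identify the residue that lies above $Z_{\{2\}}$. First, I would apply Proposition~\ref{FirstNonSolvableReduction} with $T=\mathrm{S}_p$ to $D$-conjugate $G$ to a group of the form $H\cdot (G\cap D_{\pi'})$, where $H\leq TD_\pi$, $\phi(H)=T$, and $s\in H$. The $\pi'$-part splits off cleanly since $H\cap D_{\pi'}\leq D_\pi\cap D_{\pi'}=1$.

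Second, I would apply Proposition~\ref{1stCoh} to $H$ with the $\mathrm{S}_p$-module decomposition $D_\pi = B\times C$, where $C=Z_{\{2\}}$ and $B=X_{\{2\}}\times\prod_{q\in\pi,\,q>2}D_{\{q\}}$. To verify the cohomological hypothesis $H^1(\mathrm{S}_p,B/(H\cap B))=0$, I would appeal to Lemma~\ref{AllSpModules}: for $q\neq p$, an $\mathrm{S}_p$-submodule of $X_{\{q\}}$ is some $\Omega_nX_{\{q\}}$, and an $\mathrm{S}_p$-submodule of $D_{\{q\}}$ is a Cartesian product $\Omega_n X_{\{q\}}\times Z_{q^c}$. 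Since $X_{\{q\}}$ is a $q$-divisible group, $X_{\{q\}}/\Omega_n X_{\{q\}}\cong X_{\{q\}}$ as $\mathrm{S}_p$-modules, and similarly $Z_{\{q\}}/Z_{q^c}\cong Z_{\{q\}}$. Hence $B/(H\cap B)$ is $\mathrm{S}_p$-isomorphic to $B$ itself, and Lemma~\ref{H1XandD} forces $H^1(\mathrm{S}_p,B/(H\cap B))=0$. Proposition~\ref{1stCoh} then replaces $H$ by a $B$-conjugate $H'\cdot(H\cap B)$ where $H'\leq TC=TZ_{\{2\}}$, $s\in H'$, and $\phi(H')=\mathrm{S}_p$.

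Third, I would analyse $H'$. Because $Z_{\{2\}}\cong C_{2^\infty}$ is uniserial and central in $\Mo(p,\C)$, the subgroup $H'\cap Z_{\{2\}}$ equals $Z_{2^n}$ for some $n\geq 0$. Any preimage of $r$ in $H'$ has the form $rz$ with $z\in Z_{\{2\}}$, and $(rz)^2=z^2\in Z_{2^n}$ forces $z\in Z_{2^{n+1}}$. Multiplying $rz$ by elements of $Z_{2^n}\subseteq H'$ shows that $z$ is determined only modulo $Z_{2^n}$, leaving the two possibilities $z=1$ and $z=z_{2^{n+1}}$. Combining the three steps, $G$ is $D$-conjugate to $\langle s, rz, A\rangle$ where $A=(H\cap B)\cdot (G\cap D_{\pi'})\cdot Z_{2^n}$ is an $\mathrm{S}_p$-submodule of $D$, hence $A\in\mathcal{A}^{[S]}$ by Lemma~\ref{AllSpModules}, with $|A\cap Z_{\{2\}}|=2^n$; thus the final group lies in $\mathcal R$. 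The only substantive obstacle is the cohomological verification in the second step, which rests squarely on Lemmas~\ref{AllSpModules} and \ref{H1XandD}; the rest is bookkeeping.
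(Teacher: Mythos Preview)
Your proof is correct and follows essentially the same route as the paper: reduce to a hub group via Proposition~\ref{FirstNonSolvableReduction}, apply Proposition~\ref{1stCoh} with the decomposition $D_\pi=B\times Z_{\{2\}}$ (verifying $H^1(\mathrm{S}_p,B/(H\cap B))=0$ via $B/(H\cap B)\cong B$ and Lemma~\ref{H1XandD}), and then read off the two possible lifts of $r$ in $\mathrm{S}_pZ_{\{2\}}$. The only cosmetic difference is that the paper cites Lemma~\ref{ZetaLem} for the isomorphism $B/(G\cap B)\cong B$, whereas you spell this out directly.
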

\begin{proof} 
Let $G$ be a hub group, i.e., $G\leq \mathrm{S}_pD_\pi$
and $s\in G$
(see Proposition~\ref{FirstNonSolvableReduction}).
We have $D_\pi = B\times Z_{\{2\}}$ where 
$B = X_{\{2\}} \prod_{q\in \pi \setminus \{2\}} D_{\{q\}}$.
By Lemma~\ref{H1XandD}, $H^1(\mathrm{S}_p,B) = 0$, and 
$B/(G\cap B) \cong B$ as $\mathrm{S}_p$-modules by 
Lemmas~\ref{ZetaLem} and \ref{AllSpModules}. 
Proposition~\ref{1stCoh} then gives $d\in D$ such that  
$G^d = H . (B\cap G)$ where 
$H=G^d\cap \mathrm{S}_pZ_{\{2\}}$ and $s\in H$. 
Thus $H = \langle s, rz, Z_{2^n}\rangle$ for some $n$ 
and $z\in Z_{\{2\}}$. So $G^d\in \mathcal R$ 
because $z^2=(rz)^2 \in Z_{2^n}$. 
\end{proof}

\begin{lemma}
A finite subgroup of $\Mo(p,\C)$ with permutation 
part $\mathrm{S}_p$ is reducible if and only if 
its diagonal subgroup is scalar. 
\end{lemma}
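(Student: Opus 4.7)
The reverse implication is immediate from Theorem~\ref{NonScalarIffIrreducible}, since $\phi(G)=\mathrm{S}_p$ is transitive.

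For the forward direction I would set $N:=G\cap D\leq Z$ and let $\chi_s:N\hookrightarrow\C^\times$ denote the faithful scalar character, then assume for contradiction that the inclusion $\rho:G\hookrightarrow\GL(p,\C)$ is irreducible. Since $N$ is central and $\rho|_N=p\chi_s$, Clifford theory identifies $\rho$ with an irreducible degree-$p$ projective representation of $G/N\cong\mathrm{S}_p$ whose cocycle class $\alpha$ lies in $H^2(\mathrm{S}_p,\C^\times)\cong C_2$. I would dispose of the two cases for $\alpha$ separately.

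When $\alpha$ is non-trivial, $\rho$ lifts to a faithful irreducible spin representation of the Schur double cover $2.\mathrm{S}_p$ of dimension $p$. This is impossible: the Schur--Morris dimension formula shows that a spin irreducible labelled by a strict partition $\lambda$ of $p$ has dimension divisible by $2^{\lfloor (p-\ell(\lambda))/2\rfloor}$. Since any strict partition of the odd prime $p\geq 5$ satisfies $\ell(\lambda)(\ell(\lambda)+1)/2\leq p$ and hence $\ell(\lambda)\leq p-2$, every spin dimension is even, so cannot equal the odd prime $p$.

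When $\alpha$ is trivial, $\chi_s$ extends to a $1$-dimensional character $\lambda$ of $G$, and the twist $\sigma:=\rho\otimes\lambda^{-1}$ is trivial on $N$, factoring through $G/N$ as an irreducible degree-$p$ representation $\bar\sigma$ of $\mathrm{S}_p$. Since scalar multiples of monomial matrices are monomial, $\bar\sigma$ is itself monomial, hence of the form $\mathrm{Ind}_H^{\mathrm{S}_p}\mu$ for some index-$p$ subgroup $H\leq\mathrm{S}_p$ and $1$-dimensional character $\mu$. The classical uniqueness up to conjugacy of index-$p$ subgroups of $\mathrm{S}_p$ (all of which are point stabilizers $\mathrm{S}_{p-1}$) reduces $\mu$ to the trivial or sign character of $\mathrm{S}_{p-1}$; Frobenius reciprocity then forces $\bar\sigma$ to contain the trivial or sign character of $\mathrm{S}_p$ as a constituent, contradicting irreducibility. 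The main subtlety is the spin case; the other is a routine application of Frobenius reciprocity once the uniqueness of index-$p$ subgroups of $\mathrm{S}_p$ is invoked.
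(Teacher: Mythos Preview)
Your argument is correct in content, though you have the two implications labelled the wrong way round: Theorem~\ref{NonScalarIffIrreducible} gives ``non-scalar diagonal $\Rightarrow$ irreducible'', which is the contrapositive of ``reducible $\Rightarrow$ scalar diagonal'' --- the \emph{forward} implication of the lemma as stated. What you then prove by contradiction (scalar diagonal $\Rightarrow$ reducible) is the reverse implication. This is only a labelling slip; both directions are covered.

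Your route is, however, far heavier than the paper's. The paper has just established (in the preceding Proposition) that any finite $G\leq\Mo(p,\C)$ with $\phi(G)=\mathrm{S}_p$ is $D$-conjugate to a group in $\mathcal{R}$, and every group in $\mathcal{R}$ with scalar diagonal subgroup visibly sits inside $\mathrm{S}_pZ$, which fixes the line through the all-ones vector. That is the entire proof. By contrast, you invoke the Schur multiplier of $\mathrm{S}_p$, Schur's dimension formula for spin representations of $2.\mathrm{S}_p$, the classification of index-$p$ subgroups of $\mathrm{S}_p$, and Frobenius reciprocity. Your argument has the merit of being independent of the structural Proposition and of working directly with the abstract central extension $G$, so it would survive in contexts where one has not already normalised $G$ into $\mathcal{R}$; but in the paper's development it replaces a one-line observation with substantial external machinery.
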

\begin{proof}
If $G\in \mathcal R$ and $G\cap D\leq Z$ then
$G\leq \mathrm{S}_pZ$ is reducible.
\end{proof}
\begin{lemma}\label{NonConjSp}
Distinct  irreducible 
groups in $\mathcal R$ are not $\GL(p,\C)$-conjugate.
\end{lemma}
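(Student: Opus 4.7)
The plan is to prove the contrapositive: if $G, H \in \mathcal{R}$ are irreducible and $G^w = H$ for some $w \in \GL(p,\C)$, then $G = H$. My argument has two steps: (a) reduce to $G \cap D = H \cap D$, and (b) distinguish the two residual possibilities via the determinant.

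For step (a), since $p \geq 5$, $\Sym(p) = \phi(G) = \phi(H)$ is non-solvable, so Theorem~\ref{GLConjugateImpliesSamePP} yields $(G \cap D)^w = H \cap D$. By the preceding lemma, irreducibility together with $\phi(G) = \Sym(p)$ force $G \cap D$ to be non-scalar. Theorem~\ref{Protoconjugacy}, applied with $\F = \C$ (algebraically closed), then lets me assume after absorbing a scalar that $w \in \widetilde{\Mo}(p,\C)$, so $w = de$ with $d \in \Di(p,\C)$ torsion and $e$ a permutation matrix in $\Sym(p)$. Because $G \cap D$ is abelian and stable under $\Sym(p)$,
\[ H \cap D = (G \cap D)^w = (G \cap D)^{de} = (G \cap D)^e = G \cap D, \]
so $A := G \cap D = H \cap D$.

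For step (b), with $A$ fixed and $2^n := |A \cap Z_{\{2\}}|$, the only members of $\mathcal R$ with diagonal subgroup $A$ are $G_1 := \langle s, r, A\rangle$ and $G_2 := \langle s, rz_{2^{n+1}}, A\rangle$. I distinguish them using the determinant image $\det(G_i) \leq \C^\times$, which is preserved under $\GL(p,\C)$-conjugation. Since $p$ is odd, $\det(s) = 1$, $\det(r) = -1$, $\det$ is trivial on $A \cap X_{\{2\}}$, and $\det(z_{2^n}) = \zeta_{2^n}^p$ has order $2^n$ (where $\zeta_{2^k} = e^{2\pi \mathrm{i}/2^k}$); thus the $2$-part of $\det(A)$ has order $2^n$. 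For $n \geq 1$ this already contains $\det(r) = -1$, giving $|\det(G_1)_{\{2\}}| = 2^n$; and $\det(rz_{2^{n+1}}) = -\zeta_{2^{n+1}}^p = \zeta_{2^{n+1}}^{2^n + p}$ has odd exponent $2^n + p$, hence order $2^{n+1}$, so $|\det(G_2)_{\{2\}}| = 2^{n+1}$. For $n = 0$ a direct check gives $|\det(G_1)_{\{2\}}| = 2$ and $|\det(G_2)_{\{2\}}| = 1$ (since $rz_2 = -r$ and $\det(-r) = 1$). Either way the $2$-parts differ, so $G_1$ and $G_2$ are not $\GL(p,\C)$-conjugate.

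The main challenge is identifying, once $A$ is pinned down by step~(a), an invariant that separates the two possible extensions of $A$ by $\Sym(p)$ lying in $\mathcal R$. The determinant is the natural candidate, because the twist $z_{2^{n+1}}$ is a scalar of $2$-power order not lying in $A$, so its presence must register in $\det(G)_{\{2\}}$. The only subtlety is the boundary case $n = 0$, where $z_{2^{n+1}} = -I_p$ forces a short separate computation in the opposite direction.
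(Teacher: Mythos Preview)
Your proof is correct and follows essentially the same approach as the paper's own proof: use Theorem~\ref{GLConjugateImpliesSamePP} and Theorem~\ref{Protoconjugacy} to pin down $G\cap D = H\cap D$, then separate the two extensions with that diagonal subgroup by comparing the $2$-parts of their determinant images. You have simply spelled out the details (including the boundary case $n=0$) that the paper leaves implicit in the phrase ``their images under the determinant map have different $2$-parts''.
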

\begin{proof}
Let $G\in \mathcal R$ be irreducible with diagonal 
subgroup $A$, and suppose that $G^h\in \mathcal R$ for 
some $h\in \GL(p,\C)$. 
By Theorem~\ref{GLConjugateImpliesSamePP}, 
$A^h=G^h\cap D$. 
Then $G^h\cap D = A$ by Theorem~\ref{Protoconjugacy}.
The two groups in $\mathcal R$
with diagonal subgroup $A$ are not conjugate
(their images under the determinant map have different 
$2$-parts), so $G^h=G$. 
\end{proof}

We next delete the reducible groups from $\mathcal R$.
\begin{definition}\label{RStarDef}
Let $\mathcal{R}^*$ be the subset 
of $\mathcal R$ 
consisting of all groups $G$ such that $A=G\cap D\in 
\mathcal{A}^{[S]}$ as in Definition~\ref{MathcalAsDefn}
satisfies one of the following:
\begin{itemize}
\item[(i)] $j\neq 0$ and $j\equiv 0 \bmod (p-1)$;
\item[(ii)] $j\equiv -1 \bmod (p-1)$ and $l=0$;
\item[(iii)] $j=l=0$ and ${\sf N}_A\neq 0$.
\end{itemize}
\end{definition}

The main problems for $T=\mathrm{Sym}(p)$ are now solved.
\begin{theorem}
$\mathcal{R}^*$ is a classification of 
the finite irreducible subgroups of $\Mo(p,\C)$ with 
permutation part $\mathrm{Sym}(p)$.
\end{theorem}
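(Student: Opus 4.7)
The plan is to combine three pieces that are already in place: the preceding proposition, which reduces an arbitrary finite subgroup of $\Mo(p,\C)$ with permutation part $\mathrm{S}_p$ to an element of $\mathcal R$ up to $D$-conjugacy; the reducibility lemma, which says such a group is reducible exactly when its diagonal subgroup is scalar; and Lemma~\ref{NonConjSp}, which handles irredundancy. The only genuinely new work is to check that the three conditions in Definition~\ref{RStarDef} select from $\mathcal R$ precisely those groups whose diagonal subgroup is non-scalar.

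I would first dispose of completeness. Let $G\leq \Mo(p,\C)$ be finite and irreducible with $\phi(G)=\mathrm{S}_p$. The preceding proposition provides $d\in D$ with $G^d\in \mathcal R$, and irreducibility is preserved by conjugation, so $G^d$ is irreducible; thus $G^d\cap D$ is non-scalar by the reducibility lemma. The claim then reduces to showing that $\mathcal{R}^* = \{H\in \mathcal{R} \mid H\cap D \text{ is non-scalar}\}$. Write $A = G^d\cap D\in \mathcal{A}^{[S]}$ with $A\cap D_{\{p\}} = Y_{j,k,l}$. By the definition of $\mathcal Y$ in Section~\ref{pPowerSubmodules}, $Y_{j,k,l}$ is scalar if and only if $j=0$ (which by definition forces $l=0$). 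On the $p'$-side, $A\cap D_{\{p\}'}$ is scalar if and only if $\mathsf{N}_A = 0$, since the entries of $\mathsf{N}_A$ record exactly the exponents of the non-scalar factors $X^{(r)}_{q,n_r}$. Combining these observations with the restrictions of Definition~\ref{MathcalAsDefn} (either $j\equiv 0\bmod(p-1)$, or $l=0$ and $j\equiv -1\bmod(p-1)$) yields exactly the three disjoint cases listed in Definition~\ref{RStarDef}: $j\neq 0$ with $j\equiv 0\bmod(p-1)$; $j\equiv -1\bmod(p-1)$ with $l=0$; or $j=l=0$ with $\mathsf{N}_A\neq 0$. Hence $G^d\in \mathcal{R}^*$.

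For the remaining assertions, every group in $\mathcal{R}^*$ has transitive permutation part and non-scalar diagonal subgroup, so is irreducible by Theorem~\ref{NonScalarIffIrreducible}. Irredundancy is immediate from Lemma~\ref{NonConjSp}: distinct irreducible groups in $\mathcal R$, and in particular in $\mathcal{R}^*$, are pairwise non-conjugate in $\GL(p,\C)$.

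No substantive obstacle arises; the only care needed is the bookkeeping matching the parameter constraints of $\mathcal{A}^{[S]}$ to the scalar/non-scalar dichotomy, but this is a direct consequence of the definitions of $\mathcal Y$ and $\mathsf{N}_A$ established earlier.
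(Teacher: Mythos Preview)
Your proposal is correct and follows exactly the approach the paper intends: the theorem is stated without explicit proof in the paper, being immediate from the preceding proposition, the reducibility lemma, and Lemma~\ref{NonConjSp}, together with the parameter check that Definition~\ref{RStarDef} singles out the non-scalar diagonal subgroups within $\mathcal{A}^{[S]}$. Your bookkeeping verifying that $Y_{j,k,l}$ is scalar precisely when $j=0$, and that the three cases of Definition~\ref{RStarDef} exhaust the non-scalar possibilities compatible with Definition~\ref{MathcalAsDefn}, is the only content the paper leaves implicit, and you have supplied it accurately.
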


\section{Permutation part $\mathrm{Alt}(p)$}
This section incidentally disposes of all 
degrees at most $5$. 
Degree $5$ requires added care.

Let $p\geq 5$ and $\mathrm{A}_n:= \mathrm{Alt}(n)$. 
\begin{proposition}
\label{ApModuleIffSpModule}
A finite $\mathrm{A}_p$-submodule of $\Di(p,\C)$ is an
$\mathrm{S}_p$-module.
\end{proposition}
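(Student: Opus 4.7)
The plan is to show that $A^r = A$ for some (equivalently, every) transposition $r \in \mathrm{S}_p \setminus \mathrm{A}_p$; this yields $\mathrm{S}_p$-invariance of $A$ since $\mathrm{S}_p = \langle \mathrm{A}_p, r \rangle$. The conjugation actions of both $\mathrm{A}_p$ and $r$ respect the primary decomposition $D = \prod_q D_{\{q\}}$, so $A = \prod_q A_{\{q\}}$ with each $A_{\{q\}} := A \cap D_{\{q\}}$ an $\mathrm{A}_p$-submodule, and it suffices to verify $\mathrm{S}_p$-invariance of each $A_{\{q\}}$ separately.

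For $q = p$, I would invoke Lemma~\ref{SpModulesp} with $T = \mathrm{A}_p$. The proof of that lemma rests on Neumann's determination of the $T$-submodules of $\Omega_1 D_{\{p\}}$ for any transitive $T \leq \mathrm{Sym}(p)$, and $\mathrm{A}_p$ is transitive, so the chain (i)$\Rightarrow$(iii)$\Rightarrow$(ii) of Lemma~\ref{SpModulesp} applies and forces $A_{\{p\}}$ to be an $\mathrm{S}_p$-module.

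For $q \neq p$, the key step is to identify $\Omega_1 X_{\{q\}}$ as an irreducible $\F_q \mathrm{A}_p$-module. The $2$-transitivity of $\mathrm{A}_p$ (valid for $p \geq 5$) together with the orbit-counting argument used in Proposition~\ref{XqIndecomposable} gives $\dim_{\F_q} \mathrm{End}_{\mathrm{A}_p}(\Omega_1 D_{\{q\}}) = 2$; combined with the decomposition $\Omega_1 D_{\{q\}} = Z_q \oplus \Omega_1 X_{\{q\}}$ (valid because $p$ is invertible in $\F_q$) and $(\Omega_1 X_{\{q\}})^{\mathrm{A}_p} = 0$ (the only $\mathrm{A}_p$-fixed vectors in $\F_q^p$ are constant, and no nonzero constant vector is sum-zero when $q \nmid p$), this recognises $\Omega_1 X_{\{q\}}$ as the deleted permutation module of a doubly transitive group in coprime characteristic, a standard irreducible module. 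A uniserial argument modelled on Lemma~\ref{XModules}, with the $q$-th power map $\Omega_n X_{\{q\}} \to \Omega_{n-1} X_{\{q\}}$ playing the role of $\gamma$, then shows that the finite $\mathrm{A}_p$-submodules of $X_{\{q\}}$ are exactly the $\Omega_n X_{\{q\}}$. A final application of Goursat--Remak (Theorem~\ref{GoursatRemak}) to $D_{\{q\}} = X_{\{q\}} \times Z_{\{q\}}$ forces $A_{\{q\}}$ to be Cartesian, since no nonzero section of $X_{\{q\}}$ is a trivial $\mathrm{A}_p$-module while every section of $Z_{\{q\}}$ is, and no nontrivial equivariant gluing iso is therefore available; hence $A_{\{q\}} = \Omega_n X_{\{q\}} \cdot Z_{q^m}$ for some $n, m$, which is manifestly $\mathrm{S}_p$-invariant.

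The main obstacle is the irreducibility of $\Omega_1 X_{\{q\}}$ precisely when $q$ divides $|\mathrm{A}_p|$, where Maschke semisimplicity is unavailable; the combination of $2$-transitivity of $\mathrm{A}_p$ with $q \nmid p$ carries this through as the standard ``deleted permutation module is simple in coprime characteristic'' fact. Once it is in place, the remaining structural assembly is routine, using only results already established in the paper.
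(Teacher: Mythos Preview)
Your argument has a genuine gap at the crucial step. The ``standard fact'' you invoke --- that the deleted permutation module of a $2$-transitive group of degree $n$ is irreducible over any field of characteristic not dividing $n$ --- is \emph{false}. A counterexample appears later in this very paper: for $T=\PSL(2,11)$ acting $2$-transitively on $11$ points and $q=3$ (so $q\nmid 11$), Lemma~\ref{PModules2} shows that $X_{3,1}^{(2)}$ is a proper nonzero $T$-submodule of $\Omega_1 X_{\{3\}}$. Thus $\Omega_1 X_{\{3\}}$ is reducible (though indecomposable, consistent with Proposition~\ref{XqIndecomposable}). The $2$-dimensionality of $\mathrm{End}_T(\Omega_1 D_{\{q\}})$ that you correctly derive from $2$-transitivity yields only that $\mathrm{End}_T(\Omega_1 X_{\{q\}})$ is one-dimensional, i.e.\ that $\Omega_1 X_{\{q\}}$ is a brick; bricks need not be simple.

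The paper's proof supplies exactly the extra leverage that $2$-transitivity alone cannot: since $t^2\in\mathrm{A}_p$, the subgroup $\langle s,t^2\rangle\leq\mathrm{A}_p$ already acts on the $v$ pairwise non-isomorphic simple $\langle s\rangle$-summands $X_{q,1}^{(i)}$, and by Lemma~\ref{tConjugacyonsqModules} the element $t$ permutes them in a single $v$-cycle. If $v$ is odd then $t^2$ is still a $v$-cycle and $\Omega_1 X_{\{q\}}$ is already $\langle s,t^2\rangle$-irreducible; if $v$ is even the only proper $\langle s,t^2\rangle$-submodules are the two ``parity'' halves, and since any $\mathrm{A}_p$-submodule is in particular an $\langle s,t^2\rangle$-submodule, reducibility would force $\Omega_1 X_{\{q\}}$ to decompose, contradicting Proposition~\ref{XqIndecomposable}. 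Your downstream steps (uniseriality of $X_{\{q\}}$, Goursat--Remak for $D_{\{q\}}=X_{\{q\}}\times Z_{\{q\}}$, and the appeal to Lemma~\ref{SpModulesp} for $q=p$) are fine once irreducibility is in hand, but that irreducibility genuinely requires something specific to $\mathrm{A}_p$ beyond $2$-transitivity.
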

\begin{proof}
By Lemma~\ref{SpModulesp}, we need only show that 
$M:=\Omega_1 X_{\{q\}}$ is an irreducible 
$\mathrm{A}_p$-module for primes $q\neq p$. 
By Proposition~\ref{410Bacskai},  
Lemma~\ref{411Bacskai}, and the proof of 
Theorem~\ref{qsSubmodules}, $M$ is a direct product of 
$v$ irreducible pairwise non-isomorphic 
$\langle s\rangle$-submodules $X_{q,1}^{(i)}$. 
 We infer from Lemma~\ref{tConjugacyonsqModules} that
 $M$ is an irreducible $\langle s,t^2\rangle$-module 
when $v$ is odd. 
Let $v$ be even. As an $\mathrm{A}_p$-module, if $M$ 
were reducible then it would be 
the direct sum of its $\langle s,t^2\rangle$-submodules 
$\prod_{i \ \mathrm{odd}} X_{q,1}^{(i)}$
and $\prod_{i \ \mathrm{even}} X_{q,1}^{(i)}$.
This contradicts Proposition~\ref{XqIndecomposable}.
\end{proof}

\begin{lemma}\label{H1A5C3}
$H^1(\mathrm{A}_5, X_{\{3\}}) = C_3$
and $H^1(\mathrm{A}_p, D_{\{q\}}) = 0$ for 
$(p,q) \neq (5,3)$.
\end{lemma}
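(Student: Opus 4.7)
The plan is to combine Lemma~\ref{Shapiro} (Eckmann--Shapiro) with Lemma~\ref{Spminus1} to reduce the calculation to the $H^1$ of a much smaller group with coefficients in the trivial module $Z_{\{q\}}$. First I would apply Lemma~\ref{Spminus1} with $T = \mathrm{A}_p$ (which satisfies Notation~\ref{InForce} for $p\geq 5$), noting that the point stabilizer $R = \mathrm{A}_p \cap \mathrm{S}_{p-1}$ is just $\mathrm{A}_{p-1}$ by orbit-stabilizer. Lemma~\ref{Shapiro} then yields
\[
H^1(\mathrm{A}_p, D_{\{q\}}) \;\cong\; H^1(\mathrm{A}_{p-1}, Z_{\{q\}}).
\]

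Since $Z_{\{q\}}$ (the group of $q$-primary scalar matrices) is centralized by every permutation matrix, it is a trivial $\mathrm{A}_{p-1}$-module, so $H^1(\mathrm{A}_{p-1}, Z_{\{q\}}) = \mathrm{Hom}(\mathrm{A}_{p-1}^{\mathrm{ab}}, Z_{\{q\}})$. I then split into two cases using the standard facts $\mathrm{A}_n^{\mathrm{ab}} = 1$ for $n\geq 5$ and $\mathrm{A}_4^{\mathrm{ab}} \cong C_3$. For $p \geq 7$, perfectness of $\mathrm{A}_{p-1}$ immediately kills every such Hom, giving $H^1(\mathrm{A}_p, D_{\{q\}}) = 0$ for every prime $q$ (including $q=p$). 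For $p = 5$, we instead get $\mathrm{Hom}(C_3, Z_{\{q\}})$; since $Z_{\{q\}}\cong C_{q^\infty}$ contains a (unique) copy of $C_3$ exactly when $q = 3$, this Hom is trivial for $q \neq 3$ and isomorphic to $C_3$ for $q = 3$.

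Finally, to extract the $X_{\{3\}}$ summand in the exceptional case, I would use the $\mathrm{A}_5$-module direct product decomposition $D_{\{3\}} = X_{\{3\}} \times Z_{\{3\}}$ and the resulting splitting
\[
H^1(\mathrm{A}_5, D_{\{3\}}) = H^1(\mathrm{A}_5, X_{\{3\}}) \,\oplus\, H^1(\mathrm{A}_5, Z_{\{3\}}).
\]
The second summand vanishes by the remark immediately preceding Section~\ref{PermPartSymp} ($\mathrm{A}_5$ is perfect and $Z_{\{3\}}$ is trivial), so the full $C_3$ must come from $X_{\{3\}}$, giving $H^1(\mathrm{A}_5, X_{\{3\}}) \cong C_3$.

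I do not expect a serious obstacle. The only point that needs a moment's checking is that Lemma~\ref{Spminus1} is indeed formulated with enough generality to cover $T = \mathrm{A}_p$, not merely $T = \mathrm{S}_p$, so that $R = \mathrm{A}_{p-1}$ really appears as the co-inducing subgroup; this is immediate from Notation~\ref{InForce} and the transitivity of $\mathrm{A}_p$ on $\{1,\ldots,p\}$.
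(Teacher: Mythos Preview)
Your argument is correct and is exactly the approach the paper intends: it says ``Cf.\ the proof of Lemma~\ref{H1XandD}'', and you have written out precisely that analogue, replacing $\mathrm{S}_p$ by $\mathrm{A}_p$ so that the point stabilizer becomes $\mathrm{A}_{p-1}$ and the abelianization calculation yields $C_3$ only when $(p,q)=(5,3)$.
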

\begin{proof}
Cf.~the proof of Lemma~\ref{H1XandD}.
\end{proof}

\begin{definition}
Let $w=(1,2,3)\in \mathrm{P}(p)$, so
$\mathrm{A}_p = \langle s, w\rangle$.
Let $\mathcal{U}_0$ be the set of 
groups $\langle s, w, A\rangle$ for 
$A\in \mathcal{A}^{[S]}$.
\end{definition}

\begin{proposition}\label{CompletenessApGeneral}
If $p>5$ and $G$ is a finite subgroup of $\Mo(p,\C)$
with permutation part $\mathrm{A}_p$, then  $G$ is 
$D$-conjugate to a group in $\mathcal{U}_0$.
\end{proposition}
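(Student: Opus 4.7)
The plan is to reduce $G$ in two cohomological stages to a semidirect product of $\mathrm{A}_p$ with its diagonal subgroup, mirroring the treatment of $\mathrm{Sym}(p)$ in Section~\ref{PermPartSymp}. First, apply Proposition~\ref{FirstNonSolvableReduction} with $T=\mathrm{A}_p$: after $D$-conjugation we may assume $G=H.(G\cap D_{\pi'})$, where $H\leq \mathrm{A}_p D_\pi$, $s\in H$, and $\phi(H)=\mathrm{A}_p$ (the last identity holds because $D$-conjugation preserves $\phi$-images and $G\cap D_{\pi'}\leq \ker\phi$). It remains to conjugate $H$ so that its permutation part splits off as the standard copy $\langle s,w\rangle$.

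To that end, apply Proposition~\ref{1stCoh} to $H$ with $B=D_\pi$ and $C=\{1\}$; the output is forced to lie in $\langle s,w\rangle\cdot(H\cap D_\pi)$. The cohomological hypothesis reduces to $H^1(\mathrm{A}_p,D_\pi/(H\cap D_\pi))=0$. By Proposition~\ref{ApModuleIffSpModule}, the submodule $H\cap D_\pi$ is in fact an $\mathrm{S}_p$-module, and the proof of that proposition shows that each $\Omega_1 X_{\{q\}}$ is an irreducible $\mathrm{A}_p$-module for every $q\in\pi$. Lemma~\ref{ZetaLem} (with $\zeta=\eta=\pi$) then gives $D_\pi/(H\cap D_\pi)\cong D_\pi$ as $\mathrm{A}_p$-modules, so the task is to verify $H^1(\mathrm{A}_p,D_\pi)=0$. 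Since $p>5$, Lemma~\ref{H1A5C3} supplies $H^1(\mathrm{A}_p,D_{\{q\}})=0$ for every $q\in\pi$, and summing over $q$ yields the required vanishing.

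Combining the two reductions, $G$ is $D$-conjugate to $\langle s,w,A\rangle$ with $A=(H\cap D_\pi)(G\cap D_{\pi'})$. Since $A$ is a finite $\mathrm{A}_p$-submodule of $D$, Proposition~\ref{ApModuleIffSpModule} together with Lemma~\ref{AllSpModules} places $A$ in $\mathcal{A}^{[S]}$, so $\langle s,w,A\rangle\in\mathcal{U}_0$. The sole technical obstacle---and the reason the hypothesis $p>5$ is imposed---is the cohomology calculation: the excluded pair $(p,q)=(5,3)$ produces a non-trivial $H^1(\mathrm{A}_5,X_{\{3\}})=C_3$ by Lemma~\ref{H1A5C3}, introducing additional conjugacy classes not captured by $\mathcal{U}_0$ that must be handled by separate analysis.
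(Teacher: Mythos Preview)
Your proof is correct and follows essentially the same route as the paper's. The paper's proof is terser---it compresses your two reductions into a single sentence invoking Proposition~\ref{1stCoh}, Lemma~\ref{ZetaLem} (with $\zeta=\eta=\pi$), and Lemma~\ref{H1A5C3}---but the underlying argument is identical: you have merely made the implicit preliminary application of Proposition~\ref{FirstNonSolvableReduction} explicit, which is exactly what Proposition~\ref{1stCoh} presupposes.
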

\begin{proof}
Since $H^1(\mathrm{A}_p,D_{\pi}) = 0$ by 
Lemma~\ref{H1A5C3}, and we may take 
$\zeta = \eta = \pi$ in Lemma~\ref{ZetaLem},  
$G$ is conjugate to 
$\langle \mathrm{A}_p, G\cap D\rangle$
by Proposition~\ref{1stCoh}. 
\end{proof}

\begin{lemma}\label{U0NonConj}
Distinct irreducible groups in $\mathcal{U}_0$ 
are not $\GL(p,\C)$-conjugate.
\end{lemma}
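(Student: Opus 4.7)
The plan is to mirror the argument for Lemma~\ref{NonConjSp}, with one crucial simplification: whereas $\mathcal{R}$ contained two groups per admissible diagonal subgroup (distinguished by the $2$-part of the determinant), $\mathcal{U}_0$ contains only one group per admissible diagonal subgroup. So it will suffice to show that a $\GL(p,\C)$-conjugacy between irreducible elements of $\mathcal{U}_0$ must preserve diagonal subgroups, and then the labels match by construction of $\mathcal{U}_0$.

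First I would take an irreducible $G = \langle s, w, A\rangle \in \mathcal{U}_0$, so $A = G\cap D$, and suppose $G^h \in \mathcal{U}_0$ for some $h\in \GL(p,\C)$. Since $G = A\rtimes \mathrm{A}_p$ would fix the all-ones vector if $A$ were scalar (per the remark after Theorem~\ref{NonScalarIffIrreducible}), irreducibility forces $A$ to be non-scalar.

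Next I would apply Theorem~\ref{GLConjugateImpliesSamePP}: as $p\geq 5$ the group $G\supseteq \mathrm{A}_p$ is non-solvable, so the conjugation isomorphism $G\to G^h$ must map $G\cap D$ onto $G^h\cap D$. This delivers $A^h = G^h\cap D \leq D$, which together with non-scalarity of $A$ lets me invoke Theorem~\ref{Protoconjugacy} to conclude that $h$ is monomial. Writing $h = d\tau$ with $d\in D$ and $\tau\in \mathrm{P}(p)$, the permutation part of $G^h$ is $\mathrm{A}_p^\tau$; since $G^h\in \mathcal{U}_0$ also has permutation part $\mathrm{A}_p$, this forces $\tau\in N_{\mathrm{S}_p}(\mathrm{A}_p) = \mathrm{S}_p$.

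The conclusion then falls out: $A\in \mathcal{A}^{[S]}$ is by definition an $\mathrm{S}_p$-module (Lemma~\ref{AllSpModules}), and $d$ normalizes $D$ trivially, so
\[
G^h\cap D \;=\; A^h \;=\; A^{d\tau} \;=\; A^\tau \;=\; A.
\]
Both $G$ and $G^h$ therefore have the same diagonal subgroup $A$, and by construction there is only one element $\langle s,w,A\rangle$ of $\mathcal{U}_0$ with diagonal subgroup $A$. Hence $G^h = G$. I do not anticipate a serious obstacle here; the main thing to verify carefully is the non-scalarity of $A$, which is what licenses the use of Theorem~\ref{Protoconjugacy}.
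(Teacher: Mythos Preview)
Your proof is correct and is essentially the paper's argument spelled out in full: it mirrors the proof of Lemma~\ref{NonConjSp}, uses non-scalarity of $A$ to invoke Theorem~\ref{Protoconjugacy}, and then exploits that $A\in\mathcal{A}^{[S]}$ is $\mathrm{S}_p$-invariant (the paper cites Proposition~\ref{ApModuleIffSpModule} for this, you cite Lemma~\ref{AllSpModules}---same content) to conclude $G^h\cap D = A$ and hence $G^h = G$.
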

\begin{proof}
Cf.~the proof of Lemma~\ref{NonConjSp}. 
Proposition~\ref{ApModuleIffSpModule} comes 
into play; reducibility of $G$ is again 
 equivalent to $G\cap D\leq Z$, and there is 
 a single group in $\mathcal{U}_0$
with a given diagonal subgroup.
\end{proof}

\begin{definition}
Let $\mathcal{U}_0^*$ be the set of 
$\langle s, w, A\rangle\in \mathcal{U}_0$ 
such that one of (i)--(iii) 
as in Definition~\ref{RStarDef} 
holds for $A\in \mathcal A^{[S]}$.
\end{definition}

Thus, $\mathcal{U}_0^*$ is the subset of 
irreducible groups in $\mathcal{U}_0$. 
\begin{theorem}
If $p\geq 7$ then $\mathcal{U}_0^*$ is a 
classification of the finite irreducible 
 subgroups of $\Mo(p,\C)$ with permutation 
 part $\mathrm{Alt}(p)$.
\end{theorem}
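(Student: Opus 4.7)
The plan is to assemble the theorem directly from the three ingredients already established: Proposition~\ref{CompletenessApGeneral} (completeness up to $\mathcal{U}_0$), Lemma~\ref{U0NonConj} (no conjugacies among distinct irreducible members of $\mathcal{U}_0$), and the definition of $\mathcal{U}_0^*$ (which should pick out exactly the irreducible members of $\mathcal{U}_0$). First, let $G\leq \Mo(p,\C)$ be a finite irreducible group with $\phi(G)=\mathrm{A}_p$. Because $p\geq 7$, Proposition~\ref{CompletenessApGeneral} applies and yields a $D$-conjugate $G'$ of $G$ lying in $\mathcal{U}_0$, with $G'\cap D = (G\cap D)^d$ for some $d\in D$. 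Irreducibility is preserved by conjugation, so $G'$ is irreducible.

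Next I would verify that the irreducible members of $\mathcal{U}_0$ are precisely those listed by the three alternatives (i)--(iii) in the definition of $\mathcal{U}_0^*$. By the remark in the proof of Lemma~\ref{U0NonConj}, a group $\langle s,w,A\rangle\in\mathcal{U}_0$ is reducible iff $A\leq Z$. So the task reduces to showing that, for $A\in\mathcal{A}^{[S]}$ with $A\cap D_{\{p\}}=Y_{j,k,l}$, non-scalarity of $A$ is equivalent to exactly one of (i), (ii), (iii) holding. Definition~\ref{MathcalAsDefn} forces either $j\equiv 0\bmod (p-1)$, or both $l=0$ and $j\equiv -1\bmod (p-1)$. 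Since $Y_{0,k,0}=Z_{p^k}$ is scalar and (by the parametrization of $\mathcal Y$) $j=0$ forces $l=0$, the subgroup $A\cap D_{\{p\}}$ is non-scalar iff $j\geq 1$, splitting into case (i) ($j\equiv 0\bmod (p-1)$, $j\neq 0$) and case (ii) ($j\equiv -1\bmod (p-1)$, $l=0$); these are disjoint because $0$ and $-1$ are distinct residues modulo $p-1$ for $p\geq 3$. If instead $j=l=0$, then $A\cap D_{\{p\}}$ is scalar and non-scalarity of $A$ amounts to non-triviality of $A\cap D_{\{p\}'}$ outside the scalars, i.e., ${\sf N}_A\neq 0$, which is case (iii). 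Hence $\mathcal{U}_0^*\subseteq \mathcal{U}_0$ is exactly the set of irreducible members.

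Combining, $G$ is $\GL(p,\C)$-conjugate to the unique group in $\mathcal{U}_0^*$ in the $D$-orbit of any $\mathcal{U}_0$-representative, so $\mathcal{U}_0^*$ is complete. Irredundancy is immediate from Lemma~\ref{U0NonConj}: any two distinct groups in $\mathcal{U}_0^*$ are distinct irreducible members of $\mathcal{U}_0$, hence not $\GL(p,\C)$-conjugate. (One may also invoke Theorem~\ref{GLConjugateImpliesSamePP} to see that no listed group is $\GL(p,\C)$-conjugate to a group with a different transitive permutation part, but for our purposes this is subsumed by Lemma~\ref{U0NonConj}.)

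There is no real obstacle: every hard step has been done upstream. The only genuinely paper-specific content is the mutually-exclusive-and-exhaustive case analysis in the middle paragraph, which is routine but worth writing out carefully so that the reader can see that \emph{every} irreducible class in $\mathcal{U}_0$ is represented in $\mathcal{U}_0^*$ exactly once.
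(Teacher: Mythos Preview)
Your proof is correct and follows essentially the same approach as the paper: the paper's proof is the single sentence ``Proposition~\ref{CompletenessApGeneral} and Lemma~\ref{U0NonConj} show that $\mathcal{U}_0^*$ is complete and irredundant,'' relying on the one-line observation immediately preceding the theorem that $\mathcal{U}_0^*$ is precisely the subset of irreducible groups in $\mathcal{U}_0$. Your middle paragraph simply spells out that observation in detail, which the paper leaves implicit; the argument and the ingredients used are otherwise identical.
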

\begin{proof}
Proposition~\ref{CompletenessApGeneral}
 and Lemma~\ref{U0NonConj} show that 
$\mathcal{U}_0^*$ is complete and irredundant.
\end{proof}

\subsection{Degree $5$}
Now fix $p=5$. 
\begin{definition} 
Let $c_n=\mathrm{diag}(1,\epsilon_n, 
\epsilon_n^{-1}, \epsilon_n^{-1},\epsilon_n)$
where $\epsilon_n= e^{2\pi\mathrm{i}/3^n}$.
For $i\in \{ 1,2\}$, define 
$\mathcal{U}_i$ to be the set of groups 
$\langle s, wc_{n+1}^i, A\rangle\leq \Mo(5,\C)$ 
where $A\in \mathcal{A}^{[S]}$ 
and $A\cap X_{\{ 3\}} = \Omega_n X_{\{3\}}$, as
$n$ ranges over the non-negative integers.
\end{definition}

\begin{remark}
$\langle s, wc_{n+1}^i\rangle$ has diagonal
subgroup $\Omega_n X_{\{3\}}$.
\end{remark}

\begin{lemma}\label{WiDistinct}
The $W_i = \langle s, wc_1^i \rangle$ for 
$i \in \{ 0,1,2\}$ are isomorphic to 
 $\mathrm{A}_5$, 
and no two of these groups are conjugate in 
$D\mathrm{A}_5$.
\end{lemma}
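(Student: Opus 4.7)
The isomorphism with $\mathrm{A}_5$ should come immediately from the preceding remark: taking $n=0$ there gives $W_i \cap D = \Omega_0 X_{\{3\}} = 1$, so $\phi$ restricts to an isomorphism of $W_i$ onto $\phi(W_i) = \langle s, w\rangle = \mathrm{A}_5$.

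For the non-conjugacy claim, the plan is to view each $W_i$ as a complement of $X_{\{3\}}$ in the split extension $X_{\{3\}}\mathrm{A}_5$, encoded by the cocycle $\xi_i \colon \mathrm{A}_5 \to X_{\{3\}}$ with $\xi_i(s) = 1$ and $\xi_i(w) = c_1^i$. Lemma~\ref{H1A5C3} gives $H^1(\mathrm{A}_5, X_{\{3\}}) \cong C_3$, so there are exactly three $X_{\{3\}}$-conjugacy classes of such complements. The key check is that $[\xi_1] \neq 0$: if $\xi_1 = \delta d$ for some $d \in X_{\{3\}}$, then $\xi_1(s) = 1$ forces $d \in C_{X_{\{3\}}}(s)$; but $s$ acts cyclically on the five diagonal entries, so any $s$-fixed element of $X_{\{3\}}$ is scalar, and the only $3$-power scalar of determinant $1$ in degree $5$ is the identity. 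Hence $d = 1$ and $\xi_1 \equiv 1$, contradicting $\xi_1(w) = c_1 \neq 1$. Since $\xi_2 = \xi_1^2$ pointwise, the classes $[\xi_0], [\xi_1], [\xi_2]$ are pairwise distinct and exhaust $H^1$, so $W_0, W_1, W_2$ are pairwise non-$X_{\{3\}}$-conjugate.

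The hard part will be promoting this to non-conjugacy in the much larger $D\mathrm{A}_5$. Given $W_i^h = W_j$, I would factor $h = \pi m$ with $\pi \in \mathrm{A}_5$ and $m \in D$. Since $\pi$ normalises both $X_{\{3\}}$ and $\mathrm{A}_5$, the group $W_i^\pi$ is again a complement of $X_{\{3\}}$ in $X_{\{3\}}\mathrm{A}_5$, with cocycle $\tau \mapsto \xi_i(\pi \tau \pi^{-1})^\pi$; this is cohomologous to $\xi_i$ because inner automorphisms of $\mathrm{A}_5$ act trivially on $H^1$. Then $(W_i^\pi)^m = W_j \leq X_{\{3\}}\mathrm{A}_5$ forces $[\sigma, m] \in X_{\{3\}}$ for every $\sigma \in \mathrm{A}_5$. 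Splitting $m = m_3 m_{3'}$ with $m_{3'} \in D_{\{3'\}}$ gives $[\sigma, m_{3'}] = 1$ for all $\sigma \in \mathrm{A}_5$, and the transitivity of $\mathrm{A}_5$ on $\{1, \ldots, 5\}$ makes $m_{3'}$ scalar. Since scalars and $Z_{\{3\}}$ are central in $\GL(5, \C)$, conjugation by $m$ collapses to conjugation by some $x \in X_{\{3\}}$. Hence $W_j$ is $X_{\{3\}}$-conjugate to $W_i^\pi$, and hence to $W_i$, forcing $i = j$ by the previous paragraph. The main obstacle is exactly this last upgrade: one must prevent the bigger normalizer $D\mathrm{A}_5$ from merging the three cohomology classes, and transitivity of $\mathrm{A}_5$ on the coordinates is what forces any admissible diagonal conjugator to have scalar $3'$-part and hence act trivially modulo $X_{\{3\}}$.
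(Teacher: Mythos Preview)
Your argument is correct but takes a genuinely different route from the paper's. The paper separates $W_0$ from $W_1,W_2$ by reducibility: $W_0=\mathrm{A}_5$ is reducible, while $W_1$ and $W_2$ afford the degree-$5$ irreducible character of $\mathrm{A}_5$. To separate $W_1$ from $W_2$ it observes that $D\mathrm{A}_5 = DW_2$ (because $\phi(W_2)=\mathrm{A}_5$), so $D\mathrm{A}_5$-conjugacy collapses to $D$-conjugacy; and then $s^d\in W_2$ forces $[s,d]=1$, since the unique element of $W_2$ lying over $s$ is $s$ itself, whence $d$ is scalar. Your cohomological approach instead treats all three $W_i$ uniformly via $H^1(\mathrm{A}_5,X_{\{3\}})\cong C_3$ and requires no character-table input; the price is the longer promotion from $X_{\{3\}}$-conjugacy to $D\mathrm{A}_5$-conjugacy, which the paper's one-line trick $D\mathrm{A}_5=DW_j$ sidesteps entirely. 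Your route also dovetails directly with the proof of Theorem~\ref{A5Completeness}, where the same $H^1$ computation is used to count complements.
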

\begin{proof}
Obviously $W_0=\mathrm{A}_5$ is reducible.
Also, $W_1$ and $W_2$ correspond to the  ordinary
irreducible character of $\mathrm{A}_5$ of 
 degree $5$ (hence they are $\GL(5,\C)$-conjugate).
If $W_1$ and $W_2$ were $D\mathrm{A}_5$-conjugate,
 then they would be $D$-conjugate; but 
 $s^d\not \in W_2$ for non-scalar $d\in D$.
\end{proof}

\begin{theorem}\label{A5Completeness}
A finite subgroup $G$ of $D \mathrm{A}_5$
such that $\phi(G) = \mathrm{A}_5$
is conjugate to a group in 
$\mathcal{U}_0\cup\mathcal{U}_1 \cup \mathcal{U}_2$.
\end{theorem}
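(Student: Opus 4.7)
The plan is to follow the reduction used for Proposition~\ref{CompletenessApGeneral}, then handle the anomalous cohomology at $(p,q) = (5,3)$. Put $T = \mathrm{A}_5$, so $\pi = \{2,3\}$, and apply Proposition~\ref{FirstNonSolvableReduction} to replace $G$ by a $D$-conjugate lying in $TD_\pi$ with $s \in G$. Decompose $D_\pi = B \times C$ with $C = X_{\{3\}}$ and $B = D_{\{2\}} \times Z_{\{3\}}$. By Lemma~\ref{H1A5C3} we have $H^1(T,D_{\{2\}}) = 0$, and $H^1(T,Z_{\{3\}}) = 0$ because $\mathrm{A}_5$ is perfect and acts trivially on $Z_{\{3\}}$; hence $H^1(T,B) = 0$. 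Proposition~\ref{ApModuleIffSpModule} ensures $\Omega_1 X_{\{2\}}$ is an irreducible $T$-module, so Lemma~\ref{ZetaLem} gives $B/(G \cap B) \cong B$ as $T$-modules, and consequently $H^1(T,B/(G\cap B)) = 0$. Applying Proposition~\ref{1stCoh}, after a $B$-conjugation $G = H \cdot (G \cap B)$ where $H \leq TX_{\{3\}}$, $\phi(H) = T$, and $s \in H$.

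I would next classify such $H$. By Proposition~\ref{ApModuleIffSpModule} combined with Lemma~\ref{AllSpModules}, the only finite $T$-submodules of $X_{\{3\}}$ are the $\Omega_n X_{\{3\}}$, so $H \cap X_{\{3\}} = \Omega_n X_{\{3\}}$ for some $n \geq 0$. Lemma~\ref{ZetaLem} gives $X_{\{3\}}/\Omega_n X_{\{3\}} \cong X_{\{3\}}$ as $T$-modules, so by Lemma~\ref{H1A5C3},
\begin{equation*}
H^1\bigl(T,\, X_{\{3\}}/\Omega_n X_{\{3\}}\bigr) \;\cong\; H^1(T, X_{\{3\}}) \;=\; C_3.
\end{equation*}
There are therefore exactly three conjugacy classes of complements of $X_{\{3\}}/\Omega_n X_{\{3\}}$ in $T \ltimes (X_{\{3\}}/\Omega_n X_{\{3\}})$, hence at most three $D$-conjugacy possibilities for $H$ once $n$ is fixed.

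The three candidates are $\langle s,\, w c_{n+1}^i,\, \Omega_n X_{\{3\}}\rangle$ for $i \in \{0,1,2\}$. Each maps to a complement in the above quotient, since $c_{n+1}^3 = c_n \in \Omega_n X_{\{3\}}$ forces $w c_{n+1}^i$ to have order $3$ modulo $\Omega_n X_{\{3\}}$. Pairwise non-conjugacy under $D$ is the crux: Lemma~\ref{WiDistinct} settles $n=0$, and the same argument generalizes since the cosets $c_{n+1}^i \Omega_n X_{\{3\}}$ realise three distinct cocycles in $H^1(T, X_{\{3\}}/\Omega_n X_{\{3\}})$. Because there are exactly three cocycle classes, these three candidates exhaust them. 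Therefore $H$ is $D$-conjugate to $\langle s,\, w c_{n+1}^i,\, \Omega_n X_{\{3\}}\rangle$ for some $i$, and recombining with $G \cap B$ gives $G$ conjugate to $\langle s,\, wc_{n+1}^i,\, A\rangle \in \mathcal{U}_i$ where $A = (G\cap B)\cdot \Omega_n X_{\{3\}} \in \mathcal{A}^{[S]}$.

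The main obstacle is the final identification: matching the three complement classes with the three groups $\langle s, wc_{n+1}^i, \Omega_n X_{\{3\}}\rangle$ and showing their $D$-non-conjugacy at every level $n$. This requires an explicit cocycle computation extending Lemma~\ref{WiDistinct}, essentially verifying that no element of $X_{\{3\}}$ can conjugate $wc_{n+1}^i$ into $wc_{n+1}^j$ modulo $\Omega_n X_{\{3\}}$ for $i \neq j$; everything else is a routine application of the cohomological machinery already set up in Sections~\ref{MathcalAsDefn}--\ref{PermPartSymp}.
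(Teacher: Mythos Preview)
Your approach is correct and essentially the same as the paper's, but the paper dissolves your ``main obstacle'' with one stroke rather than leaving it as a residual computation. The paper observes that the $T$-module isomorphism $X_{\{3\}}/\Omega_n X_{\{3\}} \cong X_{\{3\}}$ is realised concretely by the surjective group endomorphism $\tau$ of $M := TX_{\{3\}}$ that is the identity on $T$ and sends $x \in X_{\{3\}}$ to $x^{3^n}$. Since $\ker\tau = \Omega_n X_{\{3\}} = H\cap D$, the image $\tau(H)$ is a complement of $X_{\{3\}}$ in $M$; and because $\tau(c_{n+1}^i) = c_1^i$, one has $\tau\bigl(\langle s, wc_{n+1}^i, \Omega_n X_{\{3\}}\rangle\bigr) = W_i$. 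Now Lemma~\ref{WiDistinct} together with $|H^1(T,X_{\{3\}})| = 3$ shows the $W_i$ exhaust the complement classes in $M$, so $\tau(H)$ is $M$-conjugate to some $W_i$, and since $\tau$ is a surjection $M\to M$, this conjugacy lifts to $H$. Thus no cocycle computation at level $n$ is needed: the endomorphism reduces every level to level $0$, where Lemma~\ref{WiDistinct} already does the work. What you gain from the paper's formulation is that the non-conjugacy of the three candidates at arbitrary $n$ is never proved separately---it is an immediate consequence of their images under $\tau$ being the pairwise non-conjugate $W_i$.
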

\begin{proof}
By Proposition~\ref{1stCoh}, and
Lemmas~\ref{ZetaLem} and \ref{H1A5C3}, 
we may suppose that the hub group $G$ is 
in $M:=X_{\{3\}}\mathrm{A}_5$.
So $G\cap D = \Omega_nX_{\{3\}}$ for some $n$.  
Let $\tau$ be the surjective endomorphism of $M$
that is the identity on $\mathrm{A}_5$ and maps 
$x\in X_{\{3\}}$ to $x^{3^n}$.  Note that
$\ker \tau = G\cap D$.

The $W_i$ and $\tau(G)$
are all complements of $X_{\{3\}}$ in $M$. 
Since $|H^1(\mathrm{A}_5,X_{\{3\}})| = 3$, 
  Lemma~\ref{WiDistinct}
implies that $\tau(G)$ is $M$-conjugate, i.e.,
$\tau(M)$-conjugate, to 
$W_i=\tau(\langle s,wc_{n+1}^i\rangle)$ for some $i$. 
Therefore $G$ is $M$-conjugate to 
$\langle s,wc_{n+1}^i\rangle$. 
\end{proof}

\begin{lemma}
Each group in $\mathcal{U}_2$ is 
conjugate to a group in $\mathcal{U}_1$.
\end{lemma}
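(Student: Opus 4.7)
The plan is to lift the $\GL(5,\mathbb{C})$-conjugacy $W_1\sim W_2$ from Lemma~\ref{WiDistinct} to arbitrary $n$ using the $3$-divisibility of $X_{\{3\}}$. First I would find a monomial conjugator for the base case: the element $r=(1,2)\in\mathrm{S}_5\setminus\mathrm{A}_5$ normalizes $M=X_{\{3\}}\mathrm{A}_5$, so $W_2^r$ is a complement of $X_{\{3\}}$ in $M$. Expressing $(wc_1^{-1})^r=w^{-1}(c_1^{-1})^r$ and matching against the element of $W_1$ with permutation $w^{-1}$, one reduces the condition $W_2^{ry_0}=W_1$ to an equation $y_0^{1-w^{-1}}=\mathrm{diag}(\epsilon^{-1},1,\epsilon,1,1)$, which is solvable since the right-hand side lies in the image $\{\mathrm{diag}(a,b,c,1,1):abc=1\}$ of $1-w^{-1}$ on $X_{\{3\}}$. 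Set $h_0:=ry_0$.

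For general $n$, divisibility of $X_{\{3\}}$ yields $y_n\in X_{\{3\}}$ with $y_n^{3^n}=y_0$; set $h:=ry_n$, again monomial. The $3^n$-power map induces an $\mathrm{A}_5$-equivariant isomorphism $\pi\colon X_{\{3\}}/\Omega_nX_{\{3\}}\to X_{\{3\}}$ extending to a semidirect-product isomorphism $\Pi\colon M/\Omega_nX_{\{3\}}\to M$ sending $\overline{wc_{n+1}^i}$ to $wc_1^i$, because $c_{n+1}^{3^n}=c_1$. The essential compatibility is that conjugation by $\bar h$ on $M/\Omega_nX_{\{3\}}$ corresponds under $\Pi$ to conjugation by $h_0$ on $M$: on $X_{\{3\}}$ this is automatic since $y_n$ and $y_0$ are diagonal, while on $\sigma\in\mathrm{A}_5$ one computes $\Pi(\sigma^{\bar h})=\sigma^r\cdot(y_n^{3^n})^{1-\sigma^r}=\sigma^r\cdot y_0^{1-\sigma^r}=\sigma^{h_0}$, using precisely $y_n^{3^n}=y_0$.

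Because $r\in\mathrm{S}_5$ normalizes the $\mathrm{S}_5$-invariant module $A\in\mathcal{A}^{[S]}$ and $y_n\in D$ centralizes $A\leq D$, we have $A^h=A$. Identifying $\bar G:=G/\Omega_nX_{\{3\}}$ with $\langle W_2,\bar A\rangle$ and $\bar H:=H/\Omega_nX_{\{3\}}$ with $\langle W_1,\bar A\rangle$ via $\Pi$ (where $H:=\langle s,wc_{n+1},A\rangle\in\mathcal{U}_1$), compatibility gives $\Pi(\bar G^{\bar h})=\langle W_2^{h_0},\bar A^{h_0}\rangle=\langle W_1,\bar A\rangle=\Pi(\bar H)$, hence $\bar G^{\bar h}=\bar H$. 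Since both $G^h$ and $H$ contain $\Omega_nX_{\{3\}}\subseteq A$ and project to the same subgroup of $M/\Omega_nX_{\{3\}}$, we conclude $G^h=H\in\mathcal{U}_1$. The main obstacle is the compatibility verification, which crucially depends on the choice $y_n^{3^n}=y_0$ together with $\mathrm{A}_5$-equivariance of $\pi$; without these, the conjugacy at the quotient level would fail to lift back to $G$ itself.
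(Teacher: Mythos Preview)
Your lifting argument via the $3^n$-power map is sound and elegant: the compatibility $\Pi\circ(\text{conj by }\bar h)=(\text{conj by }h_0)\circ\Pi$ does hold, and once you know $K_2^h=K_1$ for the hub groups $K_i=\langle s,wc_{n+1}^i\rangle\leq M$, combining with $A^h=A$ gives $G^h=H$. (Your phrasing via $\bar A$ is slightly off, since $A$ need not lie in $M=X_{\{3\}}\mathrm{A}_5$; but working with the hub groups first and then adjoining $A$ fixes this immediately.)

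The genuine gap is in the base case. You claim that $W_2^{ry_0}=W_1$ reduces to the single equation $y_0^{1-w^{-1}}=\mathrm{diag}(\epsilon^{-1},1,\epsilon,1,1)$, but since $r=(1,2)$ does \emph{not} normalize $\langle s\rangle$, matching the $w^{-1}$-component is not enough: you must also force $s^{ry_0}$ to be the (nontrivial) element of $W_1$ with permutation part $s^r$, and the kernel of $1-w^{-1}$ on $X_{\{3\}}$ is two-dimensional, so an arbitrary solution of your displayed equation need not satisfy the $s$-condition. A correct $y_0$ does exist---because $H^1(\mathrm{S}_5,X_{\{3\}})=0$ forces the outer involution to swap the classes $[W_1]$ and $[W_2]$ in $H^1(\mathrm{A}_5,X_{\{3\}})\cong C_3$---but you have not established this, nor checked the second equation directly.

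By contrast, the paper avoids any base-case analysis: it uses the element $t=(1,2,4,3)$, which \emph{does} normalize $\langle s\rangle$, and verifies by a single matrix computation that $(wc_n)^{s^3wc_ns^2}\equiv (wc_n^2)^t\pmod{\Omega_{n-1}X_{\{3\}}}$ for all $n$ simultaneously. Since $s^t$ is a power of $s$ and $A^t=A$, this one congruence immediately gives $\langle s,wc_{n+1}^2,A\rangle^t=\langle s,wc_{n+1},A\rangle$. Your approach is more conceptual but requires both the cohomological input for $n=0$ and the lifting machinery; the paper's choice of conjugator collapses everything to one line.
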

\begin{proof}
Here $t=(1,2,4,3)$.
 Matrix multiplication establishes that 
$(wc_n)^{s^3wc_ns^2} \equiv w^t c_n^{2t}$ 
 modulo the diagonal subgroup 
 $\Omega_{n-1}X_{\{3\}}$ of 
 $\langle s, w c_n\rangle$.
\end{proof}

\begin{lemma}
All groups in $\mathcal{U}_1$ are irreducible.
\end{lemma}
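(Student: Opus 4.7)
The plan is to split on $n$ in the parametrization of a typical group $G=\langle s, wc_{n+1}, A\rangle\in\mathcal{U}_1$, exploiting that $\phi(G)=\mathrm{A}_5$ is transitive of prime degree $5$ throughout.

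For the case $n\geq 1$, the diagonal subgroup $G\cap D$ contains $A$, and by definition $A\cap X_{\{3\}}=\Omega_n X_{\{3\}}$. The latter is patently non-scalar: for instance, it contains $\mathrm{diag}(\epsilon_n,\epsilon_n^{-1},1,1,1)$, where $\epsilon_n=e^{2\pi\mathrm{i}/3^n}$. Thus $G\cap D$ is non-scalar, so Theorem~\ref{NonScalarIffIrreducible} yields the irreducibility of $G$ at once.

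The case $n=0$ is the only real obstacle. Here $A\cap X_{\{3\}}=\Omega_0 X_{\{3\}}=1$, and in minimal instances (such as $A=1$) the entire diagonal subgroup of $G$ may be scalar, so Theorem~\ref{NonScalarIffIrreducible} cannot be invoked. Instead one argues via the subgroup $W_1=\langle s, wc_1\rangle\leq G$: by Lemma~\ref{WiDistinct}, $W_1$ affords the (unique) ordinary irreducible character of $\mathrm{A}_5$ of degree $5$, so $W_1$ is already irreducible as a subgroup of $\GL(5,\C)$. Because irreducibility is inherited by overgroups in the same $\GL(5,\C)$, we conclude that $G$ is irreducible as well.

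In summary, the $n\geq 1$ case is handled by the module-theoretic criterion, while the $n=0$ case is handled by the character-theoretic identification already packaged in Lemma~\ref{WiDistinct}; together these cover every group in $\mathcal{U}_1$.
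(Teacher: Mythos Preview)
Your proof is correct and follows essentially the same approach as the paper: split on $n$, invoke Theorem~\ref{NonScalarIffIrreducible} for $n\geq 1$ via the non-scalar piece $\Omega_n X_{\{3\}}\leq A$, and for $n=0$ appeal to the irreducibility of $W_1=\langle s, wc_1\rangle$ established in (the proof of) Lemma~\ref{WiDistinct}. The only minor remark is that the irreducibility of $W_1$ appears in the \emph{proof} of Lemma~\ref{WiDistinct} rather than its statement, so you might cite it accordingly.
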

\begin{proof}
Let $G=\langle s,wc_{n+1},A\rangle \in \mathcal{U}_1$. 
If $n= 0$ then $G$ contains the irreducible group 
$W_1$.
If $n\geq 1$ then $A$ is non-scalar. 
\end{proof}
\begin{lemma}
Distinct groups in $\mathcal{U}_0\cup \mathcal{U}_1$
are not $\GL(5,\C)$-conjugate.
\end{lemma}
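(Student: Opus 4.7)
The plan is a case-split on which of $\mathcal{U}_0, \mathcal{U}_1$ contain $G$ and $H$. Suppose $G, H \in \mathcal{U}_0 \cup \mathcal{U}_1$ are $\GL(5,\C)$-conjugate; I aim to show $G = H$. By Theorem~\ref{GLConjugateImpliesSamePP}, since $\phi(G) = \phi(H) = \mathrm{A}_5$ is non-solvable, any isomorphism $G \to H$ carries $G \cap D$ onto $H \cap D$, so the order of the diagonal and its scalar/non-scalar status are conjugacy invariants. A group in $\mathcal{U}_0$ is irreducible precisely when its diagonal is non-scalar, while every group in $\mathcal{U}_1$ is irreducible by the preceding lemma. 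These observations immediately dispatch the subcases where $G$ is reducible (forcing $H \in \mathcal{U}_0$ with the same scalar diagonal, whence $G = H$) and where $G \in \mathcal{U}_1$ has scalar diagonal (forcing $H \in \mathcal{U}_1$ with $n = 0$ and the same scalar diagonal, whence again $G = H$).

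Assume now that both groups have non-scalar diagonals. Then Theorem~\ref{Protoconjugacy} lets me take the conjugating element $m$ to be monomial, and $\phi(m) \in N_{\Sym(5)}(\mathrm{A}_5) = \mathrm{S}_5$. Since $G \cap D, H \cap D \in \mathcal{A}^{[S]}$ are $\mathrm{S}_5$-invariant by Lemma~\ref{AllSpModules}, $m$-conjugation preserves each, giving $G \cap D = H \cap D =: A$. For $G, H \in \mathcal{U}_0$, Lemma~\ref{U0NonConj} closes the case directly. For $G, H \in \mathcal{U}_1$, the common $A$ determines $n$ through $A \cap X_{\{3\}} = \Omega_n X_{\{3\}}$, so $G = \langle s, wc_{n+1}, A\rangle = H$.

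The substantive remaining case is $G \in \mathcal{U}_0$ and $H \in \mathcal{U}_1$ (or vice versa). My strategy is to exhibit a $\GL(5,\C)$-conjugacy invariant of $G$ that $H$ lacks: the presence of an element of order $3$ and trace $2$ whose permutation part is a $3$-cycle. The element $w \in G$ supplies one, so $m^{-1} w m \in H$ must too. Since all $3$-cycles of $\mathrm{A}_5$ are conjugate in $\mathrm{A}_5$, conjugating further inside $H$ by a lift of an appropriate $\mathrm{A}_5$-element brings this witness to the form $awc_{n+1}$ for some $a \in A$.

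The main obstacle is then ruling this out structurally. A direct expansion using $(wc_{n+1})^3 = \mathrm{diag}(1, 1, 1, \epsilon_n^{-1}, \epsilon_n)$ shows that order $3$ forces $a_4^3 = \epsilon_n$ and $a_5^3 = \epsilon_n^{-1}$, while the trace equation $a_4 \epsilon_{n+1}^{-1} + a_5 \epsilon_{n+1} = 2$ (a sum of two unit complex numbers equal to $2$) forces $a_4 = \epsilon_{n+1}$ and $a_5 = \epsilon_{n+1}^{-1}$. Decomposing the $3$-part of $a$ as $xz$ with $x \in \Omega_n X_{\{3\}}$ (so each $x_i$ has order dividing $3^n$) and $z$ the scalar $3$-component, raising the two equations $x_4 z = \epsilon_{n+1}$ and $x_5 z = \epsilon_{n+1}^{-1}$ to the $3^n$-th power annihilates every $x_i$ and forces $z^{3^n} = \epsilon_1 = \epsilon_1^{-1}$, contradicting $|\epsilon_1| = 3$. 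This contradiction completes the proof.
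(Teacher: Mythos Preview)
Your proof is correct, and the handling of the mixed case $G\in\mathcal{U}_0$, $H\in\mathcal{U}_1$ differs genuinely from the paper's argument.

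The paper pushes the reduction one step further than you do: since $G=\langle s,w,A\rangle\in\mathcal{U}_0$ is normalized by all of $\mathrm{S}_5$, any monomial conjugator $m=d\sigma$ can have its permutation part $\sigma$ absorbed into $G$, so one may take the conjugating element to be diagonal, $d\in D$. Then $(wc_{n+1})^d = w[w,d]c_{n+1}$ must lie in $wA$, forcing $[w,d]c_{n+1}\in A\cap X$. Because $w=(1,2,3)$ fixes positions $4$ and $5$, the commutator $[w,d]$ has $1$'s there, so $[w,d]c_{n+1}$ has entries $\epsilon_{n+1}^{\pm 1}$ of order $3^{n+1}$ in those positions---immediately contradicting $A\cap X_{\{3\}}=\Omega_n X_{\{3\}}$. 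Your trace argument is a legitimate alternative that avoids this extra reduction: you only know $m$ is monomial, so you use the conjugacy invariance of the trace of $w$ (namely $2$) together with order $3$ to pin down $a_4=\epsilon_{n+1}$ and $a_5=\epsilon_{n+1}^{-1}$, and then the $X\times Z$ decomposition of $A_{\{3\}}$ yields the same contradiction. The paper's route is shorter; yours is more self-contained in that it does not rely on the $\mathrm{S}_5$-normality of $\mathcal{U}_0$ groups.

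One expository point worth tightening: your claim that ``scalar/non-scalar status [of the diagonal subgroup] is a conjugacy invariant'' does not follow merely from the abstract isomorphism $G\cap D\cong H\cap D$ afforded by Theorem~\ref{GLConjugateImpliesSamePP}. The clean justification is that $G\cap D$ is scalar if and only if $G\cap D=Z(G)$ (one containment because scalars are central; the other because the image of $Z(G)$ in $G/(G\cap D)\cong\mathrm{A}_5$ is trivial), and both $|G\cap D|$ and $|Z(G)|$ are preserved under $\GL(5,\C)$-conjugacy.
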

\begin{proof}
We proceed as in the proofs of Lemmas~\ref{NonConjSp} 
and \ref{U0NonConj}. Suppose that 
$\langle s, wc_{n+1}, \allowbreak A\rangle^d =
 \langle s, w, A \rangle$  
for some $d\in D$, where 
$A\cap X_{\{3\}}= \Omega_n X_{\{3\}}$. 
Then $[w,d]c_{n+1}\in A\cap X$.
However, $[w,d]c_{n+1}$ has a diagonal 
entry of order $3^{n+1}$.
\end{proof}
\begin{theorem}
$\mathcal{U}_0^*\cup \mathcal{U}_1$ is a classification 
of the finite irreducible subgroups of $\Mo(5,\C)$ 
with permutation part $\mathrm{Alt}(5)$.
\end{theorem}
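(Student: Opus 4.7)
The plan is to assemble Theorem~\ref{A5Completeness} together with the three preceding lemmas into a completeness-plus-irredundancy argument, in the same spirit as the analogous treatments for $\mathrm{Sym}(p)$ and $\mathrm{Alt}(p)$ with $p \geq 7$.

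For completeness, I would begin with a finite irreducible $G \leq \Mo(5,\C)$ satisfying $\phi(G) = \mathrm{Alt}(5)$. By Lemma~\ref{EntriesAreTorsion} we may conjugate so that $G \leq \widetilde{\Mo}(5,\C)$; since $\phi^{-1}(\mathrm{Alt}(5)) = D \cdot \mathrm{A}_5$, in fact $G \leq D\mathrm{A}_5$. Theorem~\ref{A5Completeness} then produces a conjugate of $G$ lying in $\mathcal{U}_0 \cup \mathcal{U}_1 \cup \mathcal{U}_2$. The lemma immediately preceding the theorem shows every member of $\mathcal{U}_2$ is conjugate to one in $\mathcal{U}_1$, which reduces us to $\mathcal{U}_0 \cup \mathcal{U}_1$. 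Conjugation preserves irreducibility, so any conjugate landing in $\mathcal{U}_0$ must lie in the irreducible subset $\mathcal{U}_0^*$ by definition; and by the subsequent lemma every element of $\mathcal{U}_1$ is automatically irreducible.

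For irredundancy, I would invoke the final preparatory lemma, which asserts that distinct groups in $\mathcal{U}_0 \cup \mathcal{U}_1$ are not $\GL(5,\C)$-conjugate. Since $\mathcal{U}_0^* \cup \mathcal{U}_1 \subseteq \mathcal{U}_0 \cup \mathcal{U}_1$, the sublist inherits this property. No reducible group from $\mathcal{U}_0 \setminus \mathcal{U}_0^*$ can be conjugate to an irreducible group in $\mathcal{U}_0^* \cup \mathcal{U}_1$, so discarding them does not affect completeness either.

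The hard work was already absorbed into Theorem~\ref{A5Completeness}, where the nontrivial $H^1(\mathrm{A}_5,X_{\{3\}}) \cong C_3$ computation forced the appearance of the extra families $\mathcal{U}_1$ and $\mathcal{U}_2$, and into the explicit conjugation identity showing that $\mathcal{U}_2$ collapses onto $\mathcal{U}_1$. Given those, the theorem itself presents no real obstacle: it is a matter of bookkeeping, combining the completeness of $\mathcal{U}_0 \cup \mathcal{U}_1 \cup \mathcal{U}_2$, the reduction $\mathcal{U}_2 \to \mathcal{U}_1$, the irreducibility filtration defining $\mathcal{U}_0^*$, and the non-conjugacy lemma.
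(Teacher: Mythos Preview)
Your proposal is correct and matches the paper's approach exactly: the paper states this theorem without proof, treating it as an immediate consequence of Theorem~\ref{A5Completeness} and the three lemmas preceding the statement, which is precisely the assembly you describe.
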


\section{Degrees greater than $5$}
\label{DegreesGreaterThan5}

Let $G$ be a  finite irreducible subgroup 
of $\Mo(p,\C)$ with permutation part $T$.
In previous sections we classified
all $G$ such that $T$ is compulsory.
A member of the non-compulsory 
`projective' family, $\SL(3,2)$, is 
self-normalizing in $\Sym(7)$, hence is the only 
non-compulsory $T$ for $p=7$.
In degrees $11$ and $23$, the non-compulsory 
$T$ are $M_{11}$, $\PSL(2,11)$, and $M_{23}$. 

\subsection{Degree $7$}

Let $p=7$ and $T\cong \SL(3,2)$. Thus $\pi=\{2,3\}$.
\begin{definition}
 $V\cong \SL(3,2)$ is the subgroup of $\mathrm{S}_7$
generated by $s$ and $v=(1,2)(3,5)$.
\end{definition}

\begin{lemma}\label{OddqVmodules}
If $q$ is an odd prime then a finite $V$-submodule 
of $D_{\{q\}}$ is an $\mathrm{S}_7$-module.
\end{lemma}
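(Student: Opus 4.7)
Plan. I would prove the lemma by treating two cases for the odd prime $q$. The case $q=p=7$ is a direct application of Lemma~\ref{SpModulesp}: any finite $V$-submodule of $D_{\{7\}}$ is an element of $\mathcal Y$ by Theorem~\ref{AllpsModules}, and that lemma identifies the $V$-modules in $\mathcal Y$ with the $\mathrm{S}_7$-modules. The substantive case is $q\neq 7$, where the plan is to show that every finite $V$-submodule of $D_{\{q\}}$ is of the Cartesian form $\Omega_n X_{\{q\}}\times Z_{q^c}$, which is clearly $\mathrm{S}_7$-invariant, being characteristic in $D_{\{q\}}$.

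First I would show that every finite $V$-submodule of $X_{\{q\}}$ is one of the $\Omega_n X_{\{q\}}$. Since $V\supseteq\langle s\rangle$, any such $V$-submodule is an $\langle s\rangle$-submodule, so appears in the list produced by Theorem~\ref{qsSubmodules}. For $q$ coprime to $|V|=168$---that is, odd $q\notin\{3,7\}$---Corollary~\ref{qNotInpiModules} gives the result at once. The one case where Maschke's theorem fails is $q=3$; here the numerical coincidence that $3$ has maximal multiplicative order $6=p-1$ modulo $7$ forces $v=1$ in the notation of Notation~\ref{dAndvDefinitions}, and hence $X_{\{3\}}=X_{\{3\}}^{(1)}$ is itself uniserial as an $\langle s\rangle$-module by Proposition~\ref{StructureofXqr}. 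The only $\langle s\rangle$-submodules---and therefore $V$-submodules---are again the $\Omega_n X_{\{3\}}$.

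To apply Theorem~\ref{GoursatRemak} to $D_{\{q\}}=X_{\{q\}}\times Z_{\{q\}}$ and conclude Cartesian-ness, I would rule out non-trivial $V$-isomorphisms between non-identity sections. Since $V$ acts trivially on $Z_{\{q\}}$, it suffices to show that $s$ acts non-trivially on each $\Omega_m X_{\{q\}}/\Omega_n X_{\{q\}}$ with $m>n$. Such a section contains $\Omega_{n+1}X_{\{q\}}/\Omega_n X_{\{q\}}\cong \Omega_1 X_{\{q\}}$ as a $V$-submodule, and the characteristic polynomial of $s$ on $\Omega_1 X_{\{q\}}$ is $\overline{f}$; the value $\overline{f}(1)=p\not\equiv 0\bmod q$ confirms that $s-1$ is a nonzero endomorphism of $\Omega_1 X_{\{q\}}$, ruling out any $V$-isomorphism with a (trivial) section of $Z_{\{q\}}$.

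The principal obstacle is the $q=3$ case, since Corollary~\ref{qNotInpiModules} does not apply when $q$ divides $|V|$. The saving observation---that $v=1$ for $(p,q)=(7,3)$---is a numerical accident specific to this small degree, and is exactly what restores uniseriality and rescues the argument.
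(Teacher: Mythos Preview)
Your proof is correct and follows essentially the same route as the paper: the paper's proof reads simply ``We combine Lemma~\ref{SpModulesp}, Corollary~\ref{qNotInpiModules}, and Proposition~\ref{StructureofXqr} (for $q=3$, as $X_{\{3\}} = X_{\{3\}}^{(1)}$),'' and you have unpacked exactly these three ingredients, in the same case split.

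One small redundancy: your final Goursat--Remak paragraph is unnecessary. Since $\langle s\rangle\leq V$, any $V$-submodule $W\leq D_{\{q\}}$ is already an $\langle s\rangle$-submodule, and Theorem~\ref{qsSubmodules} tells you outright that every such module is Cartesian, i.e., $W=(W\cap X_{\{q\}})\times(W\cap Z_{\{q\}})$. You therefore need only show that $W\cap X_{\{q\}}$ is some $\Omega_n X_{\{q\}}$, which you do correctly; the $Z$-factor is automatically $\mathrm{S}_7$-invariant. The argument about $\overline{f}(1)=p\not\equiv 0\bmod q$ is valid but not needed.
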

\begin{proof}
We combine Lemma~\ref{SpModulesp},  
Corollary~\ref{qNotInpiModules}, and 
Proposition~\ref{StructureofXqr} 
(for $q=3$, as $X_{\{3\}} = X_{\{3\}}^{(1)}$).
\end{proof}

The $V$-submodule structure of 
$X_{\{2\}}= X_{\{2\}}^{(1)} X_{\{2\}}^{(2)}$
 is less tractable. 
Here we resume the conventions 
of Section~\ref{ModulesOfpCoprimeOrder},
fixing $q=2$.
In $\Z_2[\mathrm{x}]$, 
$f(\mathrm{x}) = 
\mathrm{x}^6+ \mathrm{x}^5+\mathrm{x}^4+
\mathrm{x}^3+ \allowbreak  \mathrm{x}^2+\mathrm{x}+1$ 
factorizes as the product of irreducibles
$f_1$, $f_2$, with integer polynomial 
approximations 
\[
\begin{array}{ll}
f_{1,1}(\mathrm{x})= \mathrm{x}^3
+ \mathrm{x}+1 & \qquad f_{2,1}(\mathrm{x})= 
\mathrm{x}^3 + \mathrm{x}^2+1\\
f_{1,2}(\mathrm{x})= \mathrm{x}^3
+ 2\mathrm{x}^2+\mathrm{x}+3 
& \qquad f_{2,2}(\mathrm{x})= \mathrm{x}^3
+ 3\mathrm{x}^2+2\mathrm{x}+3
\end{array}
\]
(see the proof of Lemma~\ref{Hensellike} for 
the method to calculate each $f_{i,n}$).
\begin{lemma}\label{VSubmodules}\
\begin{itemize}
\item[{\rm (i)}]
$X_{2,m}^{(1)}X_{2,n}^{(2)}$ 
is a $V$-module if and only if $m=n$ or $m=n+1$.
\item[{\rm (ii)}]
$X_{2,m}^{(1)}X_{2,n}^{(2)}$ 
is an $\mathrm{S}_7$-module if and only if $m=n$.
\end{itemize}
\end{lemma}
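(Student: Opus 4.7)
For (ii), Lemma~\ref{tConjugacyonsqModules} with $v=2$ yields $(X_{2,m}^{(1)} X_{2,n}^{(2)})^t = X_{2,n}^{(1)} X_{2,m}^{(2)}$. Since $X_{\{2\}} = X_{\{2\}}^{(1)} \times X_{\{2\}}^{(2)}$ is an internal direct product, equality of these two subgroups forces $m=n$. Conversely, $X_{2,m}^{(1)} X_{2,m}^{(2)} = \Omega_m X_{\{2\}}$ is characteristic in $X_{\{2\}}$, hence $\mathrm{S}_7$-stable. This settles part (ii).

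For (i), recall $V = \langle s, v \rangle$ with $v = (1,2)(3,5)$. Any $X_{2,m}^{(1)} X_{2,n}^{(2)}$ is automatically $\langle s\rangle$-stable, so $V$-stability is the same as $v$-stability. The plan is to establish two inclusions describing how conjugation by $v$ twists the bi-filtration: first, $(X_{2,m}^{(1)})^v \subseteq X_{2,m}^{(1)} X_{2,m-1}^{(2)}$, with the $X^{(2)}$-component of $(x_{2,m}^{(1)})^v$ lying in $X_{2,m-1}^{(2)} \setminus X_{2,m-2}^{(2)}$ when $m\geq 2$; and second, $(X_{2,n}^{(2)})^v \subseteq \Omega_n X_{\{2\}} = X_{2,n}^{(1)} X_{2,n}^{(2)}$, with the $X^{(1)}$-component of $(x_{2,n}^{(2)})^v$ lying in $X_{2,n}^{(1)} \setminus X_{2,n-1}^{(1)}$ when $n\geq 1$. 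Granting these, $v$-stability of $X_{2,m}^{(1)} X_{2,n}^{(2)}$ is equivalent to $m-1\leq n$ and $n\leq m$, i.e., to $m=n$ or $m=n+1$.

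Both inclusions are established by direct calculation with the generators $x_{2,n}^{(r)} = b_{2^n}^{f_{r',n}(s)}$. The base case $n=1$ is a finite verification in $\F_2^7$: writing $v_r := b_2^{f_{r',1}(s)}$, one checks directly that $v$ fixes the three-dimensional subspace $X_{2,1}^{(1)} = \langle v_1, v_1^s, v_1^{s^2}\rangle$ setwise, whereas $(v_2)^v$ has nonzero projection onto $X_{2,1}^{(1)}$ modulo $X_{2,1}^{(2)}$. For general $n$, the key identity is $b_{2^n}^v = b_{2^n}^{-1}$ (because $v$ interchanges the first two diagonal positions), giving $(b_{2^n}^{f_{r'}(s)})^v = (b_{2^n}^{-1})^{f_{r'}(vsv^{-1})}$; one then re-expresses $f_{r'}(vsv^{-1})$ in $\Z_2\langle s\rangle$ modulo the annihilator $f(s)$ of $X_{\{2\}}$, with 2-adic corrections at each level tracked via the approximations $f_{r,n}$ supplied by Lemma~\ref{Hensellike}.

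The main obstacle is precisely this 2-adic bookkeeping. Because $v$ does not normalize $\langle s\rangle$, the 7-cycle $vsv^{-1}$ lies outside $\langle s\rangle$, and expanding $f_{r'}(vsv^{-1})$ and reducing modulo $f(s)$ produces mixing terms between $X^{(1)}$ and $X^{(2)}$. Carefully tracking which of these terms survive at which 2-adic level, so as to show that the shift from $X^{(1)}$ into $X^{(2)}$ drops exactly one filtration level while the reverse shift drops none, is the most delicate part of the argument, and is what rules out the case $m = n-1$.
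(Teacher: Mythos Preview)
Your argument for (ii) is correct and matches the paper's implicit reasoning (via Lemma~\ref{tConjugacyonsqModules} and the characterization in Lemma~\ref{AllSpModules}).

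For (i), your logical framework is sound: the two ``sharp inclusions'' you state would indeed yield the result. The problem is that you do not actually prove them for general $m,n$. Your proposed method---re-expressing $f_{r'}(vsv^{-1})$ in $\Z_2\langle s\rangle$ modulo $f(s)$---is not well-defined: since $v$ does not normalize $\langle s\rangle$, the element $vsv^{-1}$ lies outside $\langle s\rangle$, so $f_{r'}(vsv^{-1})$ is an element of $\Z_2 V$, not of $\Z_2\langle s\rangle$, and there is no canonical way to ``reduce it modulo $f(s)$''. The $2$-adic bookkeeping you allude to has no clear inductive step, and as written the argument is a sketch of what one would like to happen rather than a proof.

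The missing idea is the squaring endomorphism $\kappa\colon x\mapsto x^2$ of $X_{\{2\}}$. Because $\kappa$ is a $V$-module map with $\kappa^k(X_{2,m}^{(1)}X_{2,n}^{(2)}) = X_{2,\max(m-k,0)}^{(1)}X_{2,\max(n-k,0)}^{(2)}$, every case reduces to a handful of base checks: the paper verifies directly that $X_{2,1}^{(1)}$ is a $V$-module (giving the case $m=n+1$ by pullback along $\kappa^n$), that $X_{2,2}^{(1)}$ is \emph{not} (ruling out $m>n+1$), and invokes the indecomposability of $\Omega_1 X_{\{2\}}$ from Proposition~\ref{XqIndecomposable} to rule out $m<n$ (since otherwise $X_{2,1}^{(2)}$ would be a $V$-direct-summand). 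This replaces your open-ended $2$-adic induction with three finite verifications.
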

\begin{proof}
Since $X_{2,1}^{(1)}$
is annihilated by $f_{1,1}(s)$,
the $\F_2$-space $X_{2,1}^{(1)}$ has
basis $\{ x_1,x_1^s,x_1^{s^2}\}$ where
$x_1:=x_{2,1}^{(1)}= \mathrm{diag}(-1,-1,-1,1,-1,1,1)$.
This basis maps to another under action by $v$. Hence 
$X_{2,1}^{(1)}$ is a $V$-module.

Clearly $X_{2,n}^{(1)}
X_{2,n}^{(2)}=\Omega_nX_{\{2\}}$ is a $V$-module; 
as is $X_{2,n+1}^{(1)} X_{2,n}^{(2)}$, being the 
inverse image of $X_{2,1}^{(1)}$ under the 
endomorphism $\kappa^n$ on
$X_{\{2\}}$ that maps $x$ to $x^{2^n}$.

Suppose that $m<n$ and $X_{2,m}^{(1)}X_{2,n}^{(2)}$ 
is a $V$-module. Then $X_{2,1}^{(2)}= 
\kappa^{n-1}\big(X_{2,m}^{(1)} X_{2,n}^{(2)}\big)$ 
is a $V$-module. But $\Omega_1X_{\{2\}} = X_{2,1}^{(1)}
\times X_{2,1}^{(2)}$ is $V$-indecomposable by 
Proposition~\ref{XqIndecomposable}.

Let $x_2= x_{2,2}^{(1)}= \mathrm{diag}(-\mathrm{i},
-\mathrm{i}, \mathrm{i}, -1, -\mathrm{i}, 1,1 )$.
Observe that $x_2^v\not \in X_{2,2}^{(1)}$, as 
$x_2^v$ is not annihilated by $f_{1,2}(s)$.
So $X_{2,2}^{(1)}$ is not a $V$-module.
However, if $m>n+1$ and  
$X_{2,m}^{(1)}X_{2,n}^{(2)}$ were a $V$-module, then 
$X_{2,2}^{(1)}= 
\kappa^{m-2}\big(X_{2,m}^{(1)}X_{2,n}^{(2)}\big)$ 
would be one too. This rules out the final possibility
for $(m,n)$.
\end{proof}

\begin{definition}
Let $\mathcal{A}^{[V]}$ be the set of 
$A\in \mathcal A$ in degree $7$ 
for which the following hold.
\begin{itemize}
\item[(i)] If $A\cap D_{\{7\}}=Y_{j,k,l}$,
 then either $j\equiv 0 \bmod p-1$, 
or both $l=0$ and $j\equiv -1 \allowbreak \bmod p-1$.
\item[(ii)] Either ${\sf N}_A = {\sf N}_A^t$, 
or $A\cap X_{\{2\}} = X_{2,n+1}^{(2)}X_{2,n}^{(2)}$
for some $n$ and ${\sf N}_A$ agrees with
 ${\sf N}_A^t$ in each row apart from the row 
for $q=2$.
\end{itemize}
\end{definition}

\begin{proposition}
$\mathcal{A}^{[V]}$ is the set of all finite 
$V$-submodules of $D$.
\end{proposition}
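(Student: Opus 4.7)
The plan is to decompose any finite submodule $A \leq D$ into its primary components, characterize $V$-invariance on each component separately using the lemmas already in hand, and verify the resulting conditions are exactly (i) and (ii) in the definition of $\mathcal{A}^{[V]}$.

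First I would observe that, since the group $V$ of permutation matrices preserves the Sylow decomposition of $D$, any $V$-submodule splits as $A = A_{\{7\}} \times \prod_{q \neq 7} A_{\{q\}}$, with each factor a $V$-module. Within each $D_{\{q\}} = X_{\{q\}} \times Z_{\{q\}}$ for $q \neq 7$, the Goursat--Remak theorem (Theorem~\ref{GoursatRemak}), combined with the fact that non-identity sections of $Z_{\{q\}}$ are $\langle s\rangle$-trivial while non-identity sections of $X_{\{q\}}$ are not (Lemma~\ref{411Bacskai}), forces $A_{\{q\}} = (A \cap X_{\{q\}}) \times (A \cap Z_{\{q\}})$. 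So the problem reduces to characterizing $V$-submodules of each $X_{\{q\}}$, of each $Z_{\{q\}}$, and of $D_{\{7\}}$; for the $Z_{\{q\}}$ factors the answer is trivially ``any cyclic subgroup'' since $V$ acts trivially there.

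For the $7$-part $A_{\{7\}} = Y_{j,k,l}$: since $V$ is non-solvable and contains $s$, Lemma~\ref{SpModulesp} applies directly and shows $A_{\{7\}}$ is a $V$-module exactly when the alternative on $(j,l)$ in condition~(i) holds. For odd primes $q \neq 7$: Lemma~\ref{OddqVmodules} identifies $V$-submodules of $D_{\{q\}}$ with $\mathrm{S}_7$-submodules, and by Lemma~\ref{tConjugacyonsqModules} this is equivalent to the $q$-th row of ${\sf N}_A$ being fixed under the cyclic shift corresponding to $t$. For $q = 2$: any $V$-submodule is also an $\langle s \rangle$-submodule, so by Theorem~\ref{qsSubmodules} (taking $c=0$) we may write $A \cap X_{\{2\}} = X^{(1)}_{2,n_1} X^{(2)}_{2,n_2}$, and Lemma~\ref{VSubmodules}(i) forces $(n_1,n_2) = (m,m)$ or $(m+1,m)$. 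The first case says the row-$2$ entry of ${\sf N}_A$ is shift-invariant; the second is the unbalanced exception singled out in condition~(ii).

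Assembling these primary characterizations gives both implications, since the conditions on different primes are independent. The main obstacle is bookkeeping rather than mathematical depth: distinguishing the balanced from the unbalanced case at $q=2$, and keeping straight how the $t$-action on ${\sf N}_A$ interacts with the Goursat--Remak style of expressing the submodule. All the substantive work has already been done in Lemmas~\ref{SpModulesp}, \ref{OddqVmodules}, and \ref{VSubmodules}, so the proof itself should be short—essentially an invocation of these three results together with the primary decomposition argument.
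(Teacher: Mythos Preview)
Your proposal is correct and matches the paper's intent: the proposition is stated in the paper without proof, precisely because it is meant to follow immediately from Lemmas~\ref{SpModulesp}, \ref{OddqVmodules}, and \ref{VSubmodules} together with the primary decomposition, exactly as you outline. Your write-up is in fact more explicit than the paper, which simply leaves the assembly to the reader.
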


Since $V\cap \mathrm{S}_6 \cong \mathrm{S}_4$,
Lemmas~\ref{Shapiro} and \ref{Spminus1} give
the following.
\begin{lemma}\label{VCohom}
$H^1(V,D_{\{q\}})= 0$ if $q$ is odd.
\end{lemma}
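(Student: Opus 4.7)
The plan is to imitate the proofs of Lemmas~\ref{H1XandD} and \ref{H1A5C3}, leveraging the Eckmann--Shapiro isomorphism (Lemma~\ref{Shapiro}) together with the co-induction description of the diagonal module in Lemma~\ref{Spminus1}. First I would specialize those two lemmas to $T=V$: writing $R = V\cap \mathrm{S}_6$, Lemma~\ref{Spminus1} gives $D_{\{q\}}\cong (Z_{\{q\}})_R^V$ as $V$-modules, and then Lemma~\ref{Shapiro} yields
\[
H^1(V, D_{\{q\}}) \;\cong\; H^1(R, Z_{\{q\}}).
\]

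Next I would use the identification $R\cong \mathrm{S}_4$ stated just before the lemma. Since $Z_{\{q\}}$ consists of scalar matrices, the conjugation action of any permutation matrix (in particular of $R$) on $Z_{\{q\}}$ is trivial. Therefore
\[
H^1(R, Z_{\{q\}}) \;=\; \mathrm{Hom}(R, Z_{\{q\}}) \;=\; \mathrm{Hom}(R/R', Z_{\{q\}}).
\]

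Finally I would compute that $\mathrm{S}_4/\mathrm{S}_4' = \mathrm{S}_4/\mathrm{A}_4 \cong C_2$, so the target group is $\mathrm{Hom}(C_2, Z_{\{q\}})$. For odd $q$, the group $Z_{\{q\}}$ is a $q$-group and in particular has no element of order $2$, forcing this Hom group to be trivial. This gives $H^1(V, D_{\{q\}}) = 0$ as claimed.

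There is really no substantial obstacle here: all the structural work has been done upstream in Lemmas~\ref{Shapiro} and \ref{Spminus1}, and the only arithmetic content is the abelianization $\mathrm{S}_4^{\mathrm{ab}}\cong C_2$ together with the parity mismatch for odd $q$. The only thing one should double-check is that the isomorphism $R\cong \mathrm{S}_4$ follows from the explicit generators $s,v$ of $V$ stabilizing coordinate $7$ under the action inherited from $\SL(3,2)$ on $\mathbb{F}_2^3\setminus\{0\}$, which is standard once $V$ is identified with $\SL(3,2)$.
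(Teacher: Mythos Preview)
Your proposal is correct and follows exactly the paper's approach: the paper's entire proof is the one-line observation that $V\cap \mathrm{S}_6\cong \mathrm{S}_4$ together with Lemmas~\ref{Shapiro} and \ref{Spminus1}, and you have simply spelled out the computation $H^1(\mathrm{S}_4,Z_{\{q\}})=\mathrm{Hom}(C_2,Z_{\{q\}})=0$ for odd $q$. (One minor slip in your final parenthetical: neither $s$ nor $v$ fixes the coordinate $7$; rather, $R\cong\mathrm{S}_4$ follows because $V$ is transitive of order $168$, so any point stabilizer has order $24$ and is the stabilizer of a point in $\mathrm{PG}(2,2)$.)
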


Next we carry out some matrix arithmetic.
\begin{lemma}\label{AlphaBetaEtc}
If $d\in \mathrm{D}(7,\C)$ and 
 $\langle s, vd\rangle \cap D
\leq X_{2,1}^{(1)}$, then $d^2=1$.
\end{lemma}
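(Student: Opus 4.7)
The hypothesis $\langle s, vd\rangle \cap D \leq X_{2,1}^{(1)}$ forces $A := \langle s, vd\rangle \cap D$ to be finite of exponent~$2$, and hence $\langle s, vd\rangle$ itself is finite. Writing $d = \mathrm{diag}(a_1, \ldots, a_7)$, I would exhibit three diagonal elements of $A$ as explicit monomials in the $a_i$, and use that each lies in $X_{2,1}^{(1)}$---a $3$-dimensional $\F_2 V$-module whose eight codewords have support either empty or one of seven specific weight-$4$ subsets of $\{1,\ldots,7\}$---to conclude that $a_i^2 = 1$ for every $i$.

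From $v^2 = 1$,
\[
(vd)^2 = d^v d = \mathrm{diag}(a_1a_2,\, a_1a_2,\, a_3a_5,\, a_4^2,\, a_3a_5,\, a_6^2,\, a_7^2) \in A,
\]
so $a_4^2, a_6^2, a_7^2 \in \{\pm 1\}$. Since $vs \in V$ has order~$4$, the element $(vds)^4$ lies in $A$; a direct computation yields
\[
(vds)^4 = \mathrm{diag}(b,\, a_1^4,\, b,\, a_3^2a_4^2,\, a_3^2a_4^2,\, b,\, b), \qquad b := a_2 a_5 a_6 a_7.
\]
Among the eight elements of $X_{2,1}^{(1)}$, the only codewords whose entries at positions $\{1,3,6,7\}$ are all equal are the zero codeword and the one with support $\{1,3,6,7\}$; in both, the entries at positions $\{2,4,5\}$ are $+1$, forcing $a_1^4 = 1$ and $a_3^2 a_4^2 = 1$. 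Next, $vs^2$ decomposes in $V$ as $(1,4,6)(2,3,7)(5)$ of order~$3$, so $(vds^2)^3$ also lies in $A$, and its position-$5$ entry (corresponding to the unique fixed point of $vs^2$) equals $a_3^3$; hence $a_3^3 \in \{\pm 1\}$ and $a_3^6 = 1$.

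Combining these constraints, $a_3^2 a_4^2 = 1$ together with $a_4^4 = 1$ gives $a_3^4 = 1$, and with $a_3^6 = 1$ this forces $a_3^2 = 1$, whence $a_4^2 = 1$. The $(vd)^2$ relation now yields $a_3 a_5 \in \{\pm 1\}$ (so $a_5^2 = 1$), and $a_6, a_7 \in \{\pm 1\}$. The remaining non-trivial entries of $(vds^2)^3$ are $a_2 a_4 a_6$ and $a_1 a_5 a_7$, each of which lies in $\{\pm 1\}$; together with $a_4, a_5, a_6, a_7 \in \{\pm 1\}$, this forces $a_1, a_2 \in \{\pm 1\}$, so $d^2 = 1$. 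The main obstacle is the matrix arithmetic for $(vds)^4$ and $(vds^2)^3$, and the careful matching of each diagonal against the eight codewords of $X_{2,1}^{(1)}$; the rigidity of the support patterns in this $\F_2 V$-module is what ultimately pins down the $a_i$.
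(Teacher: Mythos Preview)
Your argument is correct and follows the same strategy as the paper: compute the diagonal words $(vd)^2$, $(vds)^4$, $(vds^2)^3$ (these are $s$-conjugates of the paper's $(svd)^4$ and $(s^2vd)^3$, so yield the same constraints up to a cyclic shift of entries) and exploit membership in $X_{2,1}^{(1)}$.

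One step deserves to be written out. After establishing $a_4^2=1$ you assert ``and $a_6,a_7\in\{\pm1\}$'', but from the bare fact that the entries of $(vd)^2$ lie in $\{\pm1\}$ you only had $a_6^2,a_7^2\in\{\pm1\}$. The upgrade to $a_6^2=a_7^2=1$ requires the codeword check on $(vd)^2$ itself: among the eight elements of $X_{2,1}^{(1)}$, those with equal entries at positions $\{1,2\}$ and at $\{3,5\}$ have supports $\emptyset$, $\{1,2,3,5\}$, $\{3,4,5,7\}$, $\{1,2,4,7\}$; imposing $a_4^2=1$ leaves only the first two, and in both the entries at positions $6$ and $7$ are $+1$. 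With that made explicit your chain of deductions closes exactly as you say. The paper's proof is a one-sentence sketch of precisely this computation, so your write-up is in fact more detailed than the original.
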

\begin{proof}
Let $d= \mathrm{diag}(a_1,\ldots , a_7)$. We 
evaluate the containment of 
 $(vd)^2, (svd)^4, \allowbreak (s^2vd)^3$ 
 in $\Omega_1 D_{\{2\}}$ 
and $(vd)^2, (svd)^4$ in $\ker f_{1,1}(s)$ to 
get a system of equations in the $a_i$ whose 
simultaneous solution implies that 
each $a_i$ is $\pm 1$. 
\end{proof}

\begin{definition}
Let $g_n=\mathrm{diag}(1,1,1,\epsilon_n, 1,
\epsilon_n^{-1},1)$ and 
$h_n=\mathrm{diag}(\epsilon_n,\epsilon_n,
1, \epsilon_n^{-1}, 1, 1 , \epsilon_n^{-1})$ 
where 
$\epsilon_n = e^{\pi {\rm i}/2^{n-1}}$.
\end{definition}

\begin{remark}
The diagonal subgroup of $\langle s, v g_1\rangle$ 
 is $X_{2}^{(1)}$, and 
 $\langle s, v h_1\rangle\cong V$ is irreducible.
\end{remark}

\begin{lemma}\label{SimAlphaBetaEtc}
Let $d\in \mathrm{D}(7,\C)$. 
If $\langle s, vd\rangle \cap D
= X_{2,1}^{(1)}$ 
(resp., $\langle s, vd\rangle \cap D =1$), 
then $d\in X_{2,1}^{(1)}$ or $g_1X_{2,1}^{(1)}$
(resp., $d\in \langle h_1\rangle$).
\end{lemma}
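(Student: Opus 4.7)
The plan is to exploit Lemma~\ref{AlphaBetaEtc} in both cases to first conclude $d^2 = 1$, so $d \in \Omega_1 D_{\{2\}}$ and every diagonal entry of $d$ equals $\pm 1$; then decompose $\Omega_1 D_{\{2\}} = Z_2 \cdot X_{2,1}^{(1)} \cdot X_{2,1}^{(2)}$. Computing $sv = (1,3,6,7)(4,5)$ and $s^2 v = (1,4,6)(2,3,7)$ verifies $v^2 = (sv)^4 = (s^2v)^3 = 1$ in $V$, hence $(vd)^2$, $(svd)^4$, $(s^2vd)^3$ all lie in $\langle s, vd\rangle \cap D$. Via the identity $(gd)^n = g^n \cdot d^{g^{n-1}} \cdots d^g \cdot d$, each of these diagonal elements is a product, over the appropriate $\langle g\rangle$-orbit, of the $\pm 1$ entries of $d$.

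For the sub-case $\langle s, vd\rangle \cap D = 1$, I would set each of the three displayed elements equal to $1$, yielding the system $d_{iv} = d_i$ for all $i$, together with $d_1 d_3 d_6 d_7 = 1$, $d_1 d_4 d_6 = 1$, $d_2 d_3 d_7 = 1$, and $d_5 = 1$. Over $\{\pm 1\}$ this forces $d_3 = d_5 = d_6 = 1$ and $d_1 = d_2 = d_4 = d_7$, so $d \in \{1_7, h_1\} = \langle h_1\rangle$. Both candidates work: $d = 1$ trivially, and $d = h_1$ by the preceding remark recording that $\langle s, v h_1\rangle \cong V$.

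For the sub-case $\langle s, vd\rangle \cap D = X_{2,1}^{(1)}$, the inclusion $X_{2,1}^{(1)} \leq \langle s, vd\rangle$ implies that $\langle s, vd\rangle$ depends only on the coset $d\, X_{2,1}^{(1)}$; I would choose a representative in $Z_2 \cdot X_{2,1}^{(2)}$, of which there are $16$. Reducing the three conditions modulo $X_{2,1}^{(1)}$ gives a short linear system over $\F_2$ whose solutions are exactly the two cosets $X_{2,1}^{(1)}$ and $g_1 X_{2,1}^{(1)}$. Sufficiency follows: the trivial coset yields $\langle s, v\rangle \cdot X_{2,1}^{(1)}$ with diagonal subgroup $X_{2,1}^{(1)}$, since $X_{2,1}^{(1)}$ is a $V$-module by Lemma~\ref{VSubmodules}; and the coset $g_1 X_{2,1}^{(1)}$ is handled by the preceding remark that $\langle s, v g_1\rangle$ has diagonal subgroup $X_{2,1}^{(1)}$.

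The main obstacle is that $v^2 = (sv)^4 = (s^2v)^3 = 1$ is not a priori a presentation of $V$, so additional words in $s$ and $vd$ could in principle impose further constraints on $d$ and shrink the candidate list. This is settled by the sufficiency half above: because the explicit groups $V$, $\langle s, v g_1\rangle$, and $\langle s, v h_1\rangle$ have already been shown to realise the prescribed diagonal subgroups, no further word can eliminate any of the cosets passed by the three necessary conditions, so the three-relation analysis produces the complete answer.
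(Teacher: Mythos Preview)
Your approach is essentially the same as the paper's: invoke Lemma~\ref{AlphaBetaEtc} to force $d^2=1$, then impose the three diagonal relations $(vd)^2,(svd)^4,(s^2vd)^3\in\langle s,vd\rangle\cap D$ and solve over $\F_2$. Your explicit solution in the trivial-intersection case is correct and matches $h_1$.

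One point of logical confusion, though it does not damage the argument: the lemma is a one-way implication (a \emph{necessary} condition on $d$). Once your three-word system cuts the candidate set for $d$ down to $\langle h_1\rangle$ (respectively to the two cosets $X_{2,1}^{(1)}$ and $g_1X_{2,1}^{(1)}$), the lemma is proved. Your ``sufficiency'' paragraphs and the ``obstacle'' about further words shrinking the candidate list are unnecessary: additional relations could only shrink the set of admissible $d$, which would still leave the conclusion intact. In particular, the sentence ``the trivial coset yields $\langle s,v\rangle\cdot X_{2,1}^{(1)}$'' is not literally true for every representative (e.g.\ $d=1$ gives $\langle s,v\rangle=V$ with trivial diagonal subgroup, and indeed $h_1\in X_{2,1}^{(1)}$), but none of this is needed. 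A minor aside: your cycle decompositions of $sv$ and $s^2v$ use the opposite composition convention to the paper's, though the resulting constraints are equivalent and the conclusion is unchanged.
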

\begin{proof}
Follows from Lemma~\ref{AlphaBetaEtc} and
calculations similar to those in its proof. 
\end{proof}
\begin{definition}\label{CalVDefn}
Let
\[
\begin{array}{l}
\mathcal{V}_0 = \{ \langle s, v, A\rangle \mid 
 A \in \mathcal{A}^{[V]}\} , \\
\mathcal{V}_1 = {\textstyle \bigcup_{n\geq 1}} 
\big\{ \langle s, v g_{n}, 
A\rangle \mid  A\in 
\mathcal{A}^{[V]}\setminus \mathcal{A}^{[S]}
\text{ and } 
A\cap X_{\{2\}}^{(1)} = X_{2,n}^{(1)} \big \} , \\
\mathcal{V}_2 = {\textstyle \bigcup_{n\geq 0}} 
\big\{ \langle s, 
v h_{n+1}, 
A\rangle \mid 
A \in \mathcal{A}^{[S]} \text{ and } 
A\cap X_{\{2\}}^{(1)} = 
X_{2,n}^{(1)}  \big\}  .
\end{array}
\]
Then let $\mathcal{V} = 
\mathcal{V}_0\cup \mathcal{V}_1 \cup \mathcal{V}_2$.
\end{definition}

\begin{theorem}
A finite subgroup of $\Mo(7,\C)$ with permutation part
$\SL(3,2)$ is $D\mathrm{S}_7$-conjugate to a group in 
$\mathcal{V}$.
\end{theorem}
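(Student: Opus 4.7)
The plan is first to strip off the diagonal contributions at primes other than $2$, and then to canonicalize the lifted generator of $v$ using Lemmas~\ref{AlphaBetaEtc} and \ref{SimAlphaBetaEtc}. Since all transitive subgroups of $\mathrm{S}_7$ isomorphic to $\SL(3,2)$ are $\mathrm{S}_7$-conjugate by Theorem~\ref{IsoQDegreep}, I first conjugate $G$ so that $\phi(G)=V$. Proposition~\ref{FirstNonSolvableReduction} with $\pi=\{2,3\}$ then produces a $D$-conjugate of $G$ of the form $H\cdot(G\cap D_{\pi'})$, with $H\leq VD_\pi$ and $s\in H$.

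Next, decompose $D_\pi=D_{\{3\}}\times D_{\{2\}}$. Since $\Omega_1 X_{\{3\}}=X_{3,1}^{(1)}$ is $\langle s\rangle$-irreducible (Lemma~\ref{411Bacskai}), hence $V$-irreducible, Lemma~\ref{ZetaLem} with $\zeta=\eta=\{3\}$ gives $D_{\{3\}}/(H\cap D_{\{3\}})\cong D_{\{3\}}$ as $V$-modules. Combined with Lemma~\ref{VCohom} this yields $H^1(V,D_{\{3\}}/(H\cap D_{\{3\}}))=0$, so Proposition~\ref{1stCoh} (with $B=D_{\{3\}}$ and $C=D_{\{2\}}$) conjugates $H$ into $H'\cdot(H\cap D_{\{3\}})$ with $H'\leq VD_{\{2\}}$ and $s\in H'$. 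Hence $G$ is $D$-conjugate to $\langle s,vd,A\rangle$ for some $d\in D_{\{2\}}$, where $A:=G\cap D\in\mathcal{A}^{[V]}$. By Lemma~\ref{VSubmodules}, $A\cap X_{\{2\}}=X_{2,m}^{(1)}X_{2,n'}^{(2)}$ with either $m=n'$ (so $A\in \mathcal{A}^{[S]}$) or $m=n'+1$.

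It remains to canonicalize $d$ up to the moves (i) $d\mapsto da$ for $a\in A\cap D_{\{2\}}$, which preserves the group, and (ii) $d\mapsto d[v,e]$ for $e\in D_{\{2\}}$, which amounts to $D_{\{2\}}$-conjugating $vd$. By (ii) the $X_{\{2\}}^{(2)}$-component of $d$ can be cleared modulo $A\cap X_{\{2\}}^{(2)}$. The relations in $V$ force $(vd)^2,(svd)^4,(s^2vd)^3\in A$; applying the $2^n$-th power map $\kappa^n$ from the proof of Lemma~\ref{VSubmodules} reduces to the situation where $\langle s,vd\rangle\cap D$ is either $1$ or $X_{2,1}^{(1)}$, precisely the hypothesis of Lemma~\ref{SimAlphaBetaEtc}. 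That lemma then places the residual coset of $d$ in $\{1,g_1\}\cdot X_{2,1}^{(1)}$ or $\{1,h_1\}$. Lifting back through $\kappa^n$, $d$ becomes equivalent to one of $1$, $g_n$, $h_{n+1}$. Compatibility with the structure of $A\cap X_{\{2\}}$ then picks out the family: $d=1$ allows any $A\in\mathcal{A}^{[V]}$, giving $\mathcal{V}_0$; $d=g_n$ forces $A\cap X_{\{2\}}=X_{2,n}^{(1)}X_{2,n-1}^{(2)}$ (so $A\notin\mathcal{A}^{[S]}$), giving $\mathcal{V}_1$; $d=h_{n+1}$ forces $A\in\mathcal{A}^{[S]}$ with $A\cap X_{\{2\}}^{(1)}=X_{2,n}^{(1)}$, giving $\mathcal{V}_2$.

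The main obstacle is this final step. Lemmas~\ref{AlphaBetaEtc} and \ref{SimAlphaBetaEtc} concern $\langle s,vd\rangle$ in isolation, but in our situation the group contains the extra diagonal subgroup $A$; one must carefully verify that the reduction via $\kappa^n$ faithfully transports the hypotheses, and that the lifted representatives $g_n$ and $h_{n+1}$ genuinely remain distinct from $1$ (and from each other) under the moves (i) and (ii). This distinction relies on the non-isomorphism of $X_{\{2\}}^{(1)}$ and $X_{\{2\}}^{(2)}$ as $\langle s\rangle$-modules, together with the asymmetric one-step-up $V$-action between them exposed in the proof of Lemma~\ref{VSubmodules}.
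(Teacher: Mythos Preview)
Your overall strategy---strip away the odd-prime diagonal part via Proposition~\ref{1stCoh}, then use the squaring endomorphism $\kappa^n$ to reduce to the setting of Lemma~\ref{SimAlphaBetaEtc}, then lift back---is exactly the paper's approach. But there are two genuine problems in your execution.

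First, you stop the cohomological reduction at $H'\leq VD_{\{2\}}$ rather than $VX_{\{2\}}$. The paper (tersely) reduces all the way to $VX_{\{2\}}$: since $V\cong\SL(3,2)$ is perfect, $H^1(V,Z_{\{2\}})=0$, so one further application of Proposition~\ref{1stCoh} with $B=Z_{\{2\}}$ and $C=X_{\{2\}}$ removes the scalar $2$-part. Without this step your argument breaks. If $d$ carries a $Z_{\{2\}}$-component $z$ with $z^{2}\in A\cap Z_{\{2\}}=Z_{2^{c}}$ but $c$ large relative to $n$, then after applying $\kappa^n$ the group $\langle s,v\kappa^n(d)\rangle\cap D$ still contains a nontrivial scalar, so is neither $1$ nor $X_{2,1}^{(1)}$, and the hypothesis of Lemma~\ref{SimAlphaBetaEtc} is not available.

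Second, the claim that move~(ii) clears the $X_{\{2\}}^{(2)}$-component of $d$ is both unjustified and unnecessary. The image of $e\mapsto[v,e]=e^{1-v}$ is the rank-$2$ subgroup $\{\mathrm{diag}(b,b^{-1},c,1,c^{-1},1,1)\}$, which cannot surject onto the rank-$3$ module $X_{\{2\}}^{(2)}$. Moreover, conjugating by a general $e\in D_{\{2\}}$ does not fix $s$: you only keep $s\in G^{e}$ when $\gamma(e)\in A$, which trivializes the move. The paper never attempts this; once the hub group sits inside $VX_{\{2\}}$, the endomorphism $\kappa^n$ is applied directly to the whole hub group (whose diagonal subgroup contains $\ker\kappa^n=\Omega_nX_{\{2\}}$, so the preimage is uniquely determined), and Lemma~\ref{SimAlphaBetaEtc} does the rest. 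Your final paragraph about distinguishing $1$, $g_n$, $h_{n+1}$ under moves (i), (ii) is also misplaced: this theorem asserts only completeness; irredundancy is handled separately in the next theorem.
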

\begin{proof}
(Cf.~the proof of Theorem~\ref{A5Completeness}.)
By Lemmas~\ref{OddqVmodules} and \ref{VCohom},
we consider a finite hub group $G\leq VX_{\{2\}}$ 
with  $\phi(G)=V$ and $s\in G$.  
Lemma~\ref{VSubmodules} indicates that 
$\Omega_nX_{\{2\}} \leq D\cap G 
\leq \Omega_{n+1}X_{\{2\}}$ for some $n$.

Let $\kappa$ be the group endomorphism of 
$\mathrm{S}_7X_{\{2\}}$ that squares elements of 
$X_{\{2\}}$ and is the identity on $\mathrm{S}_7$. 
If $G\cap D = X_{2,n+1}^{(1)}X_{2,n}^{(2)}$
then $\kappa^n(G)\cap D = X_{2,1}^{(1)}$. 
By Lemma~\ref{SimAlphaBetaEtc},
$\kappa^n(G) = VX_{2,1}^{(1)}$ or 
$\langle s, vg_1\rangle$. 
The preimages of these groups under $\kappa^n$
are in $\mathcal{V}_0\cup \mathcal{V}_1$. 

Suppose that $G\cap D =\Omega_nX_{\{2\}}$. 
By Lemma~\ref{SimAlphaBetaEtc}, 
$\kappa^n(G)$ is then $V$ or $\langle s, vh_1\rangle$.
Hence $G\in \mathcal{V}_0\cup \mathcal{V}_2$.
\end{proof}

\begin{lemma}
$G\in \mathcal V$ is irreducible if and only 
if $G\not \in \mathcal V_0$ or $G\cap D \not \leq Z$.
\end{lemma}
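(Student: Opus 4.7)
The plan hinges on Theorem~\ref{NonScalarIffIrreducible}: since every $G \in \mathcal V$ satisfies $s \in \phi(G)$, its permutation part is transitive, and so $G$ is irreducible whenever $G \cap D$ is non-scalar. This reduces most of the work to tracking when the diagonal subgroup is forced to be non-scalar by the generating data.

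For the ``only if'' direction I will argue the contrapositive. Assume $G \in \mathcal V_0$ and $G \cap D \leq Z$. Then $G = \langle s, v, A\rangle$ with $A$ scalar, so $G$ is contained in $\langle s, v\rangle \cdot Z$. The matrices $s$ and $v$ are genuine permutation matrices and so fix the all-ones vector, and scalars fix the line through it; hence $G$ fixes a $1$-dimensional subspace and is reducible, exactly as in the Remark following Theorem~\ref{NonScalarIffIrreducible}.

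For the ``if'' direction I will split into cases according to which $\mathcal V_i$ contains $G$. The case $G \in \mathcal V_0$ with non-scalar diagonal subgroup is Theorem~\ref{NonScalarIffIrreducible} verbatim. For $G = \langle s, v g_n, A\rangle \in \mathcal V_1$, Definition~\ref{CalVDefn} forces $n \geq 1$ and $A \cap X_{\{2\}}^{(1)} = X_{2,n}^{(1)}$. The proof of Lemma~\ref{VSubmodules} exhibits $X_{2,1}^{(1)} \subseteq X_{2,n}^{(1)}$ as the $\F_2\langle s\rangle$-span of the clearly non-scalar element $x_1 = \mathrm{diag}(-1,-1,-1,1,-1,1,1)$, so $G \cap D$ is non-scalar and Theorem~\ref{NonScalarIffIrreducible} again applies. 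The same argument handles $G \in \mathcal V_2$ with parameter $n \geq 1$.

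The one genuine hiccup---and the main obstacle---is the boundary case $\mathcal V_2$ with $n = 0$: here $A \cap X_{\{2\}}^{(1)}$ is trivial and $A$ itself may be scalar, so Theorem~\ref{NonScalarIffIrreducible} offers no direct leverage on the diagonal subgroup. I will sidestep this by noting that $G \supseteq \langle s, v h_1\rangle$, and the Remark immediately preceding Definition~\ref{CalVDefn} already asserts that $\langle s, v h_1\rangle$ is irreducible; an overgroup of an irreducible linear group is irreducible, completing the proof.
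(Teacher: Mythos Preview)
Your proof is correct and follows essentially the same approach as the paper. The paper's argument is terser---it handles only the non-obvious case ($G\in\mathcal{V}_1\cup\mathcal{V}_2$ with $G\cap D\leq Z$) explicitly, observing that this forces $G\in\mathcal{V}_2$ with $n=0$ and hence $\langle s,vh_1\rangle\leq G$---but your more detailed case split and the key step of invoking the irreducibility of $\langle s,vh_1\rangle$ for the boundary case are identical in substance.
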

\begin{proof}
Suppose that $G\in \mathcal{V}_1\cup \mathcal{V}_2$ 
and $G\cap D\leq Z$. Then $G\in \mathcal{V}_2$ by
Definition~\ref{CalVDefn}, and we know that 
$\langle s, vh_1\rangle\leq G$ 
is irreducible.
\end{proof}

\begin{definition}
Let $\mathcal{V}^*$ be the sublist of $\mathcal V$ 
that excludes all members of $\mathcal{V}_0$ with 
scalar diagonal subgroup.
\end{definition}

\begin{theorem}
$\mathcal{V}^*$ is 
a classification of the finite irreducible 
subgroups of $\Mo(7,\C)$ with permutation part $\SL(3,2)$. 
\end{theorem}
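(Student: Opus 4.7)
My plan separates completeness from irredundancy.

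For completeness, Theorem~\ref{IsoQDegreep} lets me conjugate any finite irreducible $G\leq\Mo(7,\C)$ whose permutation part is isomorphic to $\SL(3,2)$ into one with permutation part exactly $V$. The immediately preceding theorem places such a $G$ in a $D\mathrm{S}_7$-conjugate inside $\mathcal V$, and the preceding lemma characterises the irreducible members of $\mathcal V$ as precisely $\mathcal V^*$.

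For irredundancy, suppose $G,H\in\mathcal V^*$ are $\GL(7,\C)$-conjugate by some $w$. Both are non-solvable, so Theorem~\ref{GLConjugateImpliesSamePP} gives $\phi(G)\sim_{\mathrm{S}_7}\phi(H)$ and $|G\cap D|=|H\cap D|$; absorbing the outer $\mathrm{S}_7$-conjugation into $w$, we may assume both permutation parts equal $V$. If the common diagonal is scalar, inspection of Definition~\ref{CalVDefn} forces $G=H=\langle s,vh_1\rangle$, since $\mathcal V_0$-members with scalar diagonal are excluded from $\mathcal V^*$ and $\mathcal V_1$-members always have $A\neq 1$. Otherwise the diagonal is non-scalar, so Theorem~\ref{Protoconjugacy} furnishes a monomial conjugator. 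Because $V=\Sigma L(3,2)$ is self-normalising in $\mathrm{S}_7$ (Table~\ref{table-nmr-reps}), $\phi(w)\in V$ and $w\in DV$; writing $w=dv'$ with $d\in D$, $v'\in V$, abelianity of $D$ makes $w$-conjugation on the $V$-module $G\cap D$ coincide with $v'$-conjugation, so $A:=G\cap D=H\cap D$.

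Now Lemma~\ref{VSubmodules} separates $\mathcal V_1$ (where $A\notin\mathcal A^{[S]}$) from $\mathcal V_2$ (where $A\in\mathcal A^{[S]}$), and within each sublist the parameters $(A,n)$---or just $A$ in $\mathcal V_0$---determine the group. The real task is to rule out cross-sublist collisions with matching $A$: the pairs $\mathcal V_0\leftrightarrow\mathcal V_1$ (with $A\in\mathcal A^{[V]}\setminus\mathcal A^{[S]}$) and $\mathcal V_0\leftrightarrow\mathcal V_2$ (with $A\in\mathcal A^{[S]}$). Conjugation by $w=dv'$ sends the generator $va$ of $G$ to $(va)^{v'}\cdot(d^{-1}\cdot a^{v'}\cdot d)$, which modulo $A$ is $v^{v'}$ twisted by a coboundary in $D^{1-v}A/A$; requiring the resulting permutation part to remain $v$ forces $v'\in C_V(v)$, and the twist coset in $X_{\{2\}}^{(1)}/(X_{\{2\}}^{(1)}\cap A)$ modulo coboundaries is therefore a conjugacy invariant.

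The main obstacle is the last step: showing that $g_n$ (resp.\ $h_{n+1}$) represents a non-trivial $H^1$-class modulo $A$, so that the $\mathcal V_1$- or $\mathcal V_2$-twisted generator is not $DV$-conjugate to the untwisted $\mathcal V_0$-generator. This is the same matrix computation that underlies Lemmas~\ref{AlphaBetaEtc} and~\ref{SimAlphaBetaEtc}: imposing that $[d,v]\equiv g_n$ or $h_{n+1}\bmod A$ yields a system of equations on the diagonal entries of $d$ whose only solutions respect the coset structure that defined $g_n$ and $h_{n+1}$ as the distinct representatives of $H^1(V,X_{\{2\}}^{(1)}/(X_{\{2\}}^{(1)}\cap A))$ in the first place. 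With this invariant in hand the three sublists have disjoint $DV$-orbits, and irredundancy follows.
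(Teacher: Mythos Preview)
Your completeness argument and the within-sublist analysis are sound, as is your observation that $\mathcal V_1$ and $\mathcal V_2$ are separated by the $A$-type. The gap is in the reduction from $DV$-conjugacy. Your assertion that ``requiring the resulting permutation part to remain $v$ forces $v'\in C_V(v)$'' is unjustified: conjugation by $w=dv'$ need not send the generator $va_G$ to an element of $H$ with permutation part $v$; it merely must land somewhere in $H$, which contains elements of every $V$-permutation part. So your construction of a conjugacy invariant is not established as stated. (It can be rescued---inner automorphisms act trivially on $H^1(V,D/A)$---but you do not invoke this.)

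The paper bypasses the issue by an asymmetry you overlook: if $G\in\mathcal V_0$ then $G=VA$ actually \emph{contains} $V$, so $V$ normalises $G$. Thus $G^{dv'}=H$ yields $G^{d^{v'}}=H$ with $d^{v'}\in D$, and one is reduced to pure $D$-conjugacy. From there the paper's argument is elementary and avoids cohomology: since $v=(1,2)(3,5)$ fixes positions $4,6,7$, every commutator $[v,d]$ has $1$'s in those three positions. But $h_{n+1}$ has entries of order $2^{n+1}$ in positions $4$ and $7$, and comparing positions $4$ and $6$ shows $[v,d]h_{n+1}\notin A$ since $A\cap X_{\{2\}}=\Omega_nX_{\{2\}}$; for $g_n$ one applies $\kappa^{n-1}$ (the $2^{n-1}$-power map) and checks that the image fails to lie in $X_{2,1}^{(1)}$. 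Your appeal to Lemmas~\ref{AlphaBetaEtc} and \ref{SimAlphaBetaEtc} does not cover general $A$ without this $\kappa$-reduction, which you never state.
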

\begin{proof}
To prove irredundancy of $\mathcal{V}^*$, suppose that
$G\in \mathcal{V}_0$ and $H\in \mathcal{V}_1\cup
\mathcal{V}_2$ are $D\mathrm{S}_7$-conjugate, hence
$DV$-conjugate (as $N_{\mathrm{S}_7}(V) = V$),
with the same diagonal subgroup $A$. 
Let $A\cap X_{\{2\}}^{(1)} = X_{2,n}^{(1)}$.
Then there is $d\in \mathrm{D}(7,\C)$ 
such that $[v,d]b\in A$ where 
$b=g_{n}$ ($n\geq 1$) or $h_{n+1}$ ($n\geq 0$).  
Since the last two (and fourth) diagonal entries of 
$[v,d]$ are $1$s, $b$ cannot be $h_{n+1}$.
Also, $\kappa^{n-1}([v,d]g_n)\not \in X_{2,1}^{(1)}$ 
for any $d$.
\end{proof}

\subsection{Degree $11$}
\label{Degree11Subsection}

There are two non-isomorphic permutation 
parts of degree $11$, each of which is is 
self-normalizing in $\mathrm{Sym}(11)$.

\begin{definition}
Let $w_1$, $w_2$ be the permutation matrices 
corresponding respectively 
to $(1,7)(2,3)(4,8)(5,9)$, 
$(1,3)(2,8)(4,7)(5,6)\in \mathrm{Sym}(11)$.
Then let $P = \langle s, w_1\rangle$ and
$Q = \allowbreak \langle s, w_2\rangle$. 
\end{definition}
In fact $P\cong \mathrm{PSL}(2,11)$, $Q\cong 
M_{11}$, and $P\leq Q$. For both groups, 
$\pi =\{ 2,3,5\}$.
\begin{lemma}\label{PModules1}
If $q\neq 3$ then the finite $P$-submodules 
of $D_{\{q\}}$ are $\mathrm{S}_{11}$-modules.
\end{lemma}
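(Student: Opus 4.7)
I would split on the prime $q$ and handle each case separately. For $q = p = 11$, Lemma~\ref{SpModulesp} gives the list of finite $T$-submodules of $D_{\{p\}}$ for \emph{every} non-solvable transitive $T$ containing $s$; applied to $T = P$ and $T = \mathrm{S}_{11}$ it yields the same list, so there is nothing to check. For $q$ coprime to $|P| = 2^2 \cdot 3 \cdot 5 \cdot 11$, Corollary~\ref{qNotInpiModules} implies that the finite $P$-submodules of $X_{\{q\}}$ are exactly the $\Omega_n X_{\{q\}}$, which are also the $\mathrm{S}_{11}$-submodules. This leaves the two primes $q \in \pi \setminus \{3\} = \{2, 5\}$.

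The case $q = 2$ is effectively automatic: the multiplicative order of $2$ modulo $11$ is $10 = p-1$, so $v = 1$ and $X_{\{2\}} = X_{\{2\}}^{(1)}$ is $\langle s\rangle$-uniserial by Proposition~\ref{StructureofXqr}; any $P$-submodule, being an $\langle s\rangle$-submodule, must be one of the $\Omega_n X_{\{2\}}$.

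The case $q = 5$ is the heart of the argument. Here $\mathrm{ord}_{11}(5) = 5$, so $v = 2$ and $X_{\{5\}}$ decomposes as $X_{\{5\}}^{(1)} X_{\{5\}}^{(2)}$ over $\langle s\rangle$, giving rise to $\langle s\rangle$-submodules that are \emph{not} $\mathrm{S}_{11}$-submodules. I would prove that $\Omega_1 X_{\{5\}}$ is irreducible as an $\mathbb{F}_5 P$-module; granted this, the inductive argument of Proposition~\ref{StructureofXqr} (with $P$ in place of $\langle s\rangle$) forces $X_{\{5\}}$ to be $P$-uniserial, with submodules exactly the $\Omega_n X_{\{5\}}$. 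For the irreducibility, identify $\Omega_1 X_{\{5\}}$ with the sum-zero hyperplane of $\mathbb{F}_5^{11}$, regarded as the mod-$5$ reduction of an integral lattice whose complex character is the $10$-dimensional constituent $\chi_{10}$ of the $2$-transitive permutation character of $P$ on $11$ points. Each $5$-element of $P$ fixes exactly one of the $11$ points (its cycle type is $(5,5,1)$), so $\chi_{10}$ vanishes on both classes of $5$-elements; since $\chi_{10}(1) = 10$ has $5$-part equal to $|P|_5$, the character $\chi_{10}$ lies in a $5$-block of defect zero, and its reduction is a simple $\mathbb{F}_5 P$-module.

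Finally, to pass from $X_{\{q\}}$ to $D_{\{q\}} = X_{\{q\}} \times Z_{\{q\}}$, the same Goursat--Remak bookkeeping as in the proof of Theorem~\ref{qsSubmodules} applies: a non-Cartesian $P$-submodule would demand a $P$-isomorphism between a non-identity section of $X_{\{q\}}$ and a (trivial) section of $Z_{\{q\}}$, hence a trivial composition factor in $\Omega_1 X_{\{q\}}$. For $q \in \{2, 5\}$ the Brauer-character inner product $\langle \chi_{10}|_{q\text{-reg}},\, 1\rangle = 0$ (a direct computation from the values of $\chi_{10}$) rules this out. The only non-routine step, and what I expect to be the main obstacle, is the $\mathbb{F}_5 P$-irreducibility of $\Omega_1 X_{\{5\}}$.
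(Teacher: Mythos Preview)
Your overall architecture matches the paper's: split on $q$, dispatch $q=11$ via Lemma~\ref{SpModulesp}, $q\nmid |P|$ via Corollary~\ref{qNotInpiModules}, $q=2$ via $v=1$ and Proposition~\ref{StructureofXqr}, and isolate $q=5$ as the crux. Your proof is correct.

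The genuine divergence is at $q=5$. The paper argues at the level of $\langle s\rangle$-submodules: the only proper non-identity $\langle s\rangle$-submodules of $\Omega_1 X_{\{5\}}$ are $X_{5,1}^{(1)}$ and $X_{5,1}^{(2)}$, so it suffices to check that neither is $P$-invariant. This is done by a bare-hands computation, verifying that $(x_{5,1}^{(i)})^{w_1}$ is not annihilated by $f_{i,1}(s)$. Your route instead invokes block theory: since $\chi_{10}(1)_5 = 5 = |P|_5$, the character lies in a $5$-block of defect zero and its reduction is simple. Both work; the paper's argument is elementary and in keeping with the explicit-matrix ethos of the surrounding sections, while yours is cleaner and avoids any computation, at the cost of importing a non-trivial theorem. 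Your remark about $\chi_{10}$ vanishing on $5$-elements is a consequence of defect zero rather than an independent verification, so you could drop it.

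One minor point: your closing paragraph on Goursat--Remak and Brauer characters is unnecessary. Any $P$-submodule of $D_{\{q\}}$ is already an $\langle s\rangle$-submodule, and Theorem~\ref{qsSubmodules} shows that all $\langle s\rangle$-submodules of $D_{\{q\}}$ for $q\neq p$ are Cartesian (because $\langle s\rangle$ acts non-trivially on every non-identity section of $X_{\{q\}}$). So the passage from $X_{\{q\}}$ to $D_{\{q\}}$ is free, and no Brauer-character inner product is needed.
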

\begin{proof}
For $q\not \in \{3,5\}$, cf.~the proof of 
Lemma~\ref{OddqVmodules}. Inspecting
the actions of $f_{1,1}(s)$ and $f_{2,1}(s)$ 
on $(x_{5,1}^{(1)})^{w_1}$ and 
$(x_{5,1}^{(2)})^{w_1}$,
we see that $X_{5,1}^{(1)}$ and
$X_{5,1}^{(2)}$ are not $P$-modules.
Thus $\Omega_1 X_{\{5\}}$
is irreducible.
\end{proof}

\begin{lemma}\label{PModules2}
$X_{3,m}^{(1)}X_{3,n}^{(2)}$ is a $P$-module 
if and only if $n=m$ or $n=m+1$.
\end{lemma}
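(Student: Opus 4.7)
The plan is to imitate the proof of Lemma~\ref{VSubmodules} for the degree~$7$ case, adapted to $q=3$ and the permutation part $P\cong\mathrm{PSL}(2,11)$. Note that the multiplicative order $d$ of $3$ modulo $11$ is $5$, so $v=(p-1)/d=2$ and $X_{\{3\}}=X_{\{3\}}^{(1)}\times X_{\{3\}}^{(2)}$. The essential tool is the group endomorphism $\kappa$ of $X_{\{3\}}$ defined by $x\mapsto x^3$: it is $P$-equivariant (conjugation commutes with power maps), and its restriction to each $X_{\{3\}}^{(r)}$ is a surjection with kernel $X_{3,1}^{(r)}$, so $\kappa^n(X_{3,m+n}^{(r)})=X_{3,m}^{(r)}$.

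For the sufficiency direction, the case $n=m$ is immediate, since $\Omega_m X_{\{3\}}=X_{3,m}^{(1)}X_{3,m}^{(2)}$ is the unique largest subgroup of $D_{\{3\}}$ of exponent dividing $3^m$, hence characteristic and a $P$-module. For the case $n=m+1$, I would first verify by direct computation that $X_{3,1}^{(2)}$ is itself a $P$-module: working modulo $3$, factorize $\overline{f}=1+{\rm x}+\cdots+{\rm x}^{10}$ into irreducible quintics $g_1,g_2\in\F_3[{\rm x}]$, then conjugate the canonical generator $x_{3,1}^{(2)}$ by $w_1$ and check that the result is annihilated by $g_2(s)$. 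Once $X_{3,1}^{(2)}$ is known to be a $P$-module, observe that $X_{3,m}^{(1)}X_{3,m+1}^{(2)}=(\kappa^m)^{-1}(X_{3,1}^{(2)})$: this preimage contains $\Omega_m X_{\{3\}}$, maps onto $X_{3,1}^{(2)}$ under $\kappa^m$, and a quick order count confirms equality. Since $\kappa$ is $P$-equivariant, the preimage is a $P$-module.

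For the necessity direction, suppose $X_{3,m}^{(1)}X_{3,n}^{(2)}$ is a $P$-module. If $n<m$, then applying $\kappa^n$ gives that $X_{3,m-n}^{(1)}$ is a $P$-module, and iterating with $\kappa$ forces $X_{3,1}^{(1)}$ to be one. If $n>m+1$, applying $\kappa^m$ gives $X_{3,n-m}^{(2)}$ with $n-m\geq 2$, and iterating forces $X_{3,2}^{(2)}$ to be a $P$-module. Both outcomes must be ruled out. For the first, Proposition~\ref{XqIndecomposable} suffices: if $X_{3,1}^{(1)}$ were a $P$-module, then $\Omega_1 X_{\{3\}}$ would split as $X_{3,1}^{(1)}\oplus X_{3,1}^{(2)}$, contradicting $P$-indecomposability of $X_{\{3\}}$. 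For the second, I would lift $g_2$ one step to $f_{2,2}\in\Z[{\rm x}]$ via the algorithm in the proof of Lemma~\ref{Hensellike}, conjugate $x_{3,2}^{(2)}$ by $w_1$, and verify that the result is not annihilated by $f_{2,2}(s)$ modulo~$9$.

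The main obstacle is the explicit arithmetic with $11\times 11$ monomial matrices and ninth roots of unity. After fixing the factorization $\overline{f}=g_1 g_2$ over $\F_3$ and its Hensel lift to the second approximation, the verifications reduce to conjugating a handful of diagonal matrices by $w_1$ and checking polynomial annihilation identities in $\Z_3\langle s\rangle$. The asymmetry between $X_{3,1}^{(2)}$ (a $P$-module) and $X_{3,1}^{(1)}$ (not a $P$-module) is dictated by the canonical ordering of $g_1,g_2$ in Definition~\ref{Definefandgr}, and is what forces the inequality $n=m+1$ in the statement rather than $m=n+1$, mirroring but reversing the degree~$7$ phenomenon seen in Lemma~\ref{VSubmodules}.
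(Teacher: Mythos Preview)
Your proposal is correct and follows exactly the approach the paper intends: its proof simply says ``Cf.~the proof of Lemma~\ref{VSubmodules}; $X_{3,1}^{(2)}$ is a $P$-module, while $X_{3,2}^{(2)}$ is not,'' and you have spelled out precisely that adaptation, including the role reversal between the superscripts $(1)$ and $(2)$ relative to the degree~$7$ case. The only cosmetic difference is that the paper's proof of Lemma~\ref{VSubmodules} verifies $P$-invariance of $X_{3,1}^{(2)}$ by checking that a basis maps to a basis under $w_1$, whereas you phrase it as annihilation by $g_2(s)$; these are equivalent.
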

\begin{proof}
Cf.~the proof of Lemma~\ref{VSubmodules}; 
$X_{3,1}^{(2)}$ is a $P$-module, 
while $X_{3,2}^{(2)}$ is not. 
\end{proof}
Lemmas~\ref{PModules1} and \ref{PModules2} 
inform the next definition.
\begin{definition}
Let $\mathcal{A}^{[P]}$ be the set of 
$A\in \mathcal A$ in degree $11$ for which 
the following hold.
\begin{enumerate}
\item If $A\cap D_{\{p\}} = Y_{j,k,l}$, then 
either $j \equiv 0 \bmod p-1$, or both
$l=0$ and $j+1\equiv 0 \allowbreak \bmod p-1$.
\item Either ${\sf N}_A={\sf N}_A^t$, or 
$A\cap X_{\{3\}} = X_{3,n}^{(1)}X_{3,n+1}^{(2)}$ 
for some $n\geq 0$ 
and ${\sf N}_A$ agrees with ${\sf N}_A^t$
in each row apart from the row for $q=3$.
\end{enumerate}
\end{definition} 

Moving between $\mathcal{A}^{[V]}$ and 
$\mathcal{A}^{[P]}$, the roles of 
$X_{q,*}^{(1)}$ and
 $X_{q,*}^{(2)}$ are switched as 
the critical prime $q$ switches between
 $2$ and $3$. The Hasse diagram of the 
$T$-submodule lattice of $X_{\{q\}}$ is a
zig-zag chain.
\begin{proposition}\
\begin{itemize}
\item[{\rm (i)}]
$\mathcal{A}^{[P]}$ is the set of all finite 
$P$-submodules of $D$.
\item[{\rm (ii)}]
$\mathcal{A}^{[S]}$ is the set of all finite 
$Q$-submodules of $D$.
\end{itemize}
\end{proposition}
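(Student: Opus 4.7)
The plan is to dissect any finite $P$- or $Q$-submodule $A$ of $D$ into its Sylow components via Theorem~\ref{AllModulesEverywhere}, and to read off at each prime the local conditions imposed by $P$- or $Q$-invariance from the lemmas just established.

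For part~(i), let $A$ be a finite $P$-submodule of $D$. The $11$-part $A\cap D_{\{11\}} = Y_{j,k,l}$ satisfies condition~(1) by Lemma~\ref{SpModulesp}. For $q\in\{2,5\}$, Lemma~\ref{PModules1} forces the $q$-part to be an $\mathrm{S}_{11}$-module, and Lemma~\ref{AllSpModules} then says the $q$-row of ${\sf N}_A$ coincides with that of ${\sf N}_A^t$. For $q\notin\{2,3,5,11\}$, Corollary~\ref{qNotInpiModules} reduces the $q$-part to some $\Omega_n X_{\{q\}}$, whose row in ${\sf N}_A$ is automatically $t$-fixed. For $q=3$, Lemma~\ref{PModules2} gives $A\cap X_{\{3\}}=X_{3,m}^{(1)}X_{3,n}^{(2)}$ with $n\in\{m,m+1\}$: the first option yields a $t$-fixed $q=3$ row, and the second option is precisely the zig-zag alternative of condition~(2). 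Conversely, every $A\in\mathcal{A}^{[P]}$ is an $\langle s\rangle$-module by Theorem~\ref{AllModulesEverywhere}, and the same local conditions supply $w_1$-invariance of each Sylow component, so via Goursat--Remak (Theorem~\ref{GoursatRemak}) such an $A$ is indeed a $P$-module.

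For part~(ii), since $P\leq Q\leq \mathrm{S}_{11}$, we have $\mathcal{A}^{[S]}\subseteq \{Q\text{-submodules of }D\}\subseteq \mathcal{A}^{[P]}$. As $\mathcal{A}^{[S]}$ and $\mathcal{A}^{[P]}$ agree on every condition apart from the $q=3$ zig-zag option allowed in the latter, it suffices to rule out the zig-zag case under $Q$-invariance. Equivalently, it suffices to show that $X_{3,1}^{(2)}$ is not preserved by $w_2$; the $3^m$-power endomorphism $\kappa^m$ of $X_{\{3\}}$ then propagates this to ruling out $X_{3,m}^{(1)}X_{3,m+1}^{(2)}$ for every $m\geq 0$, because $Q$-invariance of $X_{3,m}^{(1)}X_{3,m+1}^{(2)}$ would descend modulo $\Omega_m X_{\{3\}}$ to $Q$-invariance of $X_{3,1}^{(2)}$ inside $\Omega_1 X_{\{3\}}$.

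The main obstacle is therefore this single verification that $w_2=(1,3)(2,8)(4,7)(5,6)$ does not normalise $X_{3,1}^{(2)}$. The concrete test is to conjugate the generator $x_{3,1}^{(2)}=b_3^{f_{1,1}(s)}$ by $w_2$ and check that the resulting diagonal matrix is not annihilated by $f_{2,1}(s)$, confirming $(x_{3,1}^{(2)})^{w_2}\notin X_{3,1}^{(2)}$. This is a finite matrix computation over $\F_3$, but it is the one place in the argument where the algebraic distinction between $\mathrm{PSL}(2,11)$ and $M_{11}$ genuinely intervenes beyond what Lemmas~\ref{PModules1}--\ref{PModules2} already provide.
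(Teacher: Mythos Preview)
Your proof is correct and follows essentially the same approach as the paper: part~(i) is assembled from Lemmas~\ref{SpModulesp}, \ref{PModules1}, \ref{PModules2}, and Corollary~\ref{qNotInpiModules} prime by prime, and part~(ii) is deduced from part~(i) via $P\leq Q$ together with the single verification $(x_{3,1}^{(2)})^{w_2}\notin X_{3,1}^{(2)}$. The paper's proof is simply a terse two-line version of your argument.
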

\begin{proof}
Part (ii) follows from part (i): $P\leq Q$, and 
$X_{3,n}^{(1)}X_{3,n+1}^{(2)}$ is not a $Q$-module
because $(x_{3,1}^{(2)})^{w_2} \not \in X_{3,1}^{(2)}$.
\end{proof}

Since $P\cap \mathrm{S}_{10}\cong \mathrm{A}_5$ 
and $Q\cap \mathrm{S}_{10}\cong \mathrm{A}_6\cdot 2$, 
we deduce the following.
\begin{lemma}\label{1stCPandQ}\
\begin{itemize}
\item[{\rm (i)}] 
$H^1(P,D_{\{q\}})=0$ for all primes $q$.
\item[{\rm (ii)}] $H^1(Q,D_{\{q\}})=0$ for odd primes
$q$, and $H^1(Q,X_{\{2\}})=C_2$. 
\end{itemize}
\end{lemma}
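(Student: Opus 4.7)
The plan is to mimic the reduction used in Lemma~\ref{H1XandD} and Lemma~\ref{H1A5C3}. First I would apply Lemma~\ref{Spminus1} to identify $D_{\{q\}}$ with the coinduced module $(Z_{\{q\}})_R^T$, where $R=T\cap \mathrm{S}_{p-1}$ (so $R=P\cap \mathrm{S}_{10}$ for part~(i) and $R=Q\cap \mathrm{S}_{10}$ for part~(ii)). Then the Eckmann--Shapiro Lemma~\ref{Shapiro} gives
\[
H^1(T,D_{\{q\}})\;\cong\; H^1(R,Z_{\{q\}}).
\]
Because $Z_{\{q\}}$ is central in $\Mo(p,\C)$, the $R$-action on it is trivial, so $H^1(R,Z_{\{q\}})=\mathrm{Hom}(R/R',Z_{\{q\}})$.

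Next I would pin down the point stabilizers. In the degree-$11$ action of $\PSL(2,11)$ the point stabilizer is $\mathrm{A}_5$, so $R\cong \mathrm{A}_5$ in part~(i); similarly $M_{11}$ acts $2$-transitively on $11$ points with stabilizer $M_{10}$, so $R\cong M_{10}\cong \mathrm{A}_6\cdot 2$ in part~(ii).

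For part~(i), $\mathrm{A}_5$ is perfect, so $R/R'=1$ and the Hom group vanishes for every~$q$.
For part~(ii), $M_{10}$ has derived subgroup $\mathrm{A}_6$ (since $\mathrm{A}_6$ is simple of index~$2$), so $R/R'\cong C_2$. Hence $\mathrm{Hom}(C_2,Z_{\{q\}})$ is $0$ when $q$ is odd and is $C_2$ when $q=2$; this gives $H^1(Q,D_{\{q\}})=0$ for odd~$q$ and $H^1(Q,D_{\{2\}})=C_2$. To extract the second half of part~(ii), I would split $D_{\{2\}}=X_{\{2\}}\times Z_{\{2\}}$ as $Q$-modules to obtain
\[
H^1(Q,D_{\{2\}})\;\cong\; H^1(Q,X_{\{2\}})\oplus H^1(Q,Z_{\{2\}}),
\]
and then use that $Q\cong M_{11}$ is simple, hence perfect, to kill the second summand. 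This forces $H^1(Q,X_{\{2\}})\cong C_2$.

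The only point that requires any real care is the identification of $R$ and its abelianization in the $M_{11}$ case: one must check that the point stabilizer really is $M_{10}$ (rather than, say, $\PSL(2,11)$) and that $M_{10}'=\mathrm{A}_6$. Everything else is a mechanical application of Eckmann--Shapiro, the triviality of scalar actions, and the perfectness of the relevant simple groups.
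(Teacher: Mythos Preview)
Your proposal is correct and follows essentially the same approach as the paper: the paper's entire proof is the line preceding the lemma, ``Since $P\cap \mathrm{S}_{10}\cong \mathrm{A}_5$ and $Q\cap \mathrm{S}_{10}\cong \mathrm{A}_6\cdot 2$, we deduce the following,'' together with the implicit cross-reference to the proof of Lemma~\ref{H1XandD}, which is exactly the Eckmann--Shapiro reduction and splitting $D_{\{2\}}=X_{\{2\}}\times Z_{\{2\}}$ that you spell out.
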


\begin{lemma}
If $G$ is a finite subgroup of $\Mo(11,\C)$ 
with permutation part $P$, then  
$G$ is $D$-conjugate to $P.(G\cap D)$.
\end{lemma}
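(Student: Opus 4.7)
The plan is to mirror the strategy of Proposition~\ref{CompletenessApGeneral}: first apply Proposition~\ref{FirstNonSolvableReduction} to reduce $G$ to a hub group, and then apply Proposition~\ref{1stCoh} to split that hub over its full diagonal subgroup. By Proposition~\ref{FirstNonSolvableReduction}, $G$ is $D$-conjugate to $H\cdot(G\cap D_{\pi'})$ with $H\leq PD_\pi$ and $s\in H$. Since $G\cap D_{\pi'}$ is preserved under any subsequent $D_\pi$-conjugation, it suffices to $D_\pi$-conjugate $H$ onto $P\cdot(H\cap D_\pi)$. I would invoke Proposition~\ref{1stCoh} with $T=P$, $B=D_\pi$, and $C=1$, reducing the task to verifying $H^1(P,D_{\{q\}}/A_q)=0$ for each $q\in\pi=\{2,3,5\}$, where $A_q=H\cap D_{\{q\}}$.

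For $q\in\{2,5\}$ the verification is clean: Lemma~\ref{PModules1} asserts that every $P$-submodule of $D_{\{q\}}$ is an $\mathrm{S}_{11}$-submodule, so $\Omega_1X_{\{q\}}$ is irreducible as a $P$-module, and Lemma~\ref{ZetaLem} with $\zeta=\eta=\{q\}$ yields the $P$-isomorphism $D_{\{q\}}/A_q\cong D_{\{q\}}$. The required vanishing then follows from Lemma~\ref{1stCPandQ}(i).

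The main obstacle is the case $q=3$: by Lemma~\ref{PModules2} the submodule $X_{3,1}^{(2)}$ of $\Omega_1X_{\{3\}}$ is proper, so Lemma~\ref{ZetaLem} is unavailable. The $P$-equivariant endomorphism $\kappa\colon x\mapsto x^3$ of $X_{\{3\}}$ is the key tool: since every finite $P$-submodule of $X_{\{3\}}$ is either the Cartesian $\Omega_nX_{\{3\}}$ or the non-Cartesian $X_{3,n}^{(1)}X_{3,n+1}^{(2)}$, iterated $\kappa$ induces a $P$-isomorphism from $X_{\{3\}}/A_3$ onto $X_{\{3\}}$ in the Cartesian case and onto $X_{\{3\}}/X_{3,1}^{(2)}$ in the non-Cartesian case. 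The vanishing thus reduces to the single assertion $H^1(P,X_{\{3\}}/X_{3,1}^{(2)})=0$, which I would establish via a long exact sequence argument. Specifically, from $0\to X_{3,1}^{(2)}\to D_{\{3\}}\to D_{\{3\}}/X_{3,1}^{(2)}\to 0$ and $H^1(P,D_{\{3\}})=0$, the vanishing follows once $H^2(P,D_{\{3\}})=0$ and $H^2(P,X_{3,1}^{(2)})=0$ are verified. The first of these equals $H^2(R,Z_{\{3\}})$ via Shapiro (Lemma~\ref{Spminus1}) with $R=P\cap\mathrm{S}_{10}\cong\mathrm{A}_5$, which vanishes by the universal coefficient theorem since the Schur multiplier of $\mathrm{A}_5$ has order~$2$, coprime to $3$; the second is handled by a parallel long exact sequence computation using $\Omega_1X_{\{3\}}$ as the augmentation ideal of the permutation module $\F_3^{11}\cong(\F_3)_R^P$ and the irreducibility of its two 5-dimensional composition factors as $\F_3P$-modules. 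With these inputs secured, Proposition~\ref{1stCoh} delivers the required $D_\pi$-conjugation of $H$, and reassembly with the first step completes the proof.
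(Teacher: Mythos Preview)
Your approach is essentially the same as the paper's: both reduce, via Proposition~\ref{1stCoh}, to the vanishing of $H^1(P,D_\pi/(G\cap D_\pi))$, and both handle the non-Cartesian $3$-part by a long exact sequence whose crucial input is $H^2(P,X_{3,1}^{(2)})=0$. The paper is a little more economical in two respects. First, it does not split off $q\in\{2,5\}$ in advance; instead it observes that the largest $\mathrm{S}_{11}$-submodule $W$ of $G\cap D_\pi$ satisfies $D_\pi/W\cong D_\pi$ (Lemma~\ref{ZetaLem} with $T=\mathrm{S}_{11}$), so $D_\pi/(G\cap D_\pi)\cong D_\pi/X_{3,1}^{(2)}$ in one stroke, and then runs the single fragment
\[
H^1(P,D_\pi)\ \longrightarrow\ H^1(P,D_\pi/(G\cap D_\pi))\ \longrightarrow\ H^2(P,X_{3,1}^{(2)}).
\]
Second, for the key vanishing $H^2(P,X_{3,1}^{(2)})=0$ the paper simply cites the tables in Holt--Plesken~\cite[p.~229]{HoltandPlesken}, rather than attempting a direct calculation.

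Two small comments on your write-up. In your long exact sequence step, only $H^2(P,X_{3,1}^{(2)})=0$ is actually needed (together with $H^1(P,D_{\{3\}})=0$); the extra condition $H^2(P,D_{\{3\}})=0$ plays no role in that fragment, so if you are computing it only as a stepping stone toward $H^2(P,X_{3,1}^{(2)})$ you should say so explicitly. More substantively, your ``parallel long exact sequence computation'' for $H^2(P,X_{3,1}^{(2)})=0$ is left as a sketch: to carry it out you would need control of $H^1$ and $H^2$ of the other $5$-dimensional composition factor and of $\Omega_1X_{\{3\}}$ itself, and it is not obvious that this closes without further input. The paper sidesteps this entirely by quoting the Holt--Plesken tables; if you want a self-contained argument, you should spell out exactly which cohomology groups you are using and why they vanish.
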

\begin{proof}
By Proposition~\ref{1stCoh}, Lemma~\ref{ZetaLem}
(with $T=\mathrm{S}_{11}$), and 
Lemma~\ref{1stCPandQ}~(i), we may suppose that 
$G\cap D\in \mathcal{A}^{[P]}\setminus
\mathcal{A}^{[S]}$. Let $W$ be the largest 
$\mathrm{S}_{11}$-module in $G\cap D_{\pi}$, 
i.e., $(G\cap D_\pi)/W \cong A:= X_{3,1}^{(2)}$. 
Then $D_\pi/W \cong D_\pi$ 
by Lemma~\ref{ZetaLem}, so that 
 $D_\pi/(G\cap D_\pi)\cong D_\pi/A$.
The short exact sequence
\[
1\rightarrow A \rightarrow D_\pi 
\rightarrow D_\pi/A\rightarrow 1
\] 
gives rise to the fragment
\[
H^1(P,D_\pi) \rightarrow 
 H^1(P,D_\pi/(G\cap D_\pi)) 
\rightarrow H^2(P,A)
\]
of a long exact sequence 
(see \cite[p.~573]{Rotman}).
Since $H^1(P,D_\pi)= H^2(P,A)=0$
by Lemma~\ref{1stCPandQ}~(i) and
\cite[p.~229]{HoltandPlesken}, 
the proof is complete by 
 Proposition~\ref{1stCoh}.
\end{proof}

\begin{theorem}
The set of all $PA$ where $A\in \mathcal{A}^{[P]}$ 
is non-scalar is a classification 
of the finite irreducible subgroups of $\Mo(11,\C)$ 
with permutation part $\PSL(2,11)$. 
\end{theorem}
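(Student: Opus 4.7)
The plan is to deduce the classification directly from the preceding lemma together with the conjugacy machinery of Section~2. Two claims must be verified: completeness (every finite irreducible $G\leq\Mo(11,\C)$ with $\phi(G)=P$ is $\GL(11,\C)$-conjugate to some listed $PA$) and irredundancy (distinct listed $PA$ lie in distinct $\GL(11,\C)$-classes).

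For completeness, I would apply the preceding lemma to obtain $d\in D$ with $G^d = P\cdot(G\cap D)$. Setting $A := G\cap D$, this is a finite $P$-submodule of $D$, so $A\in\mathcal{A}^{[P]}$. By Theorem~\ref{NonScalarIffIrreducible}, together with transitivity of $\phi(G)=P$, the group $G$ is irreducible exactly when $A$ is non-scalar; thus $G$ is $D$-conjugate (hence $\GL(11,\C)$-conjugate) to a listed $PA$.

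For irredundancy, I would suppose $(PA)^w = PB$ for some $w\in\GL(11,\C)$, with $A,B$ non-scalar in $\mathcal{A}^{[P]}$. Since $PA$ is non-solvable, Theorem~\ref{GLConjugateImpliesSamePP} forces $A^w = B$. Non-scalarity of $A$ then allows Theorem~\ref{Protoconjugacy} to promote $w$ to a monomial matrix, up to an irrelevant scalar. The permutation image $\phi(w)$ conjugates $P$ to itself, and since $P\cong\PSL(2,11)$ is self-normalizing in $\mathrm{Sym}(11)$, we obtain $\phi(w)\in P$. Writing $w = pw'$ with $p\in P$ and $w'\in D$, the $P$-invariance of $A$ yields $A^p = A$, while $w'\in D$ commutes with $A\leq D$; hence $B = A^w = A$, so $PA = PB$. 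The parameterization $A\mapsto PA$ is visibly injective, since $A = PA\cap D$.

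The principal hurdle is the reduction of $w$ to a diagonal matrix in the irredundancy step; it rests jointly on Theorem~\ref{Protoconjugacy} (which forces $w$ to be monomial) and the self-normalizing property of $\PSL(2,11)$ in $\mathrm{Sym}(11)$ (which collapses the permutation part of $w$ into $P$). Once these twin reductions are in hand, the remainder is a formal consequence of the $P$-module structure of $A$ and the abelianness of $D$; this is precisely why the $P$ case is considerably lighter work than either the solvable cases handled earlier or the $Q\cong M_{11}$ case.
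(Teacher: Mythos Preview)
Your argument is correct and matches the paper's proof: completeness via the preceding lemma, irredundancy via Theorems~\ref{GLConjugateImpliesSamePP} and \ref{Protoconjugacy} together with the fact that $N_{\mathrm{S}_{11}}(P)=P$. One small point: Theorem~\ref{NonScalarIffIrreducible} only asserts the converse (irreducible $\Rightarrow$ non-scalar diagonal) for \emph{solvable} groups, so your ``exactly when'' is not fully justified by that citation; the missing direction here is the trivial observation that if $A\leq Z$ then the span of the all-ones vector is $PA$-invariant, hence $PA$ is reducible.
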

\begin{proof}
Let $A, B\in \mathcal{A}^{[P]}$.
If $A\leq Z$ then the split extension 
$A\rtimes P$ is reducible. 
If $PA$ and $PB$ are 
$\GL(11,\C)$-conjugate, then $A=B$ by 
Theorems~\ref{GLConjugateImpliesSamePP}
and \ref{Protoconjugacy}, because 
$N_{\mathrm{S}_{11}}(P)=P$.
\end{proof}

To conclude degree $11$, we list the groups 
with permutation part $M_{11}$.
\begin{definition}
Let $d_n=\mathrm{diag}(\epsilon_n, \epsilon_n^{-1}, 
\epsilon_n^{-1}, 1, \epsilon_n^{-1},
\epsilon_n ,1, \epsilon_n, 1,1,1)$
where $\epsilon_n= 
\allowbreak e^{\pi {\rm i}/2^{n-1}}$. 
Define $\mathcal Q$ to be the set of groups 
$\langle s, w_2, A\rangle$ and
 $\langle s, w_2d_{n+1}, A\rangle$
for $A\in \mathcal{A}^{[S]}$ such that 
$A\cap X_{\{2\} } = \Omega_n X_{\{2\}}$,
for all $n\geq 0$. 
\end{definition} 

\begin{theorem}
The subset of  $\mathcal Q$ 
that excludes (only) the groups 
$\langle s, w_2, A\rangle$, 
where $A\leq Z$, is a classification 
of the finite irreducible 
subgroups of $\Mo(11,\C)$ with 
permutation part $M_{11}$. 
\end{theorem}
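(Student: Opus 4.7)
The plan is to follow the pattern established for $\PSL(2,11)$, with the extra twist that $H^1(Q,X_{\{2\}})\cong C_2$ by Lemma~\ref{1stCPandQ}~(ii); this doubles the number of hub groups, producing the two families $\langle s,w_2,A\rangle$ and $\langle s,w_2 d_{n+1},A\rangle$ in $\mathcal Q$.

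For completeness, I would start with a finite $G\leq \Mo(11,\C)$ of permutation part $Q$, and use Proposition~\ref{FirstNonSolvableReduction} to reduce to a hub $H\leq QD_\pi$ containing $s$. Next I would invoke Proposition~\ref{1stCoh} with $C=X_{\{2\}}$ and $B$ the product of $Z_{\{2\}}$ with the odd-prime parts of $D_\pi$: by Lemmas~\ref{ZetaLem} and~\ref{1stCPandQ}, together with the fact that $Q\cong M_{11}$ is perfect (so $H^1(Q,Z_{\{2\}})=\mathrm{Hom}(Q,Z_{\{2\}})=0$), we have $H^1(Q,B/(H\cap B))=0$. Lemma~\ref{PModules1} gives $H\cap X_{\{2\}}=\Omega_n X_{\{2\}}$ for some $n\geq 0$, since every $Q$-submodule of $X_{\{2\}}$ is a $P$-submodule and hence an $\mathrm{S}_{11}$-module. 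Reducing modulo $\Omega_n X_{\{2\}}$ and applying Lemma~\ref{ZetaLem}, the image of $H$ is a complement of $X_{\{2\}}$ in $QX_{\{2\}}$; such complements fall into the two $X_{\{2\}}$-conjugacy classes counted by $H^1(Q,X_{\{2\}})\cong C_2$. Iterating the squaring endomorphism on $X_{\{2\}}$ a total of $n$ times sends $d_{n+1}$ to $d_1$, so the two listed forms with diagonal subgroup $\Omega_n X_{\{2\}}$ reduce respectively to $Q$ and to $K:=\langle s, w_2 d_1\rangle$ modulo $\Omega_n X_{\{2\}}$.

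The main obstacle is then verifying that $K$ realises the non-trivial cohomology class. I would first check $K\cong Q$ via $d_1^{w_2}=d_1$ and $d_1^2=1$, both immediate from the definition of $d_1$ (whose eleven entries are all $\pm 1$). To see $K$ is not $X_{\{2\}}$-conjugate to $Q$, suppose $K^x=Q$ for some $x\in X_{\{2\}}$. Then $s^x\in Q$ has permutation part $s$, which forces $s^x=s$ because $Q$ consists of pure permutation matrices; hence $x\in C_{X_{\{2\}}}(s)$, which is trivial since any $s$-fixed diagonal element is a scalar $\lambda$ satisfying $\lambda^{11}=1$ (as $\lambda\in X$ has determinant $1$) and $\lambda$ is a $2$-power root of unity, forcing $\lambda=1$. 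This yields $K=Q$, contradicting $w_2 d_1\neq w_2$.

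Irreducibility and irredundancy are then routine. $K$ itself is irreducible: any $K$-fixed non-zero vector is a scalar multiple of the all-ones vector $e$ (the only $s$-eigenline of eigenvalue $1$), but $d_1 e\neq e$ because $d_1$ has entries of both signs, so $K$ has no fixed vector; since the irreducible character degrees of $M_{11}$ not exceeding $11$ are $1,10,10,11$, the representation of $K$ must be the $11$-dimensional irrep. The groups $\langle s,w_2 d_{n+1},A\rangle$ then contain $K$ (for $n=0$) or have non-scalar diagonal subgroup (for $n\geq 1$, giving irreducibility via Theorem~\ref{NonScalarIffIrreducible}); the groups $\langle s,w_2,A\rangle$ are irreducible iff $A\not\leq Z$, which accounts for the exclusion in the statement. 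Finally, irredundancy follows from Theorems~\ref{GLConjugateImpliesSamePP} and~\ref{Protoconjugacy}: $\GL(11,\C)$-conjugacy among our groups reduces to $D$-conjugacy with identical diagonal subgroups (using $N_{\mathrm{S}_{11}}(Q)=Q$ and the fact that $Q$ lies in every listed group), and the $H^1$ analysis already distinguishes the two generator shapes sharing a common $A$.
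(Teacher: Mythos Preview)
Your completeness argument follows the paper's: reduce to a hub in $QX_{\{2\}}$ via Proposition~\ref{1stCoh} and Lemma~\ref{ZetaLem}, then use the squaring endomorphism together with $H^1(Q,X_{\{2\}})\cong C_2$ to land in one of two preimages. Your irreducibility argument for $K$ (no fixed vector plus the character degrees of $M_{11}$) supplies detail the paper omits. One side remark: the checks $d_1^{w_2}=d_1$ and $d_1^2=1$ only give $|w_2 d_1|=2$; they do not by themselves establish $K\cong Q$. One still needs $K\cap X_{\{2\}}=1$, equivalently that the relations of $M_{11}$ hold on $(s,w_2 d_1)$. The paper also leaves this implicit.

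Two issues remain. First, a slip: $Q$ does \emph{not} lie in $\langle s, w_2 d_{n+1}, A\rangle$, since $w_2\in\langle s, w_2 d_{n+1}, A\rangle$ would force $d_{n+1}\in A\cap X_{\{2\}}=\Omega_n X_{\{2\}}$, impossible as $|d_{n+1}|=2^{n+1}$. Your reduction to $D$-conjugacy is still correct, but it uses only $\phi(G)=Q$: write a monomial conjugator as $dq$ with $q\in N_{\mathrm{S}_{11}}(Q)=Q$, choose $g\in G$ with $\phi(g)=q$, and replace $dq$ by $g^{-1}dq\in D$.

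Second, your irredundancy step has a genuine gap. You show that $Q$ and $K=\langle s,w_2d_1\rangle$ are not $X_{\{2\}}$-conjugate, but what is needed is that $QA$ and $\langle s, w_2 d_{n+1}, A\rangle$ are not $D$-conjugate. These are different statements: the conjugator ranges over all of $D$, and the groups carry the full $A$, not just $\Omega_n X_{\{2\}}$. Your line can be completed: the projection $D\to X_{\{2\}}$ is $Q$-equivariant (its kernel $Z_{\{2\}}\times D_{\{2\}'}$ is a $Q$-submodule), hence extends to a homomorphism $QD\to QX_{\{2\}}$; composing with $\kappa^n$ carries a putative $D$-conjugacy to an $X_{\{2\}}$-conjugacy $Q\to K$, which your centralizer argument then excludes. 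The paper's route is different and more direct: if $(QA)^b=\langle s, w_2 d_{n+1}, A\rangle$ for some $b\in D$, then $[sw_2,b]\in d_{n+1}A$; but the permutation $sw_2$ has a fixed point $k$, so the $k$th diagonal entry of $[sw_2,b]$ is $1$, whereas $(d_{n+1})_k$ has order $2^{n+1}$ and cannot be absorbed by any element of $A$ (since $A\cap X_{\{2\}}=\Omega_n X_{\{2\}}$ has exponent $2^n$).
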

\begin{proof}
Let $G$ be a hub group in $QX_{\{2\}}$
(by way of Lemma~\ref{1stCPandQ}~(ii),
Proposition~\ref{1stCoh}, and Lemma~\ref{ZetaLem}). 
Repeated squaring on $G$
reduces to $Q$ or $\langle s, w_2 d_1\rangle$, 
the only copies of $M_{11}$ in 
$QX_{\{2\}}$ up to conjugacy 
(see Lemma~\ref{1stCPandQ}~(ii)). 
These have preimages in $\mathcal Q$.

Since $\langle s, w_2d_1\rangle$ is irreducible, 
the reducible groups in  $\mathcal Q$ are the $QA$
with $A$ scalar.

Suppose that $A\cap X_{\{ 2\}} = \Omega_n X_{\{ 2\}}$
 and $G = QA$ is $DQ$-conjugate, hence 
$D$-conjugate, to $\langle s, w_2d_{n+1}, A\rangle$.
Then there is a diagonal matrix $b$ such that
$b^{1-sw_2} = d_{n+1}\in A\cap X$. However,   
$b^{1-sw_2}$ has diagonal entries of order $2^{n+1}$.
\end{proof}

\subsection{Degree 23}

Let $p=23$. A non-compulsory transitive subgroup 
of $\mathrm{P}(23)$
is conjugate to the group $Q\cong M_{23}$ 
generated by $s$ and
\[
(1,3)(4,19)(5,17)(6,9)(7,8)(10,16)(12,15)(13,18).
\]
(We recycle the notation 
$Q$ from Section~\ref{Degree11Subsection}.)
\begin{definition}
Let $\mathcal{A}^{[Q]}$ be the sublist of 
$A\in \mathcal A$ in degree $23$ for which the 
following hold.
\begin{enumerate}
\item If $A\cap D_{\{p\}} = Y_{j,k,l}$, then either 
$j \equiv 0 \mod p-1$, or both $l=0$ and $j+1\equiv 0 \mod
p-1$.
\item Either ${\sf N}_A={\sf N}_A^t$, or 
$A\cap X_{\{2\}} = X_{2,n+1}^{(1)}X_{2,n}^{(2)}$ for
some $n\geq 0$ and ${\sf N}_A^t={\sf N}_A$ apart
from the row for $q=2$.
\end{enumerate}
\end{definition} 
 Proofs of the next three results are left as exercises.
\begin{lemma}
$\mathcal{A}^{[Q]}$ is the set of all finite
$Q$-submodules of $\mathrm{D}(23,\C)$.
\end{lemma}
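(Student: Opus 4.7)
The plan is to mirror the arguments of Lemmas \ref{OddqVmodules}, \ref{VSubmodules}, \ref{PModules1}, and \ref{PModules2}: decompose a candidate $A$ into its Sylow components via Theorem \ref{AllModulesEverywhere}, verify the $Q$-module condition on each Sylow factor separately, and then reassemble.

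First I would handle the $p$-part. Lemma \ref{SpModulesp} applies to every non-solvable transitive $T\leq\mathrm{Sym}(p)$ containing $s$, hence to $Q\cong M_{23}$; it identifies the finite $Q$-submodules of $D_{\{p\}}$ with the $\mathrm{Sym}(p)$-modules, which are exactly the $Y_{j,k,l}$ satisfying condition~(1) of $\mathcal{A}^{[Q]}$. For primes $q$ coprime to $|Q|$, Corollary \ref{qNotInpiModules} gives $\Omega_n X_{\{q\}}$ as the only finite $Q$-submodules of $X_{\{q\}}$, which are automatically $\mathrm{Sym}(p)$-stable. For the primes $q\in\pi\setminus\{p\}=\{2,3,5,7,11\}$, a short calculation shows that the multiplicative order of $q$ modulo $23$ equals $22$ when $q\in\{5,7,11\}$, so $v=1$ and $X_{\{q\}}=X_{\{q\}}^{(1)}$ is already $\langle s\rangle$-uniserial by Proposition \ref{StructureofXqr}; its only finite $\langle s\rangle$-submodules $\Omega_nX_{\{q\}}$ are $\mathrm{Sym}(p)$-stable, hence $Q$-stable. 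For $q=3$, one has $v=2$, so $X_{\{3\}}$ splits into two uniserial $\langle s\rangle$-direct summands. Imitating Lemma \ref{PModules1}, I would inspect the actions of $f_{1,1}(s)$ and $f_{2,1}(s)$ on $(x_{3,1}^{(1)})^{w_2}$ and $(x_{3,1}^{(2)})^{w_2}$ to verify that neither $X_{3,1}^{(1)}$ nor $X_{3,1}^{(2)}$ is $Q$-invariant; combined with $Q$-indecomposability of $X_{\{3\}}$ (Proposition \ref{XqIndecomposable}), this forces $\Omega_1 X_{\{3\}}$ to be $Q$-irreducible, so every finite $Q$-submodule of $X_{\{3\}}$ is some $\Omega_n X_{\{3\}}$.

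The core of the lemma is the prime $q=2$, where $d=11$ and $v=2$ as in degree~$7$. Here I would follow the template of Lemma \ref{VSubmodules}. First, by direct matrix arithmetic using the Hensel approximations $f_{1,1}(s)$ and $f_{2,1}(s)$ from Lemma \ref{Hensellike}, verify that precisely one of $X_{2,1}^{(1)}$, $X_{2,1}^{(2)}$ is a $Q$-submodule; pulling back through the squaring endomorphism $\kappa^n$ on $X_{\{2\}}$ then exhibits $X_{2,n+1}^{(1)}X_{2,n}^{(2)}$ as a $Q$-module. Second, check (by applying $f_{1,2}(s)$ to $(x_{2,2}^{(1)})^{w_2}$) that $X_{2,2}^{(1)}$ is not $Q$-invariant; the same $\kappa^n$ argument then excludes every $X_{2,m}^{(1)}X_{2,n}^{(2)}$ with $m\geq n+2$, while the case $m\leq n-1$ is ruled out by the indecomposability of $\Omega_1 X_{\{2\}}$ (Proposition \ref{XqIndecomposable}) applied as in the proof of Lemma \ref{VSubmodules}. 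The only surviving possibilities are $m=n$ and $m=n+1$, which match condition~(2) of $\mathcal{A}^{[Q]}$. Assembling the per-prime conclusions through Theorem \ref{AllModulesEverywhere} then completes the proof.

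The principal obstacle is bookkeeping rather than novel mathematics: the defining permutation of $w_2$ has eight transpositions acting on $23$ coordinates, so evaluating $f_{j,n}(s)(x_{q,n}^{(r)})^{w_2}$ requires more careful indexing than in degree~$7$, but no new conceptual tool beyond Lemma \ref{Hensellike}, Corollary \ref{IrredBelowIffIrredAbove}, and the Goursat--Remak assembly of Theorem \ref{AllModulesEverywhere} is required.
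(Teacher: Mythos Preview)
Your approach is correct and is exactly the route the paper intends: the statement is one of three results the paper explicitly leaves as exercises, to be handled by imitating Lemmas~\ref{OddqVmodules}, \ref{VSubmodules}, \ref{PModules1}, and \ref{PModules2}, and your decomposition into the $p$-part, the primes with $v=1$, and the two $v=2$ primes $q\in\{2,3\}$ matches that template precisely. One small notational slip: you write $w_2$ throughout, but in the paper $w_2$ is the degree-$11$ generator of $M_{11}$; the degree-$23$ generator of $Q\cong M_{23}$ is the unnamed product of eight transpositions displayed at the start of the subsection, so you should give it its own symbol rather than reuse $w_2$.
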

\begin{lemma}
If $q$ is prime then $H^1(Q, D_{\{q\}}) = 0$.
\end{lemma}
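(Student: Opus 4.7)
The plan is to reduce the computation to the cohomology of the point stabilizer, which for $Q \cong M_{23}$ is $M_{22}$, and then exploit the simplicity (hence perfectness) of $M_{22}$.

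First, I would invoke Lemma~\ref{Spminus1}: with $R := Q \cap \mathrm{S}_{p-1}$, we have $D_{\{q\}} \cong (Z_{\{q\}})_R^Q$ as $Q$-modules. The group $R$ is the stabilizer in $Q$ of the last point under the natural action of degree $23$, so $R$ is a point stabilizer of $M_{23}$, which is well-known to be $M_{22}$. Applying the Eckmann--Shapiro lemma (Lemma~\ref{Shapiro}) gives
\[
H^1(Q, D_{\{q\}}) \;\cong\; H^1(R, Z_{\{q\}}) \;=\; H^1(M_{22}, Z_{\{q\}}).
\]

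Next, note that $Z_{\{q\}}$ consists of scalar matrices, which commute with every element of $\mathrm{GL}(23,\mathbb{C})$, so $Z_{\{q\}}$ is a \emph{trivial} $R$-module. For trivial coefficients, the first cohomology is simply the homomorphism group:
\[
H^1(M_{22}, Z_{\{q\}}) \;=\; \mathrm{Hom}(M_{22}, Z_{\{q\}}) \;=\; \mathrm{Hom}(M_{22}/M_{22}', Z_{\{q\}}).
\]
Since $M_{22}$ is a non-abelian simple group (the Mathieu group on $22$ points), it is perfect: $M_{22}' = M_{22}$. Hence the abelianization is trivial, and the Hom group vanishes. This yields $H^1(Q, D_{\{q\}}) = 0$ for every prime $q$.

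There is no real obstacle here; the argument is parallel to Lemma~\ref{H1XandD} and the odd-$q$ half of Lemma~\ref{1stCPandQ}, and it is \emph{cleaner} than the $M_{11}$ case because the point stabilizer $M_{22}$ of $M_{23}$ is perfect, whereas the point stabilizer $\mathrm{A}_6 \cdot 2$ of $M_{11}$ has abelianization $C_2$ (which is what produced the nonzero $H^1(Q, X_{\{2\}}) = C_2$ in degree $11$). The only thing to double-check is the standard fact that the stabilizer of a point in the natural degree-$23$ action of $M_{23}$ is $M_{22}$, which may be cited from any standard reference on the Mathieu groups.
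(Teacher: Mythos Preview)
Your proof is correct and follows precisely the template the paper establishes in Lemmas~\ref{H1XandD}, \ref{H1A5C3}, \ref{VCohom}, and \ref{1stCPandQ}; indeed the paper leaves this result as an exercise immediately after that pattern has been set. The only detail worth flagging is that you should confirm $Q\cap \mathrm{S}_{22}$ really is $M_{22}$ for the \emph{specific} generators of $Q$ given in the paper, but this is immediate since any point stabilizer in $M_{23}$ is $M_{22}$.
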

\begin{theorem}
The set of all $AQ$ for non-scalar 
$A\in \mathcal{A}^{[Q]}$ 
is a classification of the finite irreducible 
subgroups of $\mathrm{M}(23,\C)$ with 
permutation part $M_{23}$.
\end{theorem}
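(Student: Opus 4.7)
The argument will mirror the treatment of $\PSL(2,11)$ just given, but should be somewhat cleaner because $H^1(Q,D_{\{q\}}) = 0$ for every prime $q$. Three things must be checked: every finite irreducible $G\leq \Mo(23,\C)$ with $\phi(G)=Q$ is $\GL(23,\C)$-conjugate to some $AQ$ on the list; each listed $AQ$ is irreducible; and distinct listed groups are not conjugate.

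For completeness, let $G$ be such a subgroup. Proposition~\ref{FirstNonSolvableReduction} gives, up to $D$-conjugacy, a decomposition $G = H.(G\cap D_{\pi'})$ with $H\leq QD_\pi$ and $s\in H$. I would then apply Proposition~\ref{1stCoh} to strip $D_\pi$ as well. Lemma~\ref{ZetaLem} covers every prime $q\in\pi$ for which $\Omega_1 X_{\{q\}}$ is an irreducible $Q$-module, since then $B/(H\cap B) \cong B$ and $H^1(Q,B)=0$ by the preceding lemma. The only case not immediately handled is $q=2$ with the zig-zag section $X_{2,n+1}^{(1)} X_{2,n}^{(2)}$; there I would use the long exact cohomology sequence
\[
H^1(Q,D_{\{2\}}) \to H^1(Q,D_{\{2\}}/(H\cap D_{\{2\}})) \to H^2(Q,H\cap D_{\{2\}}),
\]
whose left term vanishes by the preceding lemma and whose right term vanishes by the known $H^2(M_{23},M)=0$ for the pertinent small-exponent $\F_2 M_{23}$-modules (cf.~\cite{HoltandPlesken}). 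Hence $G$ is $D$-conjugate to $Q\cdot(G\cap D)$, and $G\cap D\in \mathcal{A}^{[Q]}$ by the preceding lemma on $Q$-submodules.

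Irreducibility of each $AQ$ with non-scalar $A$ is immediate from Theorem~\ref{NonScalarIffIrreducible}, since $\phi(AQ)=Q$ is transitive. Conversely, if $A\leq Z$ then $AQ\leq QZ$ fixes the all-ones vector, explaining the exclusion from the list. For irredundancy, suppose $AQ$ and $A'Q$ are $\GL(23,\C)$-conjugate with both $A$, $A'$ non-scalar. Each group is non-solvable, so Theorem~\ref{GLConjugateImpliesSamePP} forces any isomorphism to carry $A$ onto $A'$, and Theorem~\ref{Protoconjugacy} then forces the conjugator $w$ to be monomial. As $M_{23}$ is self-normalizing in $\Sym(23)$, $\phi(w)\in Q$, so we may reduce to $w\in D$; but then $A^w = A$ because diagonal matrices commute, so $A=A'$.

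The main obstacle is the cohomological step at $q=2$: because the zig-zag $Q$-submodules prevent a uniform application of Lemma~\ref{ZetaLem}, a separate verification that $H^2(M_{23},M)=0$ for the finitely many relevant $\F_2 M_{23}$-sections is required to complete the reduction to a splittable hub group. Every other step is essentially mechanical once the $\PSL(2,11)$ template has been recorded, which is presumably why this theorem is set as an exercise.
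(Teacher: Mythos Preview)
Your approach is the intended one; the paper leaves this result as an exercise precisely because the $\PSL(2,11)$ template transfers verbatim, with the zig-zag prime shifted from $3$ to $2$ and with $H^1(Q,D_{\{q\}})=0$ now holding for every $q$. The irreducibility and irredundancy paragraphs are fine as written.

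One technical point in the completeness step deserves tightening. The long exact sequence you display has $H^2(Q,\,H\cap D_{\{2\}})$ on the right, and $H\cap D_{\{2\}}$ can have arbitrarily large exponent, so this is not one of ``the pertinent small-exponent $\F_2 M_{23}$-modules'' you cite. What the $\PSL(2,11)$ argument actually does---and what you should replicate---is first quotient by the largest $\mathrm{S}_{23}$-submodule $W=\Omega_n X_{\{2\}}$ of $H\cap D_{\{2\}}$, so that $D_{\{2\}}/W\cong D_{\{2\}}$ by Lemma~\ref{ZetaLem} and $(H\cap D_{\{2\}})/W\cong X_{2,1}^{(1)}$. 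The relevant short exact sequence is then
\[
1 \longrightarrow X_{2,1}^{(1)} \longrightarrow D_{\{2\}} \longrightarrow D_{\{2\}}/X_{2,1}^{(1)} \longrightarrow 1,
\]
and the vanishing you need is $H^2(M_{23},\,X_{2,1}^{(1)})=0$ for the single $11$-dimensional irreducible $\F_2 M_{23}$-module---exactly the kind of fact available from \cite{HoltandPlesken}. Your wording suggests you already see this reduction, but the displayed sequence does not reflect it.
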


\section{Overview}\label{Overview}
We have completely and irredundantly classified up to 
conjugacy in $\GL(p,\C)$ all finite irreducible 
monomial subgroups that
\begin{itemize}
\item are solvable;
\item have permutation part 
containing $\mathrm{Alt}(p)$;
\item are non-solvable in degrees
$p\in \{7,  11, 23\}$.
\end{itemize}
\noindent
Hence, we have classified the groups for 
$p\leq 11$, $p=23$, and the infinitely 
many $p$ not of the form $(q^d-1)/(q-1)$ 
where $q$ is a prime power. 

Our methodology may be used to settle all prime 
degrees $p<31$. If $p=19$ or $29$ then the 
permutation part $T$ is compulsory.
If $p=13$ or $17$ then the non-compulsory $T$ are 
projective, with one or three possible isomorphism 
types, respectively.

A hermetic classification of the finite 
irreducible subgroups of $\Mo(p,\C)$ for arbitary 
prime $p$ is obstructed by a lack of solutions to 
the $T$-module listing problem (in $D_{ \{ p\}'}$) 
and the extension problem for projective $T$. 
We pose some conjectures,
suggested by existing evidence, 
whose resolution might aid in closing these gaps.
Note that \cite{DZII} is also stymied by
the projective family case, there being
a question about `basic subgroups' of `height' 
greater than $1$~\cite[p.~366]{DZII}.

Let $T$ be a non-solvable transitive subgroup of 
$\mathrm{Sym}(p)$.
\begin{conjecture}\label{TModuleIffNTModule}
\emph{Every finite $T$-submodule of $D_{\{p\}'}$ is an 
$N_{\mathrm{Sym}(p)}(T)$-submodule.}
\end{conjecture}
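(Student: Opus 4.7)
The plan is to argue prime by prime and then reduce to verifying invariance of the $T$-submodule lattice of each Sylow subgroup $X_{\{q\}}$ under $N := N_{\mathrm{Sym}(p)}(T)$. The conjecture is equivalent to the statement that the induced action of $N/T$ on the set of finite $T$-submodules of $D_{\{p\}'}$ is trivial, since $M^n$ is automatically a $T$-submodule whenever $M$ is one and $n \in N$.

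First I would decompose any finite $T$-submodule $M$ of $D_{\{p\}'}$ according to the primary decomposition $D_{\{p\}'} = \prod_{q \neq p} D_{\{q\}}$. Because elements of $N$ preserve each $D_{\{q\}}$, it suffices to show that $M_q := M \cap D_{\{q\}}$ is $N$-invariant for every prime $q \neq p$. Using Goursat--Remak together with Proposition \ref{XqIndecomposable} (which prevents $T$-isomorphic sections between $X_{\{q\}}$ and $Z_{\{q\}}$), each such $M_q$ splits as $(M_q \cap X_{\{q\}}) \times Z_{q^c}$ for some $c \geq 0$. The second factor is characteristic in $Z_{\{q\}}$, hence $N$-invariant at no extra cost, so the task reduces to showing $M_q \cap X_{\{q\}}$ is $N$-invariant.

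Second, for primes $q \notin \pi$, Corollary \ref{qNotInpiModules} identifies the only finite $T$-submodules of $X_{\{q\}}$ as the characteristic subgroups $\Omega_n X_{\{q\}}$, so $N$-invariance is automatic. For $q \in \pi$, the plan is to prove that the lattice of finite $T$-submodules of $X_{\{q\}}$ is \emph{totally ordered by inclusion}, with a unique submodule at each attainable order. If this holds, then $N$-invariance follows because conjugation by $n \in N$ preserves the order of a subgroup. The zig-zag patterns of Lemmas \ref{VSubmodules} and \ref{PModules2} are instances of this phenomenon, as is the uniserial situation of Lemmas \ref{OddqVmodules} and \ref{PModules1}. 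To establish total ordering in general, one combines indecomposability of $X_{\{q\}}$ as a $T$-module (Proposition \ref{XqIndecomposable}) with the $\langle s\rangle$-primary decomposition $X_{\{q\}} = \prod_r X_{\{q\}}^{(r)}$ and the transitive action of $T/\langle s \rangle$ on its factors (Lemma \ref{tConjugacyonsqModules}); the compatibility constraints between the $(n_1,\dots,n_v)$ at different layers, imposed by $T$-invariance, should leave only one legal tuple at each total exponent.

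The main obstacle is the last step: proving total ordering of the $T$-submodule lattice of $X_{\{q\}}$ uniformly for the projective family $T = \SL(d,q')$ acting on $p = (q'^d-1)/(q'-1)$ points, at primes $q \mid |T|$, $q \neq p$. This reduces to the modular representation theory of $\SL(d,q')$ on its natural doubly transitive permutation module in cross characteristic, where one must rule out the existence of two distinct $T$-submodules of equal order. This is precisely the hurdle that obstructs the full classification for the projective family (as noted in Section \ref{Overview} and in \cite{DZII}), which explains why the statement is presented as a conjecture rather than a theorem. Verifying it would both complete the module listing problem and, by the preceding reduction, confirm the invariance claim in the remaining cases $T = \mathrm{Alt}(p)$ (already handled in Proposition \ref{ApModuleIffSpModule}) and the self-normalizing Mathieu and $\PSL(2,11)$ cases (where $N = T$ makes it trivial).
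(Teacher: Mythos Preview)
The statement you are attempting is Conjecture~\ref{TModuleIffNTModule}, which the paper explicitly leaves open; there is no proof in the paper to compare against. Your write-up is not a proof either, and to your credit you say so: the first two reduction steps (primary decomposition, then splitting off the $Z_{\{q\}}$-part) are valid, and for $q\notin\pi$ Corollary~\ref{qNotInpiModules} does settle things, but the crucial step for $q\in\pi$---that the finite $T$-submodules of $X_{\{q\}}$ form a chain---is asserted without argument. That claim is essentially the paper's Conjecture~10.3 (uniseriality of indecomposable $T$-submodules) specialised to $X_{\{q\}}$, so what you have really done is reduce one open conjecture to another, not prove it.

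Two technical slips are worth flagging. First, your justification for the Cartesian splitting of $M_q$ via Proposition~\ref{XqIndecomposable} is misdirected: indecomposability of $X_{\{q\}}$ does not by itself rule out $T$-isomorphic sections with $Z_{\{q\}}$. The correct reason is that $\langle s\rangle$ already forces the split (Theorem~\ref{qsSubmodules}), since $s$ acts trivially on $Z_{\{q\}}$ and non-trivially on every non-identity section of $X_{\{q\}}$. Second, the phrase ``transitive action of $T/\langle s\rangle$ on its factors (Lemma~\ref{tConjugacyonsqModules})'' is doubly wrong: $\langle s\rangle$ is not normal in non-solvable $T$ (Proposition~\ref{FitTransitiveDegreep}), so the quotient is undefined; and Lemma~\ref{tConjugacyonsqModules} concerns $\langle t\rangle$, which need not lie in $T$. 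The actual permutation action of $T$ on the $X_{\{q\}}^{(r)}$ is governed by how $T$ meets $N_{\mathrm{Sym}(p)}(\langle s\rangle)$, and sorting that out for the projective family is exactly the unresolved difficulty.
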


Suppose that $\SL(d,q)\unlhd T\leq 
\Sigma \mathrm{L}(d,q)$. 
If $q$ is prime then $N_{\mathrm{Sym}(p)}(T)=T$.
The only $p$ such that $5<p<10^6$ and 
$p=(q^d-1)/(q-1)$ with $q$ composite are 
$p=17$, $73$, $257$, $757$, $65537$,
and $262657$. 
Since the finite $\SL(2,16)$-submodules of 
$\mathrm{D}(17,\C)$ are $\mathrm{Sym}(17)$-modules 
(cf.~the proof of Lemma~\ref{OddqVmodules}),
the smallest degree at which
Conjecture~\ref{TModuleIffNTModule}
could fail is $73$. 
This conjecture has a bearing on the 
conjugacy problem (see 
Theorems~\ref{GLConjugateImpliesSamePP}
and \ref{Protoconjugacy}). 

\begin{conjecture}\label{NumberExtensions}
\emph{Every finite $T$-submodule of $D$ has
the same number of $T$-extensions in 
$\widetilde{\mathrm{M}}(p,\C)$
up to $\widetilde{\mathrm{M}}(p,\C)$-conjugacy.}
\end{conjecture}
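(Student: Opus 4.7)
The plan is to reformulate the extension count as a cohomological invariant and to show it depends only on $T$, not on the choice of $A$.

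First I would identify the $T$-extensions of $A$ sitting inside $\widetilde{\Mo}(p,\C)$ (that is, subgroups $G$ with $G \cap D = A$ and $\phi(G) = T$) with complements of $\widetilde{D}/A$ in $(\widetilde{D}/A) \rtimes T$, where $\widetilde{D}$ is the torsion part of $\Di(p,\C)$. Conjugation by $\widetilde{D}$ corresponds to adding $1$-coboundaries, so the $\widetilde{D}T$-conjugacy classes of such complements are parametrized by $H^1(T, \widetilde{D}/A)$. By Theorems~\ref{GLConjugateImpliesSamePP} and~\ref{Protoconjugacy}, $\widetilde{\Mo}$-conjugacy of two such $G$ with the same $A$ is realized by a monomial matrix that normalizes $A$, and therefore differs from $\widetilde{D}T$-conjugacy only by the outer action of $N_{\Sym(p)}(T)/T$ on $H^1$. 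Thus Conjecture~\ref{NumberExtensions} is equivalent to asserting that the orbit count $|H^1(T, \widetilde{D}/A)/(N_{\Sym(p)}(T)/T)|$ is independent of $A$.

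Next I would attempt to prove the stronger (and cleaner) statement that $H^1(T, \widetilde{D}/A)$ is independent of $A$ as an $N_{\Sym(p)}(T)/T$-set, by a prime-by-prime analysis. Decomposing $\widetilde{D} = \prod_q \widetilde{D}_{\{q\}}$ and $A = \prod_q A_{\{q\}}$ accordingly, the problem splits into verifying that $H^1(T, \widetilde{D}_{\{q\}}/A_{\{q\}})$ is the same for every admissible $A_{\{q\}}$ and each prime $q$. For $q$ coprime to $p|T|$, divisibility of $\widetilde{D}_{\{q\}}$ together with Maschke's theorem gives $H^1(T, \widetilde{D}_{\{q\}}/A_{\{q\}}) = 0$ uniformly. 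For $q \in \pi$ at which $\Omega_1 X_{\{q\}}$ is an irreducible $T$-module, Lemma~\ref{ZetaLem} furnishes a $T$-module isomorphism $\widetilde{D}_{\{q\}}/A_{\{q\}} \cong \widetilde{D}_{\{q\}}$, so the cohomology is literally unchanged.

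The remaining cases, and the crux of the proof, are the prime $p$ itself and those $q \in \pi$ at which $\Omega_1 X_{\{q\}}$ is reducible as a $T$-module (examples being $q = 2$ for $T = \SL(3,2) \leq \Sym(7)$ and $q = 3$ for $T = \PSL(2,11) \leq \Sym(11)$). For these primes I would apply the long exact sequence associated to
\[
0 \to A_{\{q\}} \to \widetilde{D}_{\{q\}} \to \widetilde{D}_{\{q\}}/A_{\{q\}} \to 0,
\]
combined with the detailed submodule structure proved in Section~\ref{SubgroupsOfDpC} (notably Propositions~\ref{410Bacskai} and~\ref{StructureofXqr}, and the zig-zag lattices of Lemmas~\ref{VSubmodules} and~\ref{PModules2}). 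Filtering $A_{\{q\}}$ along an ascending chain $A_{\{q\}} \subseteq A'_{\{q\}}$ of $T$-submodules, I would show at each step that the induced map $H^1(T, \widetilde{D}_{\{q\}}/A_{\{q\}}) \to H^1(T, \widetilde{D}_{\{q\}}/A'_{\{q\}})$ is an isomorphism, yielding stability.

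The principal obstacle is the $p$-primary part: here $p \mid |T|$ rules out semisimplicity, and the two families of $\Sym(p)$-submodules isolated by Lemma~\ref{SpModulesp} interact in a non-trivial way through the integral group ring. A secondary difficulty arises for projective $T = \SL(d,q)$ with $d \geq 3$, where $|N_{\Sym(p)}(T)/T| = |\Aut(\F_q)| > 1$ and the graph automorphism must be tracked through the cohomological identifications to confirm that orbit counts (and not merely cohomology cardinalities) are preserved as $A$ varies.
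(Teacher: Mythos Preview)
The statement you are attempting to prove is Conjecture~\ref{NumberExtensions}: it is \emph{open} in the paper, not a theorem. There is no proof in the paper to compare against. The paper records only that the conjecture has been verified for the compulsory permutation parts and for the non-compulsory $T$ in degrees $p\leq 17$, and it explains that those verifications were carried out using surjective endomorphisms of $TD_\pi$ acting identically on $T$; it then explicitly remarks that such endomorphisms need not exist in general (none exist for $T\cong\SL(5,2)$). So your proposal is not a re-derivation of a known argument but a research plan for an open problem.

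As a plan, your cohomological reformulation is reasonable and matches the spirit of Propositions~\ref{FirstNonSolvableReduction} and~\ref{1stCoh}. However, what you have written is not a proof: you yourself flag the $p$-primary case and the reducible-$\Omega_1 X_{\{q\}}$ primes as unresolved, and the filtration step ``I would show at each step that the induced map is an isomorphism'' is exactly the content of the conjecture at those primes, not something that follows from the long exact sequence alone. There is also a hidden dependency you should make explicit: your passage from $\widetilde{D}T$-conjugacy to $\widetilde{\Mo}$-conjugacy via the action of $N_{\Sym(p)}(T)/T$ on $H^1(T,\widetilde{D}/A)$ presupposes that elements of $N_{\Sym(p)}(T)$ normalize $A$; for $T$-submodules that are not $N_{\Sym(p)}(T)$-submodules this action is not defined on the nose, and the paper records precisely this issue as the separate open Conjecture~\ref{TModuleIffNTModule}. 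In short, your outline correctly locates the difficulties but does not overcome them, and the paper does not claim to either.
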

The number $n_{\scst T}$ in 
 Conjecture~\ref{NumberExtensions}
is $2$, $1$, $2$, $2$, $1$, $2$, $1$ for 
$T = \mathrm{S}_p$, $\mathrm{A}_p$
($p>5$), $\mathrm{A}_5$, 
$\SL(3,2)$, $\PSL(2,11)$, $M_{11}$, $M_{23}$,
respectively.
In degrees $p\in \{13, 17\}$, the
conjecture is true for $T=\SL(3,3)$ and 
$\SL(2,16)$, with $n_{\scst T}=2$ and $4$, 
respectively; we suppress the proofs. 
Surjective endomorphisms of $TD_\pi$ 
that act identically on $T$ (e.g., denoted 
as powers of $\kappa$ when $p=7$ and $T=V$)  
were used to validate Conjecture~\ref{NumberExtensions}
in the cases so far examined. 
Such maps need not always exist:
it can be shown that there are none for 
$T\cong \SL(5,2)$. 
However, another pattern emerges from the body of
results about $X_{\{q\}}$ for $p\leq 31$
and $q\neq p$.

\begin{conjecture}
\emph{Every indecomposable $T$-submodule of 
$D$ is uniserial.}
\end{conjecture}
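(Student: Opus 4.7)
The plan is to work prime by prime using the primary decomposition of $D = D_{\{p\}}\times \prod_{q\neq p} D_{\{q\}}$. Since each primary component is a $T$-submodule, any indecomposable $T$-submodule $M$ of $D$ must lie in a single component: otherwise its projections to two distinct primaries are themselves submodules (by coprimality of orders), and $M$ splits as their direct product, contradicting indecomposability. Hence it suffices to prove that every indecomposable $T$-submodule of $D_{\{p\}}$ and of each $D_{\{q\}}$ is uniserial.

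For $q=p$, Lemma~\ref{SpModulesp} severely restricts the finite $T$-submodules of $D_{\{p\}}$ to those $Y_{j,k,l}$ satisfying condition (iii). A direct inspection of the resulting sublattice, using the Goursat--Remak framework that produced $\mathcal Y$, shows that modulo the cyclic subgroup $Z_{\{p\}}$ the $T$-submodules of $X_{\{p\}}$ form a chain, so the indecomposable submodules are uniserial. For primes $q\neq p$ with $q\nmid |T|$, Maschke's theorem makes $\Omega_1 X_{\{q\}}$ semisimple as a $T$-module; combined with the $T$-indecomposability of $X_{\{q\}}$ (Proposition~\ref{XqIndecomposable}) this forces $\Omega_1 X_{\{q\}}$ to be $T$-irreducible, and Corollary~\ref{qNotInpiModules} then gives $X_{\{q\}}$ itself uniserial with submodule chain $\Omega_n X_{\{q\}}$. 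Adjoining $Z_{\{q\}}$ in the sense of Theorem~\ref{GoursatRemak} does not disturb the uniseriality of indecomposable pieces.

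The substantive case is $q\in\pi$. Here $X_{\{q\}}=\prod_{r=1}^v X_{\{q\}}^{(r)}$ with each $X_{\{q\}}^{(r)}$ uniserial over $\langle s\rangle$ (Proposition~\ref{StructureofXqr}). Any $T$-submodule $M$ of $X_{\{q\}}$ projects to some $X_{q,n_r}^{(r)}$ in each factor, and the tuple $(n_1,\dots,n_v)$ must be invariant under the cyclic permutation of indices induced by $t$ (Lemma~\ref{tConjugacyonsqModules}) and more generally under the full permutation action of $T$ on $\{1,\ldots,v\}$. The strategy is to show that within each $T$-orbit on $\{1,\ldots,v\}$ the induced sublattice of $T$-submodules is a zig-zag chain totally ordered by refinement, generalising Lemmas~\ref{VSubmodules} and~\ref{PModules2}. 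An indecomposable $T$-submodule must be supported on a single orbit, and chainhood within that orbit forces uniseriality.

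The main obstacle is establishing the zig-zag chain structure uniformly across all non-compulsory $T$ and all $q\in\pi$. A promising tool is vertex-source theory for indecomposable $\Z_q T$-lattices: since $T$ is transitive of prime degree $p\neq q$, the point stabiliser has order coprime to $p$, and one might hope to bound the vertex of an indecomposable submodule of the permutation lattice $D_{\{q\}}\cong (Z_{\{q\}})_R^T$ (Lemma~\ref{Spminus1}) by a $q$-subgroup of a stabiliser whose structure is controlled. A fallback is the direct combinatorial approach of Lemmas~\ref{VSubmodules} and~\ref{PModules2}, analysing the action of a chosen non-$s$ generator of $T$ on the generators $x_{q,n}^{(r)}$; this works in each specific degree but the uniformity demanded by the conjecture is precisely where genuinely new input seems required.
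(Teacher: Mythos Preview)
This statement is posed in the paper as an open \emph{conjecture}; the paper gives no proof, only the remark that the pattern is observed in $X_{\{q\}}$ for $p\leq 31$ and $q\neq p$. There is therefore nothing to compare your attempt against, and your own final paragraph already concedes that the decisive case $q\in\pi$ ``is precisely where genuinely new input seems required''. What you have written is a reasonable map of where the difficulty lies, not a proof.

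Two points in the sketch are worth flagging. First, your handling of $q=p$ is too optimistic, and in fact exposes a problem with the conjecture as literally stated. Every non-identity $\langle s\rangle$-submodule of $D_{\{p\}}$ contains $Z_p$ (see the remark after Lemma~\ref{ZModules}), so \emph{every} non-identity $T$-submodule of $D_{\{p\}}$ is indecomposable. Yet $Y_{p-2,2,0}=X_{p^{p-1}}Z_{p^2}$ is a $T$-module by Lemma~\ref{SpModulesp} and contains the incomparable $T$-submodules $X_{p^{p-1}}$ and $Z_{p^2}$, so it is not uniserial. Read literally the conjecture already fails at $q=p$; the sentence preceding it in the paper strongly suggests the intended assertion is only that each $X_{\{q\}}$ with $q\neq p$ is $T$-uniserial. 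Your ``direct inspection'' does not engage with this.

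Second, in the $q\in\pi$ discussion you invoke ``the full permutation action of $T$ on $\{1,\ldots,v\}$'', but no such action exists: $\langle s\rangle$ is not normal in non-solvable $T$, so a general element of $T$ does not permute the $\langle s\rangle$-indecomposables $X_{\{q\}}^{(r)}$. Only $N_T(\langle s\rangle)$ acts, and cyclically via some power of $t$. The zig-zag chains in Lemmas~\ref{VSubmodules} and~\ref{PModules2} come not from any orbit argument but from testing a chosen non-normalising generator of $T$ against the explicit $x_{q,n}^{(r)}$; the absence of a uniform replacement for that ad hoc calculation is exactly why the conjecture remains open.
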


Rather than pursuing such conjectures
to ever higher degrees, it seems more fruitful 
to classify primitive groups of moderate prime 
degree.
The non-solvable finite primitive subgroups of 
$\SL(p,\C)$ are listed up to isomorphism 
in \cite{DZI}. Our ultimate goal is a 
(complete, irredundant, explicit) classification 
 of all finite irreducible subgroups of 
$\GL(p,\C)$ for $p\leq 11$ (at least). 
The next section begins this work.

\section{Finite complex linear groups 
of degrees $2$ and $3$}\label{Deg2and3}

Some material in this section pertaining to 
finite primitive subgroups of $\SL(2,\C)$ 
and $\SL(3,\C)$ is common knowledge, tracing back to 
old classifications referenced in 
Section~\ref{Introduction}. A convenient source 
is \cite[Chapters~X, XII]{MillerBlichfeldtDickson}.

The following easy lemma and its corollary assist in
irredundancy proofs.
\begin{lemma} 
\label{CheckByCharacters}
Let $G$ be a finite irreducible subgroup of
$\GL(n,\F)$ where $\F$ is any field. 
Then $\mathrm{Aut}(G)$ has a natural action on the set 
of equivalence classes $[\rho]$ of
faithful irreducible representations 
$\rho\colon G \rightarrow \GL(n,\F)$, defined by 
$[\rho]\theta = [\rho \circ \theta]$. Under this
action,
\begin{itemize}
\item[{\rm (i)}] 
$\mathrm{Stab}_{\Aut(G)}([\rho]) \cong 
N_{\GL(n,\F)}(\rho(G))/C_{\GL(n,\F)}(\rho(G));$
\item[{\rm (ii)}] the 
 orbits are in one-to-one correspondence with the
 $\GL(n,\F)$-conjugacy classes of all irreducible 
subgroups of $\GL(n,\F)$ isomorphic to $G$.
\end{itemize}
\end{lemma}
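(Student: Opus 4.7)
The plan is a bookkeeping exercise around a single homomorphism from a normalizer into $\Aut(G)$. Before anything else I would verify that the prescribed rule $[\rho]\theta := [\rho\circ\theta]$ descends to equivalence classes (if $\rho' = g\rho g^{-1}$ then $\rho'\theta = g(\rho\theta)g^{-1}$) and satisfies the axioms of a right action; faithfulness and irreducibility survive composition with $\theta$ because $\theta$ bijects $G$ and does not alter the image of $\rho$.

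For (i), I would define $\Phi\colon N_{\GL(n,\F)}(\rho(G))\to \Aut(G)$ by $\Phi(g)(x) = \rho^{-1}(g\rho(x)g^{-1})$; this is well-defined because $\rho$ is injective and $g$ normalizes $\rho(G)$. A one-line check shows $\Phi$ is a homomorphism whose kernel is exactly $C_{\GL(n,\F)}(\rho(G))$. The key observation is that $\mathrm{Im}(\Phi) = \mathrm{Stab}_{\Aut(G)}([\rho])$: if $\theta = \Phi(g)$, then $\rho\theta = g\rho g^{-1}$, so $[\rho]\theta = [\rho]$; conversely, if $\rho\theta = g\rho g^{-1}$ for some $g\in\GL(n,\F)$, then $g\rho(G)g^{-1} = \rho(\theta(G)) = \rho(G)$, so $g$ in fact lies in the normalizer and $\theta = \Phi(g)$. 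The first isomorphism theorem then yields the claimed isomorphism.

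For (ii), I would send an $\Aut(G)$-orbit of $[\rho]$ to the $\GL(n,\F)$-conjugacy class of $\rho(G)$. This is well-defined because $(\rho\circ\theta)(G) = \rho(G)$ and equivalent representations have conjugate images. Surjectivity is immediate: every irreducible $H\leq\GL(n,\F)$ isomorphic to $G$ is realized as $\rho(G)$ for $\rho$ equal to the composition of any isomorphism $G\to H$ with the inclusion $H\hookrightarrow \GL(n,\F)$. For injectivity I would run the argument from (i) in reverse: if $g\rho_1(G)g^{-1}=\rho_2(G)$, then $\theta\colon x\mapsto \rho_2^{-1}(g\rho_1(x)g^{-1})$ lies in $\Aut(G)$ and satisfies $\rho_2\circ\theta = g\rho_1 g^{-1}\sim \rho_1$, placing $[\rho_1]$ and $[\rho_2]$ in the same $\Aut(G)$-orbit.

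The only step needing alertness, which I would flag as the main (modest) obstacle, is the automatic promotion of an intertwiner $g\in\GL(n,\F)$ realizing $\rho\theta\sim\rho$ to an element of $N_{\GL(n,\F)}(\rho(G))$; once this is observed, both (i) and (ii) reduce to the first isomorphism theorem and its orbit-space analogue, with no further input required.
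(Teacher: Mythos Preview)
Your argument is correct and complete. The paper itself does not supply a proof of this lemma---it is introduced as ``easy'' and stated without justification---so there is no alternative approach to compare against; your treatment via the homomorphism $\Phi\colon N_{\GL(n,\F)}(\rho(G))\to\Aut(G)$ and the first isomorphism theorem is the natural one the authors presumably had in mind.
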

\begin{corollary}\label{SingleCClNonsolvable}
Let $G$ be a finite absolutely irreducible subgroup 
of $\GL(n,\F)$ that is self-normalizing in $\GL(n,\F)$
 modulo scalars. If there are precisely
$|\mathrm{Out}(G)|$
inequivalent faithful 
absolutely irreducible representations of $G$ in $\GL(n,\F)$, 
then every absolutely irreducible subgroup
of $\GL(n,\F)$ isomorphic to $G$ is conjugate to $G$. 
\end{corollary}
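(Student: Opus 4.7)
The plan is to apply Lemma~\ref{CheckByCharacters} to the $\Aut(G)$-action restricted to equivalence classes of faithful \emph{absolutely} irreducible representations $G \to \GL(n,\F)$. First, observe that this restriction is legitimate: for any $\theta \in \Aut(G)$ and representation $\rho$, the images $\rho(G)$ and $(\rho \circ \theta)(G)$ coincide, so absolute irreducibility is preserved by the $\Aut(G)$-action. Hence both the stabilizer formula in (i) and the orbit correspondence in (ii) of the lemma descend verbatim to the absolutely irreducible case.

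Next, I would compute the stabilizer of $[\mathrm{id}_G]$ under this action. Since $G$ is absolutely irreducible, Schur's Lemma yields $C_{\GL(n,\F)}(G) = \F^\times I_n$, and the self-normalizing-modulo-scalars hypothesis gives $N_{\GL(n,\F)}(G) = G \cdot \F^\times I_n$. Therefore
\[
N_{\GL(n,\F)}(G)/C_{\GL(n,\F)}(G) \;\cong\; G/(G \cap \F^\times I_n) \;=\; G/Z(G) \;=\; \mathrm{Inn}(G).
\]
By Lemma~\ref{CheckByCharacters}~(i), this quotient is precisely $\mathrm{Stab}_{\Aut(G)}([\mathrm{id}_G])$, so the orbit-stabilizer theorem shows the $\Aut(G)$-orbit of $[\mathrm{id}_G]$ has cardinality $|\Aut(G)|/|\mathrm{Inn}(G)| = |\mathrm{Out}(G)|$.

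Finally, the hypothesis provides exactly $|\mathrm{Out}(G)|$ equivalence classes of faithful absolutely irreducible representations of $G$ in $\GL(n,\F)$, so the orbit just computed already exhausts the entire set. By Lemma~\ref{CheckByCharacters}~(ii), this single orbit corresponds to a single $\GL(n,\F)$-conjugacy class of absolutely irreducible subgroups isomorphic to $G$, establishing the corollary.

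There is no serious obstacle here: the argument is a clean orbit-stabilizer count once the two inputs (Schur's Lemma and self-normalization modulo scalars) are combined. The only point requiring care is the observation above that the $\Aut(G)$-action of Lemma~\ref{CheckByCharacters} genuinely restricts to absolutely irreducible classes, so that both parts of the lemma remain applicable after restriction.
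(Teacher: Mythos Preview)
Your proof is correct and is exactly the argument the paper intends: the corollary is stated without proof precisely because it is the immediate orbit--stabilizer consequence of Lemma~\ref{CheckByCharacters} once one identifies the stabilizer of $[\mathrm{id}_G]$ with $\mathrm{Inn}(G)$ via Schur's Lemma and the self-normalizing hypothesis. Your care in noting that the $\Aut(G)$-action restricts to absolutely irreducible classes (since $\rho$ and $\rho\circ\theta$ share the same image) is the only point that needed comment, and you handled it cleanly.
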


\subsection{Degree 2}
Some of our generators for the primitive 
groups in degree $2$ are taken from 
\cite[\S \S \! 102--103]{MillerBlichfeldtDickson}.
The others are in $D_{\{p\}}$; 
see Section~\ref{pPowerSubmodules}. 
\begin{definition}
Let 
\[
a= {\small
\textstyle{\frac{1}{2}}
\left[
\renewcommand{\arraycolsep}{.15cm}
 \begin{array}{lr}
\mathrm{i}-1 & \mathrm{i}-1 \\
\vspace{-.55cm} & \\
\mathrm{i}+1& -\mathrm{i}-1 \end{array} \right]},
\qquad 
b= {\small
\textstyle{\frac{1}{\sqrt{2}}}
\left[
\renewcommand{\arraycolsep}{.1cm}
\begin{array}{cc}
1+\mathrm{i} & 0 \\
0 & 1-\mathrm{i} \end{array} \right]}, \qquad
c= {\small
\textstyle{\frac{1}{2}}
\left[
\renewcommand{\arraycolsep}{.1cm}
\begin{array}{cc}
\mathrm{i} & \lambda_{\scst 1} - 
\lambda_{\scst 2}\mathrm{i} \\
-\lambda_{\scst 1} -
\lambda_{\scst 2}\mathrm{i} & 
- \mathrm{i} \end{array}
\right]},
\]
where $\lambda_1 = \frac{1-\sqrt{5}}{2}$ and
 $\lambda_2 = \frac{1+\sqrt{5}}{2}$.
\end{definition}

\begin{theorem}\label{Degree2PrimitivesAll}\mbox{}
\begin{itemize}
\item[{\rm (i)}] 
A finite subgroup $G$ of $\GL(2,\C)$
 is (irreducible) primitive if and only if 
$G\cap \mathrm{D}(2,\C) = Z(G)$ has even order and 
$G/Z(G)$ is isomorphic to 
$\mathrm{Alt}(4)$ or $\mathrm{Sym}(4)$ or
$\mathrm{Alt}(5)$. 
\item[{\rm (ii)}] Let $n\geq 2$ be an even integer. 
\begin{itemize}
\item
For $K= \mathrm{Alt}(4)$ and $K = \mathrm{Sym}(4)$, 
there are precisely two conjugacy classes of groups 
$G\leq \GL(2,\C)$ such that $|Z(G)| = n$ and 
$G/Z(G)\cong K$.
These have representatives
$\langle   a ,  x_4,  z_n   \rangle$, 
$\langle   az_{3n} ,   x_4   \rangle$
when $K=\mathrm{Alt}(4)$, and 
$\langle   a ,  b ,  z_n  \rangle$, 
$\langle   a ,   bz_{2n} ,  z_n \rangle$
when $K=\mathrm{Sym}(4)$.
\item 
Every subgroup of $\GL(2,\C)$ with center of order $n$ 
and central quotient $\mathrm{Alt}(5)$ is conjugate to 
$\langle   a ,  c ,   x_4 ,  z_n \rangle$.
\end{itemize}
\end{itemize}
\end{theorem}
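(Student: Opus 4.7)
The plan splits into the two parts of the statement. For part~(i), I invoke Klein's classification: the finite subgroups of $\PSL(2,\C)$ are cyclic, dihedral, or isomorphic to $\mathrm{Alt}(4)$, $\mathrm{Sym}(4)$, or $\mathrm{Alt}(5)$. A finite irreducible $G\leq \GL(2,\C)$ is imprimitive if and only if its projective image stabilises an unordered pair of lines, equivalently is cyclic or dihedral; hence $G$ is primitive precisely when $G/Z(G)$ lies in the exceptional list. For such $G$, Schur's Lemma gives $Z(G)=G\cap Z(\GL(2,\C))$, while any nonscalar element of $G\cap \Di(2,\C)$ would (via Clifford's Theorem applied to the abelian normal subgroup it generates with $Z(G)$) yield a system of imprimitivity preserved by $G$, contradicting primitivity; so $G\cap \Di(2,\C)=Z(G)$. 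Since none of $\mathrm{Alt}(4)$, $\mathrm{Sym}(4)$, $\mathrm{Alt}(5)$ has a faithful two-dimensional complex representation, while each has Schur multiplier $C_2$, the preimage of $K:=G/Z(G)$ in $G$ must realise a binary double cover of $K$---namely $\SL(2,3)$, a double cover of $\mathrm{Sym}(4)$, or $\SL(2,5)$---each of which contains $-I$; hence $-I\in Z(G)$ and $|Z(G)|$ is even.

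For part~(ii), I apply Lemma~\ref{CheckByCharacters} and, where applicable, Corollary~\ref{SingleCClNonsolvable}. For each $K$ and each even $n$, I first enumerate the abstract isomorphism types of central extensions of $K$ by $C_n$ admitting a faithful two-dimensional complex representation: these are the central products $\tilde K \ast_{C_2} C_n$, where $\tilde K$ ranges over the binary double covers of $K$. Then for each such $G$, I count the $\mathrm{Aut}(G)$-orbits on equivalence classes of faithful two-dimensional characters; by Lemma~\ref{CheckByCharacters}(ii) this equals the number of $\GL(2,\C)$-conjugacy classes of subgroups isomorphic to $G$.

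Per case: for $K=\mathrm{Alt}(5)$, the unique binary cover is $\SL(2,5)$, whose two faithful two-dimensional characters are exchanged by the outer automorphism; all $2\phi(n)$ faithful two-dimensional characters of $G=\SL(2,5)\ast_{C_2} C_n$ lie in a single $\mathrm{Aut}(G)$-orbit, and since this count equals $|\mathrm{Out}(G)|=2\phi(n)$, Corollary~\ref{SingleCClNonsolvable} gives one conjugacy class. For $K=\mathrm{Sym}(4)$, there are two non-isomorphic binary covers $\GL(2,3)$ and $2{\cdot}\mathrm{Sym}(4)^{+}$, each with a unique faithful two-dimensional character; the associated central products with $C_n$ are non-isomorphic abstract groups, each yielding one conjugacy class, for two total. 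For $K=\mathrm{Alt}(4)$, the unique binary cover $\SL(2,3)$ has three faithful two-dimensional characters $\chi_4,\chi_5,\chi_6$; the outer automorphism of $\SL(2,3)$ (induced by $\GL(2,3)\supset \SL(2,3)$) fixes $\chi_4$ and swaps $\chi_5\leftrightarrow \chi_6$, so the $\mathrm{Aut}(G)$-action on the $3\phi(n)$ faithful two-dimensional characters of $G=\SL(2,3)\ast_{C_2} C_n$ partitions into orbits of sizes $\phi(n)$ and $2\phi(n)$, yielding two conjugacy classes.

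Finally, I verify that the listed generating sets realise representatives of these orbits. Direct matrix calculation shows $\langle a,x_4\rangle\cong \SL(2,3)$, $\langle a,b\rangle$ is a double cover of $\mathrm{Sym}(4)$, and $\langle a,c,x_4\rangle\cong \SL(2,5)$; adjoining $z_n$ forms the central product with $C_n$, while the scalar twists $a\mapsto az_{3n}$ and $b\mapsto bz_{2n}$ produce the second representative in the $\mathrm{Alt}(4)$ and $\mathrm{Sym}(4)$ cases, respectively. The main obstacle is confirming non-conjugacy of the two listed representatives, and the cleanest invariant is the image of the determinant homomorphism: it is cyclic of order $n/2$ for $\langle a,x_4,z_n\rangle$ but $3n/2$ for $\langle az_{3n},x_4\rangle$, and of order $n/2$ for $\langle a,b,z_n\rangle$ but $n$ for $\langle a,bz_{2n},z_n\rangle$; these differ for every $n\geq 2$.
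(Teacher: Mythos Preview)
Your overall strategy matches the paper's: reduce to the classification of finite subgroups of $\PGL(2,\C)$ and then resolve central extension and conjugacy problems, with your use of Lemma~\ref{CheckByCharacters} making the orbit count more explicit than the paper's terse appeal to ``standard $2$-cohomology.'' Two points deserve correction, however.

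First, in part~(i) your argument that a nonscalar $d\in G\cap\Di(2,\C)$ produces an abelian \emph{normal} subgroup is unsupported: you give no reason that $\langle d, Z(G)\rangle$ is normal in $G$. In fact the assertion ``$G\cap\Di(2,\C)=Z(G)$'' fails for primitive $G$ in general---the paper's own representative $\langle a, x_4, z_n\rangle$ contains the nonscalar diagonal matrix $x_4=\mathrm{diag}(\mathrm{i},-\mathrm{i})$. The substantive content of~(i) is that $Z(G)$ is scalar (Schur) of even order with the stated central quotient, and that part of your argument is fine; the theorem statement itself appears to carry a slip here.

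Second, for $K=\mathrm{Sym}(4)$ your claim that each double cover has a \emph{unique} faithful two-dimensional character is wrong: both $\GL(2,3)$ and the binary octahedral group $2O$ have faithful irreducible character degrees $2,2,4$, hence two faithful degree-$2$ characters each. What rescues the count is that in each cover the outer automorphism $g\mapsto g\cdot(-1)^{\mathrm{sgn}(\bar g)}$ acts on characters as tensoring with the sign, and one checks (via traces on preimages of $4$-cycles) that this swaps the two faithful degree-$2$ characters. So each cover still contributes a single $\mathrm{Aut}$-orbit and your total of two conjugacy classes stands, but the justification needs repair. (Note also that $\GL(2,3)$ and $2{\cdot}\mathrm{Sym}(4)^{+}$ are, under the most common convention, the \emph{same} group; the second cover is $2O\cong 2{\cdot}\mathrm{Sym}(4)^{-}$.)
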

\begin{proof}
 The proofs of parallel statements 
in \cite[Theorem~5.4]{Flannery4FiniteFields} and
\cite[Theorems~5.8, 5.11]{FlanneryOBrien} for 
non-modular absolutely irreducible primitive groups 
transfer with minor adjustments.
There is a finite primitive group 
$H\leq \SL(2,\C)$ such that $GZ=HZ$ and 
so $G/Z(G)\cong H/Z(H)$. The possible isomomorphism
types of $H/Z(H)$ are identified in 
\cite[\S\S \! 102--103]{MillerBlichfeldtDickson}.
We solve central extension problems for subgroups
of $\GL(2,\C)$ using standard $2$-cohomology
to prove completeness.
\end{proof}
\begin{theorem}
The union of $\mathcal{L}_0$ as in 
Theorem{\em ~\ref{p2Done}} and the set of
all groups listed 
in Theorem{\em ~\ref{Degree2PrimitivesAll}~(ii)} 
as $n$ ranges over the positive even integers is a 
classification of the finite irreducible 
(i.e., finite non-abelian) 
subgroups of $\GL(2,\C)$.
\end{theorem}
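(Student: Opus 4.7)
The plan is to exploit the prime-degree dichotomy: any finite irreducible $G \le \GL(2,\C)$ is either primitive or admits a non-trivial system of imprimitivity, and in degree $2$ the latter forces $G$ to be $\GL(2,\C)$-conjugate to a monomial group with transitive permutation part $C_2$. First I would verify the parenthetical identification ``finite irreducible $=$ finite non-abelian'': by Maschke's theorem a finite reducible subgroup of $\GL(2,\C)$ is simultaneously diagonalizable and hence abelian, while every finite abelian subgroup is reducible because its complex irreducible representations are one-dimensional.

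Next I would treat the two cases separately. If $G$ has a non-trivial block system, the blocks necessarily have size $1$ (the only proper divisor of $2$), and picking a non-zero vector from each block produces a basis conjugating $G$ into $\Mo(2,\C)$ with $\phi(G)$ transitive; Theorem~\ref{p2Done} then furnishes a unique conjugate in $\mathcal{L}_0$. If instead $G$ is primitive, Theorem~\ref{Degree2PrimitivesAll}~(i) tells us that $Z(G) = G \cap \mathrm{D}(2,\C)$ has some positive even order $n$ and that $G/Z(G)$ is isomorphic to $\mathrm{Alt}(4)$, $\mathrm{Sym}(4)$, or $\mathrm{Alt}(5)$; Theorem~\ref{Degree2PrimitivesAll}~(ii), applied with this value of $n$, then supplies a representative on the explicit list.

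Irredundancy splits along the same lines. Lying in $\Mo(2,\C)$ up to $\GL(2,\C)$-conjugacy is equivalent to imprimitivity, so no group in $\mathcal{L}_0$ can be conjugate to a primitive one; within $\mathcal{L}_0$ irredundancy is Theorem~\ref{p2Done}, and within the primitive part it is Theorem~\ref{Degree2PrimitivesAll}~(ii) together with the observation that the central order $n$ and the isomorphism type of $G/Z(G)$ are invariants of the $\GL(2,\C)$-conjugacy class. I expect no serious obstacle: the only step requiring content beyond the quoted theorems is the reduction of an imprimitive irreducible group to a monomial conjugate, and that is the standard change-of-basis argument sketched above.
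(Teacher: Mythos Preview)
Your proposal is correct and follows exactly the route the paper implicitly relies on: the paper states this theorem without proof, treating it as an immediate corollary of Theorem~\ref{p2Done} and Theorem~\ref{Degree2PrimitivesAll} via the primitive/imprimitive dichotomy in prime degree. Your argument spells out precisely the steps (imprimitive $\Leftrightarrow$ monomial in degree~$2$, the identification of irreducible with non-abelian, and the disjointness of the two sublists) that the paper takes for granted.
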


\subsection{Degree 3}
\begin{theorem}\label{Deg3PrimSol}
Let $G$ be a finite solvable primitive subgroup 
of $\GL(3,\C)$. 
\begin{itemize}
\item[{\rm (i)}]
$G\cap \mathrm{D}(3,\C) = Z(G)$ 
has order divisible by $3$. 
\item[{\rm (ii)}]
$G/Z(G)\cong E_H:= (C_3\times C_3)\rtimes H$ 
where $H$ is $C_4$ or $Q_8$ (the quaternion 
group of order $8$) or $\SL(2,3)$. 
\item[{\rm (iii)}] For $n\equiv 0 \bmod 3$, denote
by $m_{n, H}$ the number of $\GL(3,\C)$-conjugacy
classes of $G$ such that $|Z(G)| = n$ and 
$G/Z(G)\cong E_H$.  
If $H= Q_8$ then $m_{n,H}=2$. If $H$ is $C_4$ or 
$\SL(2,3)$ then $m_{n,H}=3$. 
\end{itemize}
\end{theorem}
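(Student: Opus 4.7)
The plan is to follow the blueprint of Theorem~\ref{Degree2PrimitivesAll}, adapting the classification of solvable primitive linear groups in degree $3$ (as worked out in \cite{FlanneryOBrien,Flannery4FiniteFields} for the non-modular absolutely irreducible setting) to the complex field.

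For part~(i), I would first combine Schur's lemma with Clifford's theorem. Irreducibility forces $Z(G)$ to consist of scalar matrices, so $Z(G) \subseteq \mathrm{D}(3,\C)\cap G$; conversely, a non-scalar abelian normal subgroup would partition $\C^{3}$ into proper homogeneous components permuted by $G$, contradicting primitivity. Hence $\mathrm{D}(3,\C)\cap G = Z(G)$. Divisibility by $3$ is then obtained from the classical theorem of Jordan and Blichfeldt (formalized by Suprunenko): a finite solvable primitive subgroup of $\GL(p,\C)$ of prime degree $p$ contains a normal absolutely irreducible extraspecial $p$-subgroup $E$ of order $p^{3}$ and exponent $p$, and the scalar subgroup $Z(E)$ of order $p=3$ lies in $Z(G)$.

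For part~(ii), let $E\trianglelefteq G$ be as above. Absolute irreducibility of $E$ gives $C_{G}(E)=Z(G)$, so conjugation induces an embedding $G/EZ(G)\hookrightarrow \mathrm{Aut}(E)/\mathrm{Inn}(E)$; since the commutator pairing on $E/Z(E)\cong(C_{3})^{2}$ is preserved, the image $H$ lies in $\mathrm{Sp}(2,3)=\SL(2,3)$. Irreducibility of $G$ on $\C^{3}$ is equivalent to $H$ acting irreducibly on $(\F_{3})^{2}$, and a direct inspection of $\SL(2,3)$ leaves exactly three possibilities, namely the non-split Cartan $C_{4}$, the subgroup $Q_{8}$, and the full group $\SL(2,3)$. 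Since $\gcd(|E/Z(E)|,|H|)=\gcd(9,|H|)=1$, Gasch\"utz's theorem yields a complement in $G/Z(G)$, so $G/Z(G)\cong(C_{3}\times C_{3})\rtimes H = E_{H}$.

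For part~(iii), the count is carried out in three stages. First, I would enumerate central extensions $1\to C_{n}\to G\to E_{H}\to 1$ via $H^{2}(E_{H},C_{n})$ with trivial action, using universal coefficients together with the Schur multipliers of the Hessian groups $E_{C_{4}}$, $E_{Q_{8}}$, $E_{\SL(2,3)}$. Second, I would retain only those extensions admitting a faithful $3$-dimensional complex representation: fix a standard projective representation $E_{H}\to\PGL(3,\C)$ and track which cohomology classes arise as pullbacks from $\GL(3,\C)$. Third, I would apply Lemma~\ref{CheckByCharacters} to group the resulting extensions into $\GL(3,\C)$-conjugacy classes, identified with the orbits of $\mathrm{Aut}(E_{H})$ on faithful $3$-dimensional characters. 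Explicit representatives can then be written down by lifting standard generators of $E_{H}$ to matrices in $\GL(3,\C)$ and twisting by scalars drawn from $\langle z_{n}\rangle$ and $\langle z_{3n}\rangle$, in direct analogy with Theorem~\ref{Degree2PrimitivesAll}(ii). The hardest step is the orbit bookkeeping in stage three: verifying $m_{n,Q_{8}}=2$ while $m_{n,C_{4}}=m_{n,\SL(2,3)}=3$ comes down to computing the stabilizer inside $\mathrm{Out}(E_{H})$ of the character of the chosen $3$-dimensional lift, and this must be performed case-by-case for each $H$.
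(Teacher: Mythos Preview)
Your overall strategy matches the paper's, which simply cites \cite[\S 6.4]{FlanneryOBrien} and transfers those arguments to $\C$. However, two steps in your write-up do not go through as stated.

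First, in part~(ii) your splitting argument via Gasch\"utz fails for $H=\SL(2,3)$: here $|H|=24$ and $\gcd(9,24)=3\neq 1$, so Schur--Zassenhaus does not apply to the extension $1\to (C_3)^2\to G/Z(G)\to H\to 1$. The conclusion $G/Z(G)\cong E_{\SL(2,3)}$ is correct, but it requires a genuine cohomology computation (or an explicit construction of a complement inside $G/Z(G)$, e.g.\ by analysing a Sylow $3$-subgroup of $G$ and exhibiting an element of order $3$ outside $E$ that normalises a copy of $Q_8$). You should also replace ``Irreducibility of $G$ on $\C^3$'' by ``Primitivity of $G$'': since $E$ is already absolutely irreducible, $G\supseteq E$ is irreducible for every $H$; it is \emph{primitivity} that is equivalent to $H$ acting irreducibly on $(\F_3)^2$.

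Second, your argument for $G\cap\mathrm{D}(3,\C)=Z(G)$ in part~(i) is incomplete. You correctly observe that any abelian \emph{normal} subgroup of a primitive group is central, but $G\cap\mathrm{D}(3,\C)$ is abelian without being, a priori, normal in $G$. The inclusion $G\cap\mathrm{D}(3,\C)\subseteq Z(G)$ therefore needs a separate justification (for instance via the explicit representatives, or by showing that a non-scalar diagonal element of $G$ would force a monomial structure through its interaction with the normal extraspecial subgroup~$E$).
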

\begin{proof}
Once more we appeal to our proofs of these results 
for non-modular absolutely irreducible primitive 
groups: see \cite[\S \! 6.4]{FlanneryOBrien}.
\end{proof}

We give a more detailed version of 
Theorem~\ref{Deg3PrimSol}. 
The generating sets below are transcribed from those 
in Theorems~6.22--6.24 and Corollary~6.26 of 
\cite{FlanneryOBrien} (deleting redundant generators).
\begin{theorem}\label{SolvablePrimitive3}
Let $n\equiv 0 \bmod 3$.  Define
\[
u = 
{\small
\frac{1}{\epsilon-\epsilon^2}
\left[
\begin{array}{ccc}
1 & 1 & 1 \\
1 & \epsilon & \epsilon^2 \\
1 & \epsilon^2 & \epsilon
\end{array} \right]} 
\qquad
\mbox{and} 
\qquad 
u' = {\small
\frac{1}{\epsilon-\epsilon^2}
\left[
\begin{array}{ccc}
1 & \epsilon & \epsilon \\
\epsilon^2 & \epsilon & \epsilon^2 \\
\epsilon^2 & \epsilon^2 & \epsilon
\end{array} \right]} 
\]
where $\epsilon = e^{2\pi \mathrm{i}/3}$.
Up to conjugacy, the finite solvable 
primitive subgroups of 
$\GL(3,\C)$ with center of
order $n$ and central quotient $E_H$ are as follows.
\begin{itemize}
\item[{\rm (i)}]  For $H=C_4 \! :$
\[
\begin{array}{l}
\quad 
\langle s, u,  z_n \rangle  
\qquad \quad \mbox{all} \,  n \\
\begin{array}{ll}
\left. 
\begin{array}{l}
\langle s, -u, z_n\rangle  \\
\langle s, \mathrm{i}u, z_n\rangle
\end{array} 
\right\} & n \ \mbox{odd} 
\end{array} \\
\begin{array}{ll}
\left.
\begin{array}{l}
\langle s, z_{4n} u,  z_n 
\rangle \\
\langle s, z_{4n}^2 u,  z_n \rangle
\end{array}
\right\} 
&  n \equiv 0 \bmod 4
\end{array}\\
\begin{array}{ll}
\left. 
\begin{array}{l}
\langle s, \mathrm{i}u, z_n\rangle \\
\langle s, \sqrt{\mathrm{i}} 
\hspace{.5pt} u, z_n \rangle 
\end{array}
\right\} & n\equiv 2 \bmod 4 .
\end{array}
\end{array}
\]
\item[{\rm (ii)}]  For $H = Q_8 \! :$
\[
\begin{array}{ll}
\langle s, u, u', z_n \rangle & \\
\langle s, u, -u', z_n \rangle & \ n 
\ \mbox{odd only} \\ 
\langle s, uz_{2n}, u', z_n \rangle & \ n 
\ \mbox{even only} .
\end{array}
\]
\item[]
\item[{\rm (iii)}] For $H= \SL(2,3) \! :$
\ $\langle u, x_{27}, z_n \rangle$, \,  
$\langle u, z_{3n}x_{27}, z_n \rangle$, \,
$\langle u, z_{3n}^2 x_{27}, z_n  \rangle$.
\end{itemize}
\end{theorem}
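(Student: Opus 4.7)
The plan is to leverage Theorem~\ref{Deg3PrimSol}, which has already determined the numbers $m_{n,H}$ of $\GL(3,\C)$-conjugacy classes in each case. It remains to exhibit, for each pair $(n,H)$, precisely $m_{n,H}$ pairwise non-conjugate subgroups of $\GL(3,\C)$ matching the stated generators, and to verify that our list exhausts them. Since Theorem~\ref{Deg3PrimSol} was proved by transcription from the non-modular absolutely irreducible classification in \cite[\S\! 6.4]{FlanneryOBrien}, the same transcription supplies candidate generating sets; the remaining task is to confirm that the listed generators realise the required counts and are pairwise non-conjugate.

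First, I would anchor each family to a base representation of $E_H$ in $\PGL(3,\C)$. The extraspecial normal subgroup $C_3 \times C_3$ lifts to $\langle s, x_3 \rangle Z / Z$ in the standard way, where $s$ is the $3$-cycle permutation matrix and $x_3 = \mathrm{diag}(1,\epsilon,\epsilon^2)$. The normalising $H$-action is supplied by a Vandermonde-type intertwiner: the scaling of $u$ is chosen so that $u$ has finite order and conjugation by $u$ permutes $\{s, x_3\}$ within $\langle s, x_3\rangle Z$, giving an order-$4$ element of $H/Z$. For $H = Q_8$, a second generator $u'$ is taken in a suitable coset of $Z$ so that $\langle u, u' \rangle Z/Z \cong Q_8$. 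For $H = \SL(2,3)$, the order-$3$ generator of $H/Z$ descends from $x_{27}$, and $\langle u, x_{27} \rangle Z / Z \cong \SL(2,3)$.

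Second, I would enumerate scalar twists that account for central extensions. For fixed $n$, the finite primitive $G \leq \GL(3,\C)$ with $Z(G) = \langle z_n \rangle$ and $G/Z(G) \cong E_H$ correspond to classes in $H^2(E_H, \langle z_n \rangle)$; each class is realised by multiplying a distinguished generator ($u$, $u'$, or $x_{27}$) by an appropriate root of unity. The arithmetic conditions on $n$ record exactly when such a twist produces a new isomorphism type versus collapsing with a previously listed one: for example, replacing $u$ by $-u$ is absorbed into $\langle z_n \rangle$ when $n$ is even, but yields a fresh class when $n$ is odd, matching the appearance of $-u$ only in the odd-$n$ sublist for $H = C_4$. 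Analogous considerations with $\mathrm{i}u$, $\sqrt{\mathrm{i}}\,u$, $z_{4n}u$, $z_{3n}^i x_{27}$, and $uz_{2n}$ cover the remaining divisibility cases.

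The main obstacle will be proving irredundancy: showing that no two groups in the list are $\GL(3,\C)$-conjugate. For this I would invoke Lemma~\ref{CheckByCharacters} together with trace comparisons. Scaling a generator by a root of unity $\lambda$ multiplies the traces of the corresponding matrix by $\lambda$, and the scalar factors in the list have been chosen so that distinct twists produce character values that differ modulo the center; hence the associated faithful irreducible representations of the common central extension are inequivalent, and the subgroups are non-conjugate. Completeness then follows automatically from matching the totals against $m_{n,H}$ as given in Theorem~\ref{Deg3PrimSol}, while the remaining checks on orders, centers, and central quotients of the listed generating sets are routine matrix computations inherited essentially verbatim from \cite{FlanneryOBrien}.
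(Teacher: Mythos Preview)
Your proposal is correct and takes essentially the same approach as the paper: the paper does not supply an independent proof of this theorem but simply states that the generating sets are transcribed from Theorems~6.22--6.24 and Corollary~6.26 of \cite{FlanneryOBrien} (with redundant generators deleted), relying on Theorem~\ref{Deg3PrimSol} for the class counts $m_{n,H}$. Your outline of anchoring to a base representation of $E_H$, enumerating scalar twists via $H^2(E_H,\langle z_n\rangle)$, and distinguishing classes by trace comparisons is precisely the content of the cited arguments in \cite{FlanneryOBrien}, so you have correctly reconstructed the underlying reasoning even though the paper itself merely points to the reference.
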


It would be pleasing to have classifications
of  finite solvable primitive subgroups of 
$\GL(p,\C)$ for larger $p$.

The finite non-solvable primitive subgroups of 
$\SL(3,\C)$ were also listed by 
Blichfeldt~\cite{Blichfeldt}. 
We fill out this listing to all of $\GL(3,\C)$ 
via the techniques employed in degree $2$ to 
prove Theorem~\ref{Degree2PrimitivesAll}.
\begin{definition}
Let 
\[
a' =   {\small
\frac{1}{2}
\left[
\renewcommand{\arraycolsep}{.15cm}
 \begin{array}{rrr}
-1 & \mu_2 & \mu_1 \\
\mu_2 & \mu_1 & -1 \\
\mu_1 & -1 & \mu_2
\end{array} \right]}
\qquad \text{and} \qquad
b'= 
{\small
\left[
\renewcommand{\arraycolsep}{.15cm}
 \begin{array}{rrr}
-1 & 0 & 0 \\
0 & 0 & -\epsilon^2 \\
0 & -\epsilon & 0
\end{array} \right]},
\]
where $\mu_1 = \frac{-1+\sqrt{5}}{2}$, 
$\mu_2 = \frac{-1-\sqrt{5}}{2}$, and
$\epsilon = e^{2\pi \mathrm{i}/3}$.
Let $c'$ be the $3\times 3$ back-circulant
matrix whose first row is
$\frac{1}{\sqrt{-7}} \,  [\omega^4-\omega^3 ,  
\omega^2 - \omega^5 , \omega - \omega^6 ]$ 
where  
 $\omega = e^{2\pi \mathrm{i}/7}$.
\end{definition}
\begin{theorem}\label{Deg3NonSolPrim}
A finite non-solvable subgroup $G$
of $\GL(3,\C)$, with center of order $n$,
is primitive if and only if $G$ is 
$\GL(3,\C)$-conjugate to one of
\begin{itemize}
\item[{\rm (i)}] $\langle 
s, \mathrm{diag}(1,-1,-1), a' , z_n \rangle 
 \cong \mathrm{Alt}(5)\times Z(G);$
\item[{\rm (ii)}] $\langle s, \mathrm{diag}(1,-1,-1),
a', b', z_n\rangle$ for $n \equiv 0 \bmod 3$,
 containing $3 \cdot \mathrm{Alt}(6)$ and
with central quotient $\mathrm{Alt}(6);$
\item[{\rm (iii)}] 
$\langle \mathrm{diag}(\omega, \omega^2, \omega^4),
c', z_n \rangle \cong \PSL(2,7)\times Z(G)$. 
\end{itemize}
\end{theorem}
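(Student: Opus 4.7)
The plan is to mirror the strategy of Theorem~\ref{Degree2PrimitivesAll}, using Blichfeldt's classification~\cite{Blichfeldt} of the finite non-solvable primitive subgroups of $\SL(3,\C)$ as the input data. Those subgroups are, up to conjugacy, copies of $\Alt(5)$, of the triple cover $3\cdot\Alt(6)$, and of $\PSL(2,7)$. For a finite non-solvable primitive $G\leq\GL(3,\C)$, the product $G\cdot Z$ (where $Z$ denotes the full scalar subgroup of $\GL(3,\C)$) meets $\SL(3,\C)$ in a finite non-solvable primitive $H$ with $HZ = GZ$ and $G/Z(G) \cong H/Z(H) \in \{\Alt(5), \Alt(6), \PSL(2,7)\}$. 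This pins down the three isomorphism types of central quotient appearing in (i)--(iii).

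Next I would determine, for each case, which $G\leq \GL(3,\C)$ with the prescribed central quotient actually occur. The Schur multipliers are $C_2$ for $\Alt(5)$ and $\PSL(2,7)$, and $C_6$ for $\Alt(6)$; among the relevant Schur covers, however, only $\Alt(5)$, $3\cdot\Alt(6)$, and $\PSL(2,7)$ themselves admit a faithful irreducible complex representation of degree~$3$ (the minimal faithful complex degrees of $\SL(2,5)$, $2\cdot\Alt(6)$, and $\SL(2,7)$ all exceed~$3$). So in cases (i) and (iii) the central extension must split and $G\cong K\times Z(G)$ with $|Z(G)|=n$ unrestricted; in case (ii) the non-split triple cover $3\cdot\Alt(6)\leq G$ contributes a scalar of order~$3$, forcing $3\mid n$. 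Existence for each allowed $n$ is then immediate by adjoining $z_n$ to the explicit matrix generators $a'$, $b'$, $c'$ realising the three groups in $\SL(3,\C)$.

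For irredundancy I would apply Corollary~\ref{SingleCClNonsolvable}. In each of the three cases the character table supplies exactly two inequivalent faithful irreducible complex representations of degree~$3$, swapped either by an outer automorphism or by a Galois conjugation that is itself induced by an outer automorphism; together with the fact that each of $\Alt(5)$, $3\cdot\Alt(6)$, $\PSL(2,7)$ is self-normalising modulo scalars in $\GL(3,\C)$, the corollary yields a single $\GL(3,\C)$-conjugacy class of absolutely irreducible copies for each triple (isomorphism type, $n$). Groups from distinct cases cannot collapse, since their central quotients are pairwise non-isomorphic.

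The main obstacle is verifying the hypothesis of Corollary~\ref{SingleCClNonsolvable}, in particular the exact count of inequivalent faithful degree-$3$ representations and the identification of each $\PGL(3,\C)$-normaliser; this is routine but must be done case by case, and it replaces the more elaborate $2$-cohomology computation used in degree~$2$. The split/non-split dichotomy for central extensions, by contrast, is dictated entirely by degree-$3$ character data of the Schur covers, so no further cohomological input is needed.
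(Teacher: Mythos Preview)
Your approach is the paper's own: Blichfeldt's list fixes the three central quotients, character data for the relevant covers decides which central extensions embed in $\GL(3,\C)$, and Corollary~\ref{SingleCClNonsolvable} yields irredundancy (the paper sketches only case~(i), noting that $\SL(2,5)$ has no faithful degree-$3$ character and that $\GL(3,\C)$ contains no group with central quotient $\mathrm{S}_5$). Two factual slips to correct: the minimal faithful degree of $\SL(2,5)$ is $2$, not greater than $3$---what you actually need, and what is true, is that it has no faithful degree-$3$ character; and $3\cdot\Alt(6)$ has \emph{four} faithful irreducible degree-$3$ characters (two for each nontrivial central character), matching $\lvert\mathrm{Out}(3\cdot\Alt(6))\rvert=4$, so the hypothesis of Corollary~\ref{SingleCClNonsolvable} still holds once the count is right.
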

\begin{proof}
The possible isomorphism types of 
central quotient, and the matrix generators, 
are apparent from 
\cite[pp.~250--251]{MillerBlichfeldtDickson}.

We sketch a proof of (i) only. 
Let $G$ be a finite subgroup of 
$\GL(3,\C)$ such that $G/Z(G)\cong \mathrm{A}_5$.
Since $|H^2(\mathrm{A}_5, Z(G)) |\leq 2$, and 
the Schur cover $\SL(2,5)$ of $\mathrm{A}_5$ has 
no faithful irreducible ordinary representation of 
degree $3$, $G$ must split over its center. Also, 
$\GL(3,\C)$ does not contain a subgroup with 
central quotient $\mathrm{S}_5$. The hypotheses 
of Corollary~\ref{SingleCClNonsolvable} are 
therefore satisfied. 
Direct computation shows that 
$\langle s, \mathrm{diag}(1,-1,-1), a' \rangle
\cong \mathrm{A}_5$.
\end{proof}
\begin{theorem}
The union of $\mathcal{L}^* \cup \mathcal{M}^*$ 
for $p=3$ (see Theorem{\em ~\ref{SolvableFullMonp}}) 
together with the set of all groups listed in 
Theorems{\em ~\ref{SolvablePrimitive3}} and
{\rm \ref{Deg3NonSolPrim}}
is a classification of the finite irreducible 
subgroups of $\GL(3,\C)$.
\end{theorem}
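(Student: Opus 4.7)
The plan is to exhibit a trichotomy of finite irreducible subgroups of $\GL(3,\C)$ according to the invariants primitivity and solvability, match each class with one of the three sublists, and then confirm disjointness between sublists.

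First, every finite irreducible subgroup $G\leq \GL(3,\C)$ is either imprimitive or primitive. In the imprimitive case, since $3$ is prime the blocks of any system of imprimitivity are one-dimensional, so $G$ is $\GL(3,\C)$-conjugate to a subgroup of $\Mo(3,\C)$. Lemma~\ref{TransitivePrimeDegreePermGroups} together with the observation that the only transitive subgroups of $\Sym(3)$ are $\langle s\rangle\cong C_3$ and $\langle s, t\rangle\cong \Sym(3)$---both solvable---shows that every finite irreducible monomial subgroup of $\GL(3,\C)$ is solvable. Theorem~\ref{SolvableFullMonp} with $p=3$ then asserts that $G$ is $\GL(3,\C)$-conjugate to exactly one member of $\mathcal{L}^*\cup \mathcal{M}^*$.

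In the primitive case, $G$ is either solvable or non-solvable. If $G$ is solvable then Theorem~\ref{SolvablePrimitive3} (with the central order constraint $n\equiv 0 \bmod 3$ supplied by Theorem~\ref{Deg3PrimSol}(i)) furnishes a unique conjugate of $G$ in the list. If $G$ is non-solvable then Theorem~\ref{Deg3NonSolPrim} plays the analogous role. Both cited theorems include irredundancy within their respective lists. Hence completeness follows by combining the three cases.

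Cross-irredundancy between the three sublists is almost automatic: primitivity and solvability are both $\GL(3,\C)$-conjugacy invariants. Since every group in $\mathcal{L}^*\cup \mathcal{M}^*$ is imprimitive and the groups of Theorems~\ref{SolvablePrimitive3} and \ref{Deg3NonSolPrim} are primitive by construction, no monomial entry can be conjugate to a primitive entry; solvability then separates the two primitive sublists. I anticipate no serious obstacle: the heavy lifting---central-extension computations on the primitive side and the full module-listing/conjugacy-reduction machinery of Sections~\ref{ConjugacyResults}--\ref{MonomialGroupsWithCp} on the monomial side---has already been carried out. The only point requiring a moment's care is checking that the explicit generating sets in Theorem~\ref{SolvablePrimitive3} really produce primitive (not accidentally monomial) groups; this is built into the derivation of those generators, and can be independently verified from $G\cap \Di(3,\C)=Z(G)$ in Theorem~\ref{Deg3PrimSol}(i), which rules out a non-scalar diagonal subgroup and hence, by Theorem~\ref{NonScalarIffIrreducible} applied contrapositively, prevents a monomial realization with transitive permutation part.
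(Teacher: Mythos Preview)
Your argument is correct and is precisely the assembly the paper intends; the theorem is stated in the paper without proof, and your trichotomy (imprimitive, primitive solvable, primitive non-solvable) together with the cited classification theorems is exactly what is needed.

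One remark on your closing paragraph: the ``independent verification'' you sketch is circular. Theorem~\ref{Deg3PrimSol}(i) asserts $G\cap \Di(3,\C)=Z(G)$ only for groups already known to be primitive, so you cannot invoke it to establish primitivity. Moreover, that equality refers to the diagonal subgroup in the \emph{given} coordinates; if $G$ were conjugate into $\Mo(3,\C)$, the diagonal subgroup in the new basis could well be non-scalar, so Theorem~\ref{NonScalarIffIrreducible} gives no contradiction. Fortunately none of this is needed: Theorems~\ref{SolvablePrimitive3} and \ref{Deg3NonSolPrim} already assert primitivity of the listed groups, and the paper's later Lemma (a finite irreducible monomial subgroup of $\GL(p,\C)$ is never isomorphic to a primitive one, citing \cite[Theorem~2.15]{Flannery4FiniteFields}) supplies the clean separation if you want an explicit reference. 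You can simply drop the last two sentences of your proposal.
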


\section{Verification and access to the lists}
\label{Implementation}

We implemented our classifications in {\sc Magma}:
see \cite{ImplementationURL}.
The input is a positive integer $m$ and 
a prime $p$ dividing $m$; the output is 
a list of irreducible monomial subgroups 
of $\GL(p,\C)$ of order $m$ up to 
$\GL(p,\C)$-conjugacy and their labels. 
The projective family is implemented 
only for $p \leq 11$. 
Other groups are returned for all 
input $m$ and $p$. 

Each output group $G$ is given by a 
generating set of monomial matrices over 
a cyclotomic field determined by $m$.
Currently, such fields can be realized up 
to size $2^{30}$ in {\sc Magma}. Our 
default is the `sparse' option. 
An isomorphic copy of $G$ defined 
over a finite field may be constructed as in 
\cite[\S \! 4.3]{Recognition}, and then
we may use other algorithms 
for finite matrix groups to study $G$.

Hensel lifting (see Lemma~\ref{Hensellike}) 
is done using {\sc Magma} intrinsic 
functions. We could avoid $p$-adic 
polynomial arithmetic by computing 
over residue rings $\Z/q^n\Z$ (for $n$ and 
primes $q$ determined by $m$).

We consider briefly the cost of setting 
up the groups of order $m$ in $\Mo(p,\C)$. 
Timings depend on $m$, 
the number of prime factors of $m$, and 
$v$ (see Notation~\ref{dAndvDefinitions}). 
For many orders, setup takes just a few 
CPU seconds. More expensive examples include 
those where $m=pq$ for a prime $q$  
of order $1$ modulo $p$.
In Table~\ref{time}, we state the CPU time in 
seconds taken to construct representatives
 of all $t$ classes of order at most $m$ 
for degrees $3$ and $5$.  
We used {\sc Magma} V2.25-2 on a 2.6GHz machine. 

\begin{table}[ht]
      \centering
      \begin{tabular}{c|crr|crr}
\toprule $m$ & $p$ & $t\phantom{tt}$ & Time & 
$p$ & $t\phantom{tt}$ & Time
\\
        \midrule
2000 & 3 & 2229 & 28 & 5 & 373 & 7 \\
4000 & 3 & 4994 & 206 & 5 & 850 & 54 \\
6000 & 3 & 7943 & 778 & 5 & 1328 & 210 \\
8000 & 3 & 10993 & 2033 & 5 & 1892 & 525 \\
10000 & 3 & 14131 & 4711 & 5 & 2445 & 1089 \\ \hline 
      \end{tabular}
\medskip
\caption{Setting up all $t$ classes of order at 
most $m$ in $\Mo(p, \C)$}
\label{time}
\end{table}

Conjugacy class representatives for all finite 
irreducible subgroups of $\GL(2,\C)$ and 
$\GL(3,\C)$ are also available.

\subsection{Checking correctness}
Much data about the groups 
is routinely corroborated using {\sc Magma}.
We can test whether a finite $G \leq \GL(p, \C)$ 
is (absolutely) irreducible. 
By exploiting the isomorphic copy of $G$ defined 
over a finite field, we can check $|G|$ and 
other group-theoretic properties, 
such as the isomorphism type of $G/Z(G)$.
We verified claims for solvable groups of orders 
$m\leq 10^4$ and all $p$ dividing $m$.
Non-solvable groups were checked for $p\leq 11$ 
and up to order $10^6$.

Lemma~\ref{CheckByCharacters}~(ii) 
underpins a rudimentary but effective 
correctness testing procedure, 
which we now summarize.
\begin{enumerate}
\item Fix $m>1$ and a prime $p$ dividing $m$.
\item List the monomial groups of order $m$ and degree
$p$ from our implementation.
\item Partition this list by isomorphism. 
\item For each isomorphism type  
$G$, use the algorithm of \cite{exp-math} 
to construct its inequivalent 
faithful irreducible monomial 
representations in $\GL(p, \C)$.  
Compute the number of $\mathrm{Aut}(G)$-orbits 
in the set of 
equivalence classes.  \label{REF-check}
\item If all groups of 
order $m$ are known, then apply 
step~\eqref{REF-check} to each.
\end{enumerate}
If the lists produced in steps~(3) and (4) 
coincide for every isomorphism type, then we 
have verified that the output from step~(2)
is irredundant. 
If the list produced in step~(5) also 
coincides, then the output is complete.
For non-solvable $G$, the algorithm of 
\cite{exp-math} constructs only those 
representations defined over $\Q$,
so correlation between lists is more limited.

In step~(5), we use the following criterion
to isolate monomial groups.
\begin{lemma}
A finite irreducible solvable subgroup of 
$\GL(p,\C)$ is monomial if and only if it 
has a non-central abelian normal subgroup.
\end{lemma}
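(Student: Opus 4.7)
The plan is to prove both implications; the forward direction is immediate from earlier results, while the converse is a standard Clifford-theoretic argument leveraging the primality of $p$.

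For the forward direction, suppose $G$ is a finite irreducible solvable monomial subgroup of $\GL(p,\C)$. Up to conjugacy we may assume $G\leq \Mo(p,\C)$, so the diagonal subgroup $A = G\cap \Di(p,\C)$ is abelian and normal in $G$ (being the kernel of $\phi|_G$). Theorem~\ref{NonScalarIffIrreducible} asserts that $A$ is non-scalar. By Schur's Lemma applied to the irreducible action of $G$, the center $Z(G)$ consists entirely of scalar matrices, so a non-scalar element of $A$ is non-central in $G$. Hence $A$ is a non-central abelian normal subgroup of $G$, as required.

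For the converse, let $A$ be a non-central abelian normal subgroup of the irreducible subgroup $G\leq \GL(p,\C)$. Restrict the ambient module $V = \C^p$ to $A$; since $A$ is abelian, every irreducible constituent of $V|_A$ is $1$-dimensional. Collect them into isotypic components $V = \bigoplus_{\chi} V_\chi$, where $\chi$ runs over the characters of $A$ occurring in $V$. Because $A$ is normal in $G$, the group $G$ permutes the $V_\chi$, and the irreducibility of $G$ forces this action to be transitive. Let $k$ be the number of isotypic components and $d$ their common dimension, so $p = kd$. If $k=1$, then $A$ acts via a single character on $V$, i.e.\ by scalars, placing $A$ in $Z(G)$ (again using Schur) and contradicting non-centrality. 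Thus $k \geq 2$, and primality of $p$ gives $k = p$, $d = 1$. So $V$ decomposes as a direct sum of $p$ one-dimensional $A$-invariant subspaces that $G$ permutes; choosing a non-zero vector from each yields a basis with respect to which $G$ acts by monomial matrices.

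The main (and only real) step is the Clifford decomposition in the converse; the rest is bookkeeping. The primality of $p$ is indispensable, since it is what rules out the intermediate case $1<k<p$ and forces the isotypic components to be one-dimensional. Note that solvability is used only in the forward direction (via Theorem~\ref{NonScalarIffIrreducible}); the converse holds without it, although one cannot weaken the hypothesis to a merely non-scalar abelian normal subgroup if $G$ is non-solvable, in view of the $\Alt(5)\leq \Mo(5,\C)$ phenomenon noted in the remark after Theorem~\ref{NonScalarIffIrreducible}.
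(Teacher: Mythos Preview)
Your proof is correct. The paper states this lemma without proof (it appears in Section~\ref{Implementation} as a working criterion for the verification procedure), so there is no argument to compare against; yours is the natural one. The forward direction correctly combines Theorem~\ref{NonScalarIffIrreducible} with Schur's Lemma, and the converse is exactly the expected Clifford-theoretic argument, with primality of $p$ forcing the number of homogeneous components to be $p$ once $k>1$.

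One minor quibble: your closing remark about weakening ``non-central'' to ``non-scalar'' is muddled, since for a normal subgroup of an irreducible group these two conditions are equivalent (Schur again). The $\Alt(5)$ example is relevant to the \emph{forward} direction---an irreducible non-solvable monomial group need not have any non-trivial abelian normal subgroup at all---rather than to any relaxation of the hypothesis in the converse.
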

\noindent
Note that a finite irreducible monomial subgroup 
of $\GL(p,\C)$ is not isomorphic to any primitive 
subgroup of 
$\GL(p,\C)$~\cite[Theorem~2.15]{Flannery4FiniteFields}.

We applied steps~(2), (3), and (4) of the
 correctness test to solvable monomial groups 
of order at most $10000$. 
The {\sc SmallGroups} library \cite{SmallGroups} 
contains the groups of order at most $2000$ 
(excluding $2^{10}$); step~(5) was applied to all.
We thereby reconciled our results with 
the classification in 
\cite{Conlon} of the finite 
irreducible $p$-subgroups of $\GL(p,\C)$,
and that in \cite{DZII}
of the finite irreducible 
subgroups of $\SL(p,\C)\cap \Mo(p,\C)$. 

An (obvious) variant of the procedure was used 
to check accuracy of the primitive group lists 
from Section~\ref{Deg2and3}.

\subsection{The number of conjugacy classes 
of monomial groups}

Our implementation can simply {\it count} 
the $\GL(p, \C)$-conjugacy classes 
of irreducible subgroups of $\Mo(p, \C)$ 
having order $m$. Since neither fields nor
 generators are constructed, this number 
is computed quickly, even for large $m$.

As an illustration, we counted the conjugacy 
classes of solvable groups of order $m$ up to 
$10^6$ and all $p$ dividing $m$.
We did likewise for non-solvable 
groups in degrees $p \leq 11$. 
Table~\ref{max-classes} shows the 
 orders with the most conjugacy classes
(solvable groups are on the left).

\begin{table}[h]
    
\ \	\ 	\begin{minipage}{.4\textwidth}
      \centering
      \begin{tabular}{lc}
        \toprule Order & No.~classes \\
        \midrule

$2^6\cdot 3^4\cdot 5^2 \cdot 7$ & 684 \\ \hline
$2^4\cdot 3^4 \cdot 7^2 \cdot 13 $ & 648 \\ \hline 
$2^7\cdot 3^4 \cdot 7 \cdot 13 $ & 640 \\ \hline 
$2^4\cdot 3^3 \cdot 5^2 \cdot  7 \cdot 13 $ & 621 \\ \hline 
$2^6\cdot 3^3 \cdot 5 \cdot 7 \cdot 13 $ & 620 \\ \hline 
$2^4\cdot 3^3 \cdot 7 \cdot 13 \cdot 19 $ & 588 \\ \hline 
$2^8\cdot 3^4 \cdot 5 \cdot 7 $ & 585 \\ \hline 
$2^6\cdot 3^5 \cdot 7^2 $ & 573 \\ \hline 
$2^7\cdot 3^3 \cdot 5 \cdot 7^2 $ & 568 \\ \hline 
$2^7\cdot 3^3 \cdot 5^2 \cdot 7 $ & 564 \\ 
        \bottomrule
      \end{tabular}
    \end{minipage}
		\begin{minipage}{.4\textwidth}
      \centering
      \begin{tabular}{lc}
        \toprule Order & No.~classes \\
        \midrule

$2^{4} \cdot 3 \cdot 5^6 $ &   25\\  \hline
$2^3 \cdot 3 \cdot 5^6$ &   25\\  \hline

$2^{11} \cdot 3^2 \cdot 5 \cdot 7$ & 17\\  \hline

$2^{13} \cdot 3 \cdot 5 \cdot 7$ & 16\\  \hline
$2^{12} \cdot 3 \cdot 5 \cdot 7 $ & 16\\  \hline

$2^{10} \cdot 3^3 \cdot 5 \cdot 7$ & 15\\  \hline
$2^{10} \cdot 3^2 \cdot 5 \cdot 7$ & 15\\  \hline

$2^{11} \cdot 3 \cdot 5 \cdot 7$ & 14\\  \hline

$2^{16} \cdot 3 \cdot 5 $ & 13\\  \hline
$2^{15} \cdot 3 \cdot 5 $ & 13\\  
      \bottomrule
      \end{tabular}
    \end{minipage}
		\medskip
\caption{Monomial group orders with the most
$\GL(p,\C)$-conjugacy classes}
\label{max-classes}
\end{table}

\subsection*{Acknowledgments}
We thank Alla Detinko for astute comments.
The second and third authors were supported by 
Marsden Fund of New Zealand grant UOA 1626; 
and by the Hausdorff Research Institute 
for Mathematics, 
as participants in the 2018 
Trimester in Logic and Group Theory.

\bibliographystyle{amsplain}

\end{document}